\newtheorem{theorem}{Theorem}
\newtheorem{corollary}[theorem]{Corollary}
\newtheorem{lemma}[theorem]{Lemma}
\newtheorem{proposition}[theorem]{Proposition}
\newtheorem{remark}[theorem]{Remark}
\numberwithin{theorem}{subsection}
\numberwithin{figure}{subsection}
\numberwithin{equation}{subsection}
\DeclareMathOperator{\CR}{CR}
\DeclareMathOperator{\dist}{dist}
\DeclareMathOperator{\SLE}{SLE}
\DeclareMathOperator{\CLE}{CLE}
\DeclareMathOperator{\GFF}{GFF}
\DeclareMathOperator{\inte}{int}
\DeclareMathOperator{\ext}{ext}
\DeclareMathOperator{\inrad}{inrad}
\DeclareMathOperator{\outrad}{outrad}
\DeclareMathOperator{\dimH}{dim}
\begin{document}

\title{Level Lines of Gaussian Free Field II: \\Whole-Plane $\GFF$}
\author{Menglu WANG and Hao WU}


%
%
\maketitle

\abstract{We study the level lines of $\GFF$ starting from interior points. We show that the level line of $\GFF$ starting from an interior point turns out to be a sequence of level loops. The sequence of level loops satisfies ``target-independent" property. All sequences of level loops starting from interior points give a tree-structure of the plane. We also introduce the continuum exploration process of $\GFF$ starting from interior. The continuum exploration process of whole-plane $\GFF$ satisfies ``reversibility".}

\tableofcontents
\newcommand{\eps}{\epsilon}
\newcommand{\ov}{\overline}
\newcommand{\U}{\mathbb{U}}
\newcommand{\T}{\mathbb{T}}
\newcommand{\HH}{\mathbb{H}}
\newcommand{\LA}{\mathcal{A}}
\newcommand{\LC}{\mathcal{C}}
\newcommand{\LD}{\mathcal{D}}
\newcommand{\LF}{\mathcal{F}}
\newcommand{\LK}{\mathcal{K}}
\newcommand{\LE}{\mathcal{E}}
\newcommand{\LL}{\mathcal{L}}
\newcommand{\LU}{\mathcal{U}}
\newcommand{\LV}{\mathcal{V}}
\newcommand{\LZ}{\mathcal{Z}}
\newcommand{\R}{\mathbb{R}}
\newcommand{\C}{\mathbb{C}}
\newcommand{\N}{\mathbb{N}}
\newcommand{\Z}{\mathbb{Z}}
\newcommand{\E}{\mathbb{E}}
\newcommand{\PP}{\mathbb{P}}
\newcommand{\QQ}{\mathbb{Q}}
\newcommand{\A}{\mathbb{A}}
\newcommand{\bn}{\mathbf{n}}
\newcommand{\MR}{MR}
\newcommand{\cond}{\,|\,}
\newcommand{\la}{\langle}
\newcommand{\ra}{\rangle}
\newcommand{\tree}{\Upsilon}

\section{Introduction}
This is the second in a series of two papers the first of which is $\cite{WangWuLevellinesGFFI}$. Suppose that $h$ is a smooth real-valued function defined on the complex plane $\C$, then level lines of $h$ are curves along which $h$ has constant value. In \cite{SchrammSheffieldDiscreteGFF, SchrammSheffieldContinuumGFF}, the authors derive that one can still make sense of level lines of Gaussian Free Field ($\GFF$), which is nolonger a pointwise-defined function and can only be viewed as a distribution. The level lines of $\GFF$ are still continuous curves, which are variants of Schramm Loewner Evolution ($\SLE_4$). 

In \cite{WangWuLevellinesGFFI}, we study level lines of $\GFF$ that start from boundary points. In the current paper, we will study level lines of $\GFF$ that start from  interior points. We show that the level line of $\GFF$ starting from an interior point turns out to be a sequence of level loops (Theorems \ref{thm::interior_levelloops_coupling} and \ref{thm::interior_levelloops_deterministic}). We explain the interaction behavior between two sequences of level loops (Theorems \ref{thm::interior_levelloops_interacting_commontarget} and \ref{thm::interior_levelloops_interacting_commonstart}): two sequences of level loops with distinct starting points but common target point will merge; two sequences of level loops with common start point and distinct target points coincide up to the first disconnecting time after which the two processes continue toward their target points in a conditionally independent way. The latter fact is so-called 
 ``target-independent" property of the sequence of level loops. Consider all sequences of level loops starting from interior points, they give a tree-structure of the complex plane (Theorem \ref{thm::interior_levelloops_determins_field}).

We also introduce continuum exploration processes of $\GFF$ starting from interior points. They are sequences of quasisimple loops (Theorems \ref{thm::interior_continuum_coupling} and \ref{thm::interior_continuum_deterministic}). We describe the interaction behavior between two continuum exploration processes (Theorems \ref{thm::interior_continuum_interacting_commontarget} and \ref{thm::interior_continuum_interacting_commonstart}). All continuum exploration processes starting from interior points also give a tree-structure of the plane (Theorem \ref{thm::interior_continuum_determins_field}). We wish to highlight an interesting fact about the continuum exploration processes.The continuum exploration processes of whole-plane $\GFF$ satisfy reversibility: the continuum exploration process starting from the origin targeted at $\infty$ ``coincides" (in a certain sense which will be made precise in Theorem \ref{thm::wholeplane_continuum_reversibility}) with the continuum exploration process starting from $\infty$ targeted at the origin. 

In a series of papers \cite{MillerSheffieldIG1, MillerSheffieldIG2, MillerSheffieldIG3, MillerSheffieldIG4}, the authors study the flow lines of $\GFF$. In particular, \cite{MillerSheffieldIG1} studies the flow lines starting from boundary points and \cite{MillerSheffieldIG4} studies the flow line starting from interior points. The current paper is motivated by \cite{MillerSheffieldIG4}. However, the situation for the level line starting from interior is quite different from the flow line case: the flow line starting from an interior point is a continuous curve; whereas, the level line starting from interior will merge with itself, and the only natural way to describe it is by a sequence of loops. 

This paper is also an important part in a program on conformal invariant metric on $\CLE_4$ which includes \cite{WernerWuCLEExploration, SheffieldWatsonWuSimpleCLEDoubly, WangWuLevellinesGFFI}. In \cite{WernerWuCLEExploration}, the authors constructed a conformal invariant growing process in $\CLE_4$ (recalled in Section \ref{subsec::growing_cle4}) where each loop has a time parameter. The authors conjectured that the time parameter is a deterministic function of the loop configuration. Later, Scott Sheffield pointed out that the conformal invariant growing process in $\CLE_4$ is closely related to the exploration process of $\GFF$ (which is made precise in \cite[Section 3]{WangWuLevellinesGFFI} and Sections \ref{subsec::gff_discrete_continuum}, \ref{subsec::gff_alternate_continuum}). The tools that we develop in the current paper will be used in the program by Scott Sheffield, Samuel Watson, Wendelin Werner and Hao Wu which tries to prove the conjecture that the time parameter in $\CLE_4$ constructed in \cite{WernerWuCLEExploration} is a deterministic function of the loop configuration, and hence gives a conformal invariant metric on $\CLE_4$ loops. 
 
\subsection{Sequence of level loops starting from interior}
\label{subsec::sequence_levelloops_interior}
To consider level line of $\GFF$ starting from an interior point, it is natural to consider a random sequence of level loops instead of a random path, and the height difference between two neighbor loops is  $r\lambda$ for some fixed number $r\in (0,1)$.
A simple loop in the complex plane is the image of the unit circle in the plane under a continuous injective map. 
In other words, a simple loop is a subset of $\C$ which is homeomorphic to the unit circle $\partial\U$. Note that a simple loop is oriented either clockwise or counterclockwise.
If $L$ is a simple loop, then $L$ separates the plane into two connected components that we call its interior $\inte(L)$ (the bounded one) and its exterior $\ext(L)$ (the unbounded one). Assume that the origin is contained in $\inte(L)$, we define the inradius and outradius of the loop $L$ to be 
\[\inrad(L)=\sup\{r>0: r\U\subset\inte(L)\};\quad \outrad(L)=\inf\{R>0: \C\setminus R\U\subset \ext(L)\}.\]
We call a sequence of simple loops $(L_n, n\in\Z)$ in the plane a \textbf{transient sequence of adjacent simple loops} disconnecting the origin from infinity if the sequence satisfies the following properties. 
\begin{enumerate}
\item[(1)] For each $n$, the loop $L_n$ disconnects the origin from infinity;
\item[(2)] For each $n$, the loop $L_{n-1}$ is contained in the closure of $\inte(L_n)$ and $L_{n-1}\cap L_n\neq\emptyset$;
\item[(3)] The sequence is transient:
\[\outrad(L_n)\to 0\ \text{ as }\ n\to-\infty; \quad \inrad(L_n)\to\infty\ \text{ as }\ n\to\infty.\]
\end{enumerate}

Given a transient sequence of adjacent simple loops $(L_n,n\in\Z)$, we use the term \textbf{level loop boundary conditions with height difference $r\lambda$} to describe the boundary values given by the following rule. Assume that the loop $L_n$ has boundary value $c$ on the left-side and $2\lambda+c$ on the right-side, the boundary value of $L_{n-1}$ is 
\[\begin{cases}
c+r\lambda \text{ on the left-side and } 2\lambda+c+r\lambda \text{ on the right-side},& \text{if } L_{n-1} \text{ is clockwise};\\
c-r\lambda \text{ on the left-side and } 2\lambda+c-r\lambda \text{ on the right-side},& \text{if } L_{n-1} \text{ is counterclockwise}.
\end{cases}\]
See Figure \ref{fig::levelloops_boundary_outside}.

\begin{figure}[ht!]
\begin{subfigure}[b]{0.48\textwidth}
\begin{center}
\includegraphics[width=0.625\textwidth]{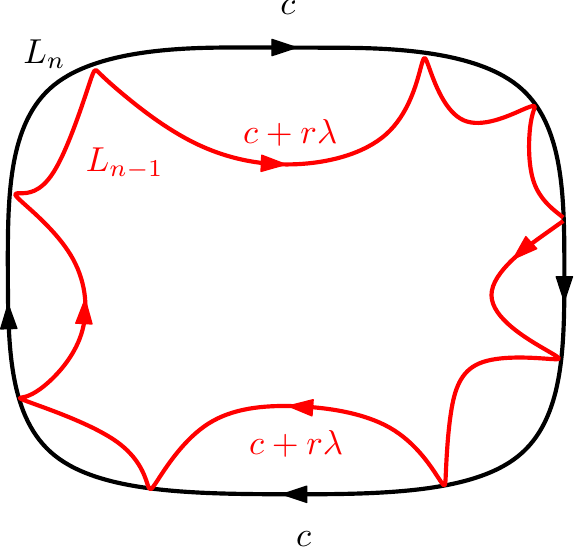}
\end{center}
\caption{If $L_{n-1}$ is clockwise, it has boundary value $c+r\lambda$ to the left-side.}
\end{subfigure}
$\quad$
\begin{subfigure}[b]{0.48\textwidth}
\begin{center}\includegraphics[width=0.625\textwidth]{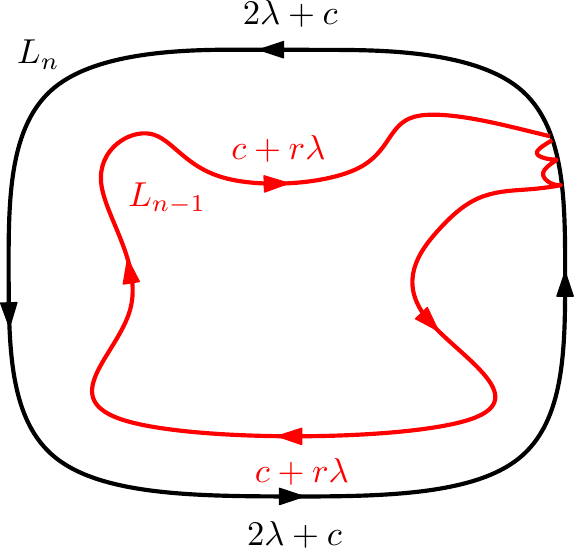}
\end{center}
\caption{If $L_{n-1}$ is clockwise, it has boundary value $c+r\lambda$ to the left-side.}
\end{subfigure}
\begin{subfigure}[b]{0.48\textwidth}
\begin{center}
\includegraphics[width=0.625\textwidth]{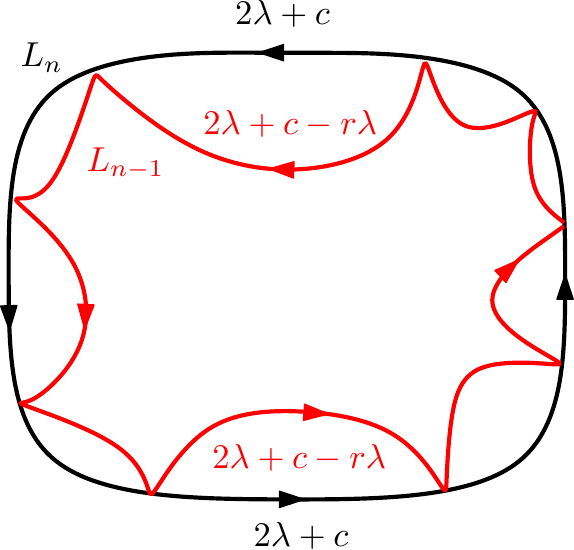}
\end{center}
\caption{If $L_{n-1}$ is counterclockwise, it has boundary value $2\lambda+c-r\lambda$ to the right-side.}
\end{subfigure}
$\quad$
\begin{subfigure}[b]{0.48\textwidth}
\begin{center}\includegraphics[width=0.625\textwidth]{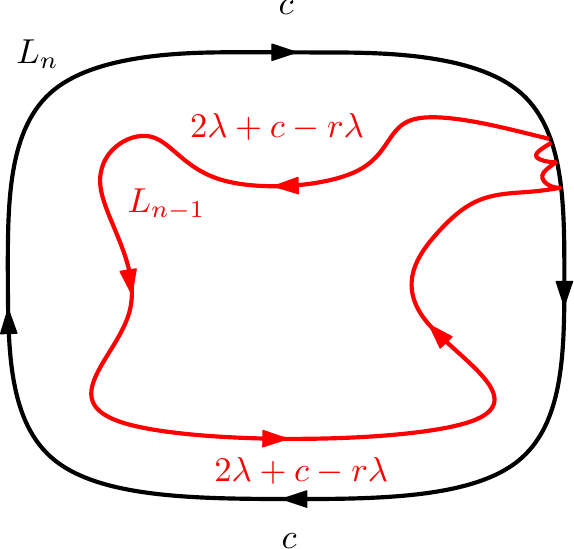}
\end{center}
\caption{If $L_{n-1}$ is counterclockwise, it has boundary value $2\lambda+c-r\lambda$ to the right-side.}
\end{subfigure}
\caption{\label{fig::levelloops_boundary_outside} Explanation of the level loop boundary conditions with height difference $r\lambda$. Assume that the boundary value of the loop $L_n$ is $c$ to the left-side and is $2\lambda+c$ to the right-side.}
\end{figure}

Note that, once we know the boundary values of a loop $L_{n_0}$ in the sequence, we can tell the boundary values of all loops $L_n$, for $n\le n_0$, by the level loop boundary conditions with height difference $r\lambda$. 
\medbreak
We say a domain $D\subset\C$ has harmonically non-trivial boundary if a Brownian motion started at a point in $D$ hits $\partial D$ almost surely. We call a sequence of simple loops $(L_n, n\in \Z)$ in $D$ a transient sequence of adjacent simple loops disconnecting $z\in D$ from $\partial D$ if the sequence satisfies the following properties.
\begin{enumerate}
\item[(1)] For each $n$, the loop $L_n$ disconnects $z$ from $\partial D$;
\item[(2)] For each $n$, the loop $L_{n-1}$ is contained in the closure of the connected component of $D\setminus L_n$ that contains $z$;
\item[(3)] The sequence is transient: the loop $L_n$ converges to $\{z\}$ in Hausdorff metric as $n\to -\infty$. 
\end{enumerate}

The level loop boundary conditions with height difference $r\lambda$ for a transient sequence of adjacent loops in $D$ is defined in the same way as before. 

The following two theorems explain that the level line of $\GFF$ starting from an interior point turns out to be a transient sequence of adjacent loops.

\begin{theorem}\label{thm::interior_levelloops_coupling}
Fix a connected domain $D\subset \C$ with harmonically non-trivial boundary, a starting point $z\in D$, and a number $r\in (0,1)$. Let $h$ be a $\GFF$ on $D$ with some boundary values. 
There exists a coupling between $h$ and a random transient sequence $(L_n, n\in \Z)$ of adjacent simple loops disconnecting $z$ from $\partial D$ such that the following domain Markov property is true. For any stopping time $N$, the conditional law of $h$ given $(L_n, n\le N)$ is a GFF on $D\setminus \cup _{n\le N}L_n$ whose boundary value agrees with the boundary value of $h$ on $\partial D$ and is given by level loop boundary conditions with height difference $r\lambda$ on $(L_n, n\le N)$.

If $D=\C$ and $h$ is a whole-plane $\GFF$ (modulo a global additive constant in $r\lambda\Z$), there is also a coupling between $h$ and a random transient sequence $(L_n, n\in \Z)$ of adjacent simple loops disconnecting the origin from infinity such that the following domain Markov property is true. For any stopping time $N$, the conditional law of $h$ given $(L_n, n\le N)$ is a $\GFF$ (modulo a global additive constant in $r\lambda\Z$) on $\C\setminus \cup _{n\le N}L_n$ whose boundary value is given by level loop boundary conditions with height difference $r\lambda$ on $(L_n, n\le N)$.
\end{theorem}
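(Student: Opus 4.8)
The plan is to build the coupling iteratively, one level loop at a time, using as the basic building block the coupling of a single level loop of GFF in a doubly connected (annular) domain disconnecting the inner boundary component from the outer one. This single-loop result should be available from the boundary-case analysis in the companion paper \cite{WangWuLevellinesGFFI}: a level line of GFF in an annular-type domain started and ended appropriately, or equivalently a level loop with the prescribed height jump $r\lambda$ across it, together with its own domain Markov property. Given such a building block, the construction proceeds as follows. First I would treat the bounded-domain case. Enumerate a sequence of ``scales'': for the loop $L_0$, sample a level loop of $h$ disconnecting $z$ from $\partial D$ at height jump $r\lambda$ (one needs to argue this exists; see below). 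Conditionally on $L_0$, the field $h$ restricted to the connected component of $D\setminus L_0$ containing $z$ is, by the single-loop domain Markov property, a GFF with the appropriate boundary data, independent of the field outside; inside this component sample $L_{-1}$ the same way with height jump $r\lambda$ relative to $L_0$; iterate to get $(L_n, n\le 0)$. Symmetrically, to go ``outward'' (toward $\partial D$), use that conditionally on $L_0$ the field in the component of $D\setminus L_0$ \emph{not} containing $z$ is an independent GFF with its boundary data, and in the doubly connected region between $L_0$ and $\partial D$ sample $L_1$, and iterate to get $(L_n, n\ge 0)$.

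**Second**, I must verify the three defining properties of a transient sequence of adjacent simple loops. Properties (1) and (2) — each $L_n$ disconnects $z$ from $\partial D$, and $L_{n-1}$ lies in the closure of the component of $D\setminus L_n$ containing $z$ with $L_{n-1}\cap L_n\neq\emptyset$ — hold by construction, since each loop is chosen inside the relevant component of the complement of the previously drawn loop and the single-loop coupling produces a loop touching the inner boundary component (that is where the $\SLE_4$-type path is anchored). The transience property (3), that $L_n\to\{z\}$ in Hausdorff distance as $n\to-\infty$ (and the analogous statement outward), requires a quantitative input: each step the loop must shrink by a definite conformal factor, or at least the conformal radius seen from $z$ must tend to $0$. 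I would get this from a Beurling-type estimate or from the fact that an $\SLE_4(\rho)$-type level loop in an annulus of bounded modulus disconnects with probability bounded away from zero in a way that forces geometric decay of the conformal radius; a Borel--Cantelli argument along the iteration then gives transience almost surely. One also needs to rule out the loops accumulating on a larger set than $\{z\}$, which again follows from the conformal-radius decay together with the a priori continuity/regularity of level lines established in \cite{WangWuLevellinesGFFI}.

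**Third**, the domain Markov property at a general stopping time $N$. For deterministic $N=n_0$ this is immediate from the iterative construction and the independence across the single-loop steps: conditionally on $(L_n, n\le n_0)$ the field on $D\setminus\cup_{n\le n_0}L_n$ decomposes as a GFF on the component containing $z$ (which drives all future inward loops) plus an independent GFF on the outer region, with boundary values dictated by the level loop boundary conditions — exactly the asserted statement. The extension to a general stopping time $N$ (with respect to the filtration generated by $(L_n)$) follows by the usual approximation of $N$ by discrete stopping times together with the consistency of the conditional laws; here I would invoke the Markov-chain structure of the sequence of (field-in-component, next-loop) pairs. The whole-plane case is handled in parallel: replace ``GFF with boundary data on $\partial D$'' by ``whole-plane GFF modulo a global additive constant in $r\lambda\Z$'', and note that the ambiguity in the constant is exactly matched by the $r\lambda\Z$ ambiguity in the heights of the level loops, so the construction and the domain Markov statement go through verbatim. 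The outward direction now has no boundary to terminate at; transience outward ($\inrad(L_n)\to\infty$) is obtained by the same conformal-radius argument applied to the exterior.

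**The main obstacle** I anticipate is twofold: (a) establishing that a single level loop disconnecting $z$ from the relevant boundary component genuinely exists and is almost surely a simple loop, rather than something degenerate — this rests on correctly importing and, if necessary, adapting the annular level-line coupling of \cite{WangWuLevellinesGFFI} and controlling the behavior of the $\SLE_4(\rho)$-type process near its force points so that it closes up into a simple loop; and (b) the transience estimate (3), i.e. showing the loops really do exhaust the punctured neighborhood of $z$ and escape to $\partial D$ (or to $\infty$), which is the one genuinely quantitative ingredient and where I expect to spend the most effort, via a uniform lower bound on the probability of a definite drop in conformal radius at each step.
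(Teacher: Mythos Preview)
Your iterative picture is close in spirit to what actually happens, but there is a genuine gap at the very first step, and the paper's argument is organized quite differently as a result.

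The basic building block available from \cite{WangWuLevellinesGFFI} is a level loop of a GFF on a \emph{simply connected} domain, \emph{starting from the boundary} and targeted at an interior point. There is no off-the-shelf ``level loop in an annulus disconnecting the two boundary components,'' and more importantly there is no way to produce your very first loop $L_0$ around an interior point $z$ directly: the level-line machinery needs a boundary to emanate from, and for the whole-plane field there is none. Your sentence ``sample a level loop of $h$ disconnecting $z$ from $\partial D$ at height jump $r\lambda$'' hides exactly this difficulty --- at what height, starting from where, and why should the result be canonical? Relatedly, your ``outward'' step in a bounded domain asks for a level loop in the genuinely doubly connected region between $L_0$ and $\partial D$ (with arbitrary data on $\partial D$), which is not the situation covered by the boundary theory.

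The paper resolves this by reversing the order you propose. It treats the whole-plane case \emph{first}, and there it does not try to pick an $L_0$ at all. Instead it takes, for each $\eps>0$, the GFF $h_\eps$ on $\C\setminus\eps\U$ with zero boundary data, and runs the alternating height-varying sequence of level loops \emph{starting from the boundary} $\partial(\eps\U)$ targeted at~$\infty$; this is exactly the iterated boundary-emanating construction you describe for the inward direction, and it is well-posed at every step because each $\ext(L_n)$ is (conformally) a disc with target~$\infty$. A coupling argument then shows that these sequences converge as $\eps\to 0$ (uniformly on compacts in the log-conformal-radius parameter), and Proposition~\ref{prop::wholeplanegff_convergence_2.10} gives convergence of $h_\eps$ to the whole-plane GFF modulo $r\lambda\Z$; the limit yields the bi-infinite sequence $(L_t,t\in\R)$ together with the desired coupling, with no distinguished starting loop. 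Your transience concern is handled along the way (Proposition~\ref{prop::levelloops_inside_transience}) by essentially the Borel--Cantelli argument you sketch. Finally, the bounded-domain case is deduced \emph{from} the whole-plane one by absolute continuity of the field near $z$ (Proposition~\ref{prop::wholeplanegff_ac_2.11}) together with the local-set transfer in Proposition~\ref{prop::wholeplanegff_localset_2.16}: one couples with the whole-plane picture until the loops leave a small neighborhood of $z$, and only then uses boundary-emanating level lines to continue. So the $\eps\to 0$ limit is not a technical convenience but the mechanism that replaces your missing $L_0$.
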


\begin{theorem}\label{thm::interior_levelloops_deterministic}
In the coupling of a $\GFF h$ and a random sequence of loops $(L_n, n\in \Z)$ as in Theorem \ref{thm::interior_levelloops_coupling}, the sequence $(L_n, n\in \Z)$ is almost surely determined by the field $h$ viewed as a distribution modulo a global additive constant in  $r\lambda\Z$.   
\end{theorem}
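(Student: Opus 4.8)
The plan is to realize the random sequence $(L_n,n\in\Z)$ as an explicit measurable function of $h$, by building it one loop at a time out of boundary-emanating level lines. The essential external input is the analogous determinism in the boundary case, a main result of \cite{WangWuLevellinesGFFI}: a level line of a $\GFF$ started from a boundary point, with any admissible boundary data, is almost surely determined by the field it is coupled with. Combined with the domain Markov property of Theorem~\ref{thm::interior_levelloops_coupling} --- used in the form that, conditionally on a single loop $L_{n}$ together with the loops lying on one side of it, the field on the complementary region is a fresh $\GFF$ whose boundary data is prescribed by the level loop boundary conditions with height difference $r\lambda$ --- this lets one recover the loops recursively.

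Concretely, the proof of Theorem~\ref{thm::interior_levelloops_coupling} exhibits, for each $n$, the loop $L_{n-1}$ as (the boundary of the pocket containing $z$ carved out by) a boundary-emanating level line of the conditional $\GFF$ in the connected component $D_{n}$ of $D\setminus L_{n}$ containing $z$ --- with boundary data on $L_{n}$ given by the level loop boundary conditions --- started from a point of $L_{n}$ and targeted at $z$, run up to the first time it disconnects $z$; the output does not depend on the choice of starting point on $L_{n}$ (a starting-point-independence property of the construction). Since the boundary data of this level line is itself read off from $h$ and $L_{n}$, and since boundary level lines are a.s.\ determined by their field \cite{WangWuLevellinesGFFI}, the loop $L_{n-1}$ is a measurable function of $h$ and $L_{n}$. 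Iterating this single-step relation in either direction expresses, for any $m$ and $n$, the loop $L_{n}$ as a measurable function of $h$ and the single loop $L_{m}$, using the transience conditions ($\outrad(L_n)\to 0$, resp.\ Hausdorff convergence of $L_n$ to $\{z\}$) to pass to the limit over the intermediate loops.

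It remains to eliminate the conditioning loop, i.e.\ to upgrade ``$L_{n}$ is determined by $h$ and $L_{m}$, for every $m$'' to ``$L_{n}$ is determined by $h$ alone''. For a bounded domain $D$ one would send $m\to\infty$: the loops $L_{m}$ then approach $\partial D$, which is deterministic, so the description of $L_{n}$ in terms of $h$ and $L_{m}$ degenerates, in the limit, to a description in terms of $h$ and $\partial D$ only, hence in terms of $h$ alone. For $D=\C$ one argues analogously after approximating the whole-plane $\GFF$ by $\GFF$s on large disks $R\,\U$, with appropriate boundary data and additive-constant normalization, and letting $R\to\infty$; here the global additive constant in $r\lambda\Z$ survives the limit, so the sequence is pinned down only up to an overall relabeling of the index, which is precisely the assertion of the theorem. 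I expect this final limiting step to be the main obstacle: the sequence is bi-infinite and has no canonical outermost loop to anchor the recursion, so one must establish by hand the convergence of the approximating level lines and the stability of the boundary-level-line determinism of \cite{WangWuLevellinesGFFI} under this degeneration --- using, for instance, the monotonicity and non-crossing of level lines coupled with a common field together with the rigidity of the $r\lambda$ height gaps to prevent two such limit constructions from disagreeing --- while in the whole-plane case additionally keeping track of the global additive constant and, near the target, coping with the degenerate (``continuation-threshold'') boundary data that arise exactly when a level line closes a loop.
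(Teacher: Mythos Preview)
Your outline correctly isolates the one-step outward determinism --- $L_{n+1}$ is a level loop of $h|_{\ext(L_n)}$ starting from $L_n$ and is therefore determined by $h$ and $L_n$ via the boundary-emanating determinism of \cite{WangWuLevellinesGFFI} (this is Proposition~\ref{prop::levelloops_inside_deterministic} in the paper). But the step you flag as ``the main obstacle'' is indeed a genuine gap, and the sketch you give does not close it. First, your inward iteration (recovering $L_{n-1}$ from $h$ and $L_n$) is not justified by Theorem~\ref{thm::interior_levelloops_coupling}, whose domain Markov property only conditions on $(L_m, m\le N)$. Second, for bounded $D$ you assert $L_m\to\partial D$ as $m\to\infty$, but the definition of a transient sequence only requires $L_n\to\{z\}$ as $n\to-\infty$ and says nothing about large $n$. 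Third, even shrinking $L_m$ to $\{z\}$, the loop $L_m$ still carries random information (its shape, its orientation, the height it sits at modulo $r\lambda\Z$), and there is no continuity statement for the map $(h,L_m)\mapsto L_n$ that would let you pass to the limit. Your proposed fix via ``monotonicity, non-crossing, rigidity of the $r\lambda$ height gaps'' is really a sketch of a merging statement, but it is not carried out.

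The paper's argument is quite different and avoids the limiting step entirely. It works in the reverse order: first the whole-plane case, then bounded domains by absolute continuity (Proposition~\ref{prop::wholeplanegff_ac_2.11}). For the whole plane, take two copies $(L^1_t)$ and $(L^2_t)$ coupled with $h$ and conditionally independent given $h$, indexed by minus log conformal radius. The key external input is the merging theorem, Theorem~\ref{thm::interior_levelloops_interacting_commontarget}, which is proved \emph{first} (via the interaction analysis of Propositions~\ref{prop::interaction_interior_interior}--\ref{prop::interaction_distinctstart} and Lemma~\ref{lem::interaction_distinctstart_auxiliary}) and does not rely on the present theorem: each $L^i$ merges with the sequence from an auxiliary third starting point, hence $L^1_t=L^2_t$ for all $t\ge T$ for some finite $T$. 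Let $R$ be the infimum of such times; scale invariance of the whole-plane coupling forces the law of $e^{-R}$ to be scale invariant, so $R=-\infty$ a.s.\ and $L^1\equiv L^2$. Thus the missing ingredient in your approach is precisely the merging-plus-scaling argument that the paper supplies.
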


In the coupling between a whole-plane $\GFF$ $h$ and a sequence of loops $(L_n,n\in\Z)$ given by Theorem \ref{thm::interior_levelloops_coupling}, we call the sequence of level loops \textbf{the alternating height-varying sequence} of level loops of $h$ starting from the origin targeted at $\infty$.

The following two theorems explain the interaction behaviors between two sequences of level loops. 

\begin{theorem}\label{thm::interior_levelloops_interacting_commontarget}
Fix $r\in (0,1)$. Suppose that $h$ is a whole-plane $\GFF$ modulo a global additive constant in $r\lambda\Z$. Fix two starting points $z_1,z_2$. For $i=1,2$, let $(L^{z_i}_n, n\in\Z)$ be the alternating height-varying sequence of level loops of $h$ with height difference $r\lambda$ starting from $z_i$ targeted at $\infty$. Define
\[N_1=\min\{n: L^{z_1}_n\text{ disconnects }z_2 \text{ from }\infty\}; \quad N_2=\min\{n: L^{z_2}_n\text{ disconnects }z_1 \text{ from }\infty\}.\]
Then, almost surely, the loops $L^{z_1}_{N_1}$ and $L^{z_2}_{N_2}$ coincide; moreover, the two sequences of loops $(L^{z_1}_n, n\ge N_1)$ and $(L^{z_2}_n, n\ge N_2)$ coincide. 
\end{theorem}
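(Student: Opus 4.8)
The plan is to first reduce the statement to the coincidence of the two single loops $L^{z_1}_{N_1}$ and $L^{z_2}_{N_2}$, and then to establish that coincidence by conditioning on the $z_1$-exploration and invoking the boundary-interaction results for level lines from \cite{WangWuLevellinesGFFI}.

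\textbf{Reduction to one loop.} Assume $L^{z_1}_{N_1}=L^{z_2}_{N_2}$ almost surely and write $L$ for the common oriented loop; put $D=\ext(L)$, which has harmonically non-trivial boundary. By the domain Markov property of Theorem \ref{thm::interior_levelloops_coupling} applied at $N_1$ and at $N_2$, the conditional law of $h|_D$ given $(L^{z_1}_n, n\le N_1)$, and given $(L^{z_2}_n, n\le N_2)$, is in both cases that of a $\GFF$ on $D$ (modulo $r\lambda\Z$) with level loop boundary conditions on $L$; and in each case the tail of the sequence is a transient sequence of adjacent loops in $D$ disconnecting $L$ from $\infty$, coupled with $h|_D$ so as to have the domain Markov property. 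By Theorem \ref{thm::interior_levelloops_deterministic} each tail is almost surely the same measurable function of $(h|_D, L)$; since these agree for the two sequences, the tails agree, which gives the ``moreover'' part. So it suffices to prove $L^{z_1}_{N_1}=L^{z_2}_{N_2}$ almost surely.

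\textbf{Coincidence of the two loops.} I would condition on $\mathcal F:=\sigma(L^{z_1}_n, n\le N_1)$. Since $N_1$ is the first index at which a $z_1$-loop disconnects $z_2$ from $\infty$, the point $z_2$ lies in the connected component $A$ of $\C\setminus\bigcup_{n\le N_1}L^{z_1}_n$ whose boundary consists of $L^{z_1}_{N_1}$ together with part of $L^{z_1}_{N_1-1}$, while $z_1\in\inte(L^{z_1}_{N_1-1})$; by Theorem \ref{thm::interior_levelloops_coupling}, conditionally on $\mathcal F$ the field $h|_A$ is a $\GFF$ on $A$ (modulo $r\lambda\Z$) with the prescribed level loop boundary data on $\partial A$. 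Two ingredients then finish the proof. First, a locality statement: level loops of $h$ do not cross, so the $z_2$-loops stay in $\overline A$ until one of them merges with a boundary loop, and — using the domain Markov property of the $z_2$-coupling together with Theorem \ref{thm::interior_levelloops_deterministic} — up to that time they agree with the corresponding loops of the alternating height-varying sequence of level loops of $h|_A$ started from $z_2$; moreover $L^{z_2}_{N_2}$ is exactly the first $z_2$-loop that separates the $L^{z_1}_{N_1}$-part from the $L^{z_1}_{N_1-1}$-part of $\partial A$. Second, a boundary-interaction statement: mapping $A$ conformally to a half-plane turns the relevant growing level loop into a level line in the sense of \cite{WangWuLevellinesGFFI}; by the height-difference rule the arc $L^{z_1}_{N_1-1}$ carries on its $A$-facing side a value off by $2\lambda$ from the height of this line, so the line reflects off it without merging, whereas $L^{z_1}_{N_1}$ carries exactly the matching value, so by the merging and non-crossing behaviour of level lines from \cite{WangWuLevellinesGFFI} the line, upon reaching $L^{z_1}_{N_1}$, merges with it and is forced to trace all of it and close up as $L^{z_1}_{N_1}$. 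Hence $L^{z_2}_{N_2}=L^{z_1}_{N_1}$, with matching orientation and parity.

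\textbf{Main obstacle.} The heart of the proof is the second step: proving that level loops of the field do not cross and that the $z_2$-sequence localises to the conditioned field on the random domain $A$; and carrying out the boundary-interaction analysis, in particular identifying correctly which boundary component the growing level line reflects off and which it merges with. Both hinge on the careful bookkeeping of heights modulo $r\lambda\Z$ and of the alternating orientations, which must be arranged so that the heights of $L^{z_1}_{N_1}$ and $L^{z_2}_{N_2}$ and the parities of $N_1$ and $N_2$ come out compatible; and since the relevant level line interacts with both boundary components of the annular region $A$ at once, reducing to the half-plane statements of \cite{WangWuLevellinesGFFI} requires some care. An alternative route that avoids part of this bookkeeping is to build a single joint exploration of $h$ from $\{z_1,z_2\}$ that is simultaneously consistent with both sequences, so that the merging is automatic.
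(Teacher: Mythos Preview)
Your reduction step matches the paper's, but you cite Theorem~\ref{thm::interior_levelloops_deterministic} to identify the tails; in the paper's order that theorem is proved \emph{using} the present one, so this is circular. The correct tool, already available at this point, is the determinism of the \emph{boundary}-emanating alternating sequence (Proposition~\ref{prop::levelloops_inside_deterministic}): once $L^{z_1}_{N_1}=L^{z_2}_{N_2}=:L$, both tails are the alternating sequence of $h|_{\ext(L)}$ starting from $L$, hence coincide. The same mis-citation appears in your ``locality statement''.

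For the coincidence $L^{z_1}_{N_1}=L^{z_2}_{N_2}$, your outline has the right shape but two substantive steps are unsupported. First, the claim that the $z_2$-loops stay in $\overline{A}$ until one merges with $\partial A$ is precisely the hard lemma (Lemma~\ref{lem::interaction_distinctstart_auxiliary}), whose proof requires analyzing the two special points $x,y$ of $\partial A$ and ruling out exit through the interior of $\partial A\cap L^{z_1}_{N_1-1}$ by case analysis on the orientation of $L^{z_1}_{N_1-1}$; you assert it. Second, your boundary-interaction claim---that the $L^{z_1}_{N_1-1}$ arc is ``off by $2\lambda$'' so the line reflects, while the $L^{z_1}_{N_1}$ arc ``matches'' so it merges---is not correct: the $A$-facing values of the two arcs differ by $r\lambda$ or $2\lambda-r\lambda$ depending on the orientation of $L^{z_1}_{N_1-1}$, and the first $z_2$-loop to touch $\partial A$ need not carry the height of $L^{z_1}_{N_1}$ at all. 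The paper does not attempt a direct merge. It shows the first $z_2$-loop $L^{z_2}_k$ to hit the inside of $L^{z_1}_{N_1}$ is trapped in $\overline{\inte(L^{z_1}_{N_1})}$, reads off its height to see that $L^{z_1}_{N_1}$ has height $u+mr\lambda$ for some $m\ge 1$, and then uses the uniqueness of level loops (Lemma~\ref{lem::levellinesI_levelloop}) to identify $L^{z_1}_{N_1}$ with a \emph{later} loop in the $z_2$-sequence; a separate minimality argument then forces this later loop to be $L^{z_2}_{N_2}$.
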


\begin{theorem}\label{thm::interior_levelloops_interacting_commonstart}
Fix $r\in (0,1)$. Suppose that $h$ is a whole-plane $\GFF$ modulo a global additive constant in $r\lambda\Z$. Fix three points $z, w_1 ,w_2$. For $i=1,2$, let $(L^{z\to w_i}_n, n\in \Z)$ be the alternating height-varying sequence of level loops of $h$ with height difference $r\lambda$ starting from $z$ targeted at $w_i$; and denote by $U_n^{w_i}$ the connected component of $\C\setminus L_n^{z\to w_i}$ that contains $w_i$. Then there exists a number $M$ such that 
\[L_n^{z\to w_1}=L_n^{z\to w_2},\quad \text{for }n\le M-1; \quad \text{and}\quad U_n^{w_1}\cap U_n^{w_2}=\emptyset,\quad\text{ for }n=M.\]
Given $(L_n^{z\to w_1}, L_n^{z\to w_2},  n\le M)$, the two sequences continue towards their target points respectively in a conditionally independent way.
\end{theorem}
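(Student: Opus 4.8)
The plan is to build both sequences $(L_n^{z\to w_1})$ and $(L_n^{z\to w_2})$ inside the \emph{same} coupling with the field $h$, and exploit the domain Markov property (Theorem \ref{thm::interior_levelloops_coupling}) together with the uniqueness statement (Theorem \ref{thm::interior_levelloops_deterministic}) to compare them. First I would argue that, as long as a level loop around $z$ still disconnects \emph{both} $w_1$ and $w_2$ from its target, the loop constructed to target $w_1$ must coincide with the loop constructed to target $w_2$. The intuition is that, before any loop separates $w_1$ from $w_2$, the two target points lie in the same complementary component, and the conditional field in that component is the same $\GFF$ with the same level-loop boundary data regardless of which of $w_1,w_2$ we name as the target; hence by Theorem \ref{thm::interior_levelloops_deterministic} the next loop — being a deterministic functional of the field modulo $r\lambda\Z$ — is the same in both explorations. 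Iterating this from $n\to-\infty$, using transience ($L_n\to\{z\}$, so small loops certainly disconnect both $w_1$ and $w_2$ when $|n|$ is large), gives $L_n^{z\to w_1}=L_n^{z\to w_2}$ for all $n$ up to the first index $M$ at which a common loop first places $w_1$ and $w_2$ in different complementary components; this is exactly the index $M$ with $U_M^{w_1}\cap U_M^{w_2}=\emptyset$, and by the coincidence up to $M$ we have $L_n^{z\to w_1}=L_n^{z\to w_2}$ for $n\le M-1$ (and the two loops $L_M^{z\to w_i}$ are the common loop $L_M$, which already separates $w_1$ from $w_2$, so the statement $U_M^{w_1}\cap U_M^{w_2}=\emptyset$ holds).

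The second claim — conditional independence of the two continuations given $(L_n^{z\to w_i},\,n\le M)$ — I would extract from the domain Markov property. Conditioned on the common loops up to level $M$, Theorem \ref{thm::interior_levelloops_coupling} says $h$ restricted to $\C\setminus\cup_{n\le M}L_n$ is a $\GFF$ (modulo $r\lambda\Z$) with prescribed boundary data, which decomposes as a direct sum of independent $\GFF$s over the distinct connected components of that complement. Since $L_M$ separates $w_1$ from $w_2$, the components $U_M^{w_1}$ and $U_M^{w_2}$ are distinct, so the restrictions $h|_{U_M^{w_1}}$ and $h|_{U_M^{w_2}}$ are conditionally independent; and the continuation $(L_n^{z\to w_i},\,n\ge M)$ is, by construction, a measurable functional of $h|_{U_M^{w_i}}$ together with the boundary data (again via Theorem \ref{thm::interior_levelloops_deterministic} applied in the subdomain). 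Hence the two continuations are conditionally independent given the common past.

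The main obstacle I anticipate is the first step, specifically justifying the target-independence of the individual loops before the disconnecting time. One has to be careful that the definition of $(L_n^{z\to w_i})$ is genuinely "local" in the sense that the $n$-th loop depends only on the field in the component containing $w_i$ after removing earlier loops, and that this is compatible across the two choices of target — this is really where a target-independence input analogous to the Miller–Sheffield theory for flow lines is needed, and it may require first establishing the analogous statement for \emph{boundary}-emanating level lines (as in \cite{WangWuLevellinesGFFI}) in each complementary domain and then transferring it to the interior loops by the inductive construction. A secondary technical point is the proper handling of the "modulo $r\lambda\Z$" ambiguity of the whole-plane $\GFF$: one must check that the construction and the uniqueness statement respect the quotient, so that "same field modulo $r\lambda\Z$" really does force "same loop", and that the independence statement is phrased correctly at the level of the quotiented field. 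I expect that, granting the results quoted from the excerpt, these points are handled by a careful but routine localization argument, with the genuine content being the target-independence of the level-loop construction itself.
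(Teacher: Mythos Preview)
Your proposal is correct and matches the paper's approach: the paper picks a single stopping time $N_0$ with both $w_1,w_2$ in $\ext(L_{N_0}^{z\to w_1})$, applies the boundary-emanating target-independence (Proposition~\ref{prop::levellinesI_targetindependence}, from \cite{WangWuLevellinesGFFI}) to the sequences of level loops of $h|_{\ext(L_{N_0}^{z\to w_1})}$ starting from $L_{N_0}^{z\to w_1}$ targeted at $w_1$ and $w_2$, and then uses Theorem~\ref{thm::interior_levelloops_deterministic} to identify these with the interior-starting sequences $(L_n^{z\to w_i})$. This is exactly the reduction you anticipated as the ``main obstacle'', and the conditional-independence part follows as you say from the domain Markov property.
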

The following theorem explains that all sequences of level loops give a tree-structure of the plane. 
\begin{theorem}
\label{thm::interior_levelloops_determins_field}
Fix $r\in (0,1)$. Suppose that $h$ is a whole-plane $\GFF$ modulo a global additive constant in $r\lambda \Z$. Suppose that $(z_m, m\ge 1)$ is any countable dense set and, for each $m$, let $(L_n^m, n\in\Z)$ be the alternating height-varying sequence of level loops of $h$ with height difference $r\lambda$ starting from $z_m$ targeted at $\infty$. Then the collection $((L^m_n, n\in\Z), m\ge 1)$ almost surely determines the field $h$ and its law does not depend on the choice of $(z_m,m\ge 1)$. Moreover, the collection $((L^m_n, n\in\Z), m\ge 1)$ forms a tree structure in the sense that, almost surely, any two sequences of level loops merge eventually.
\end{theorem}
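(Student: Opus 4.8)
The plan is to establish the three assertions---determination of $h$, independence of the law from the enumeration, and the tree structure---with Theorem~\ref{thm::interior_levelloops_interacting_commontarget} as the main engine. Fix throughout a countable dense set $D\subseteq\C$ with $\{z_m:m\ge1\}\subseteq D$; since $D\times D$ is countable we may, by countable additivity, assume that the conclusions of Theorems~\ref{thm::interior_levelloops_deterministic} and~\ref{thm::interior_levelloops_interacting_commontarget} hold simultaneously for every ordered pair of points of $D$. For $w\in D$ let $(L^w_n,n\in\Z)$ be the alternating height-varying sequence of level loops of $h$ (with height difference $r\lambda$) from $w$ targeted at $\infty$, and put $\LL=\bigcup_{w\in D}\bigcup_{n}L^w_n$. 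The tree structure is then immediate: for each pair $m\neq m'$, Theorem~\ref{thm::interior_levelloops_interacting_commontarget} gives that $(L^m_n)_n$ and $(L^{m'}_n)_n$ share a common tail, and as there are only countably many pairs this holds a.s.\ for all pairs at once.

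Next I would show that $\LL$ already contains every interior level loop and that its law does not depend on the enumeration. For $w,w'\in D$ set $N(w,w')=\min\{n:L^w_n\text{ disconnects }w'\text{ from }\infty\}$. Since $w'\in\inte(L^w_{n_0})$ forces $L^w_{n_0}$ to disconnect $w'$ from $\infty$, and $\inte(L^w_{n_0})$ shrinks to $\{w\}$ as $n_0\to-\infty$ by transience, we get $N(w,w')\to-\infty$ as $w'\to w$ along $D$. Combining this with Theorem~\ref{thm::interior_levelloops_interacting_commontarget} applied to the pair $(w,w')$, for each fixed $n$, once $w'$ is close enough to $w$ the loop $L^w_n$ coincides with some loop of the sequence $(L^{w'}_k)_k$; hence $\bigcup_n L^w_n\subseteq\LL$. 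Running the same argument with $D$ enlarged by a second countable dense set $D'$, one finds that $\LL$ is also the union of all level loops built from $D'$; thus $\LL$ is an a.s.\ well-defined random subset of $\C$ whose law is independent of the chosen dense set, and since, by Theorem~\ref{thm::interior_levelloops_deterministic}, $h$ conversely determines each $(L^m_n)_n$ and hence $\LL$, the statement about the law will follow once we prove the collection determines $h$. The same reasoning gives, a.s., $\bigcup_n L^x_n\subseteq\LL$ for every interior point $x$ (running the merging of Theorem~\ref{thm::interior_levelloops_interacting_commontarget} for $x$ against all of $D$); consequently each connected component of $\C\setminus\LL$ is a single point, because the component $U_x$ of $\C\setminus\LL$ containing $x$ is contained in $\inte(L^x_n)$ for every $n$ and $\operatorname{diam}(\inte(L^x_n))\to0$.

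To recover $h$, enumerate $\LL=\{\Lambda_1,\Lambda_2,\dots\}$ and let $\mathcal F_k=\sigma(\Lambda_1,\dots,\Lambda_k)$ (encoding the shapes and orientations). By Theorem~\ref{thm::interior_levelloops_coupling} and the target-independence of Theorem~\ref{thm::interior_levelloops_interacting_commonstart}, conditionally on $\mathcal F_k$ the field $h$ equals, on each bounded component $V$ of $\C\setminus(\Lambda_1\cup\dots\cup\Lambda_k)$, an $\mathcal F_k$-measurable harmonic function (dictated by the level loop boundary conditions) plus an independent zero-boundary GFF on $V$. Fix a test function $\phi$ with $\int\phi=0$ and compact support. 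Using the previous paragraph, one can choose the enumeration so that the $k$th initial segment has all complementary components meeting $\operatorname{supp}\phi$ of diameter at most some $\eps_k\to0$; then for $k$ large $\operatorname{supp}\phi$ avoids the unbounded component and
\[
\operatorname{Var}\big((h,\phi)\mid\mathcal F_k\big)=\sum_{V}\iint_{V\times V}\phi(x)\,G_V(x,y)\,\phi(y)\,dx\,dy\;\le\;C(\phi)\,\eps_k^2\;\xrightarrow[k\to\infty]{}\;0,
\]
the sum being over bounded components and $G_V$ the zero-boundary Green's function, and using the elementary bound $\int_V G_V(x,\cdot)\le\operatorname{diam}(V)^2/4$. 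Hence $(h,\phi)=\lim_k\E[(h,\phi)\mid\mathcal F_k]$ in $L^2$, a $\sigma(\LL)$-measurable limit; as $\phi$ is arbitrary, $h$ (modulo a constant in $r\lambda\Z$) is a.s.\ determined by $\LL$, hence by the collection $((L^m_n,n\in\Z),m\ge1)$.

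The step I expect to be the real obstacle is the geometric one in the second paragraph: upgrading the pairwise merging of Theorem~\ref{thm::interior_levelloops_interacting_commontarget} to the statement that the loops issued from a single dense family already exhaust all interior level loops, and---more delicately---that finite sub-families of $\LL$ therefore cut bounded regions into pieces of arbitrarily small diameter. This forces one to pass from the countable family of ``merging'' events to an uncountable one and to control the geometry of the loops uniformly, for which quantitative versions of the transience and of the interaction results are needed. Granting the fine-partition property, the remaining input in the third paragraph---a martingale convergence together with the vanishing of the conditional variance of the GFF tested against a fixed function on a fine partition---is routine.
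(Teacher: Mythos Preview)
Your overall strategy---the tree structure from Theorem~\ref{thm::interior_levelloops_interacting_commontarget}, then recovering $(h,f)$ as the $L^2$-limit of its conditional expectations by driving a Green's-function conditional variance to zero---is exactly the paper's. The difference, and the reason you run into the obstacle you flag, is in how you control the Green's function.

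You enumerate individual loops and try to make the \emph{diameters} of all complementary pieces shrink to zero; this is what forces the uncountable upgrade you identify as the hard step. The paper avoids it altogether. It takes the filtration $\LF_p=\sigma\big((L^m_n)_{n\in\Z}:m\le p\big)$ indexed by the number of \emph{starting points} (not loops), writes
\[
\E\big[((h,f)-X_p)^2\big]=\iint_{K\times K} f(x)\,G_p(x,y)\,f(y)\,dx\,dy,
\]
with $G_p$ the Green's function of $\C\setminus\bigcup_{m\le p}\bigcup_n L^m_n$, and then invokes dominated convergence: $0\le G_p\le G_1$, the dominator $G_1$ is integrable on $K\times K$, and for each \emph{fixed} pair $x\neq y$ one has $G_p(x,y)\to0$. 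The last point is the only geometric input needed, and it is a statement about a single deterministic pair, so no uncountability arises. In fact it follows from precisely the merging argument you give in your second paragraph: for fixed $x$, pick $z_m\to x$ in the dense set; Theorem~\ref{thm::interior_levelloops_interacting_commontarget} applied to the pair $(x,z_m)$ together with $N(x,z_m)\to-\infty$ shows that every loop $L^x_n$ eventually coincides with some $L^m_{n'}$; in particular a loop $L^x_n$ separating $x$ from $y$ (which exists by transience) lies in $\bigcup_{m\le p}\bigcup_n L^m_n$ for $p$ large, whence $G_p(x,y)=0$ eventually.

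So the obstacle you identify is not overcome but simply absent: pointwise vanishing of $G_p$ plus an integrable majorant replaces your uniform diameter bound, and the martingale step then goes through exactly as you wrote it (with $\LF_p$ in place of your $\LF_k$). Your claim that every component of $\C\setminus\LL$ is a singleton may well be true, but it is neither needed nor, as you note, easy to justify.
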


\begin{remark}
In the coupling $(h, (L_n, n\in\Z))$ as in Theorem \ref{thm::interior_levelloops_coupling} for $D=\C$, the Hausdorff dimension of the intersection of two neighbor loops is almost surely given by (see \cite[Remark 1.1.5]{WangWuLevellinesGFFI}) 
\[\dimH_H(L_n\cap L_{n+1})=2-\frac{1}{8}(2+r)^2.\]
This implies that, given the sequence of level loops $(L_n, n\in\Z)$, we can tell the height difference $r\lambda$ through the Hausdorff dimension of the intersection of two neighbor loops. 

Generally, for any two loops $L_n$ and $L_m$ in the sequence, on the event $\{L_n\cap L_m\neq\emptyset\}$, their height difference $\LD>0$ can be read from the Hausdorff dimension of their intersection:
\[\dimH_H(L_n\cap L_m)=2-\frac{1}{8}\left(\frac{\LD}{\lambda}+2\right)^2.\] 
\end{remark}

\subsection{Continuum exploration process starting from interior}

In this section, we will describe a continuum exploration process of $\GFF$ starting from an interior point. This process can be viewed as the limit of the sequence of level loops in Theorem \ref{thm::interior_levelloops_coupling} as $r$ goes to zero.

Before defining the continuum exploration process, we need to define a suitable space of loops (see also \cite[Section 4.1]{SheffieldExplorationTree}). As we introduced in Section \ref{subsec::sequence_levelloops_interior}, a simple loop is a subset of $\C$ which is homeomorphic to the unit circle $\partial\U$. Recall that $\inte(L)$ is the bounded connected component of $\C\setminus L$, and $\inte(L)$ is a simply connected domain. There exists a conformal map $\varphi$ from $\inte(L)$ onto $\U$ and $\varphi$ can be extended as a homeomorphism from the closure of $\inte(L)$ onto $\overline{\U}$. 

\begin{figure}[ht!]
\begin{subfigure}[b]{0.48\textwidth}
\begin{center}
\includegraphics[width=0.625\textwidth]{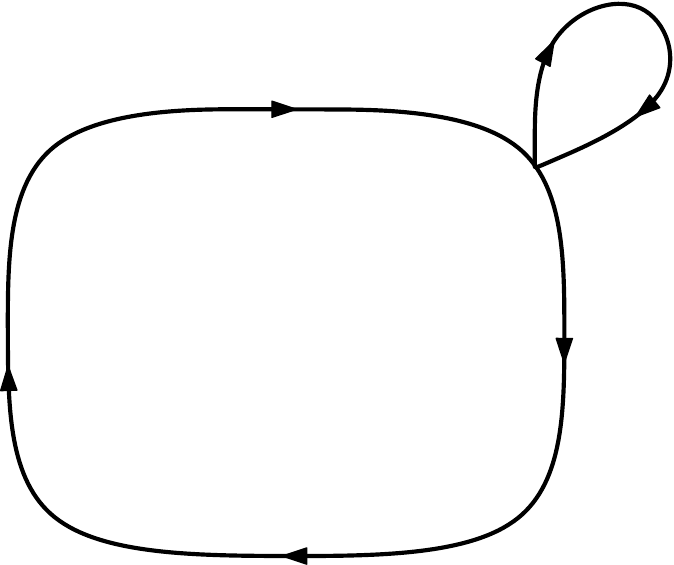}
\end{center}
\caption{A clockwise quasisimple loop.}
\end{subfigure}
$\quad$
\begin{subfigure}[b]{0.48\textwidth}
\begin{center}\includegraphics[width=0.625\textwidth]{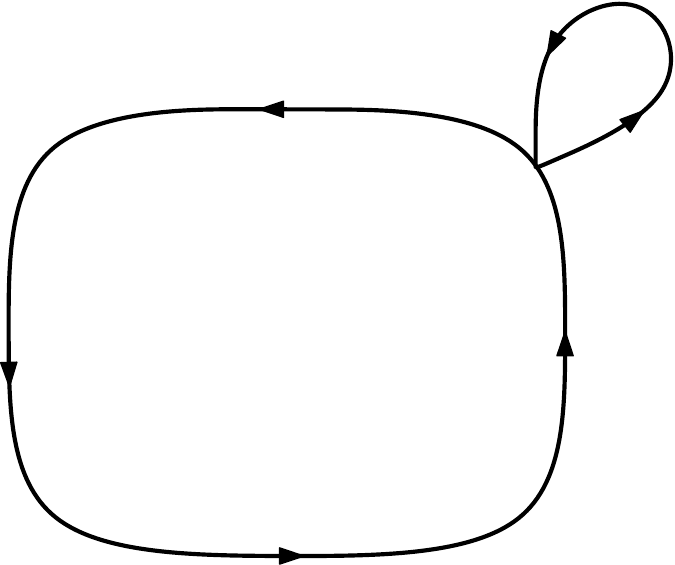}
\end{center}
\caption{A counterclockwise quasisimple loop.}
\end{subfigure}
\caption{\label{fig::quasisimpleloop_orientation} The orientations of quasisimple loops.}
\end{figure}

A quasisimple loop is the (oriented) boundary of a simply connected domain. Note that a quasisimple loop can be obtained as the limit of a sequence of simple loops under $L^{\infty}$ metric. A quasisimple loop may intersect itself, but it can not cross itself traversely. If $L$ is a quasisimple loop, we denote by $\ext(L)$ the unbounded connected component of $\C\setminus L$ (there may be several bounded connected components of $\C\setminus L$, but there is only one that is unbounded).  We will say $L$ is clockwise (resp. counterclockwise) if the boundary of $\ext(L)$ is clockwise (resp. counterclockwise). See Figure \ref{fig::quasisimpleloop_orientation}. Suppose that a quasisimple loop $L$ disconnects the origin from $\infty$, and let $U^0_L$ be the connected component of $\C\setminus L$ that contains the origin. Define
\[\inrad(L)=\sup\{r>0: r\U\subset U^0_L\},\quad \outrad(L)=\inf\{R>0: \C\setminus R\U\subset \ext(L)\}.\]

We will use the convergence of quasisimple loops in Carath\'eodory topology which is defined in the following way. We say that a sequence of quasisimple loops $(L_n, n\ge 1)$ converges to a quasisimple loop $L$ in  Carath\'eodory topology seen from $\infty$ if $\phi_0(\ext(L_n))$ converges to $\phi_0(\ext(L))$ in Carath\'eodory topology seen from the origin where $\phi_0(z)=1/z$ (see \cite[Section 3.6]{LawlerConformallyInvariantProcesses} for Carath\'eodory topology). 

We call a sequence of quasisimple loops $(L_u, u\in\R)$ in the plane a \textbf{transient c\`adl\`ag sequence of adjacent quasisimple loops} disconnecting the origin from infinity if the sequence satisfies the following properties.
\begin{enumerate}
\item [(1)] There exists a sequence of reals $(u_j, j\in\Z)$ such that 
\[u_j<u_{j+1}\ \text{ for all }\ j, \quad u_j\to-\infty\ \text{ as }\ j\to-\infty, \quad u_j\to\infty\ \text{ as }\ j\to\infty,\]
and that, for all $j$, the loops $L_u$  have the same orientation for $u_j\le u<u_{j+1}$ which is different from the orientation of $L_{u_{j+1}}$.
\item [(2)] For each $u$, the loop $L_u$ disconnects the origin from $\infty$.
\item [(3)] For $v<u$, the loop $L_v$ is contained in the closure of the connected component of $\C\setminus L_u$ that contains the origin. Moreover, the sequence is c\`adl\`ag: for each $u$, under Carath\'eodory topology seen from $\infty$, we have
\begin{itemize}
\item  the left limit $\lim_{v\uparrow u}L_v$ exists, denoted by $L_{u-}$;
\item  the right limit $\lim_{v\downarrow u}L_u$ exists and equals $L_u$.
\end{itemize}
And the two loops are adjacent: $L_u\cap L_{u-}\neq\emptyset$.
\item [(4)] The sequence is transient: 
\[\outrad(L_u)\to 0\ \text{ as }\ u\to-\infty; \quad \inrad(L_u)\to\infty\ \text{ as }\ u\to\infty.\]
\end{enumerate}

\begin{figure}[ht!]
\begin{subfigure}[b]{0.48\textwidth}
\begin{center}
\includegraphics[width=0.75\textwidth]{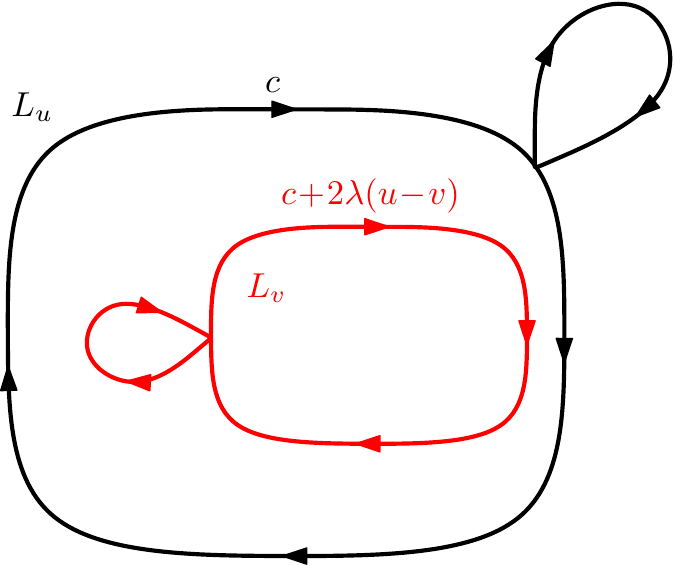}
\end{center}
\caption{Assume $u_j\le v< u<u_{j+1}$. If $L_{v}$ is clockwise, it has boundary value $c+2\lambda(u-v)$ to the left-side.}
\end{subfigure}
$\quad$
\begin{subfigure}[b]{0.48\textwidth}
\begin{center}\includegraphics[width=0.75\textwidth]{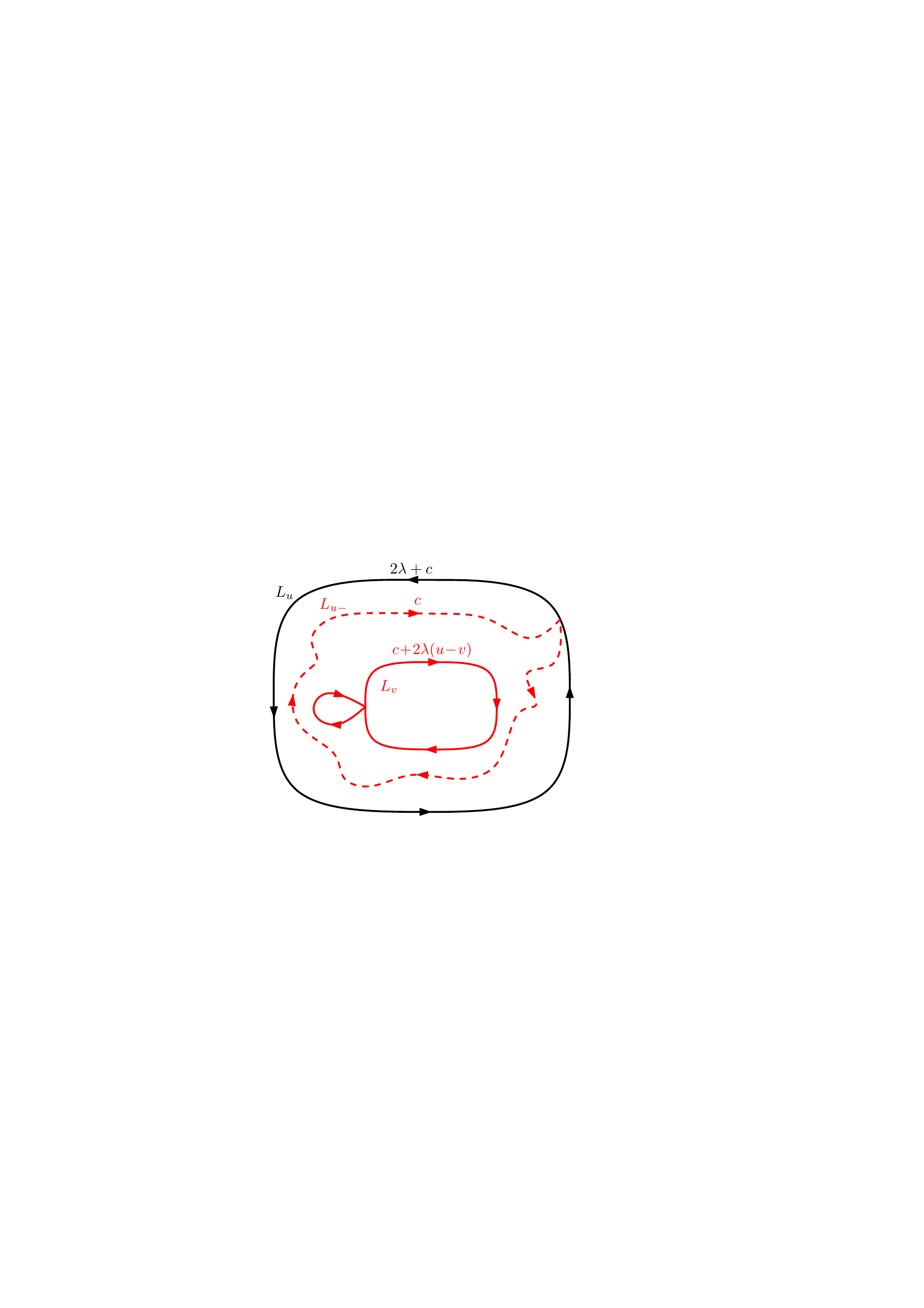}
\end{center}
\caption{Assume $u_j\le v< u=u_{j+1}$. If $L_{v}$ is clockwise, it has boundary value $c+2\lambda(u-v)$ to the left-side.}
\end{subfigure}
\begin{subfigure}[b]{0.48\textwidth}
\begin{center}
\includegraphics[width=0.75\textwidth]{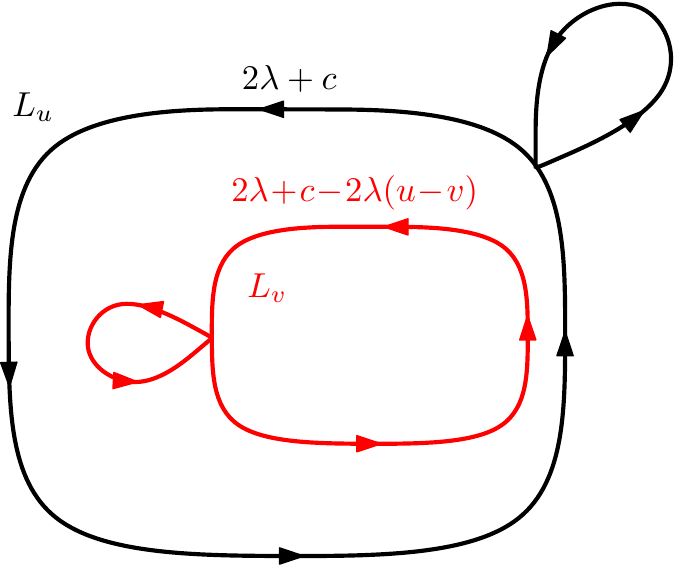}
\end{center}
\caption{Assume $u_j\le v< u<u_{j+1}$. If $L_{v}$ is counterclockwise, it has boundary value $2\lambda+c-2\lambda(u-v)$ to the right-side.}
\end{subfigure}
$\quad$
\begin{subfigure}[b]{0.48\textwidth}
\begin{center}\includegraphics[width=0.75\textwidth]{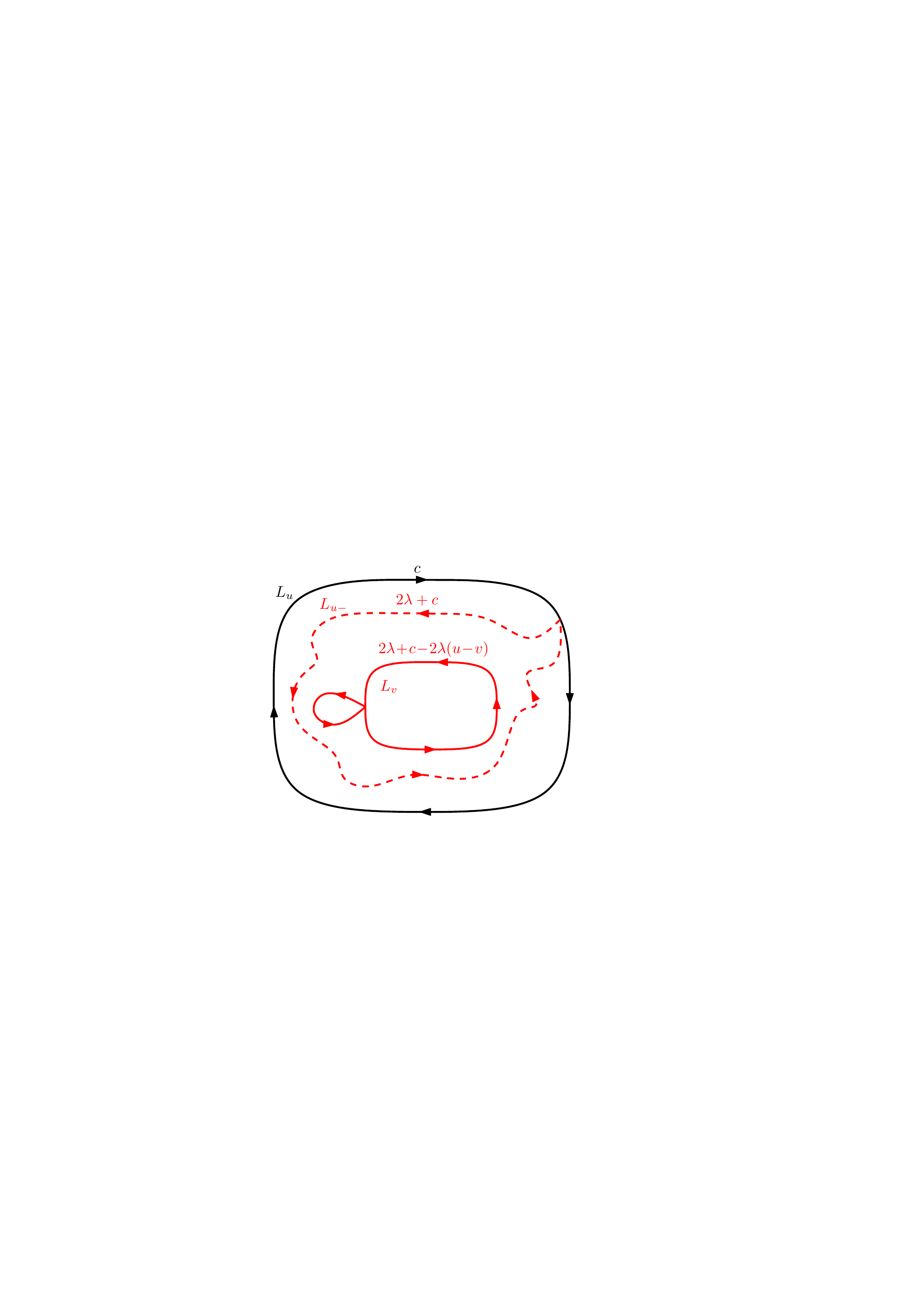}
\end{center}
\caption{Assume $u_j\le v< u=u_{j+1}$. If $L_{v}$ is counterclockwise, it has boundary value $2\lambda+c-2\lambda(u-v)$ to the right-side.}
\end{subfigure}
\caption{\label{fig::quasisimpleloop_boundary_outside} Explanation of the continuum level loop boundary conditions. Assume that the boundary value of the loop $L_u$ is $c$ to the left-side and is $2\lambda+c$ to the right-side.}
\end{figure}

Given a transient c\`adl\`ag sequence of adjacent quasisimple loops $(L_u, u\in\R)$, we use the term \textbf{continuum level loop boundary conditions} to describe the boundary values given by the following rule. Assume that the loop $L_u$ has boundary value $c$ to the left-side and $2\lambda+c$ to the right-side, and assume that $u_j<u\le u_{j+1}$ for some $j$. Then, for $v\in [u_j, u)$, the boundary value of $L_v$ is 
\[\begin{cases}
c+2\lambda(u-v) \text{ on the left-side and }2\lambda+c+2\lambda(u-v) \text{ on the right-side},&\text{if }L_v\text{ is clockwise};\\
c-2\lambda(u-v) \text{ on the left-side and }2\lambda+c-2\lambda(u-v) \text{ on the right-side},&\text{if }L_v\text{ is counterclockwise}.
\end{cases}\]
See Figure \ref{fig::quasisimpleloop_boundary_outside}. Note that, once we know the boundary value of a loop $L_u$ in the sequence, we can tell the boundary values of all loops $L_v$, for $v<u$, by the continuum level loop boundary conditions. 

Suppose that $D\subset \C$ is a domain with harmonically non-trivial boundary. We call a sequence of quasisimple loops $(L_u, u\in\R)$ in $D$ a transient c\`adl\`ag sequence of adjacent quasisimple loops disconnecting $z\in D$ from $\partial D$ if the sequence satisfies the following properties.

\begin{enumerate}
\item [(1)] There exists a sequence of reals $(u_j, j\le j_0)$ such that 
\[u_j<u_{j+1}\ \text{ for all }\ j\le j_0-1, \quad u_j\to-\infty\ \text{ as }\ j\to-\infty,\]
and that, for all $j\le j_0-1$, the loops $L_u$  have the same orientation for $u_j\le u<u_{j+1}$ which is different from the orientation of $L_{u_{j+1}}$.
\item [(2)] For each $u$, the loop $L_u$ disconnects $z$ from $\partial D$.
\item [(3)] For $v<u$, the loop $L_v$ is contained in the closure of the connected component of $D\setminus L_u$ that contains $z$. Moreover, the sequence is c\`adl\`ag under Carath\'eodory topology seen from $\infty$, and for each $u$, the loop $L_u$ has nonempty intersection with its left limit.
\item [(4)] The sequence is transient: 
the loop $L_u$ converges to $\{z\}$ in Hausdorff metric as $u\to-\infty$.
\end{enumerate}

The continuum level loop boundary conditions for a transient c\`adl\`ag sequence of quasisimple loops in $D$ can be defined similarly. 

The following two theorems explains that the continuum exploration process of $\GFF$ starting from an interior point is a c\`adl\`ag sequence of quasisimple loops.  

\begin{theorem}
\label{thm::interior_continuum_coupling}
Fix a connected domain $D\subset\C$ with harmonically non-trivial boundary and a starting point $z\in D$. Suppose that $h$ is a $\GFF$ on $D$ with some boundary values and that $(L_u, u\in\R)$ is a transient c\`adl\`ag sequence of adjacent quasisimple loops in $D$ disconnecting $z$ from $\partial D$. There exists a coupling between $h$ and $(L_u, u\in\R)$ such that the following domain Markov property is true. For any stopping time $T$, the conditional law of $h$ given $(L_u, u\le T)$ is a $\GFF$ on $D\setminus \cup_{u\le T}L_u$ whose boundary value agrees with the boundary value of $h$ on $\partial D$ and is given by the continuum level loop boundary conditions on $(L_u, u\le T)$. 

Suppose that $h$ is a whole-plane $\GFF$ modulo a global additive constant in $\R$ and that $(L_u, u\in\R)$ is a random transient c\`adl\`ag sequence of adjacent quasisimple loops disconnecting the origin from infinity. There is a coupling between $h$ and $(L_u, u\in\R)$ such that the following domain Markov property is true. For any stopping time $T$, the conditional law of $h$ given $(L_u, u\le T)$ is a $\GFF$ (modulo a global additive constant in $\R$) on $\C\setminus \cup_{u\le T}L_u$ whose boundary value is given by the continuum level loop boundary conditions on $(L_u, u\le T)$.
\end{theorem}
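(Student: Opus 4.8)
\emph{Proof proposal.} The plan is to produce the coupling as a limit, as $r\downarrow 0$, of the couplings provided by Theorem \ref{thm::interior_levelloops_coupling}. Fix a sequence $r_k\downarrow 0$ and, for each $k$, let $(h,(L^{(k)}_n,n\in\Z))$ be the coupling of Theorem \ref{thm::interior_levelloops_coupling} with height difference $r_k\lambda$, all on one probability space sharing the field $h$. I would reparametrize the loops in ``height time'': since one step changes the boundary value of a loop by $\pm r_k\lambda$, whereas the continuum level loop boundary conditions prescribe a change of $2\lambda$ per unit time, set $u^{(k)}_n=\tfrac12 r_k n$ (up to an additive normalization, e.g.\ so that $L^{(k)}_0$ is the first loop disconnecting $z$ from $\partial D$ with a prescribed boundary value) and define the c\`adl\`ag step process $L^{(k)}_u:=L^{(k)}_n$ for $u\in[u^{(k)}_n,u^{(k)}_{n+1})$. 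With this clock the level loop boundary conditions of Theorem \ref{thm::interior_levelloops_coupling} are exactly the restriction to the grid $\tfrac12 r_k\Z$ of the continuum level loop boundary conditions, with the maximal orientation runs of $(L^{(k)}_n)$ playing the role of the monotone blocks $[u_j,u_{j+1})$.

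The core of the argument is tightness of the laws of $(L^{(k)}_u,u\in\R)$ (jointly with $h$) and identification of the subsequential limits. I would establish tightness via a priori estimates, uniform in $k$, on three features: \emph{(i) spatial regularity}, namely control on the increments of $-\log\inrad$ and $-\log\outrad$ along the sequence and on the modulus of continuity of $u\mapsto L^{(k)}_u$ in the Carath\'eodory topology seen from $\infty$, so that no loop degenerates and the family converges; \emph{(ii) temporal regularity}, namely that each orientation run consists of $\Theta(1/r_k)$ loops and that the switch times do not accumulate, so that the limiting sequence $(u_j)$ is locally finite; and \emph{(iii) transience}, namely that $\outrad\to 0$ as $u\to-\infty$ and $\inrad\to\infty$ as $u\to\infty$ survive the limit. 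Within a monotone run the process is a discretization of a level line drawn at linearly varying height, i.e.\ a radial/whole-plane $\SLE_4(\rho)$-type Loewner chain, so these estimates are of the same nature as those in \cite{WangWuLevellinesGFFI} and I expect to reduce to them. Given tightness, any subsequential limit $(L_u,u\in\R)$ is by construction a c\`adl\`ag family of loops, nested in the required way and transient, and each $L_u$ is quasisimple since a Carath\'eodory limit of simple loops is quasisimple; the adjacency $L_u\cap L_{u-}\neq\emptyset$ at a jump time follows from the discrete adjacency $L^{(k)}_n\cap L^{(k)}_{n-1}\neq\emptyset$ together with a non-degeneracy input (the two loops genuinely touch, not merely come close), again drawn from the level-line estimates.

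It then remains to pass the domain Markov property to the limit. Given a stopping time $T$ for the filtration generated by $(L_u,u\le T)$, approximate it by stopping times $T_k$ for the discrete filtrations with $T_k\to T$. By Theorem \ref{thm::interior_levelloops_coupling}, conditionally on the explored loops up to $T_k$ the field $h$ is a GFF on the complement of their union, with boundary data equal to that of $h$ on $\partial D$ and given by the $r_k\lambda$ level loop boundary conditions on the explored loops. As $k\to\infty$, the unexplored component containing $z$ converges in Carath\'eodory topology to $D\setminus\bigcup_{u\le T}L_u$, the discrete boundary data converge to the continuum level loop boundary conditions, and the law of the GFF is continuous under this convergence; hence $h$ given $(L_u,u\le T)$ has the asserted conditional law. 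The whole-plane statement is obtained identically from the whole-plane part of Theorem \ref{thm::interior_levelloops_coupling}: the only extra bookkeeping is that the global additive constant, which lies in $r_k\lambda\Z$ at level $k$, becomes an arbitrary real constant in the limit, which is harmless since the exploration and its boundary data involve only height differences.

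The hard part, I expect, is items (ii)--(iii) of the tightness step: controlling the time change in the limit --- keeping the orientation switches from accumulating and the monotone blocks of positive length --- and the non-degeneracy underlying adjacency at jump times. A secondary point is that the subsequential limit should not depend on the subsequence, so that we obtain a bona fide coupling; I would settle this by invoking Theorem \ref{thm::interior_continuum_deterministic} (the field determines the loops) or, alternatively, a direct characterization of the coupling via the domain Markov property.
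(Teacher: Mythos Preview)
Your approach---taking $r\to 0$ directly in the whole-plane (or general $D$) coupling of Theorem \ref{thm::interior_levelloops_coupling}---is plausible but is not the route the paper takes, and the difference is substantive. The paper reverses the order of limits. It first builds the continuum exploration process on a \emph{bounded} domain starting from the boundary, as the $r\to 0$ limit of the upward/downward height-varying sequences (Sections \ref{subsec::gff_discrete_continuum}--\ref{subsec::gff_alternate_continuum}); crucially, it already has an explicit target object in hand, the growing process of $\CLE_4$ constructed from a Poisson point process of $\SLE_4$ bubbles (Section \ref{subsec::growing_cle4}), and dyadic monotonicity of the discrete sequences gives almost sure Carath\'eodory convergence to it (Lemma \ref{lem::discrete_continuum_cvg}, Remark \ref{rem::discrete_continuum_cvg}). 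This yields the coupling and the domain Markov property on the disc (Proposition \ref{prop::gff_upward_continuum}), which is then iterated to produce the alternating process. Only afterwards is the whole-plane version obtained, by exactly the $\epsilon\to 0$ scheme of the proof of Theorem \ref{thm::interior_levelloops_coupling}: run the already-constructed continuum process on $\C\setminus\epsilon\U$ and let $\epsilon\to 0$; the general-$D$ case follows by absolute continuity.

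The paper's route sidesteps precisely the hard parts you flag. The transition times $(u_j)$ are, by construction, the jump times of the underlying Poisson process of bubbles, so there is nothing to prove about non-accumulation or positive length of monotone blocks; c\`adl\`ag and adjacency at jump times are read off from the bubble picture (Lemma \ref{lem::growing_cle4_cadlag}); and there is no need to identify subsequential limits because the limiting object is given a priori. Your approach could in principle be made to work, but you would be reproving from scratch what the paper obtains for free from the $\CLE_4$/bubble structure, and the tightness item (ii) in particular has no obvious soft argument without that input.
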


\begin{theorem}
\label{thm::interior_continuum_deterministic}
In the coupling of a $\GFF$ $h$ and a random sequence of quasisimple loops $(L_u, u\in\R)$ as in Theorem \ref{thm::interior_continuum_coupling}, the sequence $(L_u, u\in\R)$ is almost surely determined by the field $h$ viewed as a distribution modulo a global additive constant in $\R$.
\end{theorem}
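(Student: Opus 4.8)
The plan is to mirror the strategy used for the discrete (height-varying) sequence in Theorem \ref{thm::interior_levelloops_deterministic}, and reduce the continuum statement to the boundary-emanating case treated in \cite{WangWuLevellinesGFFI}. First I would work locally: by the domain Markov property of Theorem \ref{thm::interior_continuum_coupling}, it suffices to show that each individual loop $L_u$ is determined by $h$, since then the whole c\`adl\`ag family $(L_u, u\in\R)$ is determined (the parametrization $u$ is recovered from the boundary-value increments $2\lambda(u-v)$, which by the continuum level loop boundary conditions are encoded in the field on $D\setminus\cup_{v\le u} L_v$; and the countably many ``orientation-switch'' times $u_j$ together with the loops $L_{u_j}$ are limits of nearby loops under Carath\'eodory topology). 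So the task is: \emph{given $h$ modulo a global additive constant in $\R$, the loop $L_u$ is a.s.\ determined.}

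The key step is to realize a single loop $L_u$, conditionally on the past $(L_v, v\le u_j)$ up to its last orientation-switch time $u_j < u$, as the concatenation of boundary-emanating continuum level lines in the domain $\tilde D := $ connected component of $D\setminus L_{u_j}$ containing $z$. On $\partial\tilde D$ the field $h$ has the continuum level loop boundary conditions, so along $L_{u_j}$ it takes two constant values differing by $2\lambda$. Starting a level line of $h$ from a point on $L_{u_j}$ that sits between the two boundary values, and then, each time the line hits $L_{u_j}$ again, continuing with a fresh level line, one traces out exactly $L_u$ (this is the same ``exploration by boundary level lines with varying height'' that underlies the construction of the sequence; cf.\ the first paper's Section on the GFF exploration). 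By Theorem~1.1.4 of \cite{WangWuLevellinesGFFI} — the statement that boundary-emanating level lines are determined by the field — each of these level-line pieces is a.s.\ determined by $h$ restricted to the relevant subdomain, hence so is $L_u$. A measurability/limiting argument then upgrades ``each fixed loop is determined'' to ``the whole family is determined'': one exhausts $\R$ by a countable set of times together with the (a.s.\ countable) switch times, uses that $L_u$ depends continuously (in Carath\'eodory topology) on $u$ from the right and has left limits, and invokes a monotone-class argument to conclude that the $\sigma$-algebra generated by $h$ contains that generated by $(L_u,u\in\R)$.

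The subtlety to handle carefully — and the place I expect the main obstacle — is the \emph{global additive constant}. The field is only defined modulo an additive constant in $\R$, so the absolute boundary values of any given loop are not canonically defined; only \emph{differences} of heights are meaningful. Concretely, one must check that the construction above does not secretly use the value of the additive constant: the choice of ``a point where the level line starts'' and the subsequent heights are specified only relative to $L_{u_j}$'s own (shifted) boundary data, and the Carath\'eodory-limit arguments that recover the $u_j$ and the orientation pattern are shift-invariant. I would address this exactly as in the whole-plane level-loop case of Theorem~\ref{thm::interior_levelloops_deterministic}: fix one loop (say the one separating the origin from $\infty$ at a canonical scale), note that conditioning on it pins the constant within a coset, run the boundary-level-line determination argument in the pinned field on each complementary annular region, and observe that every statement produced is invariant under adding an element of $\R$ to $h$, so it descends to the quotient. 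A second, more technical point is that the loops here are merely \emph{quasisimple}, so the complementary domains $D\setminus\cup_{v\le u}L_v$ may be multiply connected or have non-Jordan boundary; one must check that the boundary-emanating level-line results of \cite{WangWuLevellinesGFFI} apply to each simply connected complementary component with its (possibly pinched) boundary, which should follow from the approximation of quasisimple loops by simple loops under $L^\infty$ and a stability argument for the coupling.
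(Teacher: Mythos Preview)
Your proposal has a genuine gap, and it stems from a mis-reading of how Theorem~\ref{thm::interior_levelloops_deterministic} is actually proved. You propose to show that each loop $L_u$ is determined by $h$ by realizing it as a concatenation of boundary-emanating level lines \emph{started from the last transition loop $L_{u_j}$}. But this conditions on $(L_v,\,v\le u_j)$, which is precisely the object you are trying to show is measurable with respect to $h$. In a bounded domain with exploration started from $\partial D$ this bootstrap is fine because $L_0=\partial D$ is deterministic; here the sequence is bi-infinite (indexed by $u\in\R$, with $L_u\to\{z\}$ as $u\to-\infty$), so there is no deterministic anchor loop from which to start the induction. Your proposed fix---``fix one loop at a canonical scale''---is circular for the same reason: singling out such a loop from the field already presupposes the measurability you are trying to establish. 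This is also not what the paper does in Theorem~\ref{thm::interior_levelloops_deterministic}.

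The paper's argument is quite different and sidesteps this issue entirely. One reduces to $D=\C$ by absolute continuity, then couples two copies $(L^1_u)$ and $(L^2_u)$ with the same whole-plane field $h$, conditionally independently given $h$. Using the merging result, Theorem~\ref{thm::interior_continuum_interacting_commontarget} (applied via an auxiliary starting point $z\neq 0$), one shows there is a finite $R$ such that $L^1_u=L^2_u$ for all $u\ge R$. Scale invariance of the coupling $(h,(L^1_u),(L^2_u))$ then forces the law of $e^{-R}$ to be scale invariant, hence $R=-\infty$ almost surely, i.e.\ the two copies coincide for all $u$. The boundary-emanating determinism results you cite (from \cite{WangWuLevellinesGFFI}, and Proposition~\ref{prop::gff_upward_continuum_determinisitc} in the continuum case) do enter, but only upstream: they are used to prove the merging theorem, not to anchor an induction at a fixed loop. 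If you want to repair your sketch, the missing ingredients are exactly (i) the two-conditionally-independent-copies device, (ii) Theorem~\ref{thm::interior_continuum_interacting_commontarget}, and (iii) the scale-invariance step.
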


In the coupling between a whole-plane $\GFF$ $h$ and a random sequence of quasisimple loops $(L_u, u\in\R)$ given by Theorem \ref{thm::interior_continuum_coupling}, we call the sequence $(L_u, u\in\R)$ the \textbf{alternating continuum exploration process} of $h$ starting from the origin targeted at $\infty$; and we call $(u_j, j\in\Z)$ the \textbf{sequence of transition times}.

The following two theorems explain the interaction behavior between two alternating continuum exploration processes. 

\begin{theorem}\label{thm::interior_continuum_interacting_commontarget}
Suppose that $h$ is a whole-plane $\GFF$ modulo a global additive constant in $\R$. Fix two starting points $z_1,z_2$. For $i=1,2$, let $(L^{z_i}_u, u\in\R)$ be the alternating continuum exploration process of $h$ starting from $z_i$ targeted at $\infty$. Define
\[T_1=\inf\{u: L^{z_1}_u\text{ disconnects }z_2 \text{ from }\infty\}; \quad T_2=\inf\{u: L^{z_2}_u\text{ disconnects }z_1 \text{ from }\infty\}.\]
Then, almost surely, the loops $L^{z_1}_{T_1}$ and $L^{z_2}_{T_2}$ coincide; moreover, the two sequences of loops $(L^{z_1}_{u+T_1}, u\ge 0)$ and $(L^{z_2}_{u+T_2}, u\ge 0)$ coincide. 
\end{theorem}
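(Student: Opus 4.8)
The plan is to deduce this from the analogous statement for the discrete height-varying level loops, Theorem~\ref{thm::interior_levelloops_interacting_commontarget}, by sending the height difference $r\lambda$ to $0$. I would work throughout on the probability space carrying the whole-plane $\GFF$ $h$: by Theorems~\ref{thm::interior_levelloops_deterministic} and~\ref{thm::interior_continuum_deterministic} every object below is a deterministic function of $h$, hence they are all defined simultaneously. For $i=1,2$ and $r\in(0,1)$, let $(L^{z_i,r}_n,n\in\Z)$ be the alternating height-varying sequence of level loops of $h$ with height difference $r\lambda$ from $z_i$ targeted at $\infty$, and recall that, as $r\to0$, $(L^{z_i,r}_n)$ converges to the alternating continuum exploration process $(L^{z_i}_u,u\in\R)$, loop by loop, in the Carath\'eodory topology seen from $\infty$ and with respect to the natural time parametrization.

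For each fixed $r$, set $N^r_i=\min\{n:L^{z_i,r}_n\text{ disconnects }z_{3-i}\text{ from }\infty\}$. Theorem~\ref{thm::interior_levelloops_interacting_commontarget} gives that, almost surely, $L^{z_1,r}_{N^r_1}=L^{z_2,r}_{N^r_2}$ and the two tails $(L^{z_1,r}_n,n\ge N^r_1)$ and $(L^{z_2,r}_n,n\ge N^r_2)$ coincide. The whole proof then comes down to justifying the passage to the limit: that the first disconnecting loop $L^{z_1,r}_{N^r_1}$ converges, as $r\to0$, to $L^{z_1}_{T_1}$ (the first loop of the continuum $z_1$-process that disconnects $z_2$ from $\infty$), and symmetrically for the $z_2$-process. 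Granting this, letting $r\to0$ in $L^{z_1,r}_{N^r_1}=L^{z_2,r}_{N^r_2}$ gives $L^{z_1}_{T_1}=L^{z_2}_{T_2}$, and letting $r\to0$ in the equality of tails gives $(L^{z_1}_{u+T_1},u\ge0)=(L^{z_2}_{u+T_2},u\ge0)$, which is the assertion of the theorem.

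This limiting step is the main obstacle, because ``disconnects $z_2$ from $\infty$'' is a functional of a loop whose boundary behavior (a loop passing through or tangent to $z_2$) must be ruled out. The key structural input is that the continuum loops $L^{z_1}_u$ are nested in the direction of exploration and grow continuously on each interval $[u_j,u_{j+1})$ between consecutive transition times while jumping outward at each $u_j$; consequently the set of $u$ for which $L^{z_1}_u$ disconnects $z_2$ from $\infty$ is a half-line $[T_1,\infty)$, and, since $z_2$ is a fixed point, almost surely $z_2\notin\bigcup_u L^{z_1}_u$, so the disconnection of $z_2$ occurs precisely at one of the jump times $T_1=u_{j^\ast}$, with $z_2$ lying strictly between $L^{z_1}_{T_1-}$ and $L^{z_1}_{T_1}$. (The statement $z_2\notin\bigcup_u L^{z_1}_u$ can itself be obtained from the $\SLE_4$-type description of the boundary arcs, or from the discrete approximation, using that a level loop almost surely avoids any fixed point.) At such a non-degenerate disconnection time the Carath\'eodory convergence of the discrete sequences carries over to the first disconnecting loops, i.e.\ $L^{z_1,r}_{N^r_1}\to L^{z_1}_{T_1}$; I would make this precise by sandwiching $L^{z_1,r}_{N^r_1}$, using the monotonicity of both the discrete and the continuum families in the exploration direction together with the right-continuity of the continuum family, between continuum loops at parameters converging to $T_1$.

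One bookkeeping point remains: the continuum coupling of Theorem~\ref{thm::interior_continuum_coupling} is defined modulo a global additive constant in $\R$, whereas the $r$-level coupling is modulo a constant in $r\lambda\Z$; passing to the limit along a sequence $r\to0$ is consistent because $r\lambda\Z$ becomes dense in $\R$, and in any case the loops, being determined by $h$ and insensitive to a global additive constant, are unaffected. This, together with the limiting step above, completes the argument.
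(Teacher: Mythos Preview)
Your approach differs from the paper's. The paper does not pass to the limit $r\to0$ in the full statement of Theorem~\ref{thm::interior_levelloops_interacting_commontarget}; instead it first establishes a continuum analogue of the key merging lemma, Proposition~\ref{prop::continuum_alternate_merging}, which says directly that $L^{z_1}_{T_1}=L^{z_2}_{T_2}$, and then repeats the short argument of Theorem~\ref{thm::interior_levelloops_interacting_commontarget} verbatim with this proposition in place of Proposition~\ref{prop::interaction_distinctstart} and with Proposition~\ref{prop::gff_upward_continuum_determinisitc} in place of Proposition~\ref{prop::levelloops_inside_deterministic}. A discrete approximation does enter in the proof of Proposition~\ref{prop::continuum_alternate_merging}, but only locally and in one direction: after conditioning on the continuum process from $z$ up to time $T$, the process from $w$ is approximated by discrete level loops inside the remaining domain, and one argues case by case (the three configurations of Figure~\ref{fig::gff_continuum_merging}) that the first discrete loop disconnecting $z$ stays in $\overline{\ext(L^z_T)}$.

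Your argument, as written, has genuine gaps. First, the structural picture you invoke is incorrect: the continuum loops do \emph{not} ``grow continuously on each interval $[u_j,u_{j+1})$ between consecutive transition times''; the process is c\`adl\`ag with a jump at every Poisson bubble time, and these times are dense. In particular your claim that $z_2$ must lie strictly between $L^{z_1}_{T_1-}$ and $L^{z_1}_{T_1}$ is not valid in general: the paper's Case~3 (Figure~\ref{fig::gff_continuum_merging}(c)) is precisely the situation where $L^{z_1}_{T_1-}$ already disconnects $z_2$ from $\infty$ even though no $L^{z_1}_u$ with $u<T_1$ does, and this delicate case requires a separate argument. Second, the global convergence you need is not supplied by the paper: Lemma~\ref{lem::discrete_continuum_cvg} gives the discrete-to-continuum convergence only for the upward process on $\U$ with fixed boundary, not for the alternating whole-plane processes from two interior points, and the continuity of the functional ``first disconnecting loop'' under that convergence is exactly the subtle point. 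Finally, the bookkeeping issue you raise is more than cosmetic: a whole-plane field given only modulo $\R$ does not determine the $r\lambda\Z$-level loops without an extra independent uniform shift for each $r$, so ``all deterministic functions of the same $h$'' is not literally true and the joint coupling you rely on has to be constructed.
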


\begin{theorem}\label{thm::interior_continuum_interacting_commonstart}
Suppose that $h$ is a whole-plane $\GFF$ modulo a global additive constant in $\R$. Fix three points $z, w_1 ,w_2$. For $i=1,2$, let $(L^{z\to w_i}_u, u\in\R)$ be the alternating continuum exploration process of $h$ starting from $z$ targeted at $w_i$; and denote by $U_u^{w_i}$ the connected component of $\C\setminus L_u^{z\to w_i}$ that contains $w_i$. Then there exists a number $T$ such that 
\[L_u^{z\to w_1}=L_u^{z\to w_2},\quad \text{for }u<T; \quad \text{and}\quad U_T^{w_1}\cap U_T^{w_2}=\emptyset.\]
Given $(L_u^{z\to w_1}, L_u^{z\to w_2},  u\le T)$, the two sequences continue towards their target points respectively in a conditionally independent way.
\end{theorem}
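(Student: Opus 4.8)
The plan is to run the two alternating continuum exploration processes $(L^{z\to w_1}_u)$ and $(L^{z\to w_2}_u)$ simultaneously in the same whole-plane $\GFF$ $h$ and exploit the fact that both are, by Theorem \ref{thm::interior_continuum_deterministic}, deterministic functions of $h$ modulo the global additive constant. First I would set up a coupling of the two processes with $h$ using Theorem \ref{thm::interior_continuum_coupling}: conditionally on an initial segment of either process, $h$ restricted to the connected component of the complement containing the relevant target is again a $\GFF$ with continuum level loop boundary conditions, so the continuation of each process toward its target depends only on that restricted field. The key observation is a local determinism / target-independence statement at the level of individual loops: as long as neither $w_1$ nor $w_2$ has been disconnected from the common part of the boundary data by the loops explored so far, the loop being drawn toward $w_1$ and the loop being drawn toward $w_2$ see the same boundary data and the same portion of $h$, hence must coincide. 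This is the continuum analogue of the target-independence already established for the discrete height-varying sequence in Theorem \ref{thm::interior_levelloops_interacting_commonstart}, and indeed one natural route is to take $r\to 0$ in that theorem, using the fact (asserted in Section \ref{subsec::sequence_levelloops_interior}) that the continuum exploration process is the limit of the sequences of level loops as $r\downarrow 0$; I would instead prefer a direct argument to keep control of the topology.

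The main steps, in order, would be: (i) show that the two processes agree loop-by-loop up to the first time $T$ at which some explored loop $L_u$ separates $w_1$ from $w_2$ — equivalently, the first time that $U_u^{w_1}$ and $U_u^{w_2}$ become disjoint; here I would argue that before $T$, the component of $\C\setminus L_u^{z\to w_1}$ containing $w_1$ also contains $w_2$ (and symmetrically), so the Markov property gives the \emph{same} conditional field and the \emph{same} continuum level loop boundary conditions for both explorations, whence by the determinism of Theorem \ref{thm::interior_continuum_deterministic} the next loop is forced to be identical; (ii) check that $T$ is well-defined and finite almost surely — finiteness follows because $\inrad(L_u)\to\infty$ as $u\to\infty$, so eventually any explored loop surrounds both $w_1$ and $w_2$ with $w_1, w_2$ in its interior complement, and by transience the loops shrink around a single target, forcing a disconnection at some finite time; (iii) establish the conditional independence after $T$: once $U_T^{w_1}\cap U_T^{w_2}=\emptyset$, the restrictions of $h$ to these two disjoint domains are conditionally independent $\GFF$s given $(L_u^{z\to w_i}, u\le T)$ — this is the standard independence of a $\GFF$ across disjoint pieces of a domain after conditioning on the boundary data on the explored loops — and each subsequent exploration is a deterministic function of its own restricted field by Theorem \ref{thm::interior_continuum_deterministic}, so the two continuations are conditionally independent.

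The subtle point in step (i) is that $T$ is a random time and one must be careful that ``the two explorations agree up to $T$'' is not circular: the cleanest formulation is to define $T$ as the infimum of times at which the loop of the $w_1$-process disconnects $w_1$ from $w_2$, show by the above Markov-property-plus-determinism argument that on $[0,T)$ the $w_2$-process produces exactly the same loops (so $T$ is also the corresponding time for the $w_2$-process), and then handle the loop at time $T$ itself by a limiting/c\`adl\`ag argument using property (3) of the definition of a transient c\`adl\`ag sequence. I expect the main obstacle to be precisely this handling of the disconnection time: one must rule out pathologies where the two processes ``split'' only in the limit without an actual disconnecting loop at a finite time, and one must verify that the loop $L_T$ (whose existence as a c\`adl\`ag limit is guaranteed) already has $U_T^{w_1}\cap U_T^{w_2}=\emptyset$ rather than merely $\overline{U_T^{w_1}}\cap\overline{U_T^{w_2}}$ being a single point; here I would lean on the absolute continuity between whole-plane and chordal $\GFF$ away from the target, together with the interaction results for boundary-emanating level lines from \cite{WangWuLevellinesGFFI}, to show that at the disconnection time the relevant loop genuinely surrounds exactly one of the two targets and strictly separates them. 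Once that is in place, steps (ii) and (iii) are routine applications of transience (property (4)) and the spatial Markov/independence property of the $\GFF$, respectively.
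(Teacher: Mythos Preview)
Your outline is essentially correct, but the paper takes a shorter route that sidesteps the disconnection-time subtlety you worry about. Rather than arguing loop-by-loop at the whole-plane level, the paper reduces to the bounded-domain target-independence already established as Proposition~\ref{prop::continuum_alternate_targetindependence}: choose any stopping time $T_0$ for $(L^{z\to w_1}_u)$ early enough that both $w_1$ and $w_2$ lie in $\ext(L^{z\to w_1}_{T_0})$; conditionally on $(L^{z\to w_1}_u,\,u\le T_0)$ the field restricted to that exterior is an ordinary $\GFF$, and Proposition~\ref{prop::continuum_alternate_targetindependence} gives directly that the two alternating continuum explorations of this restricted field starting from $L^{z\to w_1}_{T_0}$ targeted at $w_1$ and $w_2$ agree up to a disconnection time and continue conditionally independently thereafter. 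Determinism (Theorem~\ref{thm::interior_continuum_deterministic}) then identifies these bounded-domain explorations with the tails of the whole-plane processes $(L^{z\to w_i}_u)$, exactly as in the proof of Theorem~\ref{thm::interior_levelloops_interacting_commonstart}.

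What this buys is that the delicate analysis at the disconnection time --- your ``subtle point in step (i)'' --- is not redone here: it was already carried out once, in Proposition~\ref{prop::continuum_upward_targetindependence}, via an explicit case split on the relation between $L_{T-}$ and $L_T$ (the three scenarios in Figure~\ref{fig::gff_upward_targetindependence}). Your proposed handling via absolute continuity and boundary-emanating interaction results would ultimately amount to reproving that proposition inline. So your approach is sound, but the paper's is more modular: it isolates the target-independence and the disconnection-time analysis into a separate bounded-domain proposition and then invokes it, making the proof of the present theorem a two-line reduction.
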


The following theorem explains that all the alternating continuum exploration processes of $\GFF$ give a tree-structure of the plane.

\begin{theorem}
\label{thm::interior_continuum_determins_field}
Suppose that $h$ is a whole-plane $\GFF$ modulo a global additive constant in $\R$. Suppose that $(z_m, m\ge 1)$ is any countable dense set and, for each $m$, let $(L_u^m, u\in\R)$ be the alternating continuum exploration process of $h$ starting from $z_m$ targeted at $\infty$. Then the collection $((L^m_u, u\in\R), m\ge 1)$ almost surely determines the field $h$ and its law does not depend on the choice of $(z_m,m\ge 1)$. Moreover, the collection $((L^m_u, u\in\R), m\ge 1)$ forms a tree structure in the sense that, almost surely, any two sequences of quasisimple loops merge eventually.
\end{theorem}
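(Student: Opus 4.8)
The plan is to establish the three assertions of the statement --- that the collection is a deterministic function of $h$, that $h$ is a deterministic function of the collection, and that any two branches merge --- treating them separately, with the reconstruction of $h$ from the collection being the substantial point.

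The two ``easy'' directions follow from earlier results. Applying Theorem~\ref{thm::interior_continuum_deterministic} with starting point $z_m$ shows that $(L^m_u,u\in\R)$ is almost surely a measurable function of $h$ (viewed modulo the global additive constant in $\R$) for each $m$, hence so is the whole collection. For the tree structure, Theorem~\ref{thm::interior_continuum_interacting_commontarget} applied to each pair $z_j,z_k$, which share the common target $\infty$, produces finite times $T_j,T_k$ after which the loops $(L^{z_j}_u)$ and $(L^{z_k}_u)$ coincide; thus any two branches merge, which is exactly the asserted tree structure. Together with the fact that $h$ conversely determines the exploration branch started from \emph{every} interior point (Theorem~\ref{thm::interior_continuum_deterministic}, applied now at arbitrary starting points), this will also give the independence of the choice of $(z_m)$: once we know the collection determines $h$, the collection attached to any countable dense set generates, up to null sets, the same $\sigma$-algebra as $h$, and records a dense part of the (canonical) family of branches from all points; so nothing in it depends on the particular enumeration, and in particular the law of the induced tree is canonical.

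For the reconstruction of $h$ I would fix a smooth compactly supported test function $f$ with $\int f=0$ (so that $(h,f)$ makes sense for $h$ defined modulo an additive constant) and a bounded open set $O\supset\operatorname{supp} f$, and recover $(h,f)$ from the collection. Given finitely many starting points $z_1,\dots,z_k$, the domain Markov property of Theorem~\ref{thm::interior_continuum_coupling} together with the interaction Theorems~\ref{thm::interior_continuum_interacting_commontarget} and \ref{thm::interior_continuum_interacting_commonstart} lets one build the joint exploration in successive domain-Markov steps --- explore from $z_1$ until its loop surrounds $z_2$, then continue the exploration from $z_2$ inside the component containing $z_2$, and so on --- so that, for a suitable finite family $\Lambda$ of loops drawn from the processes $(L^m_u)$ with $m\le k$, the conditional law of $h$ given $\Lambda$ is that of $g_\Lambda+\tilde h_\Lambda$, where $g_\Lambda$ is the function on $\C\setminus\bigcup\Lambda$ that is harmonic with the continuum level loop boundary conditions prescribed by $\Lambda$ (a deterministic function of $\Lambda$, defined modulo the additive constant), and $\tilde h_\Lambda$ is an independent zero-boundary $\GFF$ on $\C\setminus\bigcup\Lambda$. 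In particular
\[
\E\big[(h,f)\,\big|\,\Lambda\big]=(g_\Lambda,f),\qquad
\operatorname{Var}\big[(h,f)\,\big|\,\Lambda\big]=\iint f(x)\,G_{\C\setminus\bigcup\Lambda}(x,y)\,f(y)\,dx\,dy,
\]
where $G$ is the Green's function, understood to vanish across distinct connected components. Choosing an increasing sequence $\Lambda^{(1)}\subset\Lambda^{(2)}\subset\cdots$ of such families exhausting $\bigcup_{m,u}L^m_u$, I would show that the conditional variances tend to $0$, whence $(g_{\Lambda^{(k)}},f)\to(h,f)$ in $L^2$; since each $(g_{\Lambda^{(k)}},f)$ is measurable with respect to the collection, so is the limit $(h,f)$, and letting $f$ range over a countable dense family of mean-zero test functions shows that the collection determines $h$ modulo the additive constant in $\R$.

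The convergence of the conditional variances to $0$ reduces, by conformal invariance of the Green's function and an elementary split of the double integral into its near-diagonal and off-diagonal parts (using the monotone comparison $G_{\C\setminus\bigcup\Lambda^{(k)}}\le G_B$ for a fixed large disc $B\supset O$), to the statement that the connected components of $\C\setminus\bigcup\Lambda^{(k)}$ meeting $O$ have diameters tending to $0$ as $k\to\infty$; that $O$ eventually lies in a bounded such component is already guaranteed by the transience of a single exploration process, whose loops eventually surround $O$. Proving this shrinking of the components --- equivalently, that the loops in the collection become dense in the plane and leave a point-like complement inside any bounded region --- is the step I expect to be the main obstacle. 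I would attack it using the density of $(z_m)$ together with the transience and the nesting/adjacency structure of the individual exploration processes: every neighbourhood of a point contains some $z_m$, whose exploration process produces loops of arbitrarily small outradius surrounding $z_m$, and these, combined with loops coming from other nearby $z_{m'}$, should cut the component containing the point into pieces of vanishing diameter, the clean model being that the nested $\CLE_4$ loops around a fixed point have conformal radii decreasing to $0$.
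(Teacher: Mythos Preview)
Your overall strategy matches the paper's: the tree structure and the determination of the branches by $h$ come from Theorems~\ref{thm::interior_continuum_interacting_commontarget} and~\ref{thm::interior_continuum_deterministic}, and the reconstruction of $h$ proceeds by fixing $f\in H_{s,0}$, setting up the conditional-expectation martingale, and showing the conditional variance (a Green's-function double integral) tends to zero. The paper in fact just says the proof is identical to that of Theorem~\ref{thm::interior_levelloops_determins_field}, which carries out exactly this scheme with $\mathcal F_p=\sigma((L^m_u,u\in\R),m\le p)$ and $D_p=\C\setminus\bigcup_{m\le p}\bigcup_u L^m_u$.

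The one place you are working harder than necessary is the step you flag as the ``main obstacle'': you aim to prove that the diameters of the components of $\C\setminus\bigcup\Lambda^{(k)}$ meeting a bounded set tend to zero. The paper avoids this entirely. It observes that $G_p\le G_1$ (integrable on $K\times K$), so dominated convergence reduces the problem to showing $G_p(x,y)\to 0$ for \emph{fixed} $x\neq y$; and $G_p(x,y)=0$ as soon as $x$ and $y$ lie in different components of $D_p$. Since $(z_m)$ is dense, some $z_m$ is close enough to $x$ that the transience of $(L^m_u)$ produces a loop of arbitrarily small outradius surrounding $z_m$, hence surrounding $x$ but not $y$, which separates them. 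Thus you only need pointwise separation of pairs, not uniform shrinking of components, and this drops out immediately from density plus transience. With this simplification your proof goes through without the obstacle you anticipated.
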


\begin{figure}[ht!]
\begin{subfigure}[b]{0.48\textwidth}
\begin{center}
\includegraphics[width=\textwidth]{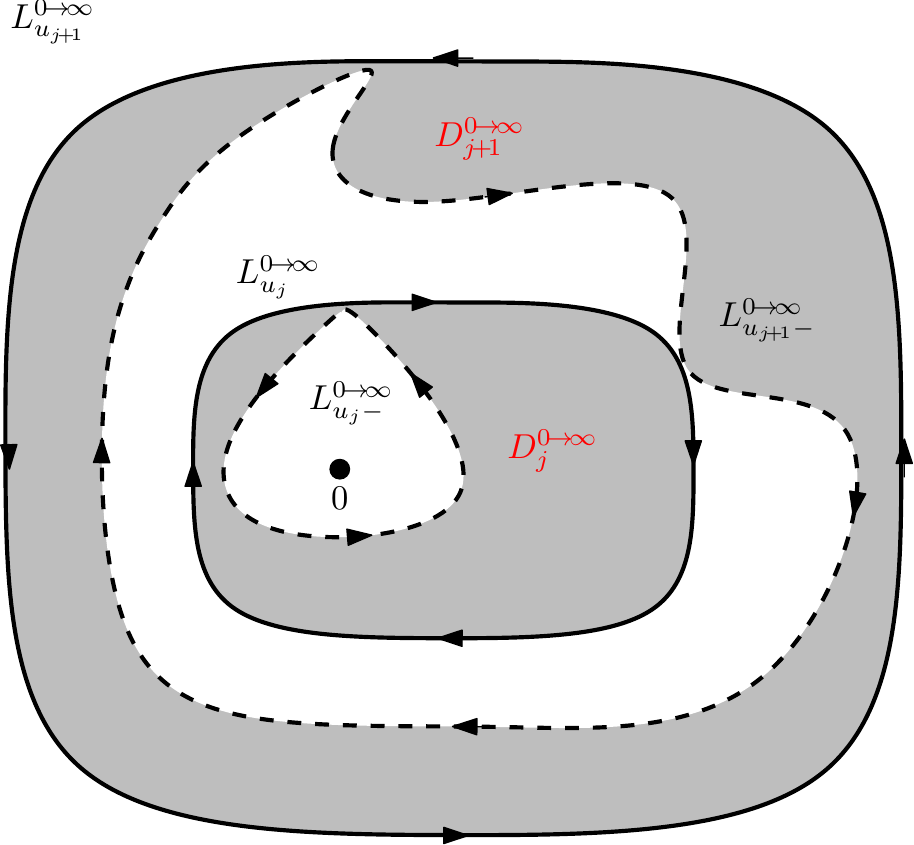}
\end{center}
\caption{The transition region $D_{j}^{0\to\infty}$ is the domain lying between the loop $L_{u_j-}^{0\to\infty}$ and the loop $L_{u_j}^{0\to\infty}$.}
\end{subfigure}
$\quad$
\begin{subfigure}[b]{0.48\textwidth}
\begin{center}\includegraphics[width=\textwidth]{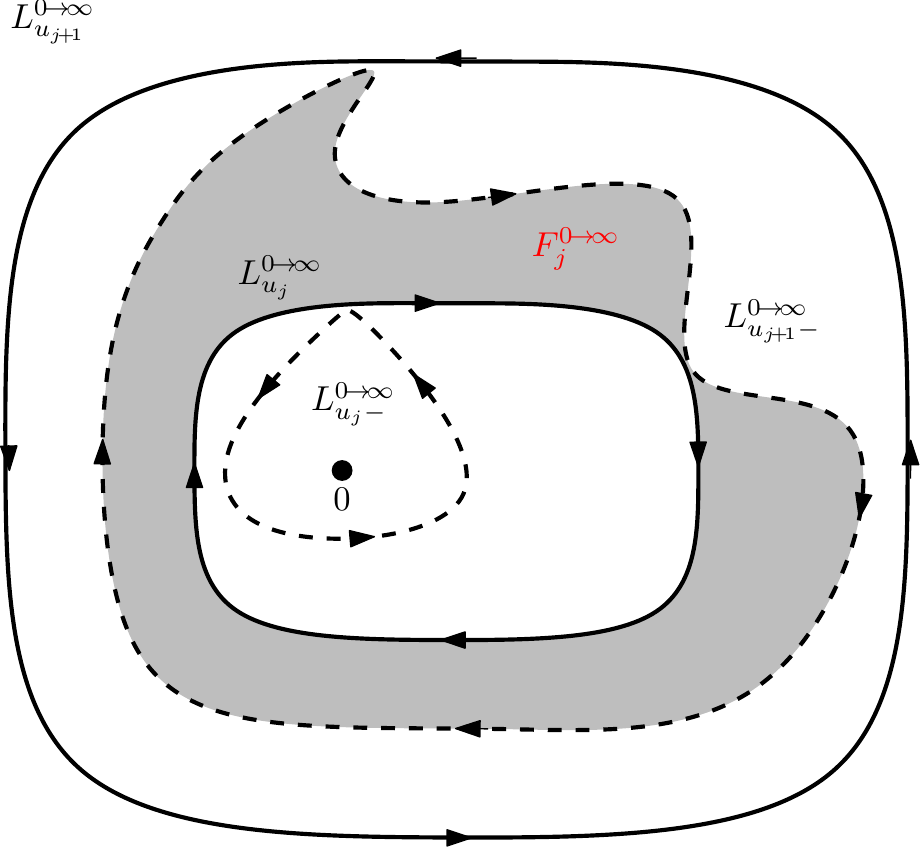}
\end{center}
\caption{The transition region $F_{j}^{0\to\infty}$ is the domain lying between the loop $L_{u_j}^{0\to\infty}$ and the loop $L_{u_{j+1}-}^{0\to\infty}$.}
\end{subfigure}
\caption{\label{fig::quasisimpleloop_transition_regions} Explanation of the sequence of transition regions and the sequence of stationary regions.}
\end{figure}

Next, we will explain the ``reversibility" of the alternating continuum exploration processes. 
Suppose that $h$ is a whole-plane $\GFF$ modulo a global additive constant in $\R$. Let $(L_u^{0\to\infty}, u\in\R)$ be the alternating continuum exploration process of $h$ starting from the origin targeted at $\infty$ where $(u_j, j\in\Z)$ is the sequence of transition times. For each transition time $u_j$, define the corresponding transition region by 
\[D^{0\to\infty}_j:=\overline{\ext(L^{0\to\infty}_{u_j-})}\setminus \ext(L_{u_j}^{0\to\infty}).\]
In other words, the region $D_j^{0\to\infty}$ is the domain lying between the loops $L_{u_j-}^{0\to\infty}$ and $L_{u_j}^{0\to\infty}$. See Figure \ref{fig::quasisimpleloop_transition_regions}(a). We call $(D_j^{0\to\infty}, j\in\Z)$ the \textbf{sequence of transition regions} of $h$ starting from the origin targeted at $\infty$. For each transition time $u_j$, define the corresponding stationary region by 
\[F_{j}^{0\to\infty}=\overline{\ext(L_{u_j}^{0\to\infty})}\setminus \ext(L^{0\to\infty}_{u_{j+1}-}).\]
In other words, the region $F_j^{0\to\infty}$ is the domain lying between $D_j^{0\to\infty}$ and $D_{j+1}^{0\to\infty}$. See Figure \ref{fig::quasisimpleloop_transition_regions}(b). We call $(F_j^{0\to\infty}, j\in\Z)$ the \textbf{sequence of stationary regions} of $h$ starting from the origin targeted at $\infty$.

\begin{theorem}
\label{thm::wholeplane_continuum_reversibility}
Suppose that $h$ is a whole-plane $\GFF$ modulo a global additive constant in $\R$. Let $(D_j^{0\to\infty}, j\in\Z)$ (resp. $(F_j^{0\to\infty}, j\in\Z)$) be the sequence of transition regions (resp. the sequence of stationary regions) of $h$ starting from the origin targeted at $\infty$. Let $(D_j^{\infty\to 0}, j\in\Z)$ (resp. $(F_j^{\infty\to 0}, j\in\Z)$) be the sequence of transition regions (resp. the sequence of stationary regions) of $h$ starting from $\infty$ targeted at the origin. Then, almost surely, the two unions
\[\bigcup_{j\in\Z}D_j^{0\to\infty}\quad \text{and}\quad \bigcup_{j\in\Z}D_j^{\infty\to 0}\]
coincide; moreover, the two unions 
\[\bigcup_{j\in\Z}F_j^{0\to\infty}\quad \text{and}\quad \bigcup_{j\in\Z}F_j^{\infty\to 0}\]
coincide.
\end{theorem}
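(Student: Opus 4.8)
The plan is to reduce the reversibility statement to a statement about the underlying GFF, using the determinism results already established. First, I would observe that by Theorem \ref{thm::interior_continuum_deterministic}, both the exploration process $(L_u^{0\to\infty}, u\in\R)$ and the exploration process $(L_u^{\infty\to 0}, u\in\R)$ are almost surely measurable functions of the same field $h$ (viewed modulo a global additive constant in $\R$). Hence the two unions $\bigcup_j D_j^{0\to\infty}$ and $\bigcup_j D_j^{\infty\to 0}$, as well as $\bigcup_j F_j^{0\to\infty}$ and $\bigcup_j F_j^{\infty\to 0}$, are themselves deterministic functions of $h$. So the question is genuinely about identifying which regions of $\C$ these two processes carve out, and showing they produce the same decomposition of the plane into transition pieces and stationary pieces. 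Note that $\bigcup_j D_j^{0\to\infty} \cup \bigcup_j F_j^{0\to\infty} = \C$ up to a set of measure zero (the loops themselves), and similarly for the $\infty\to 0$ process, so it suffices to prove that the transition-region unions coincide; the stationary-region statement follows by taking complements.

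The key step is to give an intrinsic characterization of the transition regions in terms of $h$ alone, symmetric under swapping $0$ and $\infty$. Here I would exploit the structure coming from Theorems \ref{thm::interior_continuum_interacting_commontarget} and \ref{thm::interior_continuum_interacting_commonstart}: the continuum exploration process is target-independent up to the first disconnection time, which means that the loops of $(L_u^{0\to\infty})$ and $(L_u^{\infty\to 0})$ passing through any fixed point $z$ are closely related to the common ``geodesic-type'' structure of the exploration tree between $0$, $z$ and $\infty$. The transition regions are precisely the places where the exploration changes orientation, i.e., where a loop is traversed and then the process jumps to a loop of the opposite orientation that is adjacent to it. I would argue that, at such a transition time $u_j$, the pair $(L_{u_j-}^{0\to\infty}, L_{u_j}^{0\to\infty})$ together with the boundary data of $h$ on these loops is recognizable from $h$; and crucially, by the reversibility of the boundary-touching $\SLE_4$-type processes established in the companion paper \cite{WangWuLevellinesGFFI} (the level lines started from boundary points, which are the building blocks inside each annular region between consecutive loops), the region explored between $L_{u_j-}$ and $L_{u_j}$ when run from $0$ toward $\infty$ is the same as the corresponding region when run from $\infty$ toward $0$. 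Concretely, I would set up an induction along the transition times: using the domain Markov property of Theorem \ref{thm::interior_continuum_coupling}, condition on the outermost transition region of the $0\to\infty$ process, identify it with the innermost transition region of the $\infty\to 0$ process restricted to that part of the plane by the level-line reversibility of Paper I, and then iterate inward (resp. outward).

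The main obstacle I expect is bookkeeping the two-sided matching globally rather than locally: each process a priori only sees transition times one at a time, and the transition times $(u_j^{0\to\infty})$ and $(u_j^{\infty\to 0})$ need not be in any simple correspondence as real numbers — only the geometric regions they bound should match. So the real work is to show that the \emph{collection} of transition regions of one process, taken as an unordered family of subsets of $\C$, equals that of the other, which requires showing that every transition region of the $0\to\infty$ process arises as a transition region of the $\infty\to 0$ process and vice versa, with no ``extra'' transition regions created by either exploration. For this I would need a careful argument that the orientation-change structure is intrinsic: a relatively open annular region $A \subset \C$ separating $0$ from $\infty$ is a transition region of \emph{some} (equivalently, every) such exploration process if and only if $h$ restricted to a neighborhood of $A$, together with the harmonic boundary data, exhibits the characteristic height profile of a transition (height oscillating by $\pm 2\lambda(u-v)$ as in the continuum level loop boundary conditions, with the orientation flip). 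Establishing that this characterization is both necessary and sufficient, and that it is manifestly symmetric in $0 \leftrightarrow \infty$, is the technical heart; once it is in place, the equality of the two unions is immediate since both equal the (deterministic) union of all such annular regions of $h$.
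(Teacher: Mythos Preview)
Your proposal correctly identifies that it suffices to show the transition-region unions coincide, and you correctly point toward Theorems~\ref{thm::interior_continuum_interacting_commontarget} and~\ref{thm::interior_continuum_interacting_commonstart} as the relevant tools. However, the route you sketch---an ``intrinsic characterization'' of transition regions via local height data of $h$, plus an induction along transition times using level-line reversibility from Paper~I---is both vague and not the mechanism that actually makes the proof work. It is not clear how you would certify, from $h$ alone in a neighborhood, that a given annular region is a transition region in a way that is manifestly symmetric in $0\leftrightarrow\infty$; the continuum level-loop boundary conditions depend on the time parameter $u$, which has no obvious intrinsic meaning without already running one of the two processes.

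The paper's proof is much more direct and does not attempt any such intrinsic characterization. The key idea you are missing is to introduce an auxiliary third point $z$ and chain together \emph{three} relations among \emph{four} exploration processes:
\[
(L_u^{0\to\infty}) \;\longleftrightarrow\; (L_u^{0\to z}) \;\longleftrightarrow\; (L_u^{\infty\to z}) \;\longleftrightarrow\; (L_u^{\infty\to 0}).
\]
The first and third links use the common-start/target-independence result (Theorem~\ref{thm::interior_continuum_interacting_commonstart}): the processes agree up to the first disconnection of their two targets. The middle link uses the common-target merging result (Theorem~\ref{thm::interior_continuum_interacting_commontarget}): the processes toward $z$ from $0$ and from $\infty$ agree after their first mutual disconnection loops, which coincide. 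Tracking what happens at the moment $z$ is first separated, one finds (Lemma~\ref{lem::gff_continuum_reversibility}) that if $z$ lies in the interior of a transition region $D_{j_0}^{0\to\infty}$, then $z$ lies in the interior of some $D_{k_0}^{\infty\to 0}$ and in fact $D_{j_0}^{0\to\infty}=D_{k_0}^{\infty\to 0}$. Running this over all $z$ (or a dense set) and invoking symmetry gives the equality of the two unions immediately---no induction, no local characterization, and no appeal to boundary level-line reversibility is needed.
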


\subsection{Relation to previous works and outline}

We prove Theorems \ref{thm::interior_levelloops_coupling} to \ref{thm::interior_levelloops_determins_field} in Section \ref{sec::interior_levelloops}. Following is the outline of Section \ref{sec::interior_levelloops}.
\begin{itemize}
\item Section \ref{subsec::wholeplane_gff} is an introduction to whole-plane $\GFF$. We briefly recall the definition and the properties of whole-plane $\GFF$ and refer to \cite[Section 2.2]{MillerSheffieldIG4} for details and proofs. 
\item In Section \ref{subsec::alternate_levelloops_construction}, we first recall the construction and properties of boundary emanating level lines of $\GFF$ from \cite{WangWuLevellinesGFFI}, then introduce the sequence of level loops starting from interior, and complete the proof of Theorem \ref{thm::interior_levelloops_coupling}. \item In Section \ref{subsec::interaction}, we explain the interaction behavior between two sequences of level loops starting from interior, and complete the proofs of Theorems \ref{thm::interior_levelloops_deterministic} to \ref{thm::interior_levelloops_determins_field}. 
\end{itemize} 

\noindent We prove Theorems \ref{thm::interior_continuum_coupling} to \ref{thm::wholeplane_continuum_reversibility} in Section \ref{sec::continuum_exploration}. Following is the outline of Section \ref{sec::continuum_exploration}.
\begin{itemize}
\item In Section \ref{subsec::growing_cle4}, we first briefly introduce a continuum exploration process for $\CLE_4$ and refer to \cite{WernerWuCLEExploration} for details. Then we show that the continuum exploration process is c\`adl\`ag which is an ingredient of the proof of Theorem \ref{thm::interior_continuum_coupling}.
\item In Section \ref{subsec::gff_discrete_continuum}, we explain how to get the continuum exploration process of $\GFF$ from discrete exploration process of $\GFF$. 
\item In Section \ref{subsec::gff_alternate_continuum} we complete the proof of Theorem \ref{thm::interior_continuum_coupling}.
\item In Section \ref{subsec::continuum_interaction}, we explain the interaction behavior between two continuum exploration processes, and complete the proofs of Theorems \ref{thm::interior_continuum_deterministic} to \ref{thm::interior_continuum_determins_field}.
\item In Section \ref{subsec::continuum_reversibility}, we complete the proof of Theorem \ref{thm::wholeplane_continuum_reversibility}.
\end{itemize}

\bigbreak
\noindent\textbf{Acknowledgements.} 
The current project is motivated by \cite{MillerSheffieldIG4}, and we appreciate Jason Miller and Scott Sheffield for helpful discussions on both \cite{MillerSheffieldIG4} and the current paper. 
We also thank Richard Kenyon, Samuel Watson, and Wendelin Werner for helpful discussions. 
H.\ Wu's work is funded by NSF DMS-1406411.

\section{Sequence of level loops starting from interior}
\label{sec::interior_levelloops}

\subsection{The whole-plane $\GFF$}
\label{subsec::wholeplane_gff}
In this section, we will give an overview of whole-plane $\GFF$, we refer the proofs related to the properties of whole-plane $\GFF$ to \cite[Section 2.2]{MillerSheffieldIG4}. For the construction and the properties of ordinary $\GFF$, one can consult \cite[Section 3]{MillerSheffieldIG1} or \cite[Section 2.2]{WangWuLevellinesGFFI}. 

For a domain $D\subset \C$, we denote by $H_s(D)$ the space of $C^{\infty}$ functions with compact support in $D$ and denote by $H_{s,0}(D)$ the space of those $\phi\in H_s(D)$ with $\int \phi(z)dz=0$. We denote by $(\cdot, \cdot)$ the $L^2$ inner product. Let $H_0(\C)$ denote the Hilbert space closure of $H_{s,0}(\C)$ equipped with the Dirichlet inner product. Let $(\alpha_j)$ be a sequence of i.i.d. one-dimensional standard Gaussian (with mean zero and variance one). Let $(f_j)$ be an orthonormal basis of $H_0(\C)$. The \textbf{whole-plane $\GFF$ modulo a global additive constant in $\R$} is an equivalence class of distributions (where two distributions are equivalent when their difference is a constant in $\R$), a representative of which is given by 
$h=\sum \alpha_jf_j$. For each $\phi\in H_{s,0}(\C)$, the limit $\sum \alpha_j(f_j,\phi)$ almost surely exists, thus we write
\[(h,\phi)=\lim_n\sum_{j=1}^n \alpha_j (f_j, \phi).\]
We think of $h$ as being defined up to an additive constant in $\R$ because
\[(h+c, \phi)=(h,\phi)+(c,\phi)=(h,\phi),\quad \forall \phi\in H_{s,0}(\C),\quad \forall c\in\R.\]
We can fix the additive constant by setting $(h,\phi_0)=0$ for some fixed $\phi_0\in H_s(\C)$ with $\int \phi_0(z)dz=1$.
\medbreak
Fix $u>0$ and $\phi_0\in H_s(\C)$ with $\int\phi_0(z)dz=1$. The \textbf{whole-plane $\GFF$ modulo a global additive constant in $u\Z$} is an equivalence class of distributions (where two distributions are equivalent when their difference is a constant in $u\Z$). An instance can be generated by 
\begin{enumerate}
\item [(1)] sampling a whole-plane $\GFF$ $h$ modulo a global additive constant in $\R$, and then
\item [(2)] choosing independently a uniform random variable $U\in [0,u)$ and fixing a global additive constant for $h$ by requiring that $(h,\phi_0)\in (U+u\Z)$.
\end{enumerate}

Suppose that $h$ is a zero-boundary $\GFF$ on a proper domain $D\subset\C$ with harmonically non-trivial boundary. We can consider $h$ modulo a global additive constant in $\R$ by restricting $h$ to functions in $H_{s,0}(D)$. Fix $\phi_0\in H_s(D)$ with $\int \phi_0(z)dz=1$. We can also consider $h$ modulo a global additive constant in $u\Z$ by replacing $h$ by $h-c$ where $c\in u\Z$ is chosen so that $(h,\phi_0)-c\in [0,u)$.

The following proposition describes the domain Markov property of whole-plane $\GFF$.

\begin{proposition}
Suppose that $h$ is a whole-plane $\GFF$ modulo a global additive constant in $\R$ (resp. in $u\Z$) and that $W\subset\C$ is open and bounded. The conditional law of $h|_W$ given $h|_{\C\setminus W}$ is that of a zero-boundary $\GFF$ on $W$ plus the harmonic extension of its boundary values from $\partial W$ to $W$ which is defined modulo a global additive constant in $\R$ (resp. in $u\Z$).  
\end{proposition}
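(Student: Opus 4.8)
The plan is to reduce the statement to the corresponding property of the ordinary (zero-boundary) GFF, which is already recalled in the excerpt. First I would treat the case where $h$ is a whole-plane GFF modulo a global additive constant in $\R$, and then deduce the $u\Z$ case by the explicit construction given above (sample the $\R$-version, then fix the constant with an independent uniform random variable $U\in[0,u)$, which commutes with conditioning on $h|_{\C\setminus W}$).

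For the $\R$-version, I would work with a representative $h=\sum_j\alpha_j f_j$ where $(f_j)$ is an orthonormal basis of $H_0(\C)$ with respect to the Dirichlet inner product. The key is to choose the basis adapted to the decomposition $H_0(\C)=H_0(W)\oplus H_0^\perp(W)$, where $H_0(W)$ is the closure of $H_{s,0}(W)$ and $H_0^\perp(W)$ is its orthogonal complement inside $H_0(\C)$ (with respect to the Dirichlet inner product); functions in $H_0^\perp(W)$ are precisely those that are harmonic in $W$. This gives an orthogonal decomposition $h=h_W+\tilde h_W$, where $h_W$ is a zero-boundary GFF on $W$ (supported on $H_0(W)$) and $\tilde h_W$ is harmonic in $W$, and the two are independent. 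One then checks that $h|_{\C\setminus W}$ is a measurable function of $\tilde h_W$ alone — indeed $\tilde h_W$ is, by construction, the harmonic extension into $W$ of the values of $h$ outside $W$ — so that conditioning on $h|_{\C\setminus W}$ leaves the conditional law of $h_W$ unchanged and freezes $\tilde h_W$ to equal the harmonic extension of the boundary data. Adding back $h_W$ gives exactly the asserted conditional law: a zero-boundary GFF on $W$ plus the harmonic extension of the boundary values, defined modulo a global additive constant in $\R$ (the additive-constant ambiguity is carried by $\tilde h_W$ and hence by the boundary data).

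I would fill in the details following \cite[Section 2.2]{MillerSheffieldIG4}: the orthogonal projection onto $H_0^\perp(W)$ is well defined since $W$ is bounded with harmonically non-trivial boundary, and the identification of the projection with harmonic extension is the standard Markov decomposition argument, identical to the bounded-domain case in \cite[Section 3]{MillerSheffieldIG1}. For the $u\Z$-version, the constant-fixing step in (2) above is performed using a test function $\phi_0\in H_s(W)$ (which is legitimate once $W$ is an arbitrary bounded open set after a translation, or one simply notes the construction is independent of the choice of $\phi_0$), so the pinning of the global constant is measurable with respect to the same data and commutes with the conditioning; the conclusion then reads ``modulo a global additive constant in $u\Z$.''

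The main obstacle is purely technical rather than conceptual: one must be careful that the Markov decomposition of $H_0(\C)$ — which is genuinely a quotient space, since constants are killed — behaves well, i.e. that $H_{s,0}(W)\subset H_{s,0}(\C)$ embeds isometrically and that its orthocomplement really is the space of Dirichlet-finite functions harmonic in $W$, modulo constants. This requires the hypothesis that $\partial W$ is harmonically non-trivial (so harmonic extension is well defined and the projection does not lose information) and a mild approximation argument to pass from smooth compactly supported test functions to the full Hilbert-space statement; none of this is new, so I would state it and cite \cite{MillerSheffieldIG4, MillerSheffieldIG1} for the proof.
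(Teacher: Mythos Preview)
Your proposal is correct and matches the paper's approach: the paper simply cites \cite[Proposition 2.8]{MillerSheffieldIG4} for this proposition, and your sketch is precisely the standard Dirichlet-space orthogonal decomposition argument that underlies that reference. There is nothing to add.
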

\begin{proof}
\cite[Proposition 2.8]{MillerSheffieldIG4}.
\end{proof}

The following proposition explains that infinite volume limits of ordinary $\GFF$ converge to the whole-plane $\GFF$.

\begin{proposition}\label{prop::wholeplanegff_convergence_2.10}
Suppose that $(D_n)$ is any sequence of domains with harmonically non-trivial boundary containing the origin such that $\dist(0,\partial D_n)\to\infty$ as $n\to\infty$. For each $n$, let $h_n$ be an instance of $\GFF$ on $D_n$ with boundary conditions which are uniformly bounded in $n$. Fix $R>0$, we have the following.
\begin{enumerate}
\item [(1)] As $n\to\infty$, the laws of $h_n$ restricted to $B(0,R)$, viewed as a distribution modulo a global additive constant in $\R$, converge in total variation to the law of whole-plane $\GFF$ restricted to $B(0,R)$ modulo a global additive constant in $\R$.
\item [(2)] Fix $\phi_0\in H_s(\C)$ with $\int \phi_0(z)dz=1$ which is constant and positive on $B(0,R)$. As $n\to\infty$, the laws of $h_n$ restricted to $B(0,R)$, viewed as a distribution modulo a global additive constant in $u\Z$, converge in total variation to the law of whole-plane $\GFF$ restricted to $B(0,R)$ modulo a global additive constant in $u\Z$.
\end{enumerate}
\end{proposition}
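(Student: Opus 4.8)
The plan is to reduce both statements to a single convergence-in-total-variation estimate for the GFF restricted to a ball, which one proves by comparing Hilbert-space decompositions, and then to handle the quotient by $\R$ (resp.\ $u\Z$) as a simple consequence.

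First I would fix $R' > R$ and, for $n$ large enough that $B(0,R') \subset D_n$, decompose $h_n = h_n^0 + \mathfrak{h}_n$ on $B(0,R')$, where $h_n^0$ is a zero-boundary GFF on $B(0,R')$ and $\mathfrak{h}_n$ is the harmonic extension to $B(0,R')$ of the restriction of $h_n$ to $\partial B(0,R')$; by the domain Markov property these two pieces are independent, and the law of $h_n^0$ does not depend on $n$. The same decomposition applies to the whole-plane GFF $h$ (via the Markov property recalled in the previous proposition), giving $h = h^0 + \mathfrak{h}$ on $B(0,R')$ with $h^0$ a zero-boundary GFF on $B(0,R')$ of the same fixed law. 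Thus the total-variation distance between the laws of $h_n|_{B(0,R)}$ and $h|_{B(0,R)}$ (each viewed modulo constants, or after fixing the constant by some $\phi_0$) is controlled by the total-variation distance between the laws of $\mathfrak{h}_n|_{B(0,R)}$ and $\mathfrak{h}|_{B(0,R)}$, since convolving two measures with the same kernel can only decrease total-variation distance. The harmonic functions $\mathfrak{h}_n$ and $\mathfrak{h}$ on $B(0,R')$ are smooth, and when restricted further to $B(0,R)$ they lie in a space on which one has good control; the key point is that $\mathfrak{h}_n|_{B(0,R)}$ is a Gaussian field whose covariance kernel is built from the Green's function of $D_n$ (minus that of $B(0,R')$), and as $\dist(0,\partial D_n) \to \infty$ this covariance converges — uniformly on $B(0,R) \times B(0,R)$ together with all derivatives — to the corresponding kernel for the whole-plane GFF. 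Convergence of the (smooth) covariance kernels of these nondegenerate Gaussian fields on the fixed finite-``dimensional-after-smoothing'' test space forces convergence in total variation of their laws; this is the step I expect to be the main obstacle, as it requires care about which function space one works in and a quantitative comparison of Green's functions, though morally it is exactly \cite[Proposition 2.10]{MillerSheffieldIG4} and one may simply cite it.

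Granting the total-variation convergence of the laws of $h_n|_{B(0,R)}$ to the law of the whole-plane GFF restricted to $B(0,R)$ at the level of distributions modulo an additive constant in $\R$, part (1) is immediate. For part (2), recall that the whole-plane GFF modulo a constant in $u\Z$ is obtained from the one modulo a constant in $\R$ by the independent recipe in Section \ref{subsec::wholeplane_gff}: sample the $\R$-class, then fix the constant by requiring $(h,\phi_0) \in U + u\Z$ with $U$ uniform on $[0,u)$. Since $\phi_0$ is taken constant and positive on $B(0,R)$, the pairing $(h,\phi_0)$ restricted to what is determined by $h|_{B(0,R)}$ behaves well, and for $h_n$ one fixes the constant by the analogous rule $(h_n,\phi_0) - c_n \in [0,u)$, $c_n \in u\Z$. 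The map sending an $\R$-equivalence class together with an independent uniform shift to the corresponding $u\Z$-class is a fixed measurable (indeed continuous in the appropriate sense) operation applied to $h_n|_{B(0,R)}$ and to $h|_{B(0,R)}$ alike, so total-variation convergence is preserved under it, and part (2) follows from part (1). I would end by noting that all of this is \cite[Proposition 2.10]{MillerSheffieldIG4} and omit the routine verifications.
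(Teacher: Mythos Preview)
Your proposal is correct and matches the paper's approach exactly: the paper's proof is simply a citation to \cite[Proposition 2.10]{MillerSheffieldIG4}, and you both identify this reference and sketch the argument behind it (Markov decomposition into a fixed zero-boundary GFF plus a harmonic part whose covariance converges, then passage from the $\R$-quotient to the $u\Z$-quotient via the independent uniform shift). There is nothing to add.
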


\begin{proof}
\cite[Proposition 2.10]{MillerSheffieldIG4}.
\end{proof}

\begin{proposition}\label{prop::wholeplanegff_ac_2.11}
Suppose that $D\subset\C$ with harmonically non-trivial boundary and $h$ is a $\GFF$ on $D$ with some boundary data.
Fix $W\subset D$ open and bounded with $\dist(W,\partial D)>0$. The law of $h|_W$ modulo a global additive constant in $\R$ (resp. in $u\Z$) is mutually absolutely continuous with respect to the law of whole-plane $\GFF$ restricted to $W$ modulo a global additive constant in $\R$ (resp. in $u\Z$).
\end{proposition}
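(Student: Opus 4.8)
The plan is to reduce the statement to the Cameron--Martin absolute continuity of a zero-boundary GFF under a smooth, compactly supported shift, carried out on a domain slightly larger than $W$. Since $\dist(W,\partial D)>0$, put $\delta=\tfrac12\dist(W,\partial D)$ and $V=\{x:\dist(x,W)<\delta\}$, so that $\overline W\subset V$, $\overline V\subset D$ and $\dist(W,\partial V)\ge\delta$; fix also a cutoff $\chi\in C_c^\infty(V)$ with $0\le\chi\le1$ and $\chi\equiv1$ on a neighbourhood of $\overline W$.

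Consider first the case modulo an additive constant in $\R$. By the domain Markov property of the GFF $h$ on $D$, applied to $V$, we may write $h|_V=h_0^V+\mathfrak h^V$, where $h_0^V$ is a zero-boundary GFF on $V$ and $\mathfrak h^V$ is a random harmonic function on $V$ that is $\sigma(h|_{D\setminus V})$-measurable and hence independent of $h_0^V$. Being harmonic, $\mathfrak h^V$ is $C^\infty$ on $V$, so $\psi:=\chi\,\mathfrak h^V\in C_c^\infty(V)$, and since $\chi\equiv1$ near $\overline W$ we get $h|_W=(h_0^V+\psi)|_W$. The same reasoning applies to the whole-plane GFF: using its domain Markov property stated above, applied to a representative $\tilde h$ normalised by a test function supported in $\C\setminus\overline V$ (so that the additive constant is measurable with respect to $\tilde h|_{\C\setminus V}$), we obtain $\tilde h|_W=(h_0^V+\tilde\psi)|_W$ for some random $\tilde\psi\in C_c^\infty(V)$ independent of the zero-boundary part. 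Now $C_c^\infty(V)$ lies in the Cameron--Martin space of the zero-boundary GFF on $V$, so for each deterministic $g\in C_c^\infty(V)$ the law of $h_0^V+g$ is equivalent to that of $h_0^V$ with an everywhere positive density; integrating over the law of the independent random shift, the law of $h_0^V+\psi$, and likewise of $h_0^V+\tilde\psi$, is equivalent to that of $h_0^V$. Equivalence of measures is preserved under pushforward, so applying the measurable map ``restrict to $W$, then pass to the quotient modulo $\R$'' shows that $\mathrm{Law}(h|_W\bmod\R)$ and $\mathrm{Law}(\tilde h|_W\bmod\R)$ are both equivalent to $\mathrm{Law}(h_0^V|_W\bmod\R)$, hence to one another; and the class modulo $\R$ of $\tilde h|_W$ does not depend on the normalisation chosen for $\tilde h$.

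The case modulo a constant in $u\Z$ runs along the same lines, with one additional step for the additive constant. Fix $\phi_1\in H_s(W)$ with $\int\phi_1=1$; the information in the class of $h|_W$ modulo $u\Z$ that is not already in its class modulo $\R$ is $(h,\phi_1)\bmod u$. Conditionally on the class modulo $\R$, the quantity $(h,\phi_1)$ is Gaussian with strictly positive conditional variance --- here one uses again that $W$ is compactly contained in $V$, so that $\phi_1$ is not an $H^{-1}$-limit of mean-zero test functions supported in $W$ --- so, conditionally, $(h,\phi_1)\bmod u$ has a wrapped-Gaussian density on $\R/u\Z$ that is bounded above and below, in particular mutually absolutely continuous with the uniform law. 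For the whole-plane GFF the class modulo $u\Z$ is, by its very construction, the class modulo $\R$ decorated with an \emph{exactly} independent uniform constant in $\R/u\Z$. Together with the $\R$-case (and again stability of equivalence under pushforward), this yields mutual absolute continuity of $\mathrm{Law}(h|_W\bmod u\Z)$ and $\mathrm{Law}(\tilde h|_W\bmod u\Z)$.

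I expect no deep obstacle here --- this is the standard locality/absolute-continuity principle for the GFF, i.e.\ \cite[Proposition~2.11]{MillerSheffieldIG4}. The two points that genuinely use the hypothesis $\dist(W,\partial D)>0$, and are therefore the crux, are: (i) realising a Markovian harmonic function on $V$, restricted to $W$, as the restriction of an honest Cameron--Martin vector of the zero-boundary GFF on the enlarged domain $V$ --- exactly what the enlargement $W\subset\subset V$ and the cutoff $\chi$ provide; and (ii) in the $u\Z$-case, the strict positivity of the conditional variance of the normalising mass $(h,\phi_1)$, which again relies on the compact containment of $W$ in $V$.
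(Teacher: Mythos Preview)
The paper does not actually prove this proposition: its entire proof is the single citation ``\cite[Proposition 2.11]{MillerSheffieldIG4}.'' You have correctly identified this citation yourself at the end of your sketch, and what you have written is essentially the standard Cameron--Martin argument that underlies that reference. So there is nothing to compare against on the paper's side; your write-up simply supplies the argument the paper delegates.

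Your sketch is sound in outline. The enlargement $W\subset\!\subset V\subset\!\subset D$ and the cutoff $\chi$ are exactly the right device to turn the Markovian harmonic part into an honest Cameron--Martin shift of the zero-boundary field on $V$, and integrating the resulting density over the independent law of the shift is the correct way to pass from deterministic to random shifts. One small point worth tightening: in the $u\Z$ step you assert that, conditionally on the class modulo $\R$, the pairing $(h,\phi_1)$ has strictly positive variance. This is true, but the justification you give (``$\phi_1$ is not an $H^{-1}$-limit of mean-zero test functions supported in $W$'') is slightly imprecise as stated; the cleanest way to see it is that the Green's function of $D$ (or of $V$, after the Markov decomposition) is not identically constant on $\overline W\times\overline W$, so the orthogonal projection of $\phi_1$ onto the closed span of mean-zero test functions cannot exhaust all of $\phi_1$. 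This is a cosmetic issue rather than a gap.
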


\begin{proof}
\cite[Proposition 2.11]{MillerSheffieldIG4}.
\end{proof}

The notion of \textbf{local set}, first introduced in \cite{SchrammSheffieldContinuumGFF} for ordinary $\GFF$, serves to generalize the domain Markov property to random subsets. There is an analogous theory of local sets for the whole-plane $\GFF$.

Suppose that $h$ is a whole-plane $\GFF$ modulo a global additive constant in $\R$ (resp. in $u\Z$) and suppose that $A\subset\C$ is a random closed subset which is coupled with $h$ such that $\partial(\C\setminus A)$ has harmonically non-trivial boundary. We say that $A$ is a local set of $h$ if there exists a law on pairs $(A, h_1)$, where $h_1$ is a distribution with the property that $h_1|_{\C\setminus A}$ is almost surely a harmonic function, such that a sample with the law $(A, h)$ can be produced by 
\begin{enumerate}
\item [(1)] choosing the pair $(A, h_1)$,
\item [(2)] sampling an instance $h_2$ of zero-boundary $\GFF$ on $\C\setminus A$ and setting $h=h_1+h_2$,
\item [(3)] considering the equivalence class of distribution modulo a global additive constant in $\R$ (resp. in $u\Z$) represented by $h$. 
\end{enumerate}
The definition is equivalent if we consider $h_1$ as being defined modulo a global additive constant in $\R$ (resp. in $u\Z$). We write $\LC_A$ for the function $h_1$ described above. We say that $\LC_A$ is the conditional mean of $h$ given $A$ and $h|_A$.

By this definition, Theorem \ref{thm::interior_levelloops_coupling} implies that, for any stopping time $N$, the sequence of loops $(L_n, n\le N)$ is a local set for the whole-plane $\GFF$ modulo a global additive constant in $r\lambda\Z$. 

The following propositions are properties of local sets of whole-plane $\GFF$. 

\begin{proposition}
Suppose that $h$ is a whole-plane $\GFF$ modulo a global additive constant in $\R$ or in $u\Z$. Suppose that $A_1,A_2$ are random closed subsets and that $(h,A_1)$ and $(h,A_2)$ are couplings for which $A_1,A_2$ are local. Let $A=A_1\tilde{\cup}A_2$ denote the random closed subset which is given by first sampling $h$, then sampling $A_1,A_2$ conditionally independent given $h$, and then taking the union of $A_1$ and $A_2$. Then $A$ is also local for $h$. 
\end{proposition}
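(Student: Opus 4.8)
The plan is to verify, for $A=A_1\tilde{\cup}A_2$, the whole-plane analogue of one of the standard equivalent characterizations of local sets (the list of equivalent conditions for the whole-plane $\GFF$ in \cite[Section 2.2]{MillerSheffieldIG4}, modelled on \cite[Lemma 3.9]{SchrammSheffieldContinuumGFF}). Concretely, I would use the characterization that a random closed set $A$ coupled with a whole-plane $\GFF$ $h$ (modulo a global additive constant) is local if and only if, for every deterministic bounded open $U\subset\C$, the following holds: writing $g$ for the random distribution on $U$ obtained by subtracting from $h|_U$ the harmonic extension of its boundary values on $\partial U$, the conditional law of the pair $(g,A)$ given $h|_{\C\setminus U}$, restricted to the event $\{\ov U\cap A=\emptyset\}$, is a product measure $\PP^U_{\GFF}\otimes\kappa$, where $\PP^U_{\GFF}$ is the law of a zero-boundary $\GFF$ on $U$ and $\kappa=\kappa(h|_{\C\setminus U})$ is a sub-probability kernel that does not involve $g$. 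I would first record this equivalence: its ``only if'' direction is immediate from the definition of local set together with the domain Markov property of the whole-plane $\GFF$ recalled above, and the ``if'' direction follows by the same finite-approximation argument as in the bounded-domain case \cite{SchrammSheffieldContinuumGFF}. The point of using this form is that it converts locality into a clean conditional-independence statement on which the conditional-independence hypothesis on $A_1,A_2$ can act directly.

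Given this, fix a deterministic bounded open $U\subset\C$ and condition on $h|_{\C\setminus U}$. By the domain Markov property of the whole-plane $\GFF$, the distribution $g$ is then a zero-boundary $\GFF$ on $U$, independent of $h|_{\C\setminus U}$, and the pair $(h|_{\C\setminus U},g)$ reconstructs $h$. Since $A_1$ and $A_2$ are, by hypothesis, sampled conditionally independently given $h$, the conditional law of the triple $(g,A_1,A_2)$ given $h|_{\C\setminus U}$ can be generated by: sampling $g\sim\PP^U_{\GFF}$; then, with $h=h(g)$ the field reconstructed from $(h|_{\C\setminus U},g)$, sampling $A_1$ and $A_2$ conditionally independently from their conditional laws $\mu^1_{h(g)}$ and $\mu^2_{h(g)}$ given $h$.

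Now I would feed in the locality of $A_1$ and of $A_2$. Applying the characterization to $A_1$, the law of $(g,A_1)$ given $h|_{\C\setminus U}$, restricted to $\{\ov U\cap A_1=\emptyset\}$, equals $\PP^U_{\GFF}(dg)\otimes\kappa_1(dA_1)$. Comparing with the disintegration $\PP^U_{\GFF}(dg)\,\mu^1_{h(g)}(dA_1)$ of the (unrestricted) joint law and using uniqueness of disintegrations, one concludes that for $\PP^U_{\GFF}$-a.e.\ $g$ the measure $\mu^1_{h(g)}$ restricted to $\{\ov U\cap A_1=\emptyset\}$ coincides with $\kappa_1$, hence does not depend on $g$; symmetrically for $A_2$. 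Substituting this into the generating description of the previous paragraph, the conditional law of $(g,A_1,A_2)$ given $h|_{\C\setminus U}$, restricted to $\{\ov U\cap A_1=\emptyset\}\cap\{\ov U\cap A_2=\emptyset\}=\{\ov U\cap(A_1\cup A_2)=\emptyset\}$, is the product $\PP^U_{\GFF}(dg)\,\kappa_1(dA_1)\,\kappa_2(dA_2)$. Thus, on this event, $g$ is independent of $(A_1,A_2)$, and a fortiori of the coarser datum $A_1\cup A_2$, with conditional law $\PP^U_{\GFF}$; this is exactly the characterization for $A$, so $A$ is local. The conditional mean $\LC_A$ is then the harmonic extension of $h|_{\partial U}$ on each component of $\C\setminus A$ meeting such a $U$, hence harmonic off $A$.

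The step I expect to be the main obstacle is the measure-theoretic bookkeeping rather than any new idea: phrasing the characterization so that every conditioning is either on a positive-probability event or via disintegration kernels, and carefully carrying the modulo-additive-constant normalization through. In particular one must check that restricting $h$ to a bounded $U$ removes the ambiguity in the global constant and that all of the above is invariant under changing it, which is precisely where the mutual absolute continuity and domain Markov statements recalled earlier (Propositions~\ref{prop::wholeplanegff_convergence_2.10}--\ref{prop::wholeplanegff_ac_2.11} and the domain Markov property) are used. A minor point is to keep the event in the form $\{\ov U\cap A_i=\emptyset\}$ (rather than $\{U\cap A_i=\emptyset\}$) so that the intersection over $i$ is again of the same form for $A_1\cup A_2$, matching the convention of \cite{SchrammSheffieldContinuumGFF}.
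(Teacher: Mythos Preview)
Your proposal is correct and follows the standard route: it is precisely the Schramm--Sheffield characterization-and-product argument, transported to the whole-plane setting. The paper itself does not give a proof at all; it simply cites \cite[Proposition~2.14]{MillerSheffieldIG4}, and what you have written is essentially a sketch of that cited argument. So there is no substantive difference in approach---you have just unpacked the reference.

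One small remark on presentation: the characterization you invoke (the whole-plane analogue of \cite[Lemma~3.9]{SchrammSheffieldContinuumGFF}) is indeed stated in \cite[Section~2.2]{MillerSheffieldIG4}, so you may as well cite it directly rather than re-deriving the ``if'' direction; the disintegration step you describe is then the entire content of the proof. Your caveat about the modulo-constant normalization is well placed, but in practice this is handled once in \cite{MillerSheffieldIG4} when the characterization is established, after which the union argument is identical to the bounded-domain case.
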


\begin{proof}
\cite[Proposition 2.14]{MillerSheffieldIG4}.
\end{proof}

\begin{proposition}
Let $A_1,A_2$ be connected local sets of whole-plane $\GFF$ which are conditionally independent and $A=A_1\tilde{\cup}A_2$. Then $\LC_A-\LC_{A_2}$ is almost surely a harmonic function in $\C\setminus A$ that tends to zero along all sequences of points in $\C\setminus A$ that tend to a limit in a connected component of $A_2\setminus A_1$ (which consists of more than a single point) or that tend to a limit on a connected component of $A_1\cap A_2$ (which consists of more than a single point) at a point that is a positive distance from either $A_2\setminus A_1$ or $A_1\setminus A_2$.
\end{proposition}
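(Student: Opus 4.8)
The plan is to reduce this to the corresponding statement for the ordinary $\GFF$ on a bounded domain, which is a standard property of local sets (see \cite{SchrammSheffieldContinuumGFF} and \cite[Section 6]{MillerSheffieldIG1}), using the tower property of conditional means together with the localization principle of Proposition \ref{prop::wholeplanegff_ac_2.11}. First I would fix a representative of $h$, say by requiring $(h,\phi_0)=0$ for some $\phi_0\in H_s(\C)$ with $\int\phi_0=1$; then $\LC_A$ and $\LC_{A_2}$ are honest harmonic functions on $\C\setminus A$ and $\C\setminus A_2$ respectively, and the difference $\LC_A-\LC_{A_2}$ is an honest function, independent of the choice of $\phi_0$ (changing $\phi_0$ shifts both $\LC_A$ and $\LC_{A_2}$ by the same constant). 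This difference is the object whose harmonicity and boundary behaviour we must establish.

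Next I would record the conditional-mean identity. Reveal the pair $(A_2,h|_{A_2})$; by the definition of a local set of the whole-plane $\GFF$, the field $h-\LC_{A_2}$ restricted to $\C\setminus A_2$ is a zero-boundary $\GFF$ there. Since $A_1$ and $A_2$ are conditionally independent given $h$, the set $A=A_1\tilde{\cup}A_2$ is, conditionally on $(A_2,h|_{A_2})$, a local set of this $\GFF$ on $\C\setminus A_2$, whose intersection with $\C\setminus A_2$ is $A_1\setminus A_2$. The tower property for local sets then gives that $\LC_A-\LC_{A_2}$ equals the conditional mean, computed in the domain $\C\setminus A_2$, of this zero-boundary $\GFF$ with respect to the local set $A_1\setminus A_2$. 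In particular $\LC_A-\LC_{A_2}$ is harmonic on $(\C\setminus A_2)\setminus(A_1\setminus A_2)=\C\setminus A$, which is the first assertion.

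For the boundary behaviour, I would invoke the fact that the conditional mean of a zero-boundary $\GFF$ on a domain with respect to a local set $B$ tends to zero along sequences approaching a boundary point possessing a neighbourhood disjoint from $B$. If $x$ lies on a connected component of $A_2\setminus A_1$ with more than one point, then $x\in\partial(\C\setminus A_2)$ and $x\notin A_1$, so $x$ has a neighbourhood meeting neither $A_1$ nor $A_1\setminus A_2$, and the limit follows. If $x$ lies on a connected component of $A_1\cap A_2$ at positive distance from $A_2\setminus A_1$ and from $A_1\setminus A_2$, then on a neighbourhood of $x$ the sets $A_1$ and $A_2$ coincide, so on that neighbourhood $A_1\setminus A_2$ is empty; again $x$ is a boundary point of $\C\setminus A_2$ with a neighbourhood disjoint from the local set $A_1\setminus A_2$, and the limit follows.

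The main obstacle is to justify the two ``standard'' ingredients directly in the whole-plane setting: the tower property together with the identification of $h-\LC_{A_2}$ as a zero-boundary $\GFF$ on the possibly unbounded, possibly disconnected domain $\C\setminus A_2$; and the vanishing of the conditional mean of a zero-boundary $\GFF$ at boundary points away from the local set. Both hold for the ordinary $\GFF$ on bounded domains by \cite{SchrammSheffieldContinuumGFF, MillerSheffieldIG1}. To transfer them I would localize: given such a point $x$, fix a bounded open $W\ni x$ whose closure is disjoint from every piece of $A_1\cup A_2$ from which $x$ is separated, compare $h|_W$ with an ordinary $\GFF$ on $W$ under the mutual absolute continuity of Proposition \ref{prop::wholeplanegff_ac_2.11} --- under which local sets remain local, conditional independence is preserved, and $\LC_A-\LC_{A_2}$ is altered only by a harmonic term arising from the Radon--Nikodym weighting, which is locally bounded near $x$ and hence does not affect whether the harmonic function tends to zero --- and then apply the ordinary-$\GFF$ statement inside $W$. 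One must still check that the hypotheses of the ordinary-$\GFF$ statement hold after localization (for instance that, within $W$, the relevant component of $A_1\cap A_2$ is genuinely at positive distance from the ``asymmetric'' parts), but this is immediate from the choice of $W$.
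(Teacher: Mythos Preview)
The paper does not actually prove this proposition: its entire proof is the one-line citation ``\cite[Proposition 2.15]{MillerSheffieldIG4}''. Your proposal is a correct outline of the standard argument --- reduce to the tower identity $\LC_A-\LC_{A_2}=\LC^{\C\setminus A_2}_{A_1\setminus A_2}$ for the zero-boundary field $h-\LC_{A_2}$, then invoke the vanishing of the conditional mean at boundary points away from the local set --- and this is exactly the route taken in the cited reference (and, for the ordinary $\GFF$, in \cite{SchrammSheffieldContinuumGFF} and \cite[Section 6]{MillerSheffieldIG1}). So you have written out essentially the proof behind the citation; there is nothing to compare against in the paper itself.

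One small remark on your localization step: Proposition~\ref{prop::wholeplanegff_ac_2.11} compares a $\GFF$ on a proper domain $D$ with the whole-plane $\GFF$, both restricted to $W$; to get an ordinary $\GFF$ on $W$ you would compose this with the further absolute continuity between $h_D|_W$ and a zero-boundary $\GFF$ on $W$ (or simply take $D$ to be a large disc). Also, the phrase ``$\LC_A-\LC_{A_2}$ is altered only by a harmonic term arising from the Radon--Nikodym weighting'' is a little loose --- absolute continuity does not literally add a harmonic term to the conditional mean --- but the conclusion you need (that the almost-sure boundary behaviour of $\LC_A-\LC_{A_2}$ is a local, measure-class invariant statement) is correct and is what the cited references use.
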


\begin{proof}
\cite[Proposition 2.15]{MillerSheffieldIG4}.
\end{proof}

\begin{proposition}\label{prop::wholeplanegff_localset_2.16}
Suppose that $A$ is a local set for a whole-plane $\GFF$ modulo a global additive constant in $\R$ (resp. in $u\Z$). Fix $W\subset\C$ open and bounded and assume that $A\subset W$ almost surely.
Let $D\subset\C$ be a domain with harmonically non-trivial boundary such that $W\subset D$ with $\dist(W,\partial D)>0$ and let $h_D$ be a $\GFF$ on $D$. 
\begin{enumerate}
\item [(1)] There exists a law on random closed set $A_D$ which is mutually absolutely continuous with respect to the law of $A$ such that $A_D$ is a local set for $h_D$.
\item [(2)] Let $\LC^{\C}_{A_D}$ be the function which is harmonic in $\C\setminus A$ and which has the same boundary behavior as $\LC_{A_D}$ on $A_D$. Then the law of $\LC^{\C}_{A_D}$, modulo a global additive constant in $\R$ (resp. in $u\Z$), and the law of $\LC_A$ are mutually absolutely continuous. 
\item [(3)] If $A$ is almost surely determined by $h$, then $A_D$ is almost surely determined by $h_D$.
\end{enumerate}
\end{proposition}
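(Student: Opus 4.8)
The plan is to transfer the local-set coupling from the whole-plane field to $h_D$ by a change-of-measure argument, with Proposition \ref{prop::wholeplanegff_ac_2.11} as the essential input. Fix a deterministic open set $W'$ with $\overline W\subset W'$ and $\dist(W',\partial D)>0$. The first ingredient I would isolate is a structural property of a local set contained in a deterministic bounded window --- the whole-plane analogue of a standard fact about local sets of the ordinary $\GFF$: if $A\subset W$ almost surely and $A$ is local for the whole-plane $\GFF$ $h$ modulo a global additive constant, then, conditionally on $\tilde h:=h|_{W'}$ modulo a global additive constant, $A$ is independent of the remaining randomness of $h$; in particular the conditional law of $A$ given $h$ is a measurable functional $\kappa(\tilde h,\cdot)$ of $\tilde h$. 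This follows from the domain Markov property of the whole-plane $\GFF$ recalled above, and it means that the coupling $(h,A)$ is produced by sampling $h$, forming $\tilde h$, and then sampling $A\sim\kappa(\tilde h,\cdot)$ conditionally independently of $h$ given $\tilde h$.

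I would then construct the coupling for $h_D$ in the obvious way: sample $h_D$, form $\tilde h_D:=h_D|_{W'}$ modulo a global additive constant, and sample $A_D\sim\kappa(\tilde h_D,\cdot)$ conditionally independently of $h_D$ given $\tilde h_D$. Let $\nu$ and $\nu_D$ be the laws of $\tilde h$ and $\tilde h_D$; by Proposition \ref{prop::wholeplanegff_ac_2.11} they are mutually absolutely continuous, with density $Z=d\nu/d\nu_D$ a positive finite functional of the restricted field. Hence the joint law $\nu_D(d\tilde h)\,\kappa(\tilde h,dA)$ of $(\tilde h_D,A_D)$ is mutually absolutely continuous with respect to the joint law $\nu(d\tilde h)\,\kappa(\tilde h,dA)$ of $(\tilde h,A)$, with density $Z^{-1}$; in particular the law of $A_D$ is mutually absolutely continuous with respect to the law of $A$, which is most of part (1).

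It remains to check that $A_D$ is in fact a local set for $h_D$, with conditional mean $\LC_{A_D}$, and this is the step I expect to be the crux. Locality of $A$ for $h$ means that, conditionally on $(A,h|_A)$, the field $h-\LC_A$ is a zero-boundary $\GFF$ on $\C\setminus A$; since $A\subset W$, restricting near $W$ this is an exact statement about the (Gaussian) conditional law of $\tilde h$ given $(A,h|_A)$, and the reweighting by $Z^{-1}$ --- a functional of $\tilde h$ alone --- alters that conditional law only by an explicit density. What must then be verified is that the reweighted conditional law, Markov-extended across $\partial W'$ inside $D$, is that of a zero-boundary $\GFF$ on $D\setminus A_D$ plus a harmonic function $\LC_{A_D}$; equivalently, that the Radon--Nikodym derivative $Z$ of Proposition \ref{prop::wholeplanegff_ac_2.11} is compatible with the Gaussian structure of these conditional laws. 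The cleanest way to organise this is to first record the analogous statement for local sets of the \emph{ordinary} $\GFF$ on large domains (a variant of the standard local-set transfer lemma) and then deduce the whole-plane case from it using Proposition \ref{prop::wholeplanegff_ac_2.11}, which is exactly what absorbs the passage from $\C$ to $D$ and the ``modulo additive constant'' bookkeeping.

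Once $A_D$ and $\LC_{A_D}$ are identified, part (2) follows: because $A_D\subset W$, the function $\LC^{\C}_{A_D}$ --- harmonic in $\C\setminus A_D$ with the same boundary behavior on $A_D$ as $\LC_{A_D}$, and tending to a constant at $\infty$ --- is the same measurable functional of $(\tilde h_D,A_D)$ that $\LC_A$ is of $(\tilde h,A)$, so the mutual absolute continuity of the joint laws of $(\tilde h_D,A_D)$ and $(\tilde h,A)$ pushes forward to mutual absolute continuity of the laws of $\LC^{\C}_{A_D}$ and $\LC_A$ modulo a global additive constant. For part (3): if $A$ is almost surely determined by $h$, then two conditionally independent copies of $A$ given $\tilde h$ --- equivalently, by the structural property above, given $h$ --- coincide almost surely, so $\kappa(\tilde h,\cdot)$ is almost surely a point mass and $A=F(\tilde h)$ almost surely for some measurable $F$; since $\nu$ and $\nu_D$ are mutually absolutely continuous, the same identity $A_D=F(\tilde h_D)$ holds almost surely, so $A_D$ is determined by $h_D$.
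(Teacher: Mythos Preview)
The paper does not actually prove this proposition; it cites \cite[Proposition 2.16]{MillerSheffieldIG4} and moves on. So there is no ``paper's own proof'' to compare against beyond that reference.

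Your reconstruction is along the right lines and is essentially the argument in Miller--Sheffield: reduce to the restriction $h|_{W'}$ via the domain Markov property, use the mutual absolute continuity of $h|_{W'}$ and $h_D|_{W'}$ from Proposition~\ref{prop::wholeplanegff_ac_2.11}, and transport the conditional-law kernel. The structural claim you isolate --- that a local set $A\subset W$ is conditionally independent of $h$ given $h|_{W'}$, so that the coupling is governed by a kernel $\kappa(\tilde h,\cdot)$ --- is exactly the right pivot, and parts (2) and (3) do follow from the mutual absolute continuity of the joint laws as you say.

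The one place your write-up remains a sketch is the point you yourself flag as the crux: verifying that $A_D$ is genuinely local for $h_D$, i.e.\ that conditionally on $(A_D,h_D|_{A_D})$ the remainder of $h_D$ is a zero-boundary $\GFF$ on $D\setminus A_D$ plus a harmonic function. The cleanest way to close this is not to track the Radon--Nikodym density through the Gaussian conditioning, but to argue in two steps: first, the absolute-continuity transfer gives that $A_D$ is local for $h_D|_{W'}$ (a statement purely about the field on $W'$, hence preserved under reweighting by a functional of $h_D|_{W'}$); second, the domain Markov property of $h_D$ at $\partial W'$ upgrades this to locality for $h_D$ on all of $D$, since $A_D\subset W\subset W'$. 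That is the standard route in \cite{MillerSheffieldIG4}, and with it your outline becomes a complete proof.
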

\begin{proof}
\cite[Proposition 2.16]{MillerSheffieldIG4}.
\end{proof}

\subsection{Alternating height-varying sequence of level loops}
\label{subsec::alternate_levelloops_construction}
In \cite[Sections 2, 3.3, 3.4]{WangWuLevellinesGFFI}, we studied the level line of $\GFF$ starting from a boundary point targeted at a boundary point or an interior point. We briefly recall some basic properties. 

Suppose that $h$ is a $\GFF$ on $\U$ whose boundary value is piecewise constant and changes only finitely many times. Fix a starting point $x\in\partial\U$ and a target point $z\in \overline{\U}$. The level line of $h$ starting from $x$ targeted at $z$ is a random curve $\gamma$ coupled with $h$ such that the following is true. 
Suppose that $\tilde{\tau}$ is any $\gamma-$stopping time. Then, given $\gamma[0,\tilde{\tau}]$, the conditional law of $h$ restricted to $\U\setminus \gamma[0,\tilde{\tau}]$ is the same as $\GFF$s in each connected component whose boundary value is consistent with $h$ on $\partial\U$ and is $\lambda$ to the right of $\gamma$ and is $-\lambda$ to the left of $\gamma$. In this coupling, the curve $\gamma$ is almost surely determined by $h$ and is almost surely continuous up to and including the continuation threshold. 
Generally, for $u\in\R$, the level line of $h$ with height $u$ is the level line of $h+u$. 

In particular, suppose that $h$ is a zero-boundary $\GFF$ on $\U$, let $\gamma_u$ be the level line of $h$ starting from $x\in\partial\U$ targeted at $z\in\U$ with height $u\in (-\lambda,\lambda)$. We parameterize the curve by minus the log of the conformal radius of $\U\setminus \gamma_u[0,t]$ seen from $z$:
\[\CR(\U\setminus\gamma_u[0,t];z)=e^{-t},\quad t\ge 0.\]
Then the law of $\gamma_u$ is the same as radial $\SLE_4(\rho^L;\rho^R)$ process starting with $W_0=x$ with two force points next to the starting point and the corresponding weights are given by \cite[Proposition 3.3.1]{WangWuLevellinesGFFI}
\[\rho^L=-u/\lambda-1,\quad \rho^R=u/\lambda-1.\]
Suppose that $(V^R_t,W_t,V^L_t)_{t\ge 0}$ is the corresponding radial Loewner evolution. 
The continuation threshold of $\gamma_u$ is hit at the following time
\[\tau=\inf\{t>0: V^R_t=W_t=V^L_t\},\]
which is almost surely finite. Let $U_{\tau}$ be the connected component of $\U\setminus \gamma_u[0,\tau]$ that contains $z$, and let $L_u$ be the oriented boundary of $U_{\tau}$. We know that $L_u$ is independent of the choice of $x$. We call $L_u$ the \textbf{level loop of $h$ with height $u$} starting from $\partial\U$ targeted at $z$. The loop $L_u$ is almost surely determined by $h$ and 
\[\PP[L_u\text{ is clockwise}]=\frac{\lambda+u}{2\lambda},\quad \PP[L_u\text{ is counterclockwise}]=\frac{\lambda-u}{2\lambda}.\]

We recall several results from \cite{WangWuLevellinesGFFI} that we will use later in the current paper.

\begin{lemma}\label{lem::levellinesI_remark2515}
Suppose that $h$ is a $\GFF$ on $\U$ with piecewise constant boundary data which changes a finite number of times. 
Let $\gamma_u$ be the level line of $h$ with height $u\in\R$ starting from $x\in\partial\U$ targeted at $y\in\partial\U$. 
\begin{enumerate}
\item [(1)] Suppose that $h$ has boundary value $c_R$ to the right of $x$ and $c_L$ to the left of $x$. To have non-trivial $\gamma_u$ (i.e. $\gamma_u$ has strictly positive length), we must have
\[c_L+u<\lambda,\quad \text{and }\quad c_R+u>-\lambda.\]
\item [(2)] Suppose that $h$ has boundary value $c_R$ to the right of $y$ and $c_L$ to the left of $y$. Then the probability of $\gamma_u$ to reach $y$ is zero if one of the following two conditions holds.
\begin{itemize}
\item $c_L+u\ge\lambda$ and $c_R+u\ge\lambda$;
\item $c_L+u\le -\lambda$ and $c_R+u\le -\lambda$.
\end{itemize}
\item [(3)] Suppose that $h$ has boundary value $c$ on some open interval $I$ which does neither contain $x$ nor $y$. If $\gamma_u$ hits $I$ with strictly positive probability, then $c+u\in (-\lambda, \lambda)$.
\end{enumerate}
\end{lemma}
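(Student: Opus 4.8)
The three assertions are exactly the statements one expects for level lines of the GFF once the boundary data is known, so the plan is to reduce each one to the domain Markov property of the coupling together with basic facts about $\SLE_4(\rho)$ processes. The underlying principle is that $\gamma_u$ is the level line of $h+u$, whose conditional law given an initial segment is that of a GFF with boundary value $\lambda$ to the right and $-\lambda$ to the left of the curve; so a height-$u$ level line can only touch a boundary arc or terminate at a boundary point in ways consistent with these $\pm\lambda$ values, and this is precisely what the $\SLE_4(\rho^L;\rho^R)$ description (with the weights recalled just before the lemma) encodes.

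For part (1), I would argue that $\gamma_u$ starting at $x$ has, by \cite{WangWuLevellinesGFFI}, the law of a chordal (or radial) $\SLE_4(\rho^L;\rho^R)$ with force points immediately to the left and right of $x$ and weights $\rho^L = (c_L+u)/\lambda - 1$, $\rho^R = -(c_R+u)/\lambda - 1$ read off from the boundary data near $x$ — the affine shift being the general-boundary-data analogue of \cite[Proposition 3.3.1]{WangWuLevellinesGFFI}. An $\SLE_\kappa(\rho)$ curve with $\kappa=4$ is non-trivial (immediately leaves the boundary, rather than being absorbed into it) precisely when each adjacent weight exceeds $-2$, i.e. $\rho^L>-2$ and $\rho^R>-2$; rewriting these inequalities gives $c_L+u<\lambda$ and $c_R+u>-\lambda$. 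So part (1) is a direct translation of the condition for the driving diffusion not to be instantly swallowed.

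For parts (2) and (3), I would localize near the relevant boundary point or arc and use the conditional description of $h$ together with Lemma-type statements already available in \cite{WangWuLevellinesGFFI} on boundary hitting. For (2): stop $\gamma_u$ the first time it comes within a small distance of $y$ and condition; on the component containing $y$, $h+u$ has boundary value $c_L+u$ on one side of $y$, $c_R+u$ on the other, and $\pm\lambda$ along the stopped curve. If $c_L+u$ and $c_R+u$ are both $\ge\lambda$ (resp. both $\le-\lambda$), then continuing the level line toward $y$ forces it to run along a boundary arc whose height differs from the curve's target value by at least $2\lambda$ on the appropriate side, which by the boundary-hitting analysis in \cite{WangWuLevellinesGFFI} (the level line cannot hit an arc where the height difference is $\ge 2\lambda$, equivalently the $\SLE_4(\rho)$ curve with the corresponding boundary weight $\ge 0$ does not hit that side) happens with probability zero; iterating this over a sequence of stopping times approaching $y$ and using continuity up to the continuation threshold gives that $\PP[\gamma_u \text{ reaches } y]=0$. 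For (3): suppose $\gamma_u$ hits the interval $I$ with positive probability; run $\gamma_u$ until just before its first hit of $I$, condition, and note that on the relevant component $h+u$ has boundary value $c+u$ on $I$ and $\pm\lambda$ on the curve. By the boundary-hitting criterion for $\SLE_4(\rho)$ — a height-$v$ level line hits a boundary arc of height $c'$ with positive probability only when $|c' - v|<\lambda+\lambda$ fails to force absorption, which after translating to weights reads $c+u\in(-\lambda,\lambda)$ — we conclude $c+u\in(-\lambda,\lambda)$.

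The main obstacle is bookkeeping rather than conceptual: one must correctly convert each geometric statement into the sign/threshold conditions on $\SLE_4(\rho)$ weights under general (not just zero) boundary data, keeping track of which side is which and of the $\pm\lambda$ jumps introduced by the stopped curve, and one must justify the limiting/iteration argument in (2) using continuity of $\gamma_u$ up to the continuation threshold and the fact (from \cite{WangWuLevellinesGFFI}) that $\gamma_u$ is determined by $h$. Since all of the needed input — the $\SLE_4(\rho)$ description of level lines, continuity up to the continuation threshold, and the boundary-hitting dichotomy — is available from \cite{WangWuLevellinesGFFI}, the proof amounts to assembling these facts, and I would present it as three short arguments, one per item, each ending in the displayed inequality.
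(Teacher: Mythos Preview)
The paper does not prove this lemma at all: its entire proof is the one-line citation ``\cite[Remark 2.5.15]{WangWuLevellinesGFFI}''. So there is nothing to compare at the level of argument; the statement is simply imported from the companion paper.

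Your proposal is a reasonable sketch of what that cited argument should contain, and the strategy---read off the $\SLE_4(\rho^L;\rho^R)$ weights from the boundary data near the relevant point and then invoke the standard thresholds ($\rho>-2$ for the curve to leave the boundary, $\rho\ge 0$ resp.\ $\rho\ge \kappa/2-2=0$ for the curve not to hit a given arc)---is exactly the right one. One small slip: your displayed formulas for the weights have the signs swapped. Matching against the zero-boundary case quoted just before the lemma (where the effective boundary value near $x$ is $u$ on both sides and $\rho^L=-u/\lambda-1$, $\rho^R=u/\lambda-1$), the general-boundary formulas should be
\[
\rho^L=-\frac{c_L+u}{\lambda}-1,\qquad \rho^R=\frac{c_R+u}{\lambda}-1,
\]
and then $\rho^L>-2\iff c_L+u<\lambda$, $\rho^R>-2\iff c_R+u>-\lambda$, which is the conclusion of (1). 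With that correction the rest of your outline for (2) and (3) goes through as written, since both reduce to the boundary-hitting dichotomy for $\SLE_4(\rho)$ already established in \cite{WangWuLevellinesGFFI}.
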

\begin{proof}
\cite[Remark 2.5.15]{WangWuLevellinesGFFI}.
\end{proof}

\begin{lemma}\label{lem::levellinesI_monotonicity}
Suppose that $h$ is a $\GFF$ on $\HH$ whose boundary value is piecewise constant, changes only finitely many times. For each $u\in\R$ and $x\in\partial\HH$, let $\gamma_u^x$ be the level line of $h$ with height $u$ starting from $x$ targeted at infinity. Fix $x_2\le x_1$.
\begin{enumerate}
\item [(1)] If $u_2>u_1$, then $\gamma_{u_2}^{x_2}$ almost surely stays to the left of $\gamma_{u_1}^{x_1}$.
\item [(2)] If $u_2=u_1$, then $\gamma_{u_2}^{x_2}$ may intersect $\gamma_{u_1}^{x_1}$ and, upon intersecting, the two curves merge and never separate.
\end{enumerate}

\end{lemma}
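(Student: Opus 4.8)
The plan is to reduce everything to the coupling characterization of level lines and the fact that a level line is almost surely determined by the field, together with an absolute-continuity argument to transfer the boundary-point results of \cite{WangWuLevellinesGFFI} into the present setting. First I would recall that, by the coupling property stated just above, for each $x$ and each height $u$ the curve $\gamma_u^x$ is coupled with $h$ so that, conditionally on an initial segment $\gamma_u^x[0,\tilde\tau]$, the field $h$ restricted to the complement is a $\GFF$ whose boundary value is $u+\lambda$ on the right and $u-\lambda$ on the left of the curve, and $\gamma_u^x$ is a.s.\ determined by $h$. The key observation is that monotonicity in the height is really a statement about two level lines of the \emph{same} field $h$, so I would fix the field and argue pathwise.

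For part (1), the main step is the following: if $\gamma_{u_2}^{x_2}$ has drawn an initial segment staying (weakly) to the left of $\gamma_{u_1}^{x_1}$, then along the interface between them the conditional field, viewed from the side of $\gamma_{u_1}^{x_1}$, has boundary value at least $\lambda+u_2 - $ (something); more precisely, on the region between the two curves the harmonic boundary data forces $\gamma_{u_1}^{x_1}$ to see boundary value $u_2+\lambda$ to one side, and since $u_2+\lambda>u_1+\lambda$, Lemma \ref{lem::levellinesI_remark2515}(1)--(3) (applied in each connected component of the complement of $\gamma_{u_2}^{x_2}$, after conditioning on $\gamma_{u_2}^{x_2}$) shows that $\gamma_{u_1}^{x_1}$ cannot cross to the left of $\gamma_{u_2}^{x_2}$: crossing would require the level line $\gamma_{u_1}^{x_1}$ to enter a region where its height-shifted boundary value lies outside $(-\lambda,\lambda)$ on an interval it would have to hit, which has probability zero. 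Running this argument continuously in the Loewner time of $\gamma_{u_2}^{x_2}$ (or, more carefully, conditioning on $\gamma_{u_2}^{x_2}$ entirely and then studying $\gamma_{u_1}^{x_1}$ as a level line in the complementary domains with the induced piecewise-constant boundary data) gives that $\gamma_{u_2}^{x_2}$ stays to the left of $\gamma_{u_1}^{x_1}$ almost surely; the strictness of the inequality $u_2>u_1$ is what rules out merging here. Since $h$ on $\HH$ here has only piecewise-constant boundary data changing finitely often, one can first establish the statement for $x_2=x_1$ using the strict gap argument and then handle $x_2<x_1$ by noting that moving the starting point to the left only pushes the lower-height curve further left, or alternatively by a direct comparison of the two radial/chordal $\SLE_4(\underline\rho)$ descriptions.

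For part (2), with $u_2=u_1=:u$ and $x_2\le x_1$, the two curves are level lines of the \emph{same} height of the \emph{same} field, so the statement is exactly the merging phenomenon for equal-height level lines: conditionally on $\gamma_u^{x_1}$, the curve $\gamma_u^{x_2}$ is the level line of the conditional $\GFF$ in the complement, and when it hits $\gamma_u^{x_1}$ it hits an interval of boundary value $u+\lambda$ from the left or $u-\lambda$ from the right (matching its own height), which is precisely the situation in which the level-line coupling forces the two curves to coincide from that point on and never separate — this is the a.s.-determined-by-$h$ property combined with Lemma \ref{lem::levellinesI_remark2515}(2), since on the ``merged'' interval the relevant one-sided conditions $c_L+u\ge\lambda$ or $c_R+u\le-\lambda$ hold with equality, forbidding the continuation of either curve away from the other into the wrong region. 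The main obstacle I expect is the bookkeeping in part (1): one must carefully track, in each connected component of $\HH\setminus\gamma_{u_2}^{x_2}$, what the boundary data of the conditional field is along the portion of $\gamma_{u_2}^{x_2}$'s trace and argue that the exceptional (zero-probability) events of \cite[Remark 2.5.15]{WangWuLevellinesGFFI} glue together to a single null event over a countable dense collection of stopping times, then upgrade to all times by continuity of the curves. Absolute continuity with respect to the boundary-emanating setting of \cite{WangWuLevellinesGFFI} (Propositions \ref{prop::wholeplanegff_ac_2.11} and \ref{prop::wholeplanegff_localset_2.16}, localized to bounded subdomains) is the tool that lets us import those statements verbatim.
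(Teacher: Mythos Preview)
The paper does not prove this lemma at all: its entire proof is the one-line citation \cite[Theorem 1.1.4]{WangWuLevellinesGFFI}. The statement is simply being recalled from the companion paper, where the full argument is carried out. So your proposal is not really comparable to the paper's ``proof''---you are attempting to reconstruct the argument that lives in \cite{WangWuLevellinesGFFI}.

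That said, your sketch is in the right spirit (condition on one curve, regard the other as a level line of the conditional field with the induced piecewise-constant boundary data, and use the height constraints of Lemma~\ref{lem::levellinesI_remark2515} to forbid crossing or separation). A few concrete issues, however. First, your boundary values carry a sign slip: the level line of $h$ with height $u$ is the level line of $h+u$, so $h$ itself takes value $\lambda-u$ on the right and $-\lambda-u$ on the left of the curve, not $u+\lambda$ and $u-\lambda$; this matters when you invoke the $(-\lambda,\lambda)$ window. Second, your appeal to Propositions~\ref{prop::wholeplanegff_ac_2.11} and~\ref{prop::wholeplanegff_localset_2.16} is misplaced---those are statements about whole-plane $\GFF$ and its local sets, whereas the present lemma is purely about boundary-emanating level lines on $\HH$, so no such transfer is needed or relevant. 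Third, the step you yourself flag as the obstacle (gluing the componentwise arguments over a dense set of stopping times and upgrading by continuity, and handling the interaction when the curves actually touch) is precisely the substantive content of \cite[Theorem 1.1.4]{WangWuLevellinesGFFI}; your sketch identifies the difficulty but does not resolve it. In short: correct strategy, but the execution here remains a heuristic outline rather than a proof, and for the purposes of the present paper the citation is the appropriate proof.
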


\begin{proof}
\cite[Theorem 1.1.4]{WangWuLevellinesGFFI}.
\end{proof}

\begin{lemma}\label{lem::levellinesI_levelloop}
Suppose that $h$ is a zero-boundary $\GFF$ on $\U$. Fix a height $u\in (-\lambda,\lambda)$ and a target point $z\in\U$. Let $L_u$ be the level loop of $h$ starting from the boundary $\partial\U$ targeted at $z$. Then we have the following.
\begin{enumerate}
\item [(1)] $L_u$ is oriented either clockwise or counterclockwise and is homeomorphic to the unit disc.
\item [(2)] $L_u\cap\partial\U\neq\emptyset$.
\item [(3)] Given $L_u$, the conditional law of $h$ restricted to each connected component of $\U\setminus L_u$ is the same as $\GFF$'s whose boundary value is zero on $\partial\U$, is $\lambda-u$ to the right of $L_u$, and is $-\lambda-u$ to the left of $L_u$.
\end{enumerate}
Moreover, the loop $L_u$ is almost surely determined by $h$; and any oriented loop in $\U$ that satisfies the above three properties almost surely coincides with $L_u$.
\end{lemma}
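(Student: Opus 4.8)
The plan is to derive all four assertions from the construction of $L_u$ through the level line $\gamma_u$. By applying a conformal automorphism of $\U$ we may take $z=0$. Then $\gamma_u$ is the level line of $h$ with height $u$ from a boundary point $x$ targeted at $0$, with the law of a radial $\SLE_4(\rho^L;\rho^R)$ with $\rho^L=-u/\lambda-1$ and $\rho^R=u/\lambda-1$ --- both in $(-2,0)$ since $u\in(-\lambda,\lambda)$ --- run up to the continuation threshold $\tau=\inf\{t:V^R_t=W_t=V^L_t\}<\infty$. We already know that $\gamma_u$ is a.s.\ a simple continuous curve, continuous up to and including $\tau$ and a.s.\ determined by $h$; that $L_u=\partial U_\tau$ where $U_\tau$ is the component of $\U\setminus\gamma_u[0,\tau]$ containing $0$; that $L_u$ is independent of $x$; and that $\PP[L_u\text{ clockwise}]=(\lambda+u)/(2\lambda)$. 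I would obtain (1) and (2) from the topology of $\gamma_u[0,\tau]$, (3) from local-set theory for the $\GFF$, and the uniqueness statement by reducing it to the uniqueness of boundary-emanating level lines.

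For (1) and (2): since $\gamma_u[0,\tau]$ is a simple arc, its complement in $\C$ is connected, so it cannot by itself surround $0$; hence the component $U_\tau$ must be bounded in part by $\partial\U$, and in particular the curve returns to the boundary at the continuation threshold, so $\gamma_u(\tau)\in L_u\cap\partial\U\neq\emptyset$, which is (2). For (1) I would show that $L_u=\partial U_\tau$ is a Jordan curve, so that $\overline{U_\tau}$ is homeomorphic to $\overline\U$ and $L_u$ carries a well-defined orientation (clockwise with probability $(\lambda+u)/(2\lambda)$). Absent boundary-touchings, $\gamma_u[0,\tau]$ is a crosscut of $\U$ and $\partial U_\tau$ is the crosscut together with one of the two complementary boundary arcs, which is manifestly Jordan. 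In general one must analyse the boundary-touching behaviour of $\SLE_4(\rho)$ with $\rho\in(-2,\kappa/2-2)$, $\kappa=4$: I would describe precisely which arcs of $\partial\U$ remain on $\partial U_t$ and how the touch points of $\gamma_u$ with $\partial\U$ sit relative to $U_\tau$, using that the continuation-threshold condition $V^R_\tau=W_\tau=V^L_\tau$ says exactly that the curve, together with the remaining boundary arc bounding $U_t$, has just closed up at time $\tau$, and conclude that $\partial U_\tau$ is traced once without self-touchings. This topological step is the part I expect to require the most care.

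For (3): first, $\gamma_u[0,\tau]$ is a local set of $h$. It is local when stopped at any $\gamma_u$-stopping time $t<\tau$, by the coupling characterising the level line; and, choosing stopping times $\tau_n\uparrow\tau$ with $\gamma_u[0,\tau_n]\to\gamma_u[0,\tau]$ in the Hausdorff metric, locality passes to the limit. By the defining coupling of the height-$u$ level line, given $\gamma_u[0,\tau]$ the restriction of $h$ to each connected component of $\U\setminus\gamma_u[0,\tau]$ is an independent $\GFF$ with boundary value $0$ on $\partial\U$, $\lambda-u$ on the right side of $\gamma_u$ and $-\lambda-u$ on its left side --- exactly the boundary data in (3). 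The components of $\U\setminus L_u$, the domain $U_\tau=\inte(L_u)$, and the relevant boundary values are measurable functions of the oriented loop $L_u$ (here one uses the description of $\partial U_\tau$ from the previous step to see that no component of $\U\setminus L_u$ is further cut by $\gamma_u[0,\tau]$); since $\sigma(L_u)\subseteq\sigma(\gamma_u[0,\tau])$, the conditional law of $h$ given $L_u$ equals that given $\gamma_u[0,\tau]$, which is (3).

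Finally, $L_u$ is a.s.\ determined by $h$ since $\gamma_u$ is and $\tau$ is a $\gamma_u$-stopping time. For the uniqueness characterisation, let $L'$ be any oriented loop coupled with $h$ and satisfying (1)--(3). By (2) pick $x_0\in L'\cap\partial\U$ and run the level line $\gamma_u^{x_0}$ of $h$ with height $u$ from $x_0$ targeted at $0$. Property (3) says that, conditionally on $L'$, the field in the appropriate component of $\U\setminus L'$ carries level-$u$-line boundary data along $L'$; combining this with the monotonicity and merging of level lines (Lemma~\ref{lem::levellinesI_monotonicity}) and the a.s.\ determination of level lines by the field, a containment/``light-cone'' argument as in \cite{WangWuLevellinesGFFI} forces $\gamma_u^{x_0}$ to hug $L'$ and to stop exactly on closing it. Hence the level loop from $x_0$ equals $L'$; since the level loop does not depend on the starting point, $L'=L_u$ a.s. Carrying out this hugging argument --- reducing it cleanly to the uniqueness statement for boundary-emanating level lines --- is the other place I expect to spend real effort.
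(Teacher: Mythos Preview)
The paper does not actually prove this lemma here: its proof is a bare citation to \cite[Lemmas 3.3.5 and 3.3.6]{WangWuLevellinesGFFI}. So there is no in-paper argument to compare against; your sketch is effectively a (reasonable) guess at what the cited lemmas contain, and it is broadly along the right lines.

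Two places in your sketch deserve tightening. First, in your argument for (3), the final sentence is not correct as written: the inclusion $\sigma(L_u)\subseteq\sigma(\gamma_u[0,\tau])$ by itself does \emph{not} force the conditional law given $L_u$ to equal the conditional law given $\gamma_u[0,\tau]$. What you need --- and what you implicitly assert just before --- is that the conditional law of $h$ given $\gamma_u[0,\tau]$ is already $\sigma(L_u)$-measurable. That in turn requires $\gamma_u[0,\tau]\subseteq L_u\cup\partial\U$, i.e.\ that the \emph{entire} level line lies on $\partial U_\tau$ and does not leave extra arcs subdividing the other components of $\U\setminus L_u$. This is true (the radial curve is simple and, once it disconnects a pocket from the target, never re-enters it, so every point of $\gamma_u$ stays on the boundary of the component containing $0$), but it should be stated and justified explicitly; it is really the single topological fact driving both (1) and (3), and it also explains why $\partial U_\tau$ is Jordan despite the boundary-touching of $\SLE_4(\rho)$ for $\rho\in(-2,0)$.

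Second, your uniqueness paragraph has the right idea --- reduce to the uniqueness of boundary-emanating level lines via monotonicity/merging --- and this is indeed the content of \cite[Lemma 3.3.6]{WangWuLevellinesGFFI}. Just be aware that the ``hugging'' argument has to be run on both sides of $L'$ (showing the level line from $x_0$ can neither enter $\inte(L')$ nor stray into the wrong exterior component) and that the termination at the continuation threshold coincides with closing the loop; these are straightforward but should be spelled out.
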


\begin{proof}
\cite[Lemmas 3.3.5 and 3.3.6]{WangWuLevellinesGFFI}.
\end{proof}

Suppose that $h$ is a zero-boundary $\GFF$ on $\U$. Fix $r\in (0,1)$ and a target point $z\in \U$. We will introduce the upward height-varying sequence of level loops. Set
\[u_k=-\lambda+kr\lambda,\quad \text{for } k\ge 1.\]
Let $L_1$ be the level loop of $h$ with height $u_1$. Then, given $L_1$, the conditional mean $m_1$ of $h$ restricted to $\inte(L_1)$ is
\[\begin{cases}
2\lambda-r\lambda,&\text{if } L_1 \text{ is clockwise};\\
-r\lambda,&\text{if } L_1 \text{ is counterclockwise}.
\end{cases}\] 
Moreover,
\[\PP[m_1=2\lambda-r\lambda]=r/2,\quad \PP[m_1=-r\lambda]=1-r/2.\]
If $L_1$ is clockwise, we stop and set $N=1$; if not, we continue. Generally, given $(L_1,...,L_k)$ and $L_k$ is counterclockwise, let $L_{k+1}$ be the level loop of $h$ restricted to $\inte(L_k)$ with height $u_{k+1}$. If $L_{k+1}$ is clockwise, we stop and set $N=k+1$; if not, we continue. At each step, we have chance $r/2$ to stop; thus we will stop at some finite time $N$ which we call \textbf{transition step}. We call the sequence $(L_1,...,L_N)$ \textbf{upward height-varying sequence of level loops} with height difference $r\lambda$ starting from $L_0=\partial\U$ targeted at $z$. We summarize some basic properties of this sequence in the following.
\begin{itemize}
\item [(a)] The transition step $N$ satisfies the geometric distribution
\[\PP[N>n]=(1-r/2)^n,\quad \forall n\ge 0.\]
\item [(b)] For $1\le k< N$, the loop $L_k$ is counterclockwise; the loop $L_N$ is clockwise.
\item [(c)] For $1\le k\le N$, let $m_k$ be the conditional mean of $h$ restricted to $\inte(L_k)$ given $(L_1,...,L_k)$, we have
\[m_k=-kr\lambda, \text{ for }1\le k<N; \quad m_N=2\lambda-Nr\lambda.\]
\end{itemize} 

Symmetrically, we could define \textbf{downward height-varying sequence of level loops} of $h$. For the downward height-varying sequence $(L_1,...,L_N)$, the transition step $N$ satisfies the geometric distribution. The loops $L_1,..., L_{N-1}$ are clockwise and the loop $L_N$ is counterclockwise. Let $m_k$ be the conditional mean of $h$ restricted to $\inte(L_k)$ given $(L_1,...,L_k)$, we have
\[m_k=kr\lambda,\text{ for }1\le k<N;\quad m_N=Nr\lambda-2\lambda.\]

Next, we will introduce alternating height-varying sequence of level loops. Suppose that $h$ is a zero-boundary $\GFF$ on $\U$. Fix $r\in (0,1)$ and a target point $z\in\U$. Assume that $L_0=\partial \U$ is oriented counterclockwise. We start by the upward height-varying sequence of level loops of $h$. Let $N_1$ be the transition step and denote sequence by $(L_1,..., L_{N_1})$. Let $m_{N_1}$ be the conditional mean of $h$ restricted to $\inte(L_{N_1})$ given $(L_1,...,L_{N_1})$, then
\[m_{N_1}=2\lambda-N_1r\lambda.\]
We continue the sequence by the downward height-varying sequence of level loops of $h$ restricted to $\inte(L_{N_1})$. Let $N_2$ be the its transition step and denote the sequence by $(L_{N_1+1},...,L_{N_1+N_2})$. Let $m_{N_1+N_2}$ be the conditional mean of $h$ restricted to $\inte(L_{N_1+N_2})$ given $(L_1,L_2,...,L_{N_1+N_2})$, then 
\[m_{N_1+N_2}=(N_2-N_1)r\lambda.\]
Set $M_1=N_1+N_2$. 

Generally, given $(L_1,L_2,..., L_{M_k})$ for some $k\ge 1$, we have 
\[m_{M_k}=\sum_{j=1}^{2k}(-1)^j N_j r\lambda.\]
We continue the sequence by the upward height-varying sequence $(L_{M_k+1},...,L_{M_k+N_{2k+1}})$ where $N_{2k+1}$ is its transition step. We then continue the sequence by the downward height-varying sequence $(L_{M_k+N_{2k+1}+1},...,L_{M_k+N_{2k+1}+N_{2k+2}})$ where $N_{2k+2}$ is its transition step. Set
\[M_{k+1}=M_k+N_{2k+1}+N_{2k+2},\] and let $m_{M_{k+1}}$ be the conditional mean of $h$ restricted to $\inte(L_{M_{k+1}})$ given $(L_1,..,L_{M_{k+1}})$, we have 
\[m_{M_{k+1}}=\sum_{j=1}^{2k+2}(-1)^j N_j r\lambda.\] 

In this way, we obtain an infinite sequence $(L_n,n\ge 0)$ which we call \textbf{alternating height-varying sequence of level loops} of $h$ with height difference $r\lambda$ starting from $L_0=\partial\U$ (counterclockwise) targeted at $z$.

If we orient $L_0=\partial\U$ clockwise, we can define alternating height-varying sequence similarly, the only difference is that we start by downward height-varying sequence. 

\begin{proposition}\label{prop::levelloops_inside_deterministic}
The alternating height-varying sequence of level loops of $\GFF$ is almost surely determined by the field. 
\end{proposition}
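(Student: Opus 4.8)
The plan is to build the alternating height-varying sequence $(L_n, n\ge 0)$ step by step from $L_0=\partial\U$, and at each step invoke the corresponding statement for a single level loop, namely Lemma \ref{lem::levellinesI_levelloop}, which asserts that the level loop $L_u$ of a zero-boundary $\GFF$ starting from $\partial\U$ with height $u\in(-\lambda,\lambda)$ is almost surely determined by the field. The key observation is that the construction of the alternating sequence is recursive in a way that is compatible with this lemma: given $(L_1,\dots,L_k)$, the next loop $L_{k+1}$ is the level loop (with a deterministic height $u_{k+1}$ depending only on the orientations seen so far) of the field $h$ restricted to the connected component $\inte(L_k)$ of $\U\setminus L_k$ containing $z$, and by the domain Markov property this restricted field is, conditionally on $(L_1,\dots,L_k)$, a $\GFF$ on $\inte(L_k)$ with deterministic (affine) boundary data determined by the orientations of $L_1,\dots,L_k$. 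Conformally mapping $\inte(L_k)$ to $\U$ and subtracting the harmonic extension of the boundary data, we are in exactly the setting of Lemma \ref{lem::levellinesI_levelloop} (its ``moreover'' clause also gives the needed characterization: any loop with the three listed properties coincides with $L_{k+1}$), so $L_{k+1}$ is almost surely determined by $h$ together with $(L_1,\dots,L_k)$.

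First I would make precise the inductive hypothesis: for each $n$, the loop $L_n$ and the transition-step data (the values $N_1,N_2,\dots$ and hence the heights $u_k$ and the current conditional mean $m_n$) are almost surely a measurable function of $h$. The base case $n=0$ is trivial since $L_0=\partial\U$ is deterministic. For the inductive step, condition on $\sigma(L_0,\dots,L_n)$; by Theorem \ref{thm::interior_levelloops_coupling} (equivalently, by the $\GFF$ domain Markov property recalled in Section \ref{subsec::alternate_levelloops_construction}) the conditional law of $h|_{\inte(L_n)}$ is a $\GFF$ with piecewise-constant — in fact constant-plus-$2\lambda$-jump — boundary data read off from the orientation of $L_n$, and the next height $u_{n+1}$ is determined by whether we are in an upward or downward run, which by the inductive hypothesis is $h$-measurable. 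Apply Lemma \ref{lem::levellinesI_levelloop} (after the conformal change of coordinates, using the conformal invariance of the level-line/level-loop coupling and the fact that shifting the field by a deterministic harmonic function shifts the height): $L_{n+1}$ is determined by $h|_{\inte(L_n)}$ given $(L_0,\dots,L_n)$, hence by $h$. Since a countable intersection of almost sure events is almost sure, the whole sequence $(L_n,n\ge 0)$ is almost surely determined by $h$.

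The main technical point, and the one deserving the most care, is the bookkeeping of the boundary data and heights under the recursion, so that at each step the relevant height genuinely lies in the admissible range $(-\lambda,\lambda)$ required by Lemma \ref{lem::levellinesI_levelloop} and that the ``determined by the field'' conclusion propagates cleanly — in particular that the transition steps $N_j$, which govern when one switches between upward and downward runs, are themselves $h$-measurable (they are, since each is the first step at which the newly revealed loop has the ``wrong'' orientation, and orientation of a loop that is determined by $h$ is determined by $h$). One should also note that all the events in question — e.g. $\{L_n$ is clockwise$\}$ — are measurable with respect to the $\sigma$-algebra generated by the loop, and that the conformal maps $\inte(L_n)\to\U$, while random, are measurable functions of $L_n$, so no measurability subtlety arises there. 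Once this bookkeeping is in place the proof is a straightforward induction; I do not expect any genuinely hard estimate, only careful tracking of the deterministic affine boundary values described in Section \ref{subsec::alternate_levelloops_construction}.
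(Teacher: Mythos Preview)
Your proposal is correct and matches the paper's approach. The paper in fact states Proposition \ref{prop::levelloops_inside_deterministic} without an explicit proof, treating it as immediate from the construction: the alternating sequence is built recursively, each $L_{k+1}$ being the level loop of $h$ restricted to $\inte(L_k)$ with a height determined by the previous orientations, and Lemma \ref{lem::levellinesI_levelloop} (together with item (3) there for the conditional boundary data) furnishes exactly the inductive step you spell out.
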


Note that, in the alternating height-varying sequence of level loops with height difference $r\lambda$, each loop has the height of the form 
\[-\lambda+nr\lambda,\text{ for some }n\in\Z.\]
The relation between the heights of two neighbor loops is explained in Figure \ref{fig::levelloops_boundary_inside}. 

\begin{figure}[ht!]
\begin{subfigure}[b]{0.48\textwidth}
\begin{center}
\includegraphics[width=0.625\textwidth]{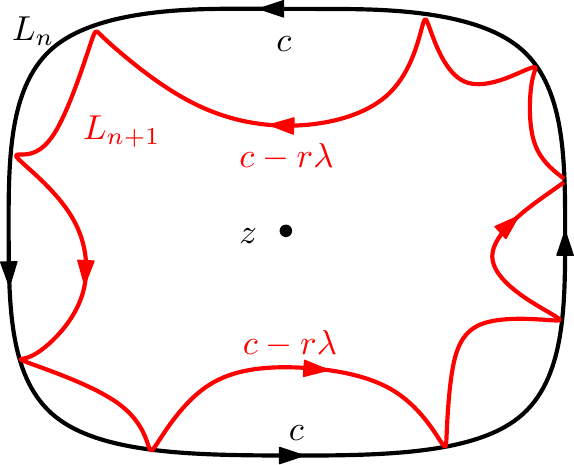}
\end{center}
\caption{If $L_{n+1}$ is counterclockwise, it has boundary value $c-r\lambda$ to the left-side.}
\end{subfigure}
$\quad$
\begin{subfigure}[b]{0.48\textwidth}
\begin{center}\includegraphics[width=0.62\textwidth]{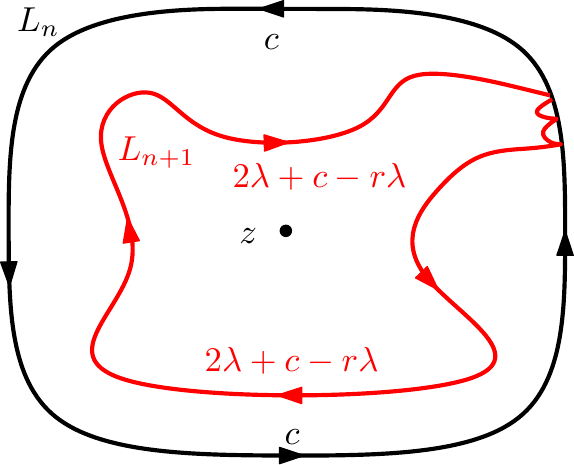}
\end{center}
\caption{If $L_{n+1}$ is clockwise, it has boundary value $2\lambda+c-r\lambda$ to the right-side.}
\end{subfigure}
\begin{subfigure}[b]{0.48\textwidth}
\begin{center}
\includegraphics[width=0.625\textwidth]{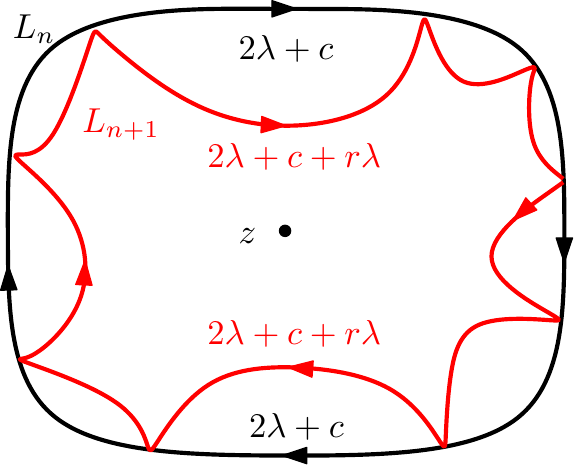}
\end{center}
\caption{If $L_{n+1}$ is clockwise, it has boundary value $2\lambda+c+r\lambda$ to the right-side.}
\end{subfigure}
$\quad$
\begin{subfigure}[b]{0.48\textwidth}
\begin{center}\includegraphics[width=0.62\textwidth]{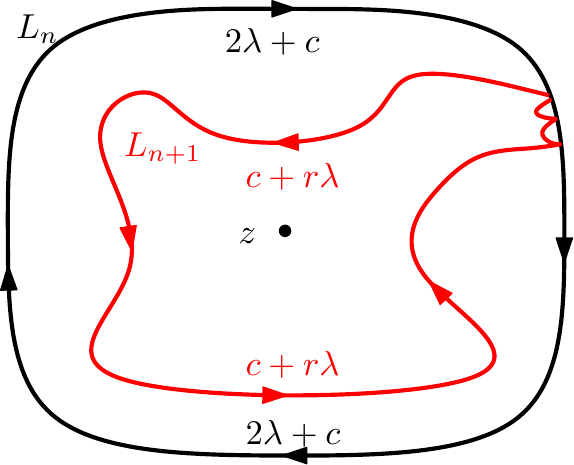}
\end{center}
\caption{If $L_{n+1}$ is counterclockwise, it has boundary value $c+r\lambda$ to the left-side.}
\end{subfigure}
\caption{\label{fig::levelloops_boundary_inside} Explanation of the relation between the heights of two neighbor loops.  Assume that the boundary value of the loop $L_n$ is $c$ to the left-side and is $2\lambda+c$ to the right-side. Note that, if the height of the loop $L_n$ is of the form $-\lambda+nr\lambda$ for some $n\in\Z$, the height of the loop $L_{n+1}$ is $-\lambda+(n\pm 1)r\lambda$.}
\end{figure}

From the construction, we know that the alternating height-varying sequence $(L_n, n\ge 0)$ satisfies the following domain Markov property.
\begin{proposition}\label{prop::levelloops_inside_markov}
Suppose that $(L_n, n\ge 0)$ is an alternating height-varying sequence of level loops. For any stopping time $N$, given $(L_n, n\le N)$, let $g_N$ be the conformal map from $\inte(L_N)$ onto $\U$ such that $g_N(z)=z, g_N'(z)>0$, then the conditional law of the sequence $(g_N(L_{N+n}), n\ge 0)$ is the same as alternating height-varying sequence of level loops. 
\end{proposition}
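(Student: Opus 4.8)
The plan is to reduce the statement to the domain Markov property of a single level line together with the construction of the alternating height-varying sequence as an iterated concatenation of upward and downward height-varying blocks. First I would treat the case of a deterministic step $N=n_0$, and then bootstrap to a general stopping time. The key observation is that each loop $L_{k+1}$ in the sequence is built as the level loop of $h$ restricted to $\inte(L_k)$ with a prescribed height, and the level loop itself is constructed from a level line (a radial $\SLE_4(\rho^L;\rho^R)$), whose coupling with the $\GFF$ enjoys the conformal Markov property recalled in Section \ref{subsec::alternate_levelloops_construction}.

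The main steps, in order, are as follows. (i) Record that, by Lemma \ref{lem::levellinesI_levelloop}, conditionally on $(L_n, n\le n_0)$ the field $h$ restricted to $\inte(L_{n_0})$ is a $\GFF$ whose boundary value is a constant, namely $\lambda-u$ (resp. $-\lambda-u$) to the right (resp. left) of $L_{n_0}$, where $u$ is the height of $L_{n_0}$; equivalently, after subtracting the conditional mean $m_{n_0}$, it is a zero-boundary $\GFF$ on $\inte(L_{n_0})$. (ii) Apply the conformal map $g_{n_0}\colon \inte(L_{n_0})\to\U$ with $g_{n_0}(z)=z$, $g_{n_0}'(z)>0$; since the $\GFF$ is conformally invariant and level lines/level loops transform covariantly under conformal maps (with heights unchanged, because the height is a fixed additive constant and the Loewner parametrization is by conformal radius seen from $z$, which is intrinsic), the conditional law of $(g_{n_0}(L_{n_0+n}),n\ge 0)$ given $(L_n,n\le n_0)$ is exactly the law of the alternating height-varying sequence generated from a zero-boundary $\GFF$ on $\U$ targeted at $z$. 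Here one must check that the ``phase'' of the alternating construction matches: if $L_{n_0}$ is counterclockwise the continuation is an upward block, if clockwise a downward block, which is precisely how the alternating sequence was defined; and the heights $-\lambda + nr\lambda$ are preserved because subtracting the constant $m_{n_0}$ shifts the reference height back to that of a zero-boundary field, as in the recursion for $m_{M_k}$ in Section \ref{subsec::alternate_levelloops_construction}. (iii) To pass from a deterministic $n_0$ to a general stopping time $N$, decompose over $\{N=n_0\}$ for each $n_0\ge 0$: on each such event the claim follows from (i)--(ii) since $\{N=n_0\}$ is measurable with respect to $(L_n,n\le n_0)$; summing over $n_0$ gives the general statement.

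I do not expect a serious obstacle here, since the statement is essentially a repackaging of the construction, but the step requiring the most care is (ii): verifying that conformal covariance of the level-loop construction is exact, i.e. that $g_{n_0}$ maps the level loop of $h|_{\inte(L_{n_0})}$ with a given height to the level loop of the pushed-forward field with the same height, and that the stopping-rule for the transition step (chance $r/2$ at each level, geometric law) is preserved under this map. This reduces to the conformal Markov property of radial $\SLE_4(\rho^L;\rho^R)$ and the fact that the continuation threshold $\tau=\inf\{t: V^R_t=W_t=V^L_t\}$ is a conformally intrinsic notion, both of which are available from \cite{WangWuLevellinesGFFI}. A minor additional point is that $g_{n_0}$ is only defined once $\inte(L_{n_0})$ is known to be simply connected with $z$ in its interior, which is guaranteed by property (1) of a transient sequence of adjacent simple loops together with Lemma \ref{lem::levellinesI_levelloop}(1).
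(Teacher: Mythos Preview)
Your proposal is correct and is essentially a careful elaboration of what the paper asserts without proof: the paper simply states that the proposition follows ``from the construction'' and gives no further argument. Your steps (i)--(iii) are exactly the natural unpacking of that claim---conditional on $(L_n,n\le n_0)$ the field restricted to $\inte(L_{n_0})$ is a $\GFF$ with constant boundary data, conformal invariance of the $\GFF$ and of the level-loop construction transports the continuation to $\U$, and the passage to a general stopping time is by decomposing over $\{N=n_0\}$---so there is nothing to contrast with the paper beyond the fact that you have written out what the authors left implicit.
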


\begin{proposition}\label{prop::levelloops_inside_transience}
The alternating height-varying sequence of level loops $(L_n, n\ge 0)$ starting from $L_0=\partial\U$ targeted at $z\in\U$ is transient: the loop $L_n$ converges to $\{z\}$ in Hausdorff metric as $n\to\infty$.
\end{proposition}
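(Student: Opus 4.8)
The plan is to show that the sequence of inradii $\inrad(L_n)$ tends to zero almost surely, which combined with the nesting property $L_{n+1}\subset\overline{\inte(L_n)}$ and the fact that each $L_n$ disconnects $z$ from $\partial\U$ will force $L_n\to\{z\}$ in the Hausdorff metric. After applying the conformal map $g_n$ from $\inte(L_n)$ onto $\U$ fixing $z$, the domain Markov property (Proposition~\ref{prop::levelloops_inside_markov}) tells us that the conditional law of $g_n(L_{n+1})$ is that of the first level loop in a fresh alternating height-varying sequence in $\U$ targeted at $z$. Hence, writing $C_n=\CR(\inte(L_n);z)$ for the conformal radius seen from $z$, the increments $-\log C_{n+1}+\log C_n$ are i.i.d.\ copies of $X:=-\log\CR(\inte(L_1);z)$ where $L_1$ is a single level loop, and it suffices to show $\E[X]>0$ (in fact $\E[X]\in(0,\infty)$), since then by the strong law of large numbers $-\log C_n=\sum_{k=1}^n (-\log C_k + \log C_{k-1})\to+\infty$, i.e.\ $C_n\to 0$.

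The key step is therefore to analyze $\CR(\inte(L_1);z)$ for a single level loop. Recall from the discussion preceding Lemma~\ref{lem::levellinesI_levelloop} that $L_1$ is built from a radial $\SLE_4(\rho^L;\rho^R)$ curve $\gamma_{u_1}$ run until its continuation threshold $\tau=\inf\{t: V^R_t=W_t=V^L_t\}$, with the curve parametrized so that $\CR(\U\setminus\gamma_{u_1}[0,t];z)=e^{-t}$, and $\inte(L_1)=U_\tau$ is the connected component of $\U\setminus\gamma_{u_1}[0,\tau]$ containing $z$. Thus $-\log\CR(\inte(L_1);z)=\tau$, so I must show $\E[\tau]\in(0,\infty)$, where $\tau$ is the continuation-threshold time of this radial Loewner evolution. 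Positivity is immediate since $\tau>0$ almost surely (the curve has strictly positive length by Lemma~\ref{lem::levellinesI_remark2515}(1), as $u_1=-\lambda+r\lambda\in(-\lambda,\lambda)$ for $r\in(0,1)$); finiteness $\E[\tau]<\infty$ follows from the fact that $\tau$ has an exponential tail. The latter is the standard argument for radial $\SLE_\kappa(\underline\rho)$ processes: by the domain Markov property of the driving SDE and a compactness/Harnack argument, there is a uniform lower bound $\delta>0$ on the conditional probability that the process is absorbed (reaches the continuation threshold) within one unit of capacity time given that it has not yet been absorbed, so $\PP[\tau>n]\le(1-\delta)^n$.

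The main obstacle I expect is making the uniform absorption-probability bound rigorous for the particular radial $\SLE_4(\rho^L;\rho^R)$ process with force points immediately adjacent to the starting point and weights $\rho^L=-u_1/\lambda-1$, $\rho^R=u_1/\lambda-1$ (which sum to $-2$); one must check that the configuration of force points $(V^R_t,W_t,V^L_t)$ genuinely coalesces in finite time with the claimed tail, controlling the behavior of the Bessel-like coordinate processes as the force points approach $W_t$. An alternative, perhaps cleaner, route that avoids hands-on SDE estimates: observe that by Lemma~\ref{lem::levellinesI_levelloop}(2) we have $L_1\cap\partial\U\neq\emptyset$, so the harmonic measure of $\partial\U$ seen from $z$ in $\inte(L_1)$ is bounded below by a strictly positive universal constant (again after conformal mapping and a Beurling-type estimate), which directly gives $\CR(\inte(L_1);z)\le 1-c$ for a deterministic $c>0$, hence $\E[X]\ge -\log(1-c)>0$ without needing the full exponential tail — only $\E[X]<\infty$ still requires a tail bound, but even a first-moment bound on $\tau$ suffices and can be extracted from Lemma~\ref{lem::levellinesI_remark2515} together with the geometric structure of the upward/downward sub-sequences. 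Finally, once $C_n\to0$, I will upgrade to Hausdorff convergence: since $z\in\inte(L_n)$ and $\diam(\inte(L_n))$ is comparable to $\inrad(L_n)\le\CR(\inte(L_n);z)\to 0$ by the Koebe $1/4$ theorem (giving $\inrad(L_n)\ge C_n/4$ is the wrong direction, so instead use $\outrad$ of the complementary domain, or note $\sup_{w\in L_n}|w-z|\le C_n$ by Koebe applied to the complement), every point of $L_n$ lies within $o(1)$ of $z$, which is exactly the assertion.
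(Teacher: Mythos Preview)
Your i.i.d.\ log-conformal-radius argument does establish $C_n=\CR(\inte(L_n);z)\to 0$ almost surely (and you do not even need $\E[\tau]<\infty$: since $X>0$ a.s., pick $\eps>0$ with $\PP[X>\eps]>0$ and apply the second Borel--Cantelli lemma to the independent events $\{X_n>\eps\}$ to get $\sum X_n=\infty$). The real gap is the final step. Conformal radius only controls the \emph{in}radius: Koebe gives $\dist(z,L_n)\asymp C_n$, so $C_n\to 0$ says the loops come arbitrarily close to $z$, but it says nothing about the outradius $\sup_{w\in L_n}|w-z|$. Nested simply connected domains with $C_n\to 0$ can all contain a thin ``keyhole'' tube reaching out to a fixed distance from $z$, in which case $L_n\not\to\{z\}$ in the Hausdorff metric. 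Your suggested fixes do not work: there is no Koebe-type upper bound on the diameter of a simply connected domain in terms of its conformal radius, and the harmonic-measure claim is incorrect as stated --- $\partial\U$ meets $\partial\,\inte(L_1)=L_1$ only on a polar set, so its harmonic measure from $z$ in $\inte(L_1)$ is $0$, not bounded below.

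The paper sidesteps this entirely by never passing through conformal radii. It shows that after a fixed number $n_0=\lceil 2/r\rceil$ of steps, with probability at least $p>0$ one has $L_{n_0}\subset\eta\U$ for some fixed $\eta<1$ in the \emph{Euclidean} sense. The mechanism: on $\{N_1\ge n_0\}$ (the first transition has not yet occurred) the loop $L_{n_0}$ has height $-\lambda+n_0 r\lambda\ge\lambda$, and by the hitting rules in Lemma~\ref{lem::levellinesI_remark2515} a level loop of such height cannot touch the zero-boundary $\partial\U$; hence $L_{n_0}$ is compactly contained in $\U$ with positive probability. From there the domain Markov property makes the events $\{\phi_k(L_{(k+1)n_0})\subset\eta\U\}$ i.i.d., Borel--Cantelli supplies a subsequence $(k_j)$ along which they hold, and a Schwarz-lemma induction (factor $\phi_{k_j}$ through the rescaling $w\mapsto w/\eta^j$ and use $|\varphi(w)|\ge|w|$ for the residual map) yields $L_{k_j n_0}\subset\eta^j\U$, which is Hausdorff convergence to $\{0\}$. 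The crucial point is that this induction propagates \emph{Euclidean containment}; a conformal-radius hypothesis would not propagate through the conformal change of coordinates, which is exactly where your approach breaks down.
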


\begin{proof}
We may assume that the target point is the origin and that $L_0$ is counterclockwise. Suppose that the height difference is $r\lambda$ for $r\in (0,1)$. 
\medbreak 
\textit{First}, we show that there exist constants $n_0, \eta\in(0,1)$, and $p\in (0,1)$ such that
\[\PP\left[L_{n_0}\subset \eta\U\right]\ge p.\]
Set $n_0=\lceil 2/r\rceil$. Let $N_1$ be the first transition step of the sequence $(L_n, n\ge 0)$. On the event $\{N_1\ge n_0\}$, the level loop $L_{n_0}$ is of height $-\lambda+n_0r\lambda\ge \lambda$, thus $L_{n_0}\cap \partial\U=\emptyset$ (by Lemma \ref{lem::levellinesI_remark2515}). Therefore,
\[\PP[L_{n_0}\cap \partial\U=\emptyset]\ge\PP[N_1>n_0]=\left(1-r/2\right)^{n_0}.\] 
Thus, there exists $\eta\in (0,1)$ such that
\[\PP[L_{n_0}\subset \eta\U]\ge \left(1-r/2\right)^{n_0}/2.\]
\medbreak
\textit{Next}, we show the transience. For $k\ge 0$, let $\phi_k$ be the conformal map from $\inte(L_{kn_0})$ onto $\U$ such that $\phi_k(0)=0, \phi_k'(0)>0$. From the domain Markov property of the sequence $(L_n, n\ge 0)$, we know that $(\phi_k(L_{(k+1)n_0}), k\ge 0)$ is an i.i.d. sequence. By the first step, we have
\[\sum_k \PP\left[\phi_k(L_{(k+1)n_0})\subset \eta\U\right]=\infty.\]
By Borel-Cantelli Lemma, almost surely, there exists a sequence $k_j\to\infty$ such that 
\[\left\{\phi_{k_j}(L_{(k_j+1)n_0})\subset \eta\U\right\} \text{  holds for all } j\ge 0.\]
In particular,
\[\left\{\phi_{k_j}(L_{(k_{j+1})n_0})\subset \eta\U\right\} \text{  holds for all } j\ge 0.\]
Denote $L_{k_jn_0}$ by $l_j$. We only need to show that the sequence $(l_j,j\ge 1)$ is transient. We will show $l_j\subset \eta^j \U$ by induction on $j$. 

For $j=1$, on the one hand, we have $\phi_{k_0}(l_1)\subset \eta\U$; on the other hand, since $|\phi_{k_0}(z)|\ge |z|$ for all $z$, we have $\phi_{k_0}^{-1}(\eta\U)\subset\eta\U$.
Combining these two facts, we have that $l_1\subset\phi_{k_0}^{-1}(\eta\U)\subset\eta\U$.

Assume that the claim holds for $j$, let us consider $l_{j+1}$. We know that  $\phi_{k_j}(l_{j+1})\subset \eta\U$, it is then sufficient to argue that $\phi_{k_j}^{-1}(\eta\U)\subset\eta^{j+1}\U$. Note that $\phi_{k_j}$ is the conformal map from $\inte(l_j)$ onto $\U$ with $\phi_{k_j}(0)=0, \phi_{k_j}'(0)>0$.  Let $\varphi_1(z)=z/\eta^j$. Since $l_j\subset \eta^j\U$, the set $\varphi_1(\inte(l_j))$ is still contained in $\U$. Let $\varphi_2$ be the conformal map from $\varphi_1(\inte(l_j))$ onto $\U$ with $\varphi_2(0)=0,\varphi_2'(0)>0$. Then $\phi_{k_j}=\varphi_2\circ\varphi_1$. Since $|\varphi_2(z)|\ge |z|$ for all $z$, we have $\varphi_2^{-1}(\eta\U)\subset\eta\U$. Thus
\[\phi_{k_j}^{-1}(\eta\U)=\varphi_1^{-1}\varphi_2^{-1}(\eta\U)\subset\varphi_1^{-1}(\eta\U)=\eta^{j+1}\U.\]
This completes the proof. 
\end{proof}

The following proposition describes the target-independence of the sequence of level loops.

\begin{proposition}\label{prop::levellinesI_targetindependence}
Fix $r\in (0,1)$. Suppose that $h$ is a zero-boundary $\GFF$ on $\U$. Fix two points $w_1,w_2\in\U$. For $i=1,2$, let $(L_n^{w_i}, n\ge 0)$ be the alternating height-varying sequence of level loops of $h$ with height difference $r\lambda$ starting from $\partial\U$ targeted at $w_i$; and denote by $U^{w_i}_n$ the connected component of $\U\setminus L^{w_i}_n$ that contains $w_i$. Then there exists a number $M$ such that
\[L^{w_1}_n=L^{w_2}_n\quad\text{for }n\le M-1;\quad U^{w_1}_n\cap U^{w_2}_n=\emptyset,\quad\text{for }n=M.\]
Given $(L_n^{w_1}, L_n^{w_2}, n\le M)$, the two sequences continue towards their target points respectively in a conditionally independent way. 
\end{proposition}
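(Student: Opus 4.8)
The plan is to run the two alternating sequences simultaneously and prove the statement by induction on the loop index, the inductive step being a ``single-loop target independence'' fact combined with the domain Markov property of the alternating sequence (Proposition~\ref{prop::levelloops_inside_markov}); transience (Proposition~\ref{prop::levelloops_inside_transience}) will give the finiteness of the index $M$.

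First I would establish the single-loop statement, in a simply connected domain $D$ with suitable boundary data: for interior targets $w_1,w_2$ and a height $u\in(-\lambda,\lambda)$, letting $\gamma^{w_i}$ be the level line targeted at $w_i$ run to its continuation threshold, $L^{w_i}$ the resulting level loop and $U^{w_i}$ the component of $D\setminus L^{w_i}$ containing $w_i$, one has: either $w_1$ and $w_2$ lie in the same component of $D\setminus L^{w_1}$, in which case $L^{w_1}=L^{w_2}$ as oriented loops and $U^{w_1}=U^{w_2}$; or they lie in different components, in which case $U^{w_1}\cap U^{w_2}=\emptyset$ and, conditionally on the level line run up to the time it disconnects $w_1$ from $w_2$, the loops $L^{w_1}$ and $L^{w_2}$ --- hence the whole subsequent explorations --- are independent. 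I would deduce this from two inputs. The level line targeted at an interior point is a radial $\SLE_4(\rho^L;\rho^R)$ with $\rho^L+\rho^R=-2$, and such processes targeted at distinct points agree up to time change until the two targets are disconnected; since moreover each level line is a.s.\ determined by the field, the traces $\gamma^{w_1}$ and $\gamma^{w_2}$ coincide up to the first time $\sigma$ at which one of them disconnects $w_1$ from $w_2$. And the continuation threshold is target-independent, being read off from the driving and force-point processes relative to the starting point together with the boundary data near the tip. Hence if the continuation threshold is reached before $\sigma$ then $w_1,w_2$ still lie in the same component of the complement of the traced loop, which is the first case; if $\sigma$ comes first then $\gamma[0,\sigma]$ splits $D$ into components $O^{w_1}\ni w_1$ and $O^{w_2}\ni w_2$, after time $\sigma$ each $\gamma^{w_i}$ is (by the Markov property of level lines) the level line of $h|_{O^{w_i}}$ targeted at $w_i$, and since $O^{w_1},O^{w_2}$ are distinct components the restricted fields --- hence $L^{w_1},L^{w_2}$ --- are conditionally independent given $\gamma[0,\sigma]$ by the local-set property, with $U^{w_1}\subseteq O^{w_1}$ disjoint from $U^{w_2}\subseteq O^{w_2}$.

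With this in hand I would set $U_0^{w_i}=\U$, let $M$ be the first $n\ge1$ with $w_2\notin U_n^{w_1}$ (well defined since the $U_n^{w_1}$ are nested and decreasing), and run the induction. By Proposition~\ref{prop::levelloops_inside_markov}, conditionally on $(L_k^{w_i},k\le n)$ the tail of the $w_i$-sequence is a fresh alternating sequence in $U_n^{w_i}$ with boundary data as in Lemma~\ref{lem::levellinesI_levelloop}, so applying the single-loop statement inside $U_{n-1}^{w_1}$ --- which contains both $w_1$ and $w_2$ whenever $n\le M$, and in which the two sequences have the \emph{same} next height, their previous loops coinciding and hence having the same orientation --- gives $L_n^{w_1}=L_n^{w_2}$ and $U_n^{w_1}=U_n^{w_2}$ for $n<M$, and at $n=M$ (where $w_2\notin U_M^{w_1}$, so the disconnecting case applies) gives $U_M^{w_1}\cap U_M^{w_2}=\emptyset$ together with conditional independence of the two continuations given the $M$-th step level line up to its disconnecting time. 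Since, given $(L_n^{w_1},L_n^{w_2},n\le M)$, each continuation depends only on the field in $U_M^{w_i}$, and these two fields are conditionally independent --- being supported on disjoint subdomains of $O^{w_1}$ and $O^{w_2}$ --- a routine use of the local-set Markov property rephrases the conclusion as conditional independence given $(L_n^{w_1},L_n^{w_2},n\le M)$. Finally $M<\infty$ a.s.: by Proposition~\ref{prop::levelloops_inside_transience} the loops $L_n^{w_1}$ converge to $\{w_1\}$ in the Hausdorff metric, so for large $n$ the loop $L_n^{w_1}$ lies in a ball about $w_1$ avoiding $w_2$, whence $U_n^{w_1}=\inte(L_n^{w_1})$ lies in that ball and $w_2\notin U_n^{w_1}$.

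The step I expect to be the main obstacle is the single-loop target independence --- in particular, reconciling the disconnecting time with the target-independent continuation threshold, handling (or showing negligible) the borderline event that the two coincide, and making the conditional-independence statement in the disconnecting case precise. The remaining induction and the finiteness of $M$ should be comparatively routine.
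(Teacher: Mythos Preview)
The paper does not actually prove this proposition here; its entire proof is the citation \cite[Section 3.5]{WangWuLevellinesGFFI}. Your proposal is a faithful reconstruction of what that cited argument amounts to: the single-loop step is precisely the target-independence of radial $\SLE_4(\rho^L;\rho^R)$ with $\rho^L+\rho^R=-2$ (the coordinate-change identity), together with the fact that the level loop is determined by the field (Lemma~\ref{lem::levellinesI_levelloop}); the induction via the domain Markov property (Proposition~\ref{prop::levelloops_inside_markov}) and the finiteness of $M$ via transience (Proposition~\ref{prop::levelloops_inside_transience}) are then routine, exactly as you lay them out.

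On the obstacle you flag: your instinct is right that the continuation threshold is read off from the boundary force-point configuration $(V^L,W,V^R)$ and does not see the interior target, so the two level lines share the same continuation threshold up to the disconnection time. The borderline event $\sigma=\tau$ is harmless because at $\tau$ the loop is already complete as a subset of $\U$; if $w_1,w_2$ land in the same component of $\U\setminus\gamma[0,\tau]$ you are in the first case, and if not then they were separated at time $\tau$, which is still the ``disconnecting'' case with the conditional-independence conclusion following from the fact that the restricted fields on the two components are conditionally independent given $\gamma[0,\tau]$. So there is no genuine gap, only bookkeeping.
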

\begin{proof}
\cite[Section 3.5]{WangWuLevellinesGFFI}.
\end{proof}

Suppose that $(\tilde{L}_n, n\ge 0)$ is an alternating height-varying sequence of level loops starting from $\tilde{L}_0=\partial\U$ targeted at $z\in\U$. We want to change the index of the loops so that they are indexed by minus the log of the conformal radius seen from $z$. Namely, we construct the sequence $(L_t, t\ge 0)$ so that 
\[L_t=\tilde{L}_n,\quad\text{if }-\log\CR(\inte(\tilde{L}_n);z)\le t<-\log\CR(\inte(\tilde{L}_{n+1});z).\]
We call the obtained sequence $(L_t, t\ge 0)$ \textbf{alternating height-varying sequence of level loops indexed by minus the log conformal radius} seen from $z$.

Next, we will introduce a whole-plane version of alternating height-varying sequence of level loops. Before this, we briefly recall the conformal radius seen from infinity. When $D\subset\C$ is a simply connected compact set, its complement $\C\setminus D$ is a simply connected domain in the Riemann sphere that contains $\infty$. Let $\phi:\C\setminus\overline{\U}\to\C\setminus D$ be the unique bijective conformal map with $\phi(\infty)=\infty$ and the expansion at $\infty$ of the following form
\[\phi(z)=c_1z+c_0+c_{-1}z^{-1}+\cdots,\quad c_1>0.\]
We call $c_1$ the derivative of $\phi$ at $\infty$, denoted by $\phi'(\infty)$; call $1/c_1$ the conformal radius of $\C\setminus D$ seen from $\infty$, denoted by $\CR(\C\setminus D;\infty)$. Note that, for $\eps>0$,
\[\CR\left(\C\setminus (\eps\U);\infty\right)=\CR\left(\frac{1}{\eps}\U;0\right)=\frac{1}{\eps}.\]

For each $\eps>0$, let $h_{\eps}$ be a $\GFF$ on $\C_{\eps}=\C\setminus(\eps\U)$ with zero-boundary value. The boundary $\partial (\eps\U)$ is oriented either clockwise or counterclockwise. We can uniquely generate the alternating height-varying sequence of level loops of $h_{\eps}$ starting from $\partial(\eps\U)$ targeted at $\infty$: $(L_t^{\eps},t\ge \log\eps)$, where the loops are indexed by minus log conformal radius seen from $\infty$. Namely,
\[-\log\CR(\ext(L_t^{\eps});\infty)=t,\quad \text{for }t\ge\log\eps.\]
For each $t\ge \log\eps$, let $g_t^{\eps}$ be the conformal map from $\ext(L_t^{\eps})$ onto $\C\setminus\U$ with $g_t^{\eps}(\infty)=\infty, (g_t^{\eps})'(\infty)>0$. The following lemma explains that the family $(g_t^{\eps},t\ge \log\eps)$ is convergent. 

\begin{lemma}\label{lem::levelloops_outside_convergent}
The family of conformal maps $(g_t^{\eps}, t\ge\log\eps)$ converges weakly to some limit $(g_t, t\in\R)$ with respect to the topology of local uniform convergence. Namely, for every $\delta>0$ and every compact interval $[a,b]$, the map $g_t^{\eps}$ converges weakly to $g_t$ uniformly on $[a,b]\times\{w\in\C: \dist(w,\inte(L_b))\ge\delta\}$ where $L_t$ is the loop such that $g_t$ is the conformal map from $\ext(L_t)$ onto $\C\setminus\U$ with $g_t(\infty)=\infty, g_t'(\infty)>0$.
\end{lemma}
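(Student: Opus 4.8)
The plan is to obtain $(g_t,t\in\R)$ as a finite-volume limit, in the spirit of the construction of the whole-plane $\GFF$ recalled in Section \ref{subsec::wholeplane_gff}, through four steps: a Koebe-type localization, a coupling of the approximating fields via absolute continuity, a merging argument for the coupled loop sequences, and the extraction of the weak limit. For the localization, note that the compact set $\C\setminus\ext(L_b^\eps)$ contains the origin and that $\ext(L_b^\eps)$ has conformal radius $e^{-b}$ seen from $\infty$, so the Koebe distortion theorem applied to the exterior uniformizing map of $\ext(L_b^\eps)$ gives $\C\setminus\ext(L_b^\eps)\subset\overline{B(0,4e^b)}$. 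Since the loops are nested and grow outward toward $\infty$, it follows that $L_t^\eps\subset\overline{B(0,4e^b)}$ for all $\log\eps\le t\le b$, and therefore the collection $\{(L_t^\eps,g_t^\eps):\log\eps\le t\le b\}$ is a measurable functional of $h_\eps$ restricted to the fixed ball $W_b:=B(0,5e^b)$ (the loops are level loops, determined by the field in the region they explore, by Lemma \ref{lem::levellinesI_levelloop} and Proposition \ref{prop::levelloops_inside_deterministic}).

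For the coupling step, fix a small $\rho>0$. I would show that for $0<\eps'\le\eps$ one can couple $h_{\eps'}$ and $h_\eps$, viewed as fields modulo a global additive constant in $r\lambda\Z$, so that they coincide on $\{|z|>\rho\}$ on an event of probability $1-o(1)$ as $\eps,\eps'\to0$. This follows from Proposition \ref{prop::wholeplanegff_convergence_2.10}(2): the inversion $z\mapsto 1/z$ turns the zero-boundary $\GFF$ $h_\eps$ on $\C_\eps$ into a zero-boundary $\GFF$ on $B(0,1/\eps)$, and since $\dist(0,\partial B(0,1/\eps))=1/\eps\to\infty$, its restriction to $B(0,1/\rho)$ modulo $r\lambda\Z$ converges in total variation to that of the whole-plane $\GFF$ modulo $r\lambda\Z$; undoing the inversion and using the M\"obius invariance of the whole-plane $\GFF$ modulo a constant, $h_\eps|_{\{|z|>\rho\}}$ modulo $r\lambda\Z$ converges in total variation, so the two restricted fields are within $o(1)$ in total variation for $\eps,\eps'$ small and a maximal coupling makes them agree with probability $1-o(1)$.

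Next, on the coupling event I would argue that the two level loop sequences eventually coincide, which yields the convergence. By the (rescaled) transience Proposition \ref{prop::levelloops_inside_transience}, $\inrad(L_t^\eps)\to\infty$ as $t\to\infty$, so for all large $t$ the loops $L_t^\eps$ and $L_t^{\eps'}$ lie in $\{|z|>\rho\}$, and this occurs by a time at most $\log\rho$ plus a finite random amount, uniformly in $\eps,\eps'$. Once a loop of the $\eps$-sequence coincides (with matching orientation) with a loop of the $\eps'$-sequence, all subsequent loops coincide: by the domain Markov property (Proposition \ref{prop::levelloops_inside_markov}), the determinism (Proposition \ref{prop::levelloops_inside_deterministic}), and the fact that the level-loop boundary values on a loop $L\subset\{|z|>\rho\}$ modulo $r\lambda\Z$ depend only on $L$ and its orientation, the continuation of an alternating sequence from $L$ is a deterministic functional of the common field restricted to $\ext(L)$. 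That the two sequences do produce a common loop inside $\{|z|>\rho\}$ I would obtain from the merging of equal-height level lines (Lemma \ref{lem::levellinesI_monotonicity}(2)): the level lines generating the loops there have equal height, and upon meeting they merge and never separate. Consequently $(g_t^\eps,t\ge\log\eps)$ is Cauchy in probability with respect to the topology of local uniform convergence --- for fixed $[a,b]$ and $\delta>0$, on the coupling event $g_t^\eps$ and $g_t^{\eps'}$ agree on $[a,b]\times\{w:\dist(w,\inte(L_b))\ge\delta\}$ once the agreement time falls below $a$ --- and a diagonal argument over $a\downarrow-\infty$, $b\uparrow\infty$, $\delta\downarrow 0$ produces the limit $(g_t,t\in\R)$, with $L_t:=g_t^{-1}(\partial\U)$ inheriting the nesting property and the conformal-radius normalization; Propositions \ref{prop::wholeplanegff_ac_2.11} and \ref{prop::wholeplanegff_localset_2.16} then show the limit is independent of the approximating sequence.

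The main obstacle will be this merging step: upgrading the total-variation closeness of the restricted fields to a coupling in which the two loop sequences honestly coincide. The delicate point is that the early loops of each sequence live near the origin, precisely where $h_\eps$ and $h_{\eps'}$ need not agree, so one must first carry each exploration out into the region $\{|z|>\rho\}$ where the fields are coupled to agree and only then invoke the determinism in the common exterior domain together with the merging of equal-height level lines; defining the relevant stopping times cleanly in the Carath\'eodory topology, in the presence of wiggly loops, and controlling how far the exploration travels before reaching $\{|z|>\rho\}$, is where the work lies.
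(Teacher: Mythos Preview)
Your route---couple $h_\eps$ and $h_{\eps'}$ in total variation on $\{|z|>\rho\}$ via Proposition~\ref{prop::wholeplanegff_convergence_2.10}, then deduce that the two loop sequences eventually merge---is quite different from the paper's, and the merging step contains a genuine gap. The paper never couples the fields: it runs a coupling-from-the-past argument directly on the loop sequences. The separation estimate extracted from the first step of Proposition~\ref{prop::levelloops_inside_transience} says that from any current pair $(L^{\delta_1}_t,L^{\delta_2}_t)$, after a fixed time $T_0$ each new loop is well separated from its predecessor with probability bounded below; combined with the domain Markov property (Proposition~\ref{prop::levelloops_inside_markov}) and scale invariance this yields a uniformly positive probability of coupling $L^{\delta_1}_{t+T_0}=L^{\delta_2}_{t+T_0}$, and iterating over the $\asymp|\log\delta_1|/T_0$ available steps between $t=\log\delta_1$ and $t=0$ gives the quantitative bound $\PP[(L^{\delta_1}_t)_{t\ge0}\neq(L^{\delta_2}_t)_{t\ge0}]\le C\delta_1^{c}$, from which the weak convergence follows.

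The gap in your argument is the merging step itself, and while you correctly flag it as the main obstacle, your proposed resolution via Lemma~\ref{lem::levellinesI_monotonicity}(2) does not apply. That lemma concerns two boundary-emanating level lines of the \emph{same} GFF on the \emph{same} simply connected domain with equal heights. In your situation, once both sequences sit in $\{|z|>\rho\}$, you have two distinct starting loops $L^{\eps}_{n_1}$ and $L^{\eps'}_{n_2}$, and the subsequent alternating sequences are explorations of the (common, modulo $r\lambda\Z$) field restricted to the \emph{different} domains $\ext(L^{\eps}_{n_1})$ and $\ext(L^{\eps'}_{n_2})$, each carrying its own boundary datum; they are not level lines of one GFF on one simply connected domain, and there is no common boundary segment from which both emanate. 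What you actually need is a statement of the type of Proposition~\ref{prop::interaction_distinctstart}: two alternating height-varying sequences of the same field started from distinct loops eventually coincide. But that proposition is formulated and proved for the whole-plane field \emph{after} the present lemma has been used to construct the whole-plane coupling (Theorem~\ref{thm::interior_levelloops_coupling}), so invoking it here is circular. One could try to transplant the case analysis of Lemma~\ref{lem::interaction_distinctstart_auxiliary} and Proposition~\ref{prop::interaction_distinctstart} to the finite-volume domain $\{|z|>\rho\}$, but that is substantially more work than the paper's direct coupling, and you would additionally have to match orientations and heights modulo $r\lambda\Z$ before any merging can even be posed.
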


\begin{proof}
We will show that there exist universal finite constants $C, c>0$ such that the following is true. For any $\delta_1>\delta_2>0$ small, there exists a coupling between $(L_t^{\delta_1},t\ge\log\delta_1)$ and $(L_t^{\delta_2},t\ge\log\delta_2)$ such that
\[\PP\left[(L_t^{\delta_1}, t\ge 0)\neq(L_t^{\delta_2}, t\ge 0)\right]\le C\delta_1^c.\]
This will imply the conclusion.

For any $\eps>0$ small, consider the sequence $(L_t^{\eps}, t\ge\log\eps)$. By the first step in the proof of Proposition \ref{prop::levelloops_inside_transience}, we know that there exist universal constants $T_0,\eta>0$ so that the probability of the event 
$\{\dist(L^{\eps}_0,L^{\eps}_{T_0})\}\ge\eta]$ is bounded uniformly from below. Therefore, given $(L^{\delta_1}_t, L^{\delta_2}_t)$, we can couple $L^{\delta_1}_{t+T_0}$ and $L^{\delta_2}_{t+T_0}$ so that the probability of the event $\{L^{\delta_1}_{t+T_0}=L^{\delta_2}_{t+T_0}\}$ is bounded uniformly from below. 

By the domain Markov property in Proposition \ref{prop::levelloops_inside_markov} and the scaling invariance, there exists a coupling between $L_0^{\delta_1}$ and $L_0^{\delta_2}$ so that 
\[\PP\left[L_0^{\delta_1}\neq L_0^{\delta_2}\right]\le C\delta_1^c,\]
where $C$ is some finite universal constant. On the event $\{L_0^{\delta_1}=L_0^{\delta_2}\}$, we couple the processes to be identical for $t\ge 0$. This completes the proof.
\end{proof}

Suppose that $(g_t, t\in\R)$ is the limit family of conformal maps in Lemma \ref{lem::levelloops_outside_convergent} and that $(L_t, t\in\R)$ is the corresponding sequence of loops, i.e. $g_t$ is the conformal map from $\ext(L_t)$ onto $\C\setminus\U$ with $g_t(\infty)=\infty, g_t'(\infty)>0$. Note that the loops in the sequence $(L_t,t\in\R)$ are indexed by minus the log of conformal radius seen from $\infty$:
\[-\log\CR(\ext(L_t);\infty)=t,\quad \text{for all } t\in\R.\]
We also say that the alternating height-varying sequence of level loops $(L_t^{\eps}, t\ge \log\eps)$ converges weakly to $(L_t,t\in\R)$ with respect to the topology of local uniform convergence. Clearly, the sequence $(L_t, t\in\R)$ is a transient sequence of adjacent simple loops disconnecting the origin from infinity by Proposition \ref{prop::levelloops_inside_transience}. We now are ready to complete the proof of Theorem \ref{thm::interior_levelloops_coupling}.

\begin{proof}[Proof of Theorem \ref{thm::interior_levelloops_coupling}]
\textit{First}, we construct a coupling between whole-plane $\GFF$ and a transient sequence of adjacent simple loops. 
For each $\eps>0$, let $h_{\eps}$ be a $\GFF$ on $\C_{\eps}=\C\setminus \eps\U$ with zero-boundary value. Let $(L^{\eps}_t, t\ge\log\eps)$ be the alternating height-varying sequence of level loops of $h_{\eps}$ starting from $\partial(\eps\U)$ targeted at $\infty$ indexed by minus the log conformal radius seen from $\infty$. Let $(g^{\eps}_t, t\ge \log\eps)$ be the corresponding sequence of conformal maps: $g_t^{\eps}$ is the conformal map from $\ext(L_t^{\eps})$ onto $\C\setminus\U$ with $g_t^{\eps}(\infty)=\infty, (g_t^{\eps})'(\infty)>0$. On the one hand, from Lemma \ref{lem::levelloops_outside_convergent}, we know that the sequence $(L_t^{\eps}, t\ge \log\eps)$ converges weakly to $(L_t,t\in\R)$ and the sequence $(g_t^{\eps}, t\ge\log\eps)$ converges weakly to $(g_t, t\in\R)$ with respect to the topology of local uniform convergence. On the other hand, the $\GFF$s $h_{\eps}$, viewed as distributions defined modulo a global additive constant in $r\lambda\Z$, converge to a whole-plane $\GFF$, modulo a global additive constant in $r\lambda\Z$, as $\eps\to 0$ by Proposition \ref{prop::wholeplanegff_convergence_2.10}. 
This gives a coupling $(h, (L_t,  t\in\R))$ of a whole-plane $\GFF$ $h$ and a transient sequence of adjacent simple loops disconnecting the origin from $\infty$.
\medbreak
\textit{Second}, we show that the coupling $(h, (L_t,  t\in\R))$ satisfies the desired domain Markov property. For each $\eps>0$ and $T\ge\log\eps$, given $(L^{\eps}_t, t\le T)$, the conditional law of $h_{\eps}$ restricted to $\ext(L_T^{\eps})$ is the same as a $\GFF$ with boundary value 
\[\begin{cases}
0,&\text{if }L_T^{\eps}\text{ is clockwise},\\
2\lambda, &\text{if }L_T^{\eps}\text{ is counterclockwise},
\end{cases}\]
modulo a global additive constant in $r\lambda\Z$; and the conditional law of $h_{\eps}$ restricted to other connected components of $\C_{\eps}\setminus\cup_{t\le T}L_t^{\eps}$ is the same as $\GFF$s with boundary value given by level loop boundary conditions with height difference $r\lambda$. The convergence of $(h_{\eps}, (L^{\eps}_t,t\ge \log\eps))$ to $(h, (L_t, t\in\R))$ implies that, given $(L_t, t\le T)$, the conditional law of $h$ restricted to $\C\setminus\cup_{t\le T}L_t$ is the same as $\GFF$s with boundary values given by level loop boundary conditions with height difference $r\lambda$. This implies that the coupling $(h, (L_t,  t\in\R))$ satisfies the desired domain Markov property at deterministic times $T\in\R$. 

For general stopping time $\tau$, suppose that $T\in\R$ is deterministic and $\tau\ge T$. The conditional law of $h$ given $(L_t, t\le T)$ is the same as $\GFF$ in $\ext(L_T)$. Let $(\tilde{L}_t, t\ge T)$ be the alternating height-varying sequence of level loops of $h$ restricted to $\ext(L_T)$ starting from $L_T$ targeted at $\infty$. 
\begin{enumerate}
\item [(a)] By Proposition \ref{prop::levelloops_inside_deterministic}, we know that the sequence $(\tilde{L}_t, t\ge T)$ coincides with the sequence $(L_t, t\ge T)$ almost surely. 
\item [(b)] By Proposition \ref{prop::levelloops_inside_markov}, we know that the sequence $(\tilde{L}_t, t\ge T)$ satisfies the desired domain Markov property for any stopping time.
\end{enumerate}
Combining these two facts, the coupling $(h,(L_t, t\in\R))$ satisfies the desired domain Markov property for the stopping time $\tau\ge T$. Since $T$ is arbitrary, the domain Markov property holds for any stopping time $\tau$.
\medbreak
\textit{Finally}, we prove the conclusion for general domain $D$. Since the law of $h$ (modulo a global additive constant in $r\lambda\Z$) in a small neighborhood of the origin changes in an absolutely continuous way when we replace $\C$ with $D$. We can couple the sequence of loops with the field as if the domain were $\C$ up until the first time that the sequence of the loops exits the small neighborhood. Then the usual boundary emanating level lines allow us to extend the sequence of level loops uniquely by Proposition \ref{prop::wholeplanegff_localset_2.16}. 
\end{proof}

In the coupling $(h, (L_n, n\in\Z))$ as in Theorem \ref{thm::interior_levelloops_coupling}, we call the sequence $(L_n,n\in\Z)$ the \textbf{alternating height-varying sequence of level loops starting from the origin targeted at infinity} (resp. starting from $z$ targeted at $\partial D$) of $h$ with height difference $r\lambda$ when $D=\C$ (resp. when $D\subsetneq\C$).

\subsection{Interaction}
\label{subsec::interaction}
In this section, we will discuss the interaction behavoirs.
\begin{itemize}
\item The interaction between sequence of level loops starting from interior point and boundary emanating level lines: Proposition \ref{prop::interaction_interior_boundary}.
\item The interaction between two sequences of level loops starting from distinct points but targeted at a common point: Proposition \ref{prop::interaction_interior_interior} and Theorem \ref{thm::interior_levelloops_interacting_commontarget}.
\item The interaction between two sequences of level loops starting from a common point but targeted at distinct points: Theorem \ref{thm::interior_levelloops_interacting_commonstart}.
\end{itemize}
\begin{figure}[ht!]
\begin{subfigure}[b]{0.48\textwidth}
\begin{center}
\includegraphics[width=0.8\textwidth]{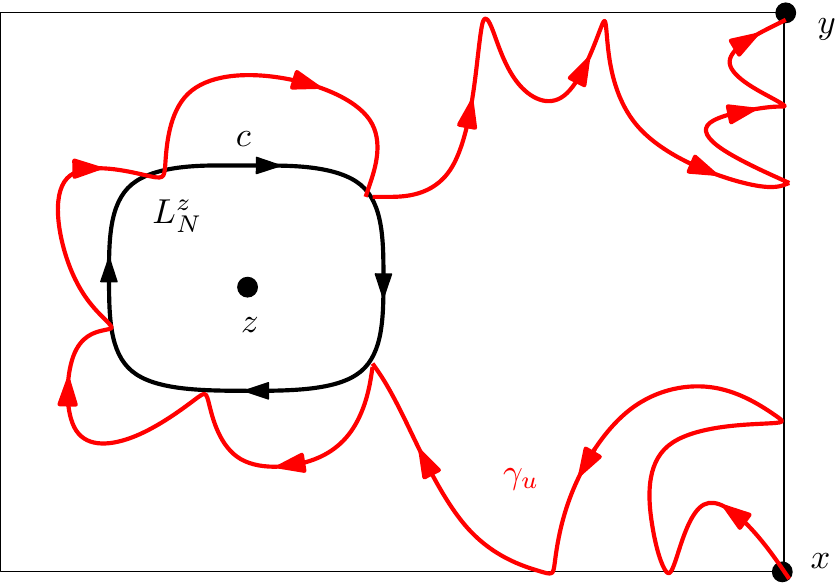}
\end{center}
\caption{The loop $L^z_N$ is clockwise and the level line $\gamma_u$ is targeted at a boundary point.}
\end{subfigure}
$\quad$
\begin{subfigure}[b]{0.48\textwidth}
\begin{center}
\includegraphics[width=0.8\textwidth]{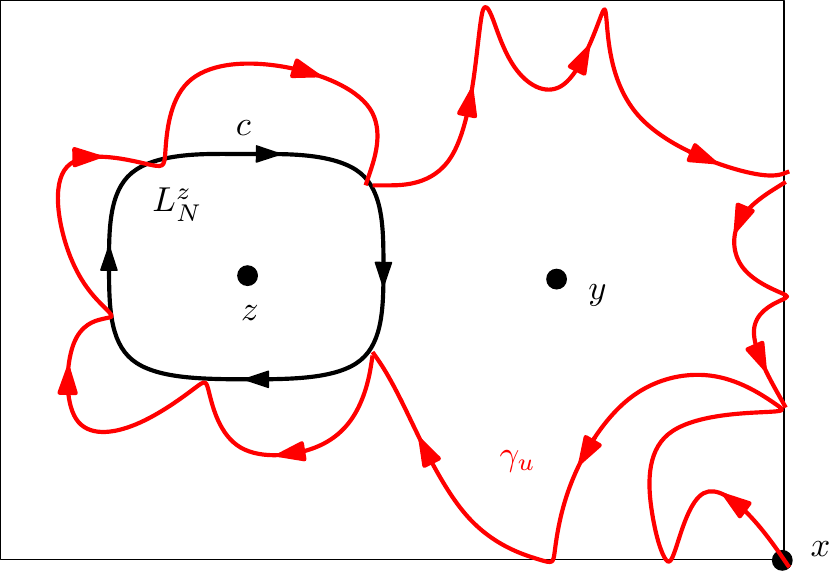}
\end{center}
\caption{The loop $L^z_N$ is clockwise and the level line $\gamma_u$ is targeted at an interior point.}
\end{subfigure}
\begin{subfigure}[b]{0.48\textwidth}
\begin{center}
\includegraphics[width=0.8\textwidth]{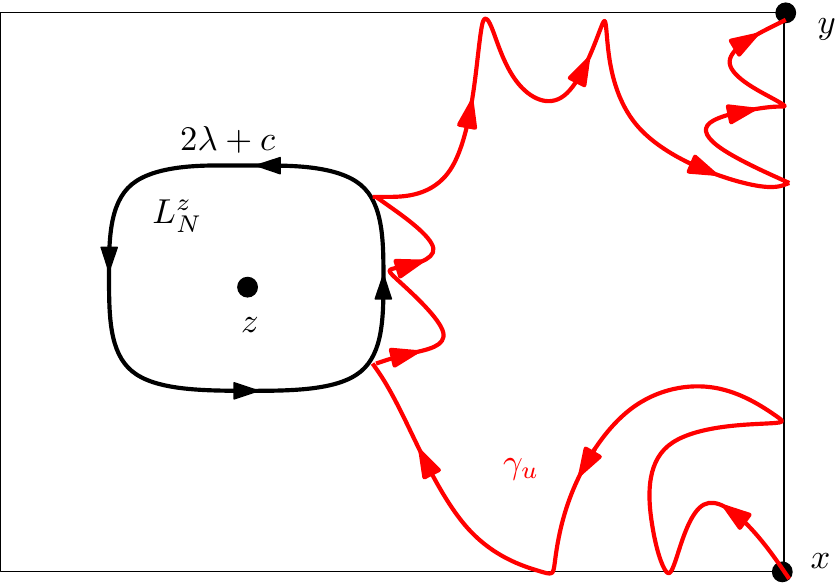}
\end{center}
\caption{The loop $L^z_N$ is counterclockwise and the level line $\gamma_u$ is targeted at a boundary point.}
\end{subfigure}
$\quad$
\begin{subfigure}[b]{0.48\textwidth} \begin{center}
\includegraphics[width=0.8\textwidth]{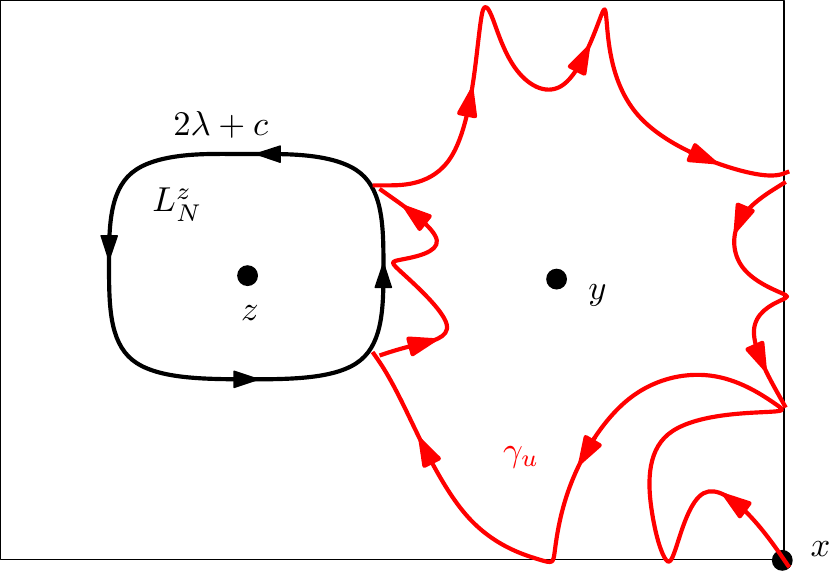}
\end{center}
\caption{The loop $L^z_N$ is counterclockwise and the level line $\gamma_u$ is targeted at an interior point.}
\end{subfigure}
\caption{\label{fig::interaction_interior_boundary} The interaction between a sequence of level loops starting from interior and a boundary emanating level line.}
\end{figure}

\begin{proposition}\label{prop::interaction_interior_boundary}
Fix $r\in (0,1)$ and a domain $D\subset\C$ with harmonically non-trivial boundary. Fix three points $z\in D, x\in\partial D$ and $y\in\overline{D}$. Let $h$ be a $\GFF$ on $D$, let $(L_n^z,n\in\Z)$ be the alternating height-varying sequence of level loops of $h$ with height difference $r\lambda$ starting from $z$ targeted at $\partial D$, and let $\gamma_u^{x\to y}$ be the level line of $h$ with height $u\in\R$ starting from $x$ targeted at $y$. For any $(L^z_n, n\in\Z)$-stopping time $N$, assume that the loop $L^z_N$ does not hit $\partial D$ and $L^z_N$ has boundary value $c$ to the left-side and $2\lambda+c$ to the right-side. On the event $\{\gamma_u^{x\to y}\text{ hits }L^z_N\}$, we have that
\[\begin{cases}
2\lambda+c+u\in (-\lambda,\lambda),&\text{if }L^z_N\text{ is counterclockwise},\\
c+u\in (-\lambda,\lambda),&\text{if }L^z_N\text{ is clockwise}.
\end{cases}\]
Moreover, the level line $\gamma_u^{x\to y}$ stays outside of $L^z_N$. See Figure \ref{fig::interaction_interior_boundary}.
\end{proposition}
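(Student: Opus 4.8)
The plan is to reduce the statement to the boundary-value constraints for ordinary (boundary emanating) level lines via the domain Markov property of the interior sequence of level loops, and then apply Lemma~\ref{lem::levellinesI_remark2515}. The key observation is that, conditionally on $(L^z_n, n\le N)$, the field $h$ restricted to the connected component of $D\setminus\cup_{n\le N}L^z_n$ containing $\partial D$ (the ``outside'' of $L^z_N$) is a $\GFF$ whose boundary data is: the original boundary data of $h$ on $\partial D$, and the level loop boundary data with height difference $r\lambda$ on the loops, in particular boundary value $c$ on the left-side of $L^z_N$ and $2\lambda+c$ on the right-side. Since the level line $\gamma^{x\to y}_u$ starts from $x\in\partial D$, which lies in this outside component, up until the first time $\gamma^{x\to y}_u$ hits $\cup_{n\le N}L^z_n$ it is simply the level line of the $\GFF$ in this domain with the stated boundary data (using that a stopped interior sequence of loops is a local set, so the level line can be coupled with the conditional field). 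So the whole question is whether $\gamma^{x\to y}_u$ can hit $L^z_N$, and which side it hits.

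First I would set up the conditioning: fix the $(L^z_n)$-stopping time $N$, condition on $(L^z_n, n\le N)$, and invoke Theorem~\ref{thm::interior_levelloops_coupling} (for the whole-plane case, modulo a global additive constant in $r\lambda\Z$; for the bounded-domain case directly) together with Proposition~\ref{prop::wholeplanegff_localset_2.16} to realize $\gamma^{x\to y}_u$ as the level line of the conditional $\GFF$ in the outside component $O_N$ of $L^z_N$, at least until it exits $O_N$. Second, I would argue that $\gamma^{x\to y}_u$ cannot \emph{cross} $L^z_N$: this is the standard argument for level lines interacting with local sets on which the boundary data is known — one approximates $L^z_N$ from outside by smooth simple loops, uses monotonicity/continuity of level lines (Lemma~\ref{lem::levellinesI_monotonicity}) and the fact that $\gamma^{x\to y}_u$ treats the (known-boundary-data) loop as part of the boundary, so it can only touch $L^z_N$ from the outside. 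The orientation bookkeeping then determines which side of $L^z_N$ the curve approaches: if $L^z_N$ is clockwise, the exterior $\ext(L^z_N)$ borders the \emph{left} side of $L^z_N$ (recall the orientation convention, Figures~\ref{fig::quasisimpleloop_orientation}, \ref{fig::levelloops_boundary_outside}), which carries boundary value $c$; if $L^z_N$ is counterclockwise, the exterior borders the \emph{right} side, which carries boundary value $2\lambda+c$. I would draw the four cases as in Figure~\ref{fig::interaction_interior_boundary}.

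Third, I would apply Lemma~\ref{lem::levellinesI_remark2515}(3): if $\gamma^{x\to y}_u$ hits a boundary segment with boundary value $c'$ with positive probability, then $c'+u\in(-\lambda,\lambda)$. Since on $\{\gamma^{x\to y}_u$ hits $L^z_N\}$ the curve hits the side of $L^z_N$ adjacent to $\ext(L^z_N)$, this gives $c+u\in(-\lambda,\lambda)$ when $L^z_N$ is clockwise and $2\lambda+c+u\in(-\lambda,\lambda)$ when $L^z_N$ is counterclockwise, which is exactly the claimed dichotomy; and the ``stays outside'' conclusion is precisely the non-crossing statement established in the previous step. One technical point: the segment of $L^z_N$ bordering $\ext(L^z_N)$ is a priori not a smooth interval, and $x$ or $y$ might lie on it in degenerate situations — but $z\in D$ is interior and $x\in\partial D$, so $x\notin L^z_N$; if $y$ is interior one replaces $y$ by a nearby boundary prime end (as in the construction of level lines targeted at interior points from \cite{WangWuLevellinesGFFI}), and Lemma~\ref{lem::levellinesI_remark2515}(3) applies to any open sub-interval of the relevant side of $L^z_N$.

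The main obstacle I expect is making the non-crossing argument fully rigorous: $L^z_N$ is a random simple loop, not a fixed smooth curve, so one cannot directly quote a statement about level lines in a fixed domain with piecewise-constant boundary data. The clean way around this is to use that a stopped interior sequence of level loops is a local set of $h$ (as noted after Proposition~\ref{prop::wholeplanegff_localset_2.16} in the excerpt) and to invoke the general theory of interaction between a level line and a local set with known conditional boundary data — i.e.\ the level line, stopped upon hitting the local set, stays on the prescribed side — which is the analogue for interior loops of \cite[Section 3.4]{WangWuLevellinesGFFI}. Everything else is routine boundary-value bookkeeping via Lemma~\ref{lem::levellinesI_remark2515} and the orientation conventions.
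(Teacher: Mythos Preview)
Your proposal is correct in its overall structure and arrives at the right conclusion via Lemma~\ref{lem::levellinesI_remark2515}(3), but you take a harder road to the ``stays outside'' claim than the paper does. The paper reverses your order: it first observes that since $\gamma_u^{x\to y}$ starts on $\partial D$ (hence outside $L^z_N$), it is the level line of the conditional $\GFF$ on $D\cap\ext(L^z_N)$ up until it first hits $L^z_N$, and applies Lemma~\ref{lem::levellinesI_remark2515}(3) immediately to get (in the clockwise case) $c+u\in(-\lambda,\lambda)$. Only then does it show that $\gamma_u^{x\to y}$ stays outside, by contradiction: if the curve had non-trivial pieces in $\inte(L^z_N)$, those pieces would be level lines of the conditional $\GFF$ on the inside components, whose boundary value along $L^z_N$ is $2\lambda+c$; a non-trivial level line of height $u$ emanating from such a boundary requires $2\lambda+c+u\in(-\lambda,\lambda)$ by Lemma~\ref{lem::levellinesI_remark2515}(1), which is incompatible with $c+u\in(-\lambda,\lambda)$.

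The upshot is that the ``main obstacle'' you flag --- making a non-crossing argument rigorous for the random loop $L^z_N$ via approximation and monotonicity --- is simply absent from the paper's proof. The $2\lambda$ jump in boundary values across the loop does all the work: the two constraints $c+u\in(-\lambda,\lambda)$ and $2\lambda+c+u\in(-\lambda,\lambda)$ cannot both hold, so the curve cannot have non-trivial pieces on both sides. Your route would also succeed once the local-set interaction machinery is fully in place, but the paper's argument is shorter and needs only two invocations of Lemma~\ref{lem::levellinesI_remark2515}.
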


\begin{proof}
We may assume that $L^z_N$ is clockwise, and the case that $L^z_N$ is counterclockwise can be proved similarly. 
\medbreak
\textit{First}, we show that $c+u\in(-\lambda,\lambda)$.
Given $(L^z_n, n\le N)$, denote by $\tilde{h}$ the field of $h$ restricted to $D\cap\ext(L^z_N)$. We know that the conditional law of $\tilde{h}$ is the same as $\GFF$ with boundary value $c$ on $L^z_N$. Moreover, the curve $\gamma_u^{x\to y}$ is also the level line of $\tilde{h}$ with height $u$. By Lemma \ref{lem::levellinesI_remark2515} Item (3), we have that $c+u\in (-\lambda, \lambda)$.
\medbreak
\textit{Next}, we show that $\gamma_u^{x\to y}$ stays outside of $L^z_N$. We prove by contradiction. Assume that $\gamma_u^{x\to y}$ has non-trivial pieces in $\inte(L^z_N)$. Given $(L^z_n, n\le N)$, the conditional law of $h$, restricted to the connected components of $D\setminus \cup_{n\le N}L^z_n$ that are in $\inte(L^z_N)$, is the same as $\GFF$ whose boundary values are $2\lambda+c$ along $L^z_N$. For these $\GFF$s, to have non-trivial level lines with height $u$ starting from $L^z_N$, we must have $2\lambda+c+u\in (-\lambda, \lambda)$ which contradicts with the fact that $c+u\in (-\lambda, \lambda)$. 
\end{proof}

\begin{figure}[ht!]
\begin{subfigure}[b]{0.48\textwidth}
\begin{center}
\includegraphics[width=0.8\textwidth]{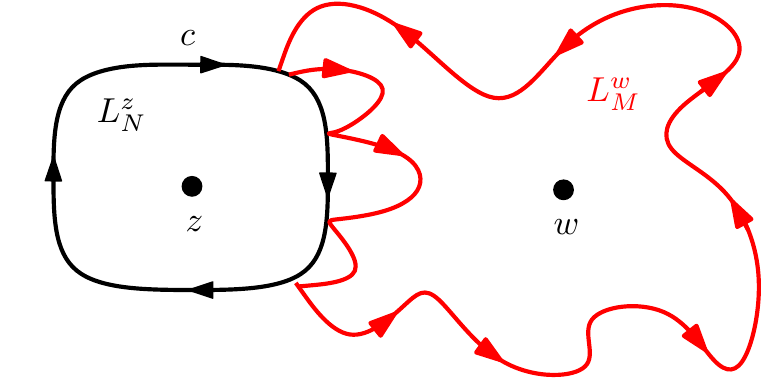}
\end{center}
\caption{The loop $L^z_N$ is clockwise and the loop $L^w_M$ is counterclockwise.}
\end{subfigure}
$\quad$
\begin{subfigure}[b]{0.48\textwidth}
\begin{center}
\includegraphics[width=0.8\textwidth]{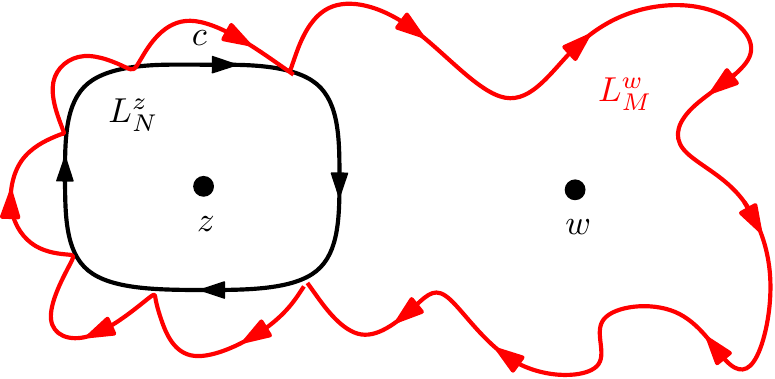}
\end{center}
\caption{The loop $L^z_N$ is clockwise and the loop $L^w_M$ is clockwise.}
\end{subfigure}
\caption{\label{fig::interaction_interior_interior} The interaction between two sequences of level loops starting from distinct interior points.}
\end{figure}

\begin{proposition}\label{prop::interaction_interior_interior}
Fix $r\in(0,1)$. Let $h$ be a whole-plane $\GFF$ modulo a global additive constant in $r\lambda\Z$. Fix two starting points $z,w$. Let $(L^z_n, n\in\Z)$ (resp. $(L^w_n, n\in\Z)$) be the alternating height-varying sequence of level loops of $h$ with height difference $r\lambda$ starting from $z$ (resp. starting from $w$) targeted at $\infty$. Let $N$ be any $(L^z_n, n\in\Z)$-stopping time such that $L^z_N$ does not disconnect $w$ from $\infty$. Assume that $L^z_N$ has boundary value $c$ to the left-side and $2\lambda+c$ to the right-side. Let $M$ be any stopping time with respect to the filtration
\[\LF_n=\sigma( L^z_k, k\le N; L^w_m, m\le n).\]
Suppose that $L^w_M$ hits $L^z_N$ and has height $u$. Then
\[\begin{cases}
2\lambda+c+u\in (-\lambda, \lambda), &\text{if }L^z_N\text{ is counterclockwise},\\
c+u\in (-\lambda, \lambda), &\text{if }L^z_N\text{ is clockwise}.
\end{cases}\]
Moreover, the loop $L^w_M$ stays outside of $L^z_N$. See Figure \ref{fig::interaction_interior_interior}.
\end{proposition}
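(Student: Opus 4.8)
The plan is to carry over the proof of Proposition~\ref{prop::interaction_interior_boundary} almost verbatim, with the boundary-emanating level line there replaced by a loop of the $w$-sequence, and using that a level loop is, locally, the trace of a level line. As in that proof it is enough to treat the case where $L^z_N$ is clockwise, so that the conditioned boundary value facing $\ext(L^z_N)$ is $c$ and the one facing $\inte(L^z_N)$ is $2\lambda+c$; the counterclockwise case is symmetric, with $c$ and $2\lambda+c$ interchanged, and yields the stated condition $2\lambda+c+u\in(-\lambda,\lambda)$. Throughout, fix a representative of $h$, so that boundary values and heights are genuine numbers.

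First I would condition on $(L^z_n,n\le N)$ and denote by $\tilde{h}$ the restriction of $h$ to $\ext(L^z_N)$. By Theorem~\ref{thm::interior_levelloops_coupling} (the local-set / domain-Markov property) the conditional law of $\tilde{h}$ is that of a $\GFF$ with boundary value $c$ on $L^z_N$, while the conditional law of $h$ restricted to each connected component of $\C\setminus\bigcup_{n\le N}L^z_n$ contained in $\inte(L^z_N)$ is that of a $\GFF$ with boundary value $2\lambda+c$ along $L^z_N$. Since $L^z_N$ does not disconnect $w$ from $\infty$ we have $w\in\ext(L^z_N)$, so for sufficiently negative index the loops $L^w_m$ are small loops around $w$ inside $\ext(L^z_N)$; and since a loop $L^w_m$ is connected, surrounds $w\in\ext(L^z_N)$, and is locally determined by the field, it cannot meet $\inte(L^z_N)$ without first meeting $L^z_N$. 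Hence the loops of the $w$-sequence lying in $\ext(L^z_N)$ are level loops of $\tilde{h}$, each (given the earlier ones) the outer boundary of the region trapped by a boundary-emanating level line of $\tilde{h}$ of the appropriate height, as in Section~\ref{subsec::alternate_levelloops_construction}.

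Now suppose $L^w_M$ meets $L^z_N$ and has height $u$, and consider first the smallest such index $M$. Near an intersection point not lying on $L^w_{M-1}$, the loop $L^w_M$ coincides with a piece of a level line of $\tilde{h}$ of height $u$ issued from the previous loop $L^w_{M-1}\subset\ext(L^z_N)$; this level line reaches the boundary arc $L^z_N$ from the $\ext(L^z_N)$-side, where the value is $c$, so Lemma~\ref{lem::levellinesI_remark2515}(3) forces $c+u\in(-\lambda,\lambda)$. For the assertion that $L^w_M$ stays outside $L^z_N$ I would argue by contradiction exactly as in Proposition~\ref{prop::interaction_interior_boundary}: were $L^w_M$ to have a nontrivial piece inside $\inte(L^z_N)$, then in the components of $\C\setminus\bigcup_{n\le N}L^z_n$ lying inside $\inte(L^z_N)$ a level line of height $u$ would have to emanate from $L^z_N$, where both sides carry value $2\lambda+c$; by Lemma~\ref{lem::levellinesI_remark2515}(1) this needs $2\lambda+c+u\in(-\lambda,\lambda)$, i.e.\ $c+u\in(-3\lambda,-\lambda)$, contradicting $c+u\in(-\lambda,\lambda)$. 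To reach an arbitrary $\LF$-stopping time $M$ I would then iterate: by the domain Markov property of the $w$-sequence (Proposition~\ref{prop::levelloops_inside_markov}), once the first loop meeting $L^z_N$ is known to stay in $\overline{\ext(L^z_N)}$, the continuation of the $w$-sequence is again an alternating sequence of level loops issued from a loop contained in $\overline{\ext(L^z_N)}$, and the previous reasoning applies to the next loop that meets $L^z_N$; inductively, every $L^w_m$ lies in $\overline{\ext(L^z_N)}$ and every one that meets $L^z_N$ has height $u$ with $c+u\in(-\lambda,\lambda)$, which is the assertion for $L^w_M$.

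The step I expect to be the main obstacle is this last one: the topological bookkeeping of how $L^z_N$ sits relative to the successive, growing $w$-loops---which may eventually enclose $\inte(L^z_N)$ entirely---and the verification that in each subdomain where the $w$-sequence is re-run the arc $L^z_N$ still carries value $c$ on the side facing the exploration and $2\lambda+c$ on the other side. A second, minor point---handled, as in the boundary-emanating case, via the whole-plane local-set calculus recalled in Section~\ref{subsec::wholeplane_gff} together with Lemma~\ref{lem::levellinesI_remark2515}---is the degenerate possibility that $L^w_M$ meets $L^z_N$ only along $L^w_{M-1}$, which one excludes or reduces to the case of the preceding loop.
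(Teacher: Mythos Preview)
Your proposal is correct and follows the same strategy as the paper: condition on $(L^z_n,n\le N)$, note that $\tilde h:=h|_{\ext(L^z_N)}$ is a $\GFF$ with boundary value $c$, identify the pieces of $L^w_M$ in $\ext(L^z_N)$ as level lines of $\tilde h$ of height $u$, apply Lemma~\ref{lem::levellinesI_remark2515}(3), and then reuse the contradiction argument from Proposition~\ref{prop::interaction_interior_boundary} to rule out excursions into $\inte(L^z_N)$.

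The one structural difference is how the two arguments package the ``up to the first exit'' issue. The paper records in one line that, given $(L^z_n,n\le N)$, the whole sequence $(L^w_n,n\in\Z)$ is coupled with $\tilde h$ as in Theorem~\ref{thm::interior_levelloops_coupling} for the domain $D=\ext(L^z_N)$, \emph{up to the first time a loop exits} $\ext(L^z_N)$; once the contradiction argument shows no loop ever exits, this coupling is valid for all $n$, and the conclusion for an arbitrary $\LF$-stopping time $M$ follows at once. You instead unwind this into an explicit induction over successive $w$-loops. Both are fine; the paper's formulation avoids your ``main obstacle'' (the topological bookkeeping when later $w$-loops enclose $\inte(L^z_N)$) because once you know the sequence never leaves $\overline{\ext(L^z_N)}$, the boundary value on $L^z_N$ seen from the exploration side is always $c$, regardless of how the $w$-loops wind around $\inte(L^z_N)$. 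Your worry about the degenerate case $L^w_M\cap L^z_N\subset L^w_{M-1}$ is likewise not a real obstruction, for the same reason as in the boundary-emanating proof.
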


\begin{proof}
We may assume that $L^z_N$ is clockwise, and the case that $L^z_N$ is counterclockwise can be proved similarly. 
\medbreak
\textit{First}, we show that $c+u\in(-\lambda,\lambda)$.
Given $(L^z_n, n\le N)$, denote by $\tilde{h}$ the field of $h$ restricted to $\ext(L^z_N)$. We have the following observations.
\begin{enumerate}
\item [(a)] Given $(L^z_n, n\le N)$, the conditional law of $\tilde{h}$ is the same as $\GFF$ on $\ext(L^z_N)$ with boundary value $c$.
\item [(b)] Given $(L^z_n, n\le N)$, the sequence $(L^w_n, n\in\Z)$ is coupled with $\tilde{h}$ in the same way as in Theorem \ref{thm::interior_levelloops_coupling} for $D=\ext(L^z_N)$, up to the first time that the loop exits $\ext(L^z_N)$.
\end{enumerate}
When $L^w_M$ hits $L^z_N$, the pieces of $L^w_M$ contained in $\ext(L^z_N)$ are level lines of $\tilde{h}$ with height $u$. 
 By Lemma \ref{lem::levellinesI_remark2515} Item (3), we have that $c+u\in (-\lambda, \lambda)$.
\medbreak
\textit{Next}, we show that $L^w_M$ stays outside of $L^z_N$. This can be proved similarly as in the proof of Proposition \ref{prop::interaction_interior_boundary}.
\end{proof}

\begin{proposition}\label{prop::interaction_distinctstart}
Fix $r\in(0,1)$. Let $h$ be a whole-plane $\GFF$ modulo a global additive constant in $r\lambda\Z$. Fix two starting points $z,w$. Let $(L^z_n, n\in\Z)$ (resp. $(L^w_n, n\in\Z)$) be the alternating height-varying sequence of level loops of $h$ with height difference $r\lambda$ starting from $z$ (resp. starting from $w$) targeted at $\infty$. 
Define the stopping times
\[N=\min\{n: L^z_n\text{ disconnects }w\text{ from }\infty\};\quad M=\min\{n: L^w_n\text{ disconnects }z\text{ from }\infty\}.\]
Then, almost surely, the loop $L^z_N$ coincides with $L^w_M$.
\end{proposition}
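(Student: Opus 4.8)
The plan is to show that $L^z_N$ and $L^w_M$ are each contained in the closed interior of the other, which forces them to coincide: if $L_1\subseteq\overline{\inte(L_2)}$ then $\ext(L_2)$, being connected, unbounded and disjoint from $L_1$, lies in $\ext(L_1)$, and if also $L_2\subseteq\overline{\inte(L_1)}$ then the two exteriors agree, whence $L_1=\partial\ext(L_1)=\partial\ext(L_2)=L_2$. Since the hypotheses are symmetric under exchanging $(z,(L^z_n)_n,N)$ with $(w,(L^w_n)_n,M)$, it is enough to prove the single inclusion $L^w_M\subseteq\overline{\inte(L^z_N)}$. First I would set up the ``collar'' picture. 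Transience (Proposition \ref{prop::levelloops_inside_transience}, transported to $\C$ by Theorem \ref{thm::interior_levelloops_coupling}) makes $N$ and $M$ a.s.\ finite. By minimality of $N$, the loop $L^z_{N-1}$ does not disconnect $w$ from $\infty$, so Proposition \ref{prop::interaction_interior_interior} applies with the $(L^z_n)$-stopping time $N-1$: any $w$-loop meeting $L^z_{N-1}$ stays outside it. As a $w$-loop starts arbitrarily close to $w\in\ext(L^z_{N-1})$ and consecutive $w$-loops are adjacent, no $L^w_m$ can enter $\inte(L^z_{N-1})$ without first meeting $L^z_{N-1}$; hence every $L^w_m\subseteq\overline{\ext(L^z_{N-1})}$, and (by the symmetric statement for the $z$-loops and $L^w_{M-1}$) the interiors of $L^z_{N-1}$ and $L^w_{M-1}$ are disjoint.

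Next I would condition on $(L^z_n,n\le N-1)$. By Theorem \ref{thm::interior_levelloops_coupling}, $\hat h:=h|_{\ext(L^z_{N-1})}$ is a $\GFF$ on $\tilde D:=\ext(L^z_{N-1})$ with constant boundary value on $\partial\tilde D=L^z_{N-1}$, and by the domain Markov property and determinism of the $z$-sequence (Propositions \ref{prop::levelloops_inside_markov}, \ref{prop::levelloops_inside_deterministic}) its continuation produces $L^z_N$ as a boundary-emanating level loop of $\hat h$: writing $\gamma$ for the level line of $\hat h$ from an arbitrary point of $L^z_{N-1}$, of the appropriate height, targeted at $\infty$ and run to its continuation threshold, one has $L^z_N=\partial U$ with $U\ni\infty$ the cut-off component of $\tilde D\setminus\gamma$, so $L^z_N\subseteq\gamma\cup L^z_{N-1}$ and $\ext(L^z_N)=U$. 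By the collar step and Proposition \ref{prop::wholeplanegff_localset_2.16}, the $w$-sequence is conditionally the interior alternating sequence of level loops of $\hat h$ from $w$ targeted at $\infty$. Then Proposition \ref{prop::interaction_interior_boundary} — whose proof uses only the constant boundary value carried by the $w$-loop, hence applies verbatim with the common target $\infty$ an interior point of $\tilde D$ — shows that $\gamma$ stays outside any $w$-loop it meets, and choosing the starting point of $\gamma$ off $L^w_m$ one gets $\gamma\cap\inte(L^w_m)=\emptyset$ for every $(L^w_n)$-stopping time $m$. For $m\le M-1$ the loop $L^w_m$ does not disconnect $z$ from $\infty$, so $\inte(L^z_{N-1})\subseteq\ext(L^w_m)$ and $L^z_{N-1}$ also avoids $\inte(L^w_m)$; combining, $L^z_N\cap\inte(L^w_m)=\emptyset$, and since $\inte(L^w_m)$ is connected and contains $w\in\inte(L^z_N)$, this yields $L^w_m\subseteq\overline{\inte(L^z_N)}$ for all $m\le M-1$.

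To pass from $m\le M-1$ to $m=M$ I would condition also on $(L^w_n,n\le M-1)$. Then $L^w_{M-1}\subseteq\overline{\inte(L^z_N)}$, so $\ext(L^z_N)=U\subseteq\ext(L^w_{M-1})$; the twice-restricted field $\check h$ is a $\GFF$ on $\tilde D_*:=\ext(L^w_{M-1})\cap\ext(L^z_{N-1})$ with constant boundary values on $L^w_{M-1}$ and on $L^z_{N-1}$; and $L^w_M$ is the level loop of $\check h$ emanating from $L^w_{M-1}$, so $L^w_M=\partial U'$ with $U'\ni\infty$ cut off by a level line $\gamma^w$ of $\check h$ from a point of $L^w_{M-1}$ targeted at $\infty$, and $L^w_M\subseteq\gamma^w\cup L^w_{M-1}\cup L^z_{N-1}$. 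By Proposition \ref{prop::interaction_interior_boundary} applied at $m=M-1$, $\gamma$ stays outside $L^w_{M-1}$, so $\gamma\subseteq\overline{\tilde D_*}$ and $\gamma$ is also a level line of $\check h$; thus $\gamma$ and $\gamma^w$ are level lines of the \emph{same} $\GFF$ $\check h$ with the \emph{common} target $\infty$, and by the monotonicity/non-crossing of level lines (Lemma \ref{lem::levellinesI_monotonicity}, in the version for level lines emanating from the two boundary components of $\tilde D_*$) they do not cross. Choosing the starting point of $\gamma^w$ in $L^w_{M-1}\setminus L^z_N\subseteq\inte(L^z_N)$, the curve $\gamma^w$ starts outside $U$; within $\tilde D_*$ the relative boundary of $U$ lies in $\gamma$, which $\gamma^w$ never crosses, and $\gamma^w$ never crosses $\partial\tilde D_*=L^w_{M-1}\cup L^z_{N-1}$ either, so $\gamma^w$ never enters $U$; as $L^w_{M-1}\cup L^z_{N-1}\subseteq\overline{\inte(L^z_N)}=\C\setminus U$ as well, we conclude $L^w_M\cap U=\emptyset$, i.e.\ $L^w_M\subseteq\overline{\inte(L^z_N)}$. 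The symmetric inclusion follows by swapping $z$ and $w$, and the first paragraph then gives $L^z_N=L^w_M$.

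The step I expect to be the main obstacle is justifying rigorously that, after the two conditionings, $L^z_N$ and $L^w_M$ are genuinely level loops of the single $\GFF$ $\check h$ generated by level lines with the common target $\infty$ — this rests on the local-set/absolute-continuity theory for the whole-plane $\GFF$ and on the invariance of a level line under conditioning on a disjoint local set — together with the form of level-line monotonicity needed when the two level lines emanate from different boundary components of the doubly connected domain $\tilde D_*$. A related nuisance is that $L^w_{M-1}$ or $L^w_M$ may touch the inner boundary $L^z_{N-1}$, so Proposition \ref{prop::interaction_interior_boundary} must be invoked in a slightly more general form than stated, or via an approximation of $\C$ by the domains $\C_\eps$ used in the proof of Theorem \ref{thm::interior_levelloops_coupling}.
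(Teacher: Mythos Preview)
Your approach is genuinely different from the paper's, and the obstacle you flag is exactly the place where the argument is incomplete. The paper does \emph{not} argue by mutual containment via level-line monotonicity. Instead it proves a dedicated auxiliary result, Lemma~\ref{lem::interaction_distinctstart_auxiliary}: if a $w$-loop $L^w_M$ sits in the collar component $U^w$ between $L^z_{N-1}$ and $L^z_N$ and is disjoint from $L^z_N$, then the \emph{next} loop $L^w_{M+1}$ is contained in $\overline{\inte(L^z_N)}$. The proof of that lemma is a height/orientation case analysis showing that $L^w_{M+1}$ could only exit $U^w$ through the two pinch points where $L^z_{N-1}$ meets $L^z_N$, ruling out the counterclockwise case for $L^z_{N-1}$, and in the remaining case exhibiting a competing level loop of $h|_{U^w\cap\ext(L^w_M)}$ staying in $\overline{U^w}$, which contradicts uniqueness (Lemma~\ref{lem::levellinesI_levelloop}). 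With this in hand, the first $w$-loop $L^w_k$ that touches $L^z_N$ from inside is contained in $\overline{\inte(L^z_N)}$; a discrete height computation (using that all heights lie in $r\lambda\Z$) then shows that $L^z_N$ and a specific later $w$-loop are both level loops of $h|_{\ext(L^w_k)}$ of the \emph{same} height targeted at $\infty$, so Lemma~\ref{lem::levellinesI_levelloop} forces $L^z_N=L^w_n$ for some $n$. A short minimality argument gives $n=M$.

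The gap in your route is real. Lemma~\ref{lem::levellinesI_monotonicity} is stated and proved only for level lines in a simply connected domain starting from the same boundary and targeted at a common boundary point; the variant you invoke --- two level lines emanating from \emph{different} boundary components of the doubly connected domain $\tilde D_*$, with a common interior target $\infty$ --- is neither stated nor an immediate consequence of anything in the paper, and would need its own proof. A second, related, problem: you assert that $\gamma^w$ is a level line of $\check h$ on $\tilde D_*$, but $\gamma^w$ is a priori a level line of $h|_{\ext(L^w_{M-1})}$, a strictly larger domain containing $\inte(L^z_{N-1})$; the collar step only controls the \emph{loop} $L^w_M$, not the full level line generating it, so you have no argument preventing $\gamma^w$ from dipping into $\inte(L^z_{N-1})$ before cutting off $U'$. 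The paper's strategy avoids both issues by never comparing two level lines directly: it compares a candidate loop to the canonical level loop of a GFF in a simply connected domain and appeals to the uniqueness Lemma~\ref{lem::levellinesI_levelloop}.
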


The proof of Proposition \ref{prop::interaction_distinctstart} needs the following lemma.

\begin{lemma}\label{lem::interaction_distinctstart_auxiliary}
Assume the same notations as in Proposition \ref{prop::interaction_distinctstart}. Let $U^w$ be the connected component of $\C\setminus\cup_{n\le N}L^z_n$ that contains $w$. Let $M$ be any stopping time with respect to the following filtration
\[\LF_n=\sigma(L^z_k, k\le N; L^w_m, m\le n).\] On the event 
\[\{L^w_M\subset\overline{U^w},\quad L^w_M\cap L^z_N=\emptyset\},\]
we have almost surely that the loop $L^w_{M+1}$ is contained in the closure of $\inte(L^z_N)$.
\end{lemma}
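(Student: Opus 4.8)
The plan is to reduce the statement to a fact about the single level line generating $L^w_{M+1}$ and then to run a boundary-value argument in the spirit of the proof of Proposition~\ref{prop::interaction_interior_boundary}.

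First I would work on the event $E=\{L^w_M\subset\overline{U^w},\ L^w_M\cap L^z_N=\emptyset\}$, which is measurable with respect to $\LF_M=\sigma(L^z_k,k\le N;L^w_m,m\le M)$. Since $N$ is the first time $L^z_n$ disconnects $w$ from $\infty$ we have $w\in\inte(L^z_N)$ and hence $\overline{U^w}\subset\overline{\inte(L^z_N)}$, so on $E$ one has $L^w_M\subset\inte(L^z_N)$ and $L^z_N\subset\ext(L^w_M)$. By the domain Markov property of the alternating sequence (Proposition~\ref{prop::levelloops_inside_markov} together with Theorem~\ref{thm::interior_levelloops_coupling}), conditionally on $(L^w_m,m\le M)$ the loop $L^w_{M+1}$ is the level loop traced by a level line $\gamma$ of $h|_{\ext(L^w_M)}$, of some height $u$, started from a point of $L^w_M$, targeted at $\infty$, and run up to the first time it disconnects $w$ from $\infty$; in particular $L^w_{M+1}\subseteq\overline{\gamma}\cup L^w_M$. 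Because $\gamma$ is continuous and starts on $L^w_M\subset\inte(L^z_N)$, it then suffices to show that $\gamma$ almost surely does not enter $\ext(L^z_N)$: this gives $\gamma\subset\overline{\inte(L^z_N)}$, hence $L^w_{M+1}\subset\overline{\inte(L^z_N)}$.

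To prove this I would condition in addition on $(L^z_n,n\le N)$. By the local-set calculus for the whole-plane $\GFF$ recalled in Section~\ref{subsec::wholeplane_gff}, the sets $(L^z_n,n\le N)$ and $(L^w_m,m\le M)$ jointly form a local set, and on $E$ the region $\ext(L^z_N)$ is one connected component of the complement, on which $h$ is a $\GFF$ with constant boundary value along $L^z_N$ --- say $c$ on one side and $2\lambda+c$ on the other, the two sides differing by $2\lambda$ because $L^z_N$ is a level loop. Suppose, for contradiction, that $\gamma$ entered $\ext(L^z_N)$ with positive conditional probability. Since $\gamma$ starts inside $\inte(L^z_N)$, it must first reach $L^z_N$ from the component on the interior side containing $L^w_M$ on its boundary and then pass through it; thus $\gamma$ emanates a non-trivial level line of height $u$ from $L^z_N$ into $\ext(L^z_N)$, and also hits the arc $L^z_N$ bounding that interior component. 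By Lemma~\ref{lem::levellinesI_remark2515} (1) and (3) this would force both the exterior-side boundary value plus $u$ and the interior-side boundary value plus $u$ to lie in $(-\lambda,\lambda)$; but those two values differ by $2\lambda$, a contradiction. Hence $\gamma$ stays in $\overline{\inte(L^z_N)}$, which proves the lemma on $E$, and therefore almost surely on $E$. This is exactly the mechanism of the proof of Proposition~\ref{prop::interaction_interior_boundary}, only with the roles of the interior and exterior of $L^z_N$ interchanged.

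The hard part is the bookkeeping needed to make the ``passing through $L^z_N$'' step rigorous: with the two explorations $(L^z_n)$ and $(L^w_m)$ coupled to a common field, one must check that the pieces of $\gamma$ just before and just after a crossing of $L^z_N$ are genuine non-trivial level lines of the conditional $\GFF$s on the two components adjoining $L^z_N$, and one must be able to invoke Lemma~\ref{lem::levellinesI_remark2515}(3) for the arc $L^z_N$ even though the target $\infty$ of $\gamma$ lies ``behind'' it. I expect this to be handled exactly as in the proofs of Propositions~\ref{prop::interaction_interior_boundary} and \ref{prop::interaction_interior_interior}, using target-independence of the alternating sequence (Proposition~\ref{prop::levellinesI_targetindependence}) to re-target $\gamma$ at an interior point of $\inte(L^z_{N-1})$ up to the time $\gamma$ first meets $L^z_N$. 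One should also record the borderline case: if $u$ happens to equal the height of $L^z_N$, then $\gamma$ merges with $L^z_N$ and $L^w_{M+1}=L^z_N\subset\overline{\inte(L^z_N)}$, still consistent with the statement.
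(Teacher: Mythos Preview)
Your reduction to the single level line $\gamma$ generating $L^w_{M+1}$ is correct, and so is the observation that if $\gamma$ crosses $L^z_N$ at an interior point of the arc $L^z_N\cap\partial U^w$, then Lemma~\ref{lem::levellinesI_remark2515}(1) on the exterior side and Lemma~\ref{lem::levellinesI_remark2515}(3) on the interior side give incompatible constraints. But this is only part of the story. The boundary $\partial U^w$ is not made up solely of $L^z_N$: it has two special points $x,y$ where $L^z_N$ and $L^z_{N-1}$ meet, and an arc of $L^z_{N-1}$ joining them. Your contradiction via Lemma~\ref{lem::levellinesI_remark2515}(3) only rules out $\gamma$ hitting the \emph{open} arc $L^z_N\cap\partial U^w$; it does not exclude $\gamma$ reaching $L^z_N$ precisely at $x$ or $y$ and crossing into $\ext(L^z_N)$ there. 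This is not a measure-zero technicality one can wave away: level lines do reach specific boundary points, and the boundary data at $x,y$ involve the $L^z_{N-1}$ side as well, so the two-sided height-gap argument does not apply cleanly.

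The paper's proof addresses exactly this gap and is substantially more involved than what you propose. After establishing (as you do) that $\gamma$ can only exit $U^w$ through $x$ or $y$, it splits according to the orientation of $L^z_{N-1}$. When $L^z_{N-1}$ is counterclockwise, a computation of the boundary values near $x,y$ and Lemma~\ref{lem::levellinesI_remark2515}(2) shows $\gamma$ cannot reach these points at all. When $L^z_{N-1}$ is clockwise, the boundary data do allow $\gamma$ to reach $x,y$; the paper instead constructs an explicit level loop $\tilde L_u$ of the same height that is contained in $\overline{U^w}$, and then invokes the uniqueness in Lemma~\ref{lem::levellinesI_levelloop} to force $L^w_{M+1}=\tilde L_u$, contradicting the assumption that $L^w_{M+1}$ exits. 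The ``hard part'' you flag (target behind the arc, re-targeting) is not the real difficulty; the real difficulty is the corner points $x,y$, and your argument as written does not handle them.
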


\begin{figure}[ht!]
\begin{subfigure}[b]{0.48\textwidth}
\begin{center}
\includegraphics[width=0.8\textwidth]{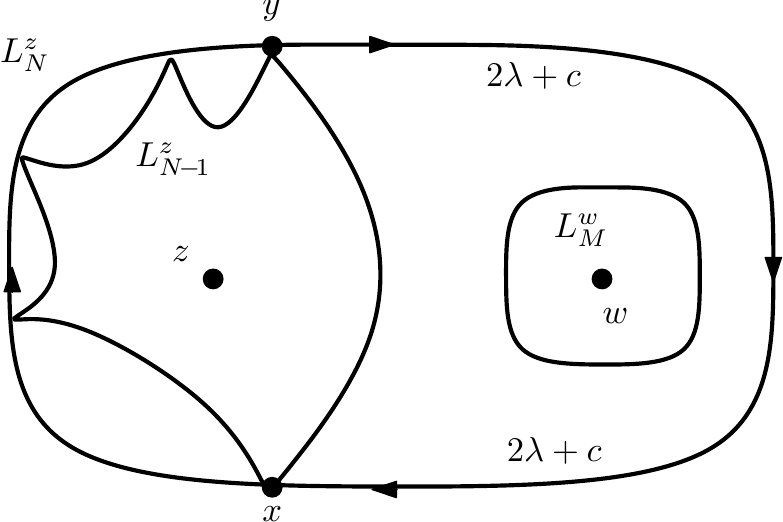}
\end{center}
\caption{The boundary $\partial U^w$ has two special points $x$ and $y$ such that the clockwise part of $\partial U^w$ from $x$ to $y$ is part of $L^z_{N-1}$ and the counterclockwise part of $\partial U^w$ from $x$ to $y$ is part of $L^z_N$.}
\end{subfigure}
$\quad$
\begin{subfigure}[b]{0.48\textwidth}
\begin{center}
\includegraphics[width=0.8\textwidth]{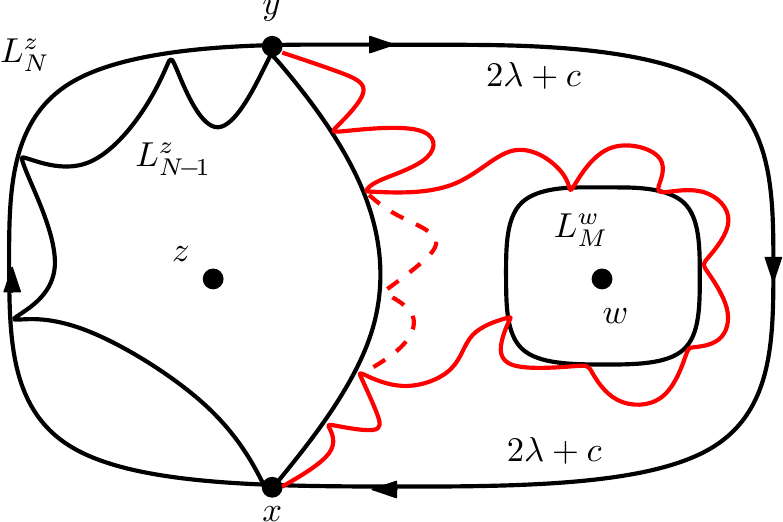}
\end{center}
\caption{Consider the field of $h$ restricted to $U^w\cap\ext(L^w_M)$. Since its boundary value along $\partial U^w\cap L^z_{N-1}$ is $c+r\lambda$ and $c+r\lambda+u\in (-\lambda,\lambda)$. 
The level lines with height $u$ can be continued towards both $x$ and $y$.}
\end{subfigure}
\caption{\label{fig::interaction_distinctstart_auxiliary} Explanation of the behavior of the level lines in Lemma \ref{lem::interaction_distinctstart_auxiliary}.}
\end{figure}

\begin{proof}
We may assume that $L^z_N$ is clockwise, and the case that $L^z_N$ is counterclockwise can be proved similarly. We have the following observations.
\begin{enumerate}
\item [(a)] Given $(L^z_n, n\le N)$, the conditional law of $h$ restricted to $\ext(L^z_N)$ is the same as $\GFF$ with boundary value $c$ which is some multiple of $r\lambda$.
\item [(b)] Note that $\partial U^w$ has two special points $x$ and $y$ so that the clockwise part of $\partial U^w$ from $x$ to $y$ is part of $L^z_{N-1}$ and the counterclockwise part of $\partial U^w$ from $x$ to $y$ is part of $L^z_N$. See Figure \ref{fig::interaction_distinctstart_auxiliary}(a).
Given $(L^z_n, n\le N)$, the conditional law of $h$ restricted to $U^w$ is the same as $\GFF$ with boundary value $2\lambda+c$ on $\partial U^w\cap L^z_N$ and 
\[\begin{cases}
c+r\lambda \text{ on }\partial U^w\cap L^z_{N-1}, &\text{if }L^z_{N-1}\text{ is clockwise},\\
2\lambda+c-r\lambda \text{ on }\partial U^w\cap L^z_{N-1}, &\text{if }L^z_{N-1}\text{ is counterclockwise}.
\end{cases}\]
\item [(c)] Given $(L^z_n, n\le N)$, the sequence $(L^w_n, n\in\Z)$ is coupled with $h$ restricted to $U^w$ in the same way as in Theorem \ref{thm::interior_levelloops_coupling} for $D=U^w$, up to the first time that the loop exits $U^w$.
\item [(d)] Given $(L^z_n, n\le N; L^w_k, k\le M)$ and on the event $\{L^w_M\subset\overline{U^w}, L^w_M\cap L^z_N=\emptyset\}$, the conditional law of $h$ restricted to $U^w\cap \ext(L^w_M)$ is the same as $\GFF$ with certain boundary data which is $2\lambda+c$ on $\partial U^w\cap L^z_N$.
\end{enumerate}
Assume that the loop $L^w_{M+1}$ has height $u$ and we will prove the conclusion by contradiction. Assume that $L^w_{M+1}$ enters $\ext(L^z_N)$. Then the pieces of $L^w_{M+1}$ that are contained in $\ext(L^z_N)$ are level lines of $h$ restricted to $\ext(L^z_N)$ with height $u$, thus $c+u\in (-\lambda, \lambda)$ by Lemma \ref{lem::levellinesI_remark2515} Item (1). Since $c$ is a multiple of $r\lambda$ and $\lambda+u$ is also a multiple of $r\lambda$, we have that $c+u\in [-\lambda+r\lambda, \lambda)$.
\medbreak
\textit{First}, we show that the loop $L^w_{M+1}$ can only exit $U^w$ through $x$ or $y$. Combining $2\lambda+c+u>\lambda$ with Lemma \ref{lem::levellinesI_remark2515} Item (3), we have that the loop $L^w_{M+1}$ can not hit the interior of $\partial U^w\cap L^z_N$ from inside. We only need to explain that $L^w_{M+1}$ can not exit $U^w$ through the interior points of $\partial U^w\cap L^z_{N-1}$. 

Assume that the loop $L^w_{M+1}$ does exit $U^w$ through an interior point of $\partial U^w\cap L^z_{N-1}$ and assume that the boundary value of the field along $\partial U^w\cap L^z_{N-1}$ is $v_1$ to the inside of $U^w$ and is $v_2$ to the outside of $U^w$. 
\begin{enumerate}
\item [(a)] Since the loop $L^w_{M+1}$ hits interior points of $\partial U^w\cap L^z_{N-1}$, we have $v_1+u\in (-\lambda, \lambda)$ by Lemma \ref{lem::levellinesI_remark2515} Item (3).
\item [(b)] Since the pieces of $L^w_{M+1}$ outside of $\partial U^w\cap L^z_{N-1}$ are level lines with height $u$, we have $v_2+u\in (-\lambda, \lambda)$ by Lemma \ref{lem::levellinesI_remark2515} Item (1).
\end{enumerate}
Combining these two facts with $|v_1-v_2|=2\lambda$, we get a contradiction.

Since $L^w_{M+1}$ is a loop, if it exits $U^w$, it has to come back to $U^w$. With a similar analysis as above, we know that $L^w_{M+1}$ can only come back to $U^w$ through $x$ or $y$. Therefore, if $L^w_{M+1}$ exits $U^w$, it has to exit $U^w$ through one of $\{x,y\}$ and come back to $U^w$ through the other one. 
\medbreak
\textit{Second}, we show that $L^z_{N-1}$ has to be clockwise. If the loop $L^z_{N-1}$ is counterclockwise, then the boundary value of $h$ restricted to $U^w$ is $2\lambda+c-r\lambda$ on $\partial U^w\cap L^z_{N-1}$. Since $2\lambda+c+u>2\lambda+c+u-r\lambda\ge \lambda$, the level lines of $h$, restricted to $U^w$, with height $u$ can not hit $y$ by Lemma \ref{lem::levellinesI_remark2515} Item (2), and the field of $h$, restricted to $U^w$, can not have non-trivial level line with height $u$ starting from $y$. This contracts that $L^w_{M+1}$ exits $U^w$ or comes back to $U^w$ through $y$.
\medbreak
\textit{Finally}, we show that there exists a level loop of $h$ restricted to $U^w\cap\ext(L^w_M)$ with height $u$ that is contained in $\overline{U^w}$. Denote by $\tilde{h}$ the field of $h$ restricted to $U^w\cap \ext(L^w_M)$. 
Since $L^z_{N-1}$ is clockwise, the field $\tilde{h}$ has boundary value $c+r\lambda$ along $\partial U^w\cap L^z_{N-1}$. 
Since the loop $L^w_{M+1}$ has height $u$ and exits $U^w$ at $x$ or $y$, we have $c+r\lambda+u<\lambda$ by Lemma \ref{lem::levellinesI_remark2515} Item (2). Therefore, we have $c+r\lambda+u\in (-\lambda, \lambda)$. 

Consider the level lines of $\tilde{h}$ with height $u$ starting from some points on $L^w_M$. Since $c+r\lambda+u\in (-\lambda,\lambda)$, these level lines can be continued towards both $x$ and $y$ after they hit $\partial U^w\cap L^z_{N-1}$ by \cite[Lemma 2.3.1]{WangWuLevellinesGFFI}. This implies that there exists a complete level loop of $\tilde{h}$ with height $u$ contained in the closure of $U^w\cap\ext(L^w_M)$ and touching $L^w_M$. We denote it by $\tilde{L}_u$. See Figure \ref{fig::interaction_distinctstart_auxiliary}(b).

Now, we have the following observations.
\begin{enumerate}
\item [(a)] The loop $\tilde{L}_u$ is a level loop of $h$ restricted to $\ext(L^w_M)$ with height $u$ starting from $L^w_M$ targeted at $\infty$.
\item [(b)] The loop $L^w_{M+1}$ is a level loop of $h$ restricted to $\ext(L^w_M)$ with height $u$ starting from $L^w_M$ targeted at $\infty$.
\end{enumerate} 
Combining these two facts with Lemma \ref{lem::levellinesI_levelloop}, we get a contradiction.
\end{proof}

\begin{proof}
[Proof of Proposition \ref{prop::interaction_distinctstart}]
We may assume that $L^z_N$ is clockwise and it has the boundary value $c$ to the left-side where $c$ is a multiple of $r\lambda$.
\medbreak
\textit{First}, we show that $L^z_N$ coincides with $L_n^w$ for some $n\in\Z$. Let $L^w_k$ be the first loop in $(L^w_n, n\in\Z)$ that hits the inside of $L^z_N$. By Lemma \ref{lem::interaction_distinctstart_auxiliary}, we know that $L^w_k$ is contained in the closure of $\inte(L^z_N)$. Suppose that the loop $L^w_k$ has height $u$. Since $L^w_k$ hits the inside of $L^z_N$, we have $2\lambda+c+u\in (-\lambda, \lambda)$ by Lemma \ref{lem::levellinesI_remark2515} Item (3). Note that $c$ is a multiple of $r\lambda$ and $\lambda+u$ is also a multiple of $r\lambda$, thus $\lambda+c+u=-mr\lambda$ for some integer $m\ge 1$. In particular, the loop $L^z_N$ has height $-\lambda-c=u+mr\lambda$. 

If $L^w_k$ is clockwise, then $L^z_N$ has to coincides with $L^w_{k+m}$, since both of $L^z_N$ and $L^w_{k+m}$ are level loops of $h$, restricted to $\ext(L^w_k)$, with height $u+mr\lambda$ starting from $L^w_k$ targeted at $\infty$ (Lemma \ref{lem::levellinesI_levelloop}).

If $L^w_k$ is counterclockwise, we have the following observations.
\begin{enumerate}
\item [(a)] The loop $L^z_N$ is a level loop of $h$ restricted to $\ext(L^w_k)$ with height $u+mr\lambda$ starting from $L^w_k$ targeted at $\infty$.
\item [(b)] The sequence $(L^w_{k+n}, n\ge 0)$ is the alternating height-varying sequence of $h$ restricted to $\ext(L^w_k)$ with height difference $r\lambda$ starting from $L^w_k$ targeted at $\infty$. Thus the sequence starts by downward height-varying sequence, changes the orientation after some finite step, and continues with upward height-varying sequence etc. 
\end{enumerate}
Combining these two facts, we know that $L^z_N$ coincides with $L^w_{k+2n+m}$ for some integer $n\ge 0$.
\medbreak
\textit{Next}, we show that $L^z_N$ coincides with $L^w_M$. By the first step, we know that $L^z_N$ coincides with $L^w_m$ for some $m\in\Z$. Since $L^w_M$ disconnects $z$ from $\infty$, we have $M\le m$. Symmetrically, the loop $L^w_M$ coincides with $L^z_n$ for some $n\ge N$. Combining these two facts, we have that $L^z_N$ coincides with $L^w_M$.
\end{proof}

\begin{proof}
[Proof of Theorem \ref{thm::interior_levelloops_interacting_commontarget}]
Suppose that, for $i=1,2$, the sequence $(L^{z_i}_t, t\in \R)$ is the alternating height-varying sequence of level loops of $h$ with height difference $r\lambda$ starting from $z_i$ targeted at $\infty$ indexed by minus the log of the conformal radius seen from $\infty$, and define
\[T_1=\inf\{t: L_t^{z_1}\text{ disconnects }z_2\text{ from }\infty\};\quad T_2=\inf\{t: L_t^{z_2}\text{ disconnects }z_1\text{ from }\infty\}.\]
By Proposition \ref{prop::interaction_distinctstart}, there exists some finite $t\in\R$ such that $L_t^{z_1}=L_t^{z_2}$; we define $T$ to be the inf of such times. We have the following observations.
\begin{enumerate}
\item [(a)] For any $t<T_1$, we have $z_2\not\in \inte(L^{z_1}_t)$; whereas, $z_2\in\inte(L^{z_1}_{T})$, thus $t<T$.Therefore $T_1\le T$.
\item [(b)] By Proposition \ref{prop::interaction_distinctstart}, we have $L^{z_1}_{T_1}=L^{z_2}_{T_2}$, thus $T\le T_1$.
\end{enumerate}
Combining these two facts, we have that $T=T_1$ almost surely. By symmetry, we have $T=T_1=T_2$ almost surely.

We have the following observations.
\begin{enumerate}
\item [(a)] The sequence $(L^{z_1}_t, t\ge T_1)$ is the alternating height-varying sequence of level loops of $h$ restricted to $\ext(L^{z_1}_{T_1})$.
\item [(b)] The sequence $(L^{z_2}_t, t\ge T_2)$ is the alternating height-varying sequence of level loops of $h$ restricted to $\ext(L^{z_2}_{T_2})$.
\item [(c)] The loops $L^{z_1}_{T_1}$ and  $L^{z_2}_{T_2}$ coincide.
\end{enumerate}
Combining these three facts and Proposition \ref{prop::levelloops_inside_deterministic}, we have that 
\[L^{z_1}_t=L^{z_2}_t,\quad \forall t\ge T_1=T_2.\]
This completes the proof.
\end{proof}

\begin{proof}
[Proof of Theorem \ref{thm::interior_levelloops_deterministic}]
We only need to show the conclusion for $D=\C$, and the proof for  $D\subsetneq\C$ follows from absolute continuity. 

Suppose that $h$ is a whole-plane $\GFF$ modulo a global additive constant in $r\lambda\Z$. Suppose that $(L^1_t, t\in\R)$ and $(L^2_t, t\in\R)$ are two alternating height-varying sequences of level loops of $h$ starting from the origin targeted at $\infty$ indexed by minus the log of the conformal radius seen from $\infty$, and they are coupled with $h$ so that, given $h$, they are conditionally independent. 

Pick any $z\in\C$ and let $(L^z_t, t\in\R)$ be the alternating height-varying sequence of level loops of $h$ starting from $z$ targeted at $\infty$. By Theorem \ref{thm::interior_levelloops_interacting_commontarget}, for $i=1,2$, there exists a finite number $T_i$ such that
\[L^i_t=L^z_t, \quad \forall t\ge T_i.\]
Therefore, there exists a finite number $T$ such that
\[L^1_t=L^2_t, \quad \forall t\ge T.\]
Let $R$ be the inf of such times. The scaling invariance of the coupling $(h, (L^1_t, t\in\R), (L^2_t, t\in\R))$ implies that the law of $e^{-R}$ is scaling invariant. Thus $e^{-R}=0$ almost surely. Therefore, almost surely,
\[L^1_t=L^2_t, \quad \forall t\in\R.\]
\end{proof}

\begin{proof}
[Proof of Theorem \ref{thm::interior_levelloops_interacting_commonstart}]
Consider $(L_n^{z\to w_1},  n\in\Z)$,  let $N_0$ be any stopping time such that both $w_1$ and $w_2$ are contained in $\ext(L_{N_0}^{z\to w_1})$. Given $(L^{z\to w_1}_n, n\le N_0)$, denote by $\tilde{h}$ the field of $h$ restricted to $\ext(L^{z\to w_1}_{N_0})$. For $i=1,2$, let $(\tilde{L}_{n}^{z\to w_i}, n\ge 0)$ be the alternating height-varying sequence of level loops of $\tilde{h}$ with height difference $r\lambda$ starting from $\tilde{L}^{z\to w_i}_0=L^{z\to w_1}_{N_0}$ targeted at $w_i$; and denote by $\tilde{U}_n^{w_i}$ the connected component of $\C\setminus \tilde{L}_n^{z\to w_i}$ that contains $w_i$. We have the following observations.
\begin{enumerate}
\item [(a)] From the target-independence in Proposition \ref{prop::levellinesI_targetindependence}, we know that there exists a number $\tilde{M}$ such that
\[\tilde{L}^{z\to w_1}_n=\tilde{L}^{z\to w_2}_n,\quad \text{for }n\le \tilde{M}-1; \quad \tilde{U}^{w_1}_n\cap \tilde{U}^{w_2}_n=\emptyset, \quad \text{for }n=\tilde{M}.\]
Given $(\tilde{L}_n^{z\to w_1}, \tilde{L}_n^{z\to w_2}, n\le\tilde{M})$, the two sequences continue towards their target points respectively in a conditionally independent way. 
\item [(b)] By Theorem \ref{thm::interior_levelloops_deterministic}, for $i=1,2$, we know that the collection of the loops in the union of $(L_n^{z\to w_1}, n\le N_0)$ and $(\tilde{L}_n^{z\to w_i}, n\ge 0)$ is the same as the collection of the loops in the sequence $(L^{z\to w_i}_n, n\in\Z)$. 
\end{enumerate}
Combining these two facts, we obtain the conclusion.
\end{proof}

\begin{proof}
[Proof of Theorem \ref{thm::interior_levelloops_determins_field}]
For $p\ge 1$, let $\LF_p$ be the $\sigma$-algebra generated by $((L^m_n, n\in\Z), m\le p)$. Fix $f\in H_{s,0}$ and define $X_p=\E[(h,f)\cond \LF_p]$ for $p\ge 1$.
\medbreak
\textit{First}, we argue that, to obtain the conclusion, it is sufficient to show that
\begin{equation}\label{eqn::mart_l2_convergence}
\E\left[\left((h,f)-X_p\right)^2\right]\to 0,\quad \text{as }p\to\infty.
\end{equation}
We have the following observations.
\begin{enumerate}
\item [(a)] Since $(h,f)$ is $L^2$ integrable, the martingale $(X_p, p\ge 1)$ converges to $\E[(h,f)\cond \LF_{\infty}]$ almost surely and in $L^2$.
\item [(b)] Assuming Equation (\ref{eqn::mart_l2_convergence}) holds, we have that $(X_p, p\ge 1)$ converges to $(h,f)$ in $L^2$.
\end{enumerate}
Combining these two facts, we have that $\E[(h,f)\cond \LF_{\infty}]=(h,f)$ almost surely, which implies the conclusion.
\medbreak
\textit{Next}, we show Equation (\ref{eqn::mart_l2_convergence}). For each $p\ge 1$, let $G_p$ be the Green's function of $D_p:=\C\setminus \cup_{m\le p}\cup_{n\in\Z}L^m_n$.
Since $\C\setminus D_p$ has Lebesgue measure zero, we can view $G_p$ as a function defined on $\C\times\C$ and the value of $G_p$ on $\C\setminus D_p$ will not affect later integrations.
Note that $D_p$ has countably many connected components. If $x$ and $y$ are contained in the same connected component of $D_p$, then $G_p(x,y)$ is exactly the Green's function of that component taking value at $(x,y)$; if $x$ and $y$ are contained in distinct components of $D_p$, then $G_p(x,y)=0$. Since the sequence $(D_p, p\ge 1)$ is decreasing, the sequence $(G_p(x,y), p\ge 1)$ is non-increasing for fixed $x$ and $y$. 

Let $K$ be a compact set which contains the support of $f$. Then
\[\E\left[\left((h,f)-X_p\right)^2\right]
=\int_K\int_K f(x)G_p(x,y)f(y)dxdy
\le ||f||_{\infty}^2\int_K\int_K G_p(x,y)dxdy.\]
Note that $G_p$ is bounded by $G_1$ which is integrable; and that $\lim_pG_p(x,y)=0$, for fixed $x$ and $y$, since $x$ and $y$ will be disconnected by $((L^m_n, n\in\Z), m\le p)$ for $p$ large enough.
Thus $\int_K\int_K G_p(x,y)dxdy\to 0$, which implies Equation (\ref{eqn::mart_l2_convergence}).
\end{proof}

\section{Continuum exploration process starting from interior}
\label{sec::continuum_exploration}

\subsection{Growing process of $\CLE_4$}
\label{subsec::growing_cle4}
In this section, we will introduce a continuum growing process of $\CLE_4$. We first recall some features of $\CLE$ and refer to \cite{SheffieldWernerCLE} for details and proofs; and then recall the construction and properties of the continuum growing process of $\CLE_4$ and refer to \cite{WernerWuCLEExploration} for details and proofs. 

$\CLE$ is a random collection of non-nested disjoint simple loops that possesses Conformal Invariance and Domain Markov Property. The loops in $\CLE$ are $\SLE_{\kappa}$-type loops for some $\kappa\in (8/3,4]$; moreover, for each such value of $\kappa$, there exists exactly one $\CLE$ distribution that has $\SLE_{\kappa}$-type loops. We denote the corresponding $\CLE$ by $\CLE_{\kappa}$ for $\kappa\in (8/3,4]$. Throughout the current paper, we fix $\kappa=4$. 

We call a bubble $l$ in $\U$ if $l\subset\overline{\U}$ is a simple loop and $l\cap\partial\U$ contains exactly one point; we call the point in $l\cap\partial\U$ the root of $l$, denoted by $R(l)$. 
Consider a $\CLE$ in $\U$, draw a small disc $B(x,\eps)$ with center $x\in\partial\U$ and radius $\eps>0$, and let $l^{\eps}$ be the loop in $\CLE$ that intersects $B(x,\eps)$ with largest radius. Define the quantity
$u(\eps)=\PP[l^{\eps}\text{ surrounds the origin}]$.
The law of $l^{\eps}$ normalized by $1/u(\eps)$ converges to a limit measure (see \cite[Section 4]{SheffieldWernerCLE}), denoted by $M(x)$. Define \textbf{$\SLE_4$-bubble measure} $M$ by 
\[M=\int_{x\in\partial\U}dx M(x),\]
where $dx$ is Lebesgue measure on $\partial\U$. When $\kappa=4$, the bubble measure $M$ is conformal invariant. Namely, for any M\"{o}bius transformation $\phi$ of $\U$, we have $\phi\circ M=M$ (see \cite[Lemma 6]{WernerWuCLEExploration}). 

Let $(l_u, u\ge 0)$ be a Poisson point process with intensity $M$. (In other words, let $((l_j, t_j), j\in J)$ be a Poisson point process with intensity $M\times [0,\infty)$, and then arrange the bubbles according to time $t_j$, i.e. denote the bubble $l_j$ by $l_u$ if $u=t_j$, and $l_u$ is empty set if there is no $t_j$ that equals $u$.) Clearly, there are only countably many bubbles in $(l_u, u\ge 0)$ that are not empty set.

We will construct a growing process from the sequence of bubbles $(l_u, u\ge 0)$. Define $\tau_1$ to be the first time $u$ that $l_u$ surrounds the origin. Consider the sequence $(l_u, 0\le u\le \tau_1)$. For each $u<\tau_1$, the bubble $l_u$ does  not surround the origin. Define $f_u$ to be the conformal map from the connected component of $\U\setminus l_u$ that contains the origin onto the unit disc such that $f_u(0)=0$ and $f_u'(0)>0$. We have the following properties. See \cite[Section 7]{SheffieldWernerCLE}
\begin{enumerate}
\item [(a)] The stopping time $\tau_1$ has exponential law: $\PP[\tau_1>u]=e^{-u}$.
\item [(b)] For $\delta>0$ small, let $u_1, u_2, ..., u_j$ be the times $u$ before $\tau_1$ at which the bubble $l_u$ has radius greater than $\delta$. Define $\Psi^{\delta}=f_{u_j}\circ\cdots\circ f_{u_1}$. Then $\Psi^{\delta}$ almost surely converges to some conformal map $\Psi$ in Carath\'eodory topology seen from the origin as $\delta$ goes to zero. Therefore, $\Psi$ can be interpreted as $\Psi=\circ_{u<\tau_1}f_u$, and $\Psi$ maps some simply connected open subset of $\U$ onto $\U$. 
\item [(c)] Define $L_{\tau_1}=\Psi^{-1}(l_{\tau_1})$. Then $L_{\tau_1}$ has the same law as the loop in $\CLE_4$ that surrounds the origin. In particular, $L_{\tau_1}$ is a simple loop. We give the clockwise orientation to the loop $L_{\tau_1}$.
\item [(d)] Generally, for each $u\le \tau_1$, we can define $\Psi_u=\circ_{s<u}f_s$, and $\Psi_u$ maps some simply connected open subset of $\U$ onto $\U$. We denote by $L_{u-}$ the boundary of $\Psi_u^{-1}(\U)$ which is a quasisimple loop. We give the counterclockwise orientation to $L_{u-}$. Note that $\Psi_u$ is the conformal map from the connected component of $\U\setminus L_{u-}$ that contains the origin onto $\U$ with $\Psi_u(0)=0$ and $\Psi'_u(0)>0$. 
\item [(e)] For each $u<\tau_1$, let $L_u$ be the boundary of $(f_u\circ\Psi_u)^{-1}(\U)$. See Figure \ref{fig::growing_cle4_quasisimpleloop}. Then $L_u$ is a quasisimple loop and we give counterclockwise orientation to it. 
\end{enumerate}

\begin{figure}[ht!]
\begin{subfigure}[b]{\textwidth}
\begin{center}
\includegraphics[width=0.6\textwidth]{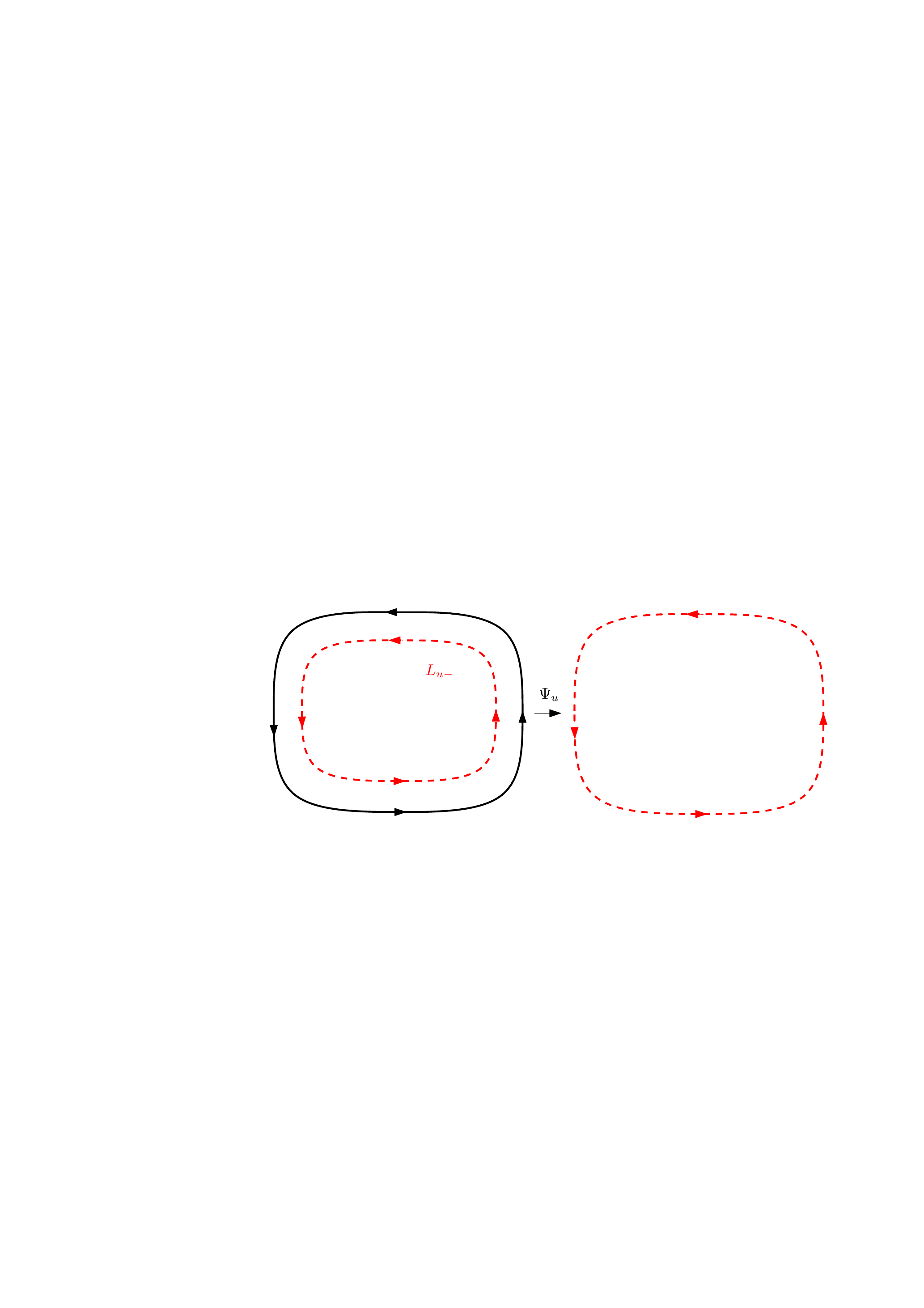}
\end{center}
\caption{The boundary of $\Psi_u^{-1}(\U)$ is a quasisimple loop, denoted by $L_{u-}$.}
\end{subfigure}
$\quad$
\begin{subfigure}[b]{\textwidth}
\begin{center}\includegraphics[width=0.6\textwidth]{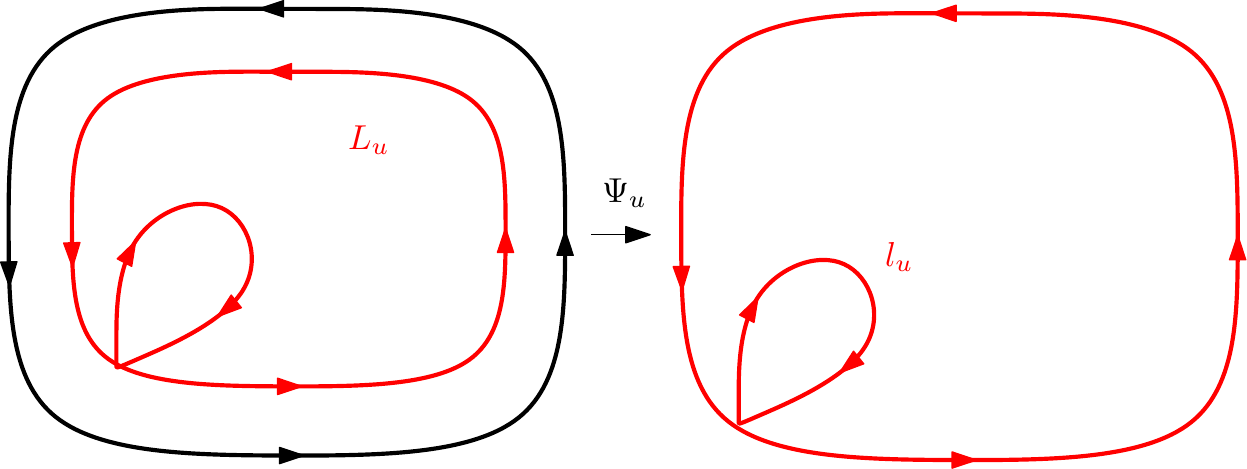}
\end{center}
\caption{The boundary of $(f_u\circ\Psi_u)^{-1}(\U)$ is a quasisimple loop, denoted by $L_{u}$.}
\end{subfigure}
\caption{\label{fig::growing_cle4_quasisimpleloop} The growing process of $\CLE_4$. For each $u<\tau_1$, the map $\Psi_u=\circ_{s<u}f_s$ is a conformal map from some simply connected subset of $\U$ onto $\U$.}
\end{figure}

From the above construction, we get a sequence of quasisimple loops $(L_u, 0\le u\le \tau_1)$ from the sequence of bubbles $(l_u, 0\le u\le \tau_1)$. We will show the sequence $(L_u, 0\le u\le \tau_1)$ is c\`adl\`ag. Before this, we need to be precise about the topology. Fix $z\in \C$ and consider a sequence of quasisimple loops $(L_n, n\ge 1)$. For each $n$, denote by $U_n^z$ the connected component of $\C\setminus L_n$ that contains $z$. We say that the sequence $(L_n, n\ge 1)$ converges to a quasisimple loop $L$ in Carath\'eodory topology seen from $z$ if the sequence $(U_n^z, n\ge 1)$ converges to $U^z$ in Carath\'eodory topology seen from $z$ where $U^z$ is the connected component of $\C\setminus L$ that contains $z$. 

\begin{lemma} \label{lem::growing_cle4_cadlag}
Consider the sequence of quasisimple loops $(L_u, 0\le u\le \tau_1)$. 
\begin{itemize}
\item For each $u\in [0,\tau_1)$, the right limit $\lim_{v\downarrow u}L_v$ exists and equals $L_u$ almost surely ;
\item For each $u\in (0,\tau_1]$, the left limit $\lim_{v\uparrow u}L_v$ exists and equals $L_{u-}$ almost surely.
\end{itemize}
\end{lemma}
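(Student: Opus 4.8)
The plan is to reduce both claims to statements about Carath\'eodory convergence, seen from the origin, of the connected component of $\C\setminus L_v$ that contains the origin, and then to use monotonicity of these components together with the Carath\'eodory kernel theorem. Write $U^0_u:=(f_u\circ\Psi_u)^{-1}(\U)$, which by item~(e) is the origin-component of $\C\setminus L_u$, and note that $\Psi_u^{-1}(\U)$ is the origin-component of $\C\setminus L_{u-}$ (both $f_u\circ\Psi_u$ and $\Psi_u$ being normalized at the origin). By the definition of the topology, ``$L_v\to L_u$ seen from the origin'' is the same as ``$U^0_v\to U^0_u$ in the Carath\'eodory sense seen from the origin'', and likewise for $L_{u-}$ and $\Psi_u^{-1}(\U)$. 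I would work on the almost-sure event of \cite[Section~7]{SheffieldWernerCLE} on which $\Psi=\circ_{s<\tau_1}f_s$ is a conformal map onto a domain of positive conformal radius, equivalently on which $\Sigma:=\sum_{0<s<\tau_1}\log f_s'(0)<\infty$ (a convergent sum of nonnegative terms, strictly positive only at the countably many bubble times). On this event the chain rule gives $\CR(U^0_v;0)=\exp\bigl(-\sum_{s\le v}\log f_s'(0)\bigr)$, and the only elementary input I will use is that, since $\Sigma$ converges, $\sum_{u<s\le v}\log f_s'(0)\to 0$ as $v\downarrow u$, and $\sum_{s\le v}\log f_s'(0)\to\sum_{s<u}\log f_s'(0)$ as $v\uparrow u$.

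For the \emph{right} limit at $u\in[0,\tau_1)$: the origin-component $U^0_v$ shrinks as more bubbles are removed, so $U^0_v\subseteq U^0_u$ for $u<v$. Hence, as $v\downarrow u$, the domains $U^0_v$ increase, and by the Carath\'eodory kernel theorem for monotone families (\cite[Section~3.6]{LawlerConformallyInvariantProcesses}) they converge, seen from the origin, to $K:=\bigcup_{v>u}U^0_v\subseteq U^0_u$. Passing to conformal radii and using the tail bound above, $\CR(K;0)=\lim_{v\downarrow u}\CR(U^0_v;0)=\exp\bigl(-\sum_{s\le u}\log f_s'(0)\bigr)=\CR(U^0_u;0)$. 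Since the conformal radius seen from a fixed interior point is \emph{strictly} monotone under strict inclusion of simply connected domains (an immediate consequence of the Schwarz lemma), $K\subseteq U^0_u$ together with equality of conformal radii forces $K=U^0_u$, i.e.\ $L_v\to L_u$.

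For the \emph{left} limit at $u\in(0,\tau_1]$: for $v<u$, the origin-component $\Psi_u^{-1}(\U)$ is obtained from $U^0_v$ by removing the further bubbles $\{l_s:v<s<u\}$, so $\Psi_u^{-1}(\U)\subseteq U^0_v$; as $v\uparrow u$ the domains $U^0_v$ decrease, and their conformal radii stay bounded below by $\CR(\Psi^{-1}(\U);0)>0$, so the limiting kernel $K'$ seen from the origin (the origin-component of the interior of $\bigcap_{v<u}U^0_v$) is nondegenerate and $U^0_v\to K'$, with $\Psi_u^{-1}(\U)\subseteq K'$. Computing conformal radii, $\CR(K';0)=\lim_{v\uparrow u}\exp\bigl(-\sum_{s\le v}\log f_s'(0)\bigr)=\exp\bigl(-\sum_{s<u}\log f_s'(0)\bigr)=\CR(\Psi_u^{-1}(\U);0)$, and strict monotonicity again yields $K'=\Psi_u^{-1}(\U)$, i.e.\ $L_v\to L_{u-}$.

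The proof needs no hard estimate: the genuinely analytic input is only the a.s.\ finiteness of $\Sigma$ — equivalently, that $\Psi$ uniformizes a domain of positive conformal radius — which is quoted from \cite{SheffieldWernerCLE}, together with the classical Carath\'eodory kernel theorem. The point that requires care is the bookkeeping of which bubbles are incorporated at the endpoint: the right limit lands on the domain with the bubbles $\{l_s:s\le u\}$ removed, that is on $L_u$, which by the definition in item~(e) already contains the bubble $l_u$; whereas the left limit lands on the domain with only $\{l_s:s<u\}$ removed, that is on $L_{u-}$. This asymmetry, dictated by the definitions of $L_u$ and $L_{u-}$ in items~(d)--(e), is exactly what makes $(L_u,0\le u\le\tau_1)$ c\`adl\`ag rather than continuous, with jumps precisely at the (countably many) times at which a bubble appears.
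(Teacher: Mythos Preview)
Your proof is correct and follows essentially the same approach as the paper: both reduce the statement to Carath\'eodory convergence, seen from the origin, of the origin-components $U^0_v=(f_v\circ\Psi_v)^{-1}(\U)$, and identify the limits with $U^0_u$ (for the right limit) and $\Psi_u^{-1}(\U)$ (for the left limit). The paper's proof simply asserts that $f_v\circ\Psi_v\to f_u\circ\Psi_u$ as $v\downarrow u$ and $f_v\circ\Psi_v\to\Psi_u$ as $v\uparrow u$ in Carath\'eodory topology, without further justification; your argument via monotonicity of the domains, the Carath\'eodory kernel theorem, and the strict monotonicity of conformal radius under inclusion (Schwarz lemma) is exactly a clean way to supply that justification, and the bookkeeping of which bubbles are included at the endpoint is correct.
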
 
\begin{proof}
\textit{First}, we show the conclusion for the right limit. For $u\in [0,\tau_1)$ and $v\in (u,\tau_1)$, we have the following observations.
\begin{enumerate}
\item [(a)] The map $f_u\circ\Psi_u$ (resp. $f_v\circ\Psi_v$) is the conformal map from the connected component of $\U\setminus L_u$  (resp. $\U\setminus L_v$) that contains the origin onto $\U$ with $f_u\circ\Psi_u(0)=0$ and $(f_u\circ\Psi_u)'(0)>0$ (resp. $f_v\circ\Psi_v(0)=0$ and $(f_v\circ\Psi_v)'(0)>0$).
\item [(b)] As $v\downarrow u$, the map $f_v\circ\Psi_v$ converges almost surely to $f_u\circ\Psi_u$ in  Carath\'eodory topology seen from the origin. 
\end{enumerate}
Combining these two facts, we have that the right limit $\lim_{v\downarrow u}L_v$ exists and equals $L_u$.
\medbreak
\textit{Next}, we show the conclusion for the left limit. For $u\in (0,\tau_1]$ and $v\in [0,u)$, we have the following observations.
\begin{enumerate}
\item [(a)] The map $f_u\circ\Psi_u$ (resp. $f_v\circ\Psi_v$) is the conformal map from the connected component of $\U\setminus L_u$  (resp. $\U\setminus L_v$) that contains the origin onto $\U$ with $f_u\circ\Psi_u(0)=0$ and $(f_u\circ\Psi_u)'(0)>0$ (resp. $f_v\circ\Psi_v(0)=0$ and $(f_v\circ\Psi_v)'(0)>0$).
\item [(b)] As $v\uparrow u$, the map $f_v\circ\Psi_v$ converges almost surely to $\Psi_u$ in  Carath\'eodory topology seen from the origin. 
\end{enumerate}
Combining these two facts, we have that the left limit $\lim_{v\uparrow u}L_v$ exists and equals $L_{u-}$.
\end{proof}

\begin{figure}[ht!]
\begin{center}
\includegraphics[width=0.6\textwidth]{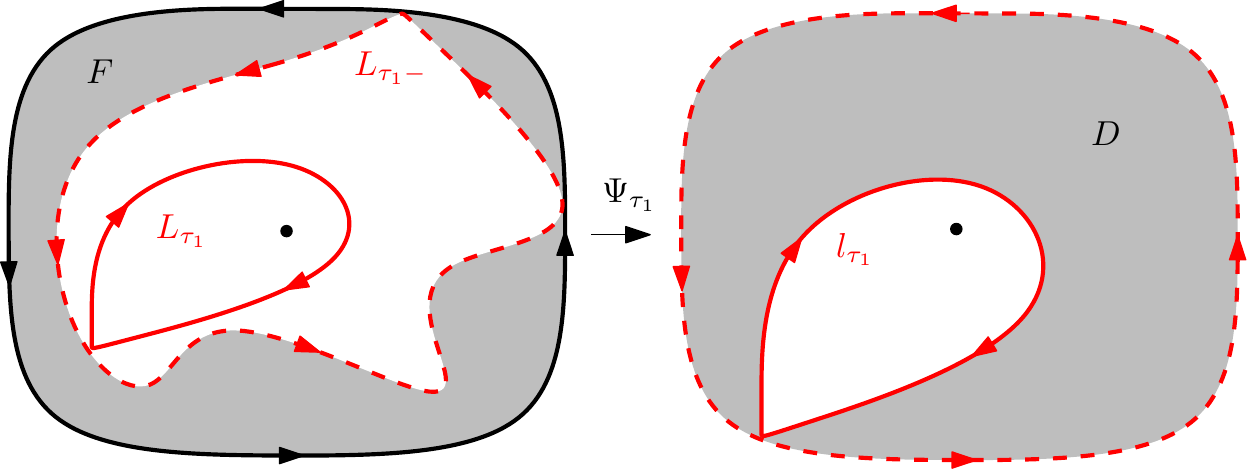}
\end{center}
\caption{\label{fig::cle4_D_F} Explanation of the definition of the compact sets $D$ and $F$: in the left figure, the gray region is $F$ which is the compact set lying between $\partial\U$ and $L_{\tau_1-}$; in the right figure, the gray region is $D$ which is the compact set lying between $\partial\U$ and the bubble $l_{\tau_1}$.}
\end{figure}

Consider the Poisson point process $(l_u, u\ge 0)$ and keep the same notations as above. Recall that $\Psi_{\tau_1}=\circ_{u<\tau_1}f_u$ is the composition of  maps $f_u$ which is the conformal map from the connected component of $\U\setminus l_u$ that contains the origin onto $\U$ with $f_u(0)=0$ and $f_u'(0)>0$. The loop $L_{\tau_1-}$ is the boundary of $\Psi_{\tau_1}^{-1}(\U)$. In other words, $\Psi_{\tau_1}$ is the conformal map from the connected component of $\U\setminus L_{\tau_1-}$ that contains the origin, denoted by $U_{\tau_1-}$, onto $\U$ with $\Psi_{\tau_1}(0)=0$ and $\Psi_{\tau_1}'(0)>0$. 

Define the compact set $F$ to be the domain lying between $\partial\U$ and $L_{\tau_1-}$ (see Figure \ref{fig::cle4_D_F}):
\[F=\overline{\U}\setminus U_{\tau_1-},\]
and denote its law by $\mu_F^{\sharp}$. Note that the complement of $F$ in the Riemann sphere has two connected components: $U_{\tau_1-}$ and $\C\cup\{\infty\}\setminus \overline{\U}$. 

The bubble $l_{\tau_1}$ is the first bubble in $(l_u, u\ge 0)$ that surrounds the origin. Define the compact set $D$ to be the domain lying between $\partial\U$ and $l_{\tau_1}$ (see Figure \ref{fig::cle4_D_F}):
\[D=\overline{\U}\setminus \inte(l_{\tau_1}),\]
and denote its law by $\mu^{\sharp}_D$. Note that the complement of $D$ in the Riemann sphere has two connected components: $\inte(l_{\tau_1})$ and $\C\cup\{\infty\}\setminus\overline{\U}$; and that the interior of $D$ is simply connected. 

The loop $L_{\tau_1}$ has the same law as the loop in $\CLE_4$ in the unit disc that surrounds the origin. Define $A$ to be the annulus lying between $\partial\U$ and $L_{\tau_1}$:
\[A=\U\setminus\overline{\inte(L_{\tau_1})},\]
and denote its law by $\mu_A^{\sharp}$. 

From the above analysis, we have the following relation between the pair $(F, D)$ and the loop in $\CLE_4$.

\begin{lemma}\label{lem::growing_cle4_operation1}
Suppose that $(F, D)$ is a pair of compact sets sampled according to the law $\mu_F^{\sharp}\otimes\mu_D^{\sharp}$. Let $\Psi_F$ be the conformal map from the connected component of $\C\setminus F$ that contains the origin onto $\U$ with $\Psi_F(0)=0$ and $\Psi_F'(0)>0$. Then the law of the compact set $F\cup\Psi_F^{-1}(D)$ is the same as the law of the closure of an annulus with law $\mu_A^{\sharp}$.
\end{lemma}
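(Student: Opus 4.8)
The plan is to apply the operation in the statement to the specific pair $(F,D)$ that already appears in the construction of the growing process from the Poisson point process $(l_u,u\ge 0)$, verify that it reproduces (almost surely) the closure of the annulus $A$, and then argue that in that construction $F$ and $D$ are in fact independent with exactly the prescribed marginals, so nothing is lost by re-sampling them independently.

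First I would pin down the conformal map. In the construction $F=\overline{\U}\setminus U_{\tau_1-}$, and the connected component of $\C\setminus F$ containing the origin is exactly $U_{\tau_1-}$. Since every $f_u$ fixes the origin with positive derivative, so does each finite composition and hence the limit $\Psi_{\tau_1}=\circ_{u<\tau_1}f_u$; by uniqueness of the Riemann map this forces $\Psi_F=\Psi_{\tau_1}$. Next I would record that $\Psi_{\tau_1}$ extends to a homeomorphism $\overline{U_{\tau_1-}}\to\overline{\U}$ (its boundary $L_{\tau_1-}$ being quasisimple) and that $L_{\tau_1}=\Psi_{\tau_1}^{-1}(l_{\tau_1})$ with $\inte(L_{\tau_1})=\Psi_{\tau_1}^{-1}(\inte(l_{\tau_1}))\subset U_{\tau_1-}$, because $l_{\tau_1}$ is a simple loop meeting $\partial\U$ only at its root. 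A direct set computation then gives
\[
F\cup\Psi_F^{-1}(D)=\bigl(\overline{\U}\setminus U_{\tau_1-}\bigr)\cup\bigl(\overline{U_{\tau_1-}}\setminus\inte(L_{\tau_1})\bigr)=\overline{\U}\setminus\inte(L_{\tau_1}),
\]
which is the closure of $A=\U\setminus\overline{\inte(L_{\tau_1})}$, whose law is $\mu_A^{\sharp}$ by definition.

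Then I would establish the independence and the marginals. Writing the driving process as $((l_j,t_j),j\in J)$ with intensity $M\times[0,\infty)$, I would split it by the restriction theorem into two independent Poisson point processes: bubbles that surround the origin, and bubbles that do not. The pair $(\tau_1,l_{\tau_1})$ is the arrival time and mark of the first point of the former; for the first point of a Poisson process on $(\mathrm{marks})\times[0,\infty)$ these two are independent, and $l_{\tau_1}$ has the law of $M$ restricted to $\{l:\ l\text{ surrounds }0\}$ and normalised, so $D$ has law $\mu_D^{\sharp}$. Meanwhile $F$ is a measurable function of $\tau_1$ together with the configuration of the \emph{other} process restricted to $[0,\tau_1)$, all of which is independent of $l_{\tau_1}$; hence $F$ is independent of $D$ and has law $\mu_F^{\sharp}$ by definition. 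So the pair $(F,D)$ arising in the construction is distributed as $\mu_F^{\sharp}\otimes\mu_D^{\sharp}$, and since $(F,D)\mapsto F\cup\Psi_F^{-1}(D)$ is a deterministic measurable map, the law of $F\cup\Psi_F^{-1}(D)$ depends only on the joint law of $(F,D)$. Combined with the set identity, this yields the lemma.

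The main obstacle will be the independence step: one has to argue with care that $\tau_1$ together with the entire family of non-surrounding bubbles arriving before $\tau_1$ — which jointly determine $F$ — carry no information about the first surrounding bubble $l_{\tau_1}$. A secondary (routine) point is checking the boundary regularity of $\Psi_{\tau_1}$ used in the set identity, so that preimages of closures are closures and $\Psi_{\tau_1}^{-1}(\inte(l_{\tau_1}))=\inte(L_{\tau_1})$; this follows from $L_{\tau_1-}$ being quasisimple and $l_{\tau_1}$ being a simple, singly rooted loop.
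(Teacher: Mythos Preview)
Your proof is correct and is precisely the argument the paper has in mind: the lemma is stated immediately after the construction with the phrase ``From the above analysis,'' and no separate proof is given, because the set identity $F\cup\Psi_F^{-1}(D)=\overline{\U}\setminus\inte(L_{\tau_1})$ and the independence of $F$ and $D$ (via the splitting of the Poisson point process into surrounding and non-surrounding bubbles) are exactly what the preceding discussion is meant to convey. Your write-up simply makes explicit what the paper leaves implicit.
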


\subsection{From discrete to continuum exploration of $\GFF$}
\label{subsec::gff_discrete_continuum}
In this section, we will explain how to couple $\GFF$ with the sequence of quasisimple loops $(L_u, 0\le u\le \tau_1)$ introduced in Section \ref{subsec::growing_cle4}. Roughly speaking, the sequence $(L_u, 0\le u\le \tau_1)$ can be viewed as the limit of the upward height-varying sequence of level loops as the height difference goes to zero. 

Suppose that $h$ is a zero-boundary $\GFF$ on $\U$. Fix $r\in (0,1)$, let $(L_n, 0\le n\le N)$ be the upward height-varying sequence of level loops with height difference $r\lambda$ starting from $L_0=\partial\U$ (counterclockwise) targeted at the origin, where $N$ is the transition step. We have the following properties. 

\begin{enumerate}
\item [(a)] The transition step satisfies geometric distribution. The normalized quantity $Nr/2 $ converges in distribution to exponential law as $r$ goes to zero.
\item [(b)] The loops $L_n$ are counterclockwise for $0\le n\le N-1$; and the loop $L_N$ is clockwise.
\item [(c)] For any stopping time $M\le N$, the conditional law of $h$ given $(L_n, 0\le n\le M)$ is the same as $\GFF$ on $\U\setminus \cup_{n\le M}L_n$ whose boundary value is, for each $n$,
\[\begin{cases}
-nr\lambda,&\text{to the left-side of }L_n,\\
2\lambda-nr\lambda, &\text{to the right-side of }L_n.
\end{cases}\]
\end{enumerate}

Let $(\tilde{L}_n, 0\le n\le \tilde{N})$ be the upward height-varying sequence of level loops of $h$ with height difference $r\lambda/2$, where $\tilde{N}$ is the transition step. By \cite[Lemma 3.4.1]{WangWuLevellinesGFFI}, the relation between $(L_n, 0\le n\le N)$ and $(\tilde{L}_n, 0\le n\le \tilde{N})$ can be summarized as follows.
\begin{enumerate}
\item [(a)] Almost surely, we have $\tilde{L}_{2n}=L_n$ for $1\le n\le N-1$.
\item [(b)] There are two possibilities for $\tilde{L}_{\tilde{N}}$:
\[\begin{cases}
\tilde{N}=2N-1,\quad \tilde{L}_{\tilde{N}}\subset \overline{\inte(L_N)}, &\text{if }\tilde{L}_{2N-1} \text{ is clockwise};\\
\tilde{N}=2N,\quad \tilde{L}_{\tilde{N}}=L_N,&\text{if }\tilde{L}_{2N-1} \text{is counterclockwise}.
\end{cases}\]
\end{enumerate}

Suppose that $h$ is a zero-boundary $\GFF$ in $\U$. For each $k\ge 1$, let $(L_n^k, 0\le n\le N^k)$ be the upward height-varying sequence of level loops with height difference $2^{-k}\lambda$ starting from $L_0=\partial\U$ targeted at the origin, where $N^k$ is the transition step. From the above analysis, we have the following observations.
\begin{enumerate}
\item [(a)] For each $k\ge 1$, define $\tau^{(k)}=2^{-k-1}N^k$. Then 
\[0\le \tau^{(k)}-\tau^{(k+1)}\le 2^{-k-2},\quad\text{for all } k\ge 1.\]
Thus, the sequence $(\tau^{(k)}, k\ge 1)$ almost surely converges to some limit, denoted by $\tau^{(\infty)}$, which has exponential law.
\item [(b)] We have
\[\inte(L^{k+1}_{N^{k+1}})\subset \inte(L^k_{N^k}),\quad \text{for all }k\ge 1.\]
The sequence $(L^k_{N^k}, k\ge 1)$ almost surely converges in Carath\'eodory topology to some limit, denoted by $L^{(\infty)}$, which has the same law as the loop in $\CLE_4$ that surrounds the origin \cite[Proposition 3.4.2]{WangWuLevellinesGFFI}.
\end{enumerate}

We reindex each sequence $(L^k_n, 0\le n\le N^k)$ in the following way:
\[L^{(k)}_u=L^k_n, \quad \text{for}\quad 2^{-k-1}n<u\le 2^{-k-1}(n+1),\quad  0\le n\le N^k-1.\]
For each  $0\le u\le \tau^{(k)}$, let $\Psi^{(k)}_u$ be the conformal map from $\inte(L^{(k)}_u)$ onto $\U$ with $\Psi^{(k)}_u(0)=0$ and $(\Psi^{(k)}_u)'(0)>0$.

Suppose that $(l_u, u\ge 0)$ is a Poisson point process with intensity $\SLE_4$-bubble measure $M$ and let $\tau_1$ be the first time $u$ that $l_u$ surrounds the origin. For each $u\in [0,\tau_1)$, let $f_u$ be the conformal map from the connected component of $\U\setminus l_u$ that contains the origin onto $\U$ with $f_u(0)=0, f_u'(0)>0$. For $u\in (0,\tau_1]$, let $\Psi_u=\circ_{s<u}f_s$ which is well-defined, see Section \ref{subsec::growing_cle4}. Let $L_{\tau_1}$ be $\Psi_{\tau_1}^{-1}(l_{\tau_1})$, and for $u\in [0,\tau_1)$, let $L_u$ be the boundary of $(f_u\circ\Psi_u)^{-1}(\U)$. Then we have the following convergence.

\begin{lemma}\label{lem::discrete_continuum_cvg}
The family of conformal maps $(\Psi^{(k)}_u, 0\le u\le \tau^{(k)})$ converges almost surely to some limit $(\Psi^{(\infty)}_u, 0\le u\le \tau^{(\infty)})$ with respect to the topology of local uniform convergence. Namely, for any $\delta>0$ and every compact interval $[a,b]$, the map $\Psi^{(k)}_u$ converges almost surely to $\Psi^{(\infty)}_u$ uniformly on \[[a,b]\times \{w\in \U: \dist(w,L_{b-})\ge \delta \text{ and }w \in U^0_{b-}\},\] 
where $U^0_{b-}$ is the connected component of $\U\setminus L_{b-}$ that contains the origin. 

Moreover, the family of conformal maps $(\Psi^{(\infty)}_u, 0\le u\le \tau^{(\infty)})$ has the same law as the family $(\Psi_u, 0\le u\le \tau_1)$.
\end{lemma}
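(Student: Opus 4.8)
The plan is to establish the two assertions of the lemma in turn: the existence of the almost-sure limit family $(\Psi^{(\infty)}_u)$ in the stated local-uniform sense, and the identification of its law with that of the $\CLE_4$-exploration family $(\Psi_u, 0\le u\le\tau_1)$ from Section \ref{subsec::growing_cle4}.

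\emph{Fixed-time convergence.} Fix $u\in(0,\tau^{(\infty)})$. Transporting the refinement relation $\tilde L_{2n}=L_n$ of \cite[Lemma 3.4.1]{WangWuLevellinesGFFI} through the reindexing $L^{(k)}_u=L^k_n$ (for $2^{-k-1}n<u\le 2^{-k-1}(n+1)$) shows that the domains $\inte(L^{(k)}_u)$ are eventually decreasing in $k$. Since $u<\tau^{(\infty)}\le\tau^{(k)}$ forces each $L^{(k)}_u$ to precede the transition loop, we have $\CR(\inte(L^{(k)}_u);0)\ge\CR(\inte(L^k_{N^k});0)$, and the latter decreases to $\CR(\inte(L^{(\infty)});0)>0$ because $L^{(\infty)}=\lim_k L^k_{N^k}$ is a $\CLE_4$-type loop. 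Hence $\inte(L^{(\infty)}_u):=\bigcap_k\inte(L^{(k)}_u)$ is a simply connected neighbourhood of the origin with positive conformal radius, and the Carath\'eodory kernel theorem gives $\Psi^{(k)}_u\to\Psi^{(\infty)}_u$, the normalized conformal map of $\inte(L^{(\infty)}_u)$ onto $\U$, locally uniformly in $w$. All of this is almost sure, since every object is a measurable function of the single field $h$.

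\emph{Joint uniformity in $(u,w)$.} To upgrade to uniformity over $u\in[a,b]$ together with $w$ in the compact set $K_\delta:=\{w\in U^0_{b-}:\dist(w,L_{b-})\ge\delta\}$, I would exploit that $u\mapsto\inte(L^{(k)}_u)$ is decreasing, so that for $u'\le u\le u''$ Koebe's distortion and growth theorems control $|\Psi^{(k)}_u-\Psi^{(k)}_{u'}|$ and $|\Psi^{(k)}_u-\Psi^{(k)}_{u''}|$ on $K_\delta$ in terms of the conformal-radius ratio $\CR(\inte(L^{(k)}_{u''});0)/\CR(\inte(L^{(k)}_{u'});0)$, and likewise for the limit maps. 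One then picks a finite partition $a=t_0<\dots<t_m=b$ whose mesh makes all these ratios close to $1$, combines the sandwiching with the fixed-time convergence of the previous step at the partition points, and treats the finitely many ``macroscopic'' jumps one at a time. This is the main obstacle: the limiting time-evolution $u\mapsto\Psi^{(\infty)}_u$ is genuinely discontinuous --- each macroscopic level-loop step survives in the limit as a jump --- so the sandwiching argument is valid only \emph{between} consecutive jumps, and one has to verify that the discrete jump locations and jump sizes converge to the limiting ones before patching the pieces together.

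\emph{Law identification.} After reindexing, the level-$k$ exploration is a composition of level-loop moves, one per point of the grid $2^{-k-1}\Z\cap[0,u)$; by the domain Markov property (Proposition \ref{prop::levelloops_inside_markov}) these moves are i.i.d.\ after rescaling, each has probability $r/2=2^{-k-1}$ of surrounding the origin, and a single normalized level-loop move with height difference $r\lambda$ converges as $r\to0$ to a sample from the $\SLE_4$-bubble measure $M$ --- this is precisely the mechanism behind \cite[Proposition 3.4.2]{WangWuLevellinesGFFI}, which already identifies $\lim_k L^k_{N^k}$ with the $\CLE_4$-loop $\Psi_{\tau_1}^{-1}(l_{\tau_1})$. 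A standard triangular-array argument then shows that this grid-composition converges in distribution to the Poisson-point-process composition $\circ_{s<u}f_s=\Psi_u$, and since the per-step stopping probability $2^{-k-1}$ over intervals of length $2^{-k-1}$ produces exactly the exponential law of $\tau_1$ for $\tau^{(\infty)}$, the time horizons also match. Combining the distributional limit $\Psi^{(k)}\Rightarrow\Psi$ with the almost-sure limit $\Psi^{(k)}\to\Psi^{(\infty)}$ from the first two steps yields $(\Psi^{(\infty)}_u,0\le u\le\tau^{(\infty)})\overset{d}{=}(\Psi_u,0\le u\le\tau_1)$.
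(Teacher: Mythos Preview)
The paper's argument is substantially shorter and avoids the obstacle you flag. Since the domains $\inte(L^{(k)}_u)$ are decreasing in $k$, Carath\'eodory convergence is governed entirely by the conformal radii (this is \cite[Lemma~3.2.1]{WangWuLevellinesGFFI}). For $m>n$ the paper sets
\[
M_u=\log\frac{\CR(\inte(L^{(n)}_u);0)}{\CR(\inte(L^{(m)}_u);0)}\ge 0,
\]
notes that $\E[M_u]\le C\,2^{-n}$ for a universal $C$ (from \cite[Lemma~3.3.8]{WangWuLevellinesGFFI}), and observes that $u\mapsto M_u$ is a nonnegative submartingale. Doob's maximal inequality then gives $\PP[\sup_{u\in[a,b]}M_u\ge\eps]\le C\eps^{-1}2^{-n}$, hence the conformal radii converge uniformly in $u\in[a,b]$, and for monotone families this upgrades immediately to the stated local-uniform convergence of the maps. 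No separation into ``continuous pieces'' and ``jumps'' is needed.

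Your joint-uniformity step has a genuine gap that the paper's route sidesteps. You correctly identify that the limiting process has macroscopic jumps, so your sandwiching argument between partition points breaks down across a jump; you then say one must show that the discrete jump locations and sizes converge to the continuum ones ``before patching the pieces together,'' but you do not do this. Carrying it out would essentially require knowing the limiting jump structure --- i.e.\ the law identification you postpone to the final step --- so the argument as written is circular. The submartingale/Doob estimate is exactly the device that makes this unnecessary: it bounds $\sup_u M_u$ directly, with no information about the limit required. (The paper's proof block does not separately argue the ``Moreover'' clause about the law; your triangular-array sketch for that part is reasonable in spirit but is not what you are being asked to compare against.)
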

\begin{proof}
Note that, for each $u$, the sequence of simply connected domains $(\inte(L_u^{(k)}), k\ge 1)$ is decreasing. To show the convergence of the domains in Carath\'eodory topology (seen from the origin), it is sufficient to show the convergence in conformal radii (seen from the origin) $(\CR\left(\inte(L_u^{(k)})\right), k\ge 1)$ (see \cite[Lemma 3.2.1]{WangWuLevellinesGFFI}).

For $u\ge 0$ and $m>n$, define
\[M_u=\log\frac{\CR\left(\inte(L_u^{(n)})\right)}{\CR\left(\inte(L_u^{(m)})\right)}\ge 0.\]
By \cite[Lemma 3.3.8]{WangWuLevellinesGFFI}, we have the following observations.
\begin{enumerate}
\item [(a)] There is a universal constant $C$ such that $\E[M_u]\le C2^{-n}$.
\item [(b)] We can show that the process $(M_u, u\ge 0)$ is a non-negative submartingale. 
\end{enumerate}
Combining these two facts with Doob's maximal inequality, we have that, for any $\eps>0$,
\[\PP\left[\sup_{u\in [a,b]}\left|\log\frac{\CR\left(\inte(L_u^{(n)})\right)}{\CR\left(\inte(L_u^{(m)})\right)}\right|\ge \eps\right]\le\frac{C}{\eps}2^{-n}.\]
This implies the uniform convergence in conformal radii, which completes the proof.
\end{proof}

\begin{remark}\label{rem::discrete_continuum_cvg}
From the convergence in Lemma \ref{lem::discrete_continuum_cvg}, we have the following observations.
\begin{enumerate}
\item [(1)] Almost surely, the sequence $(L^k_{N^k}, k\ge 1)$ monotonically converges to some limit, denoted by $L^{(\infty)}$, in Carath\'eodory topology seen from the origin.  Moreover, the loop $L^{(\infty)}$ has the same law as the loop in $\CLE_4$ that surrounds the origin.
\item [(2)] Almost surely, for each $u\in (0,\tau^{(\infty)}]$, the sequence $(L^{(k)}_u, k\ge 1)$ monotonically converges to some limit, denoted by $L^{(\infty)}_{u-}$, in Carath\'eodory topology seen from the origin. Moreover, the loop $L^{(\infty)}_{u-}$ has the same law as $L_{u-}$ for the sequence $(L_u, 0\le u\le \tau_1)$ introduced in Section \ref{subsec::growing_cle4}.
\item [(3)] Define 
\[L^{(\infty)}_{\tau^{(\infty)}}=L^{(\infty)}; \quad L^{(\infty)}_u=\lim_{v\downarrow u}L^{(\infty)}_{v-}, \quad \text{for }u\in [0,\tau^{(\infty)}).\]
Then the sequence $(L^{(\infty)}_u, 0\le u\le \tau^{(\infty)})$ has the same law as the sequence $(L_u, 0\le u\le \tau_1)$ introduced in Section \ref{subsec::growing_cle4}.
\item [(4)] The sequence $(L^{(\infty)}_u, 0\le u\le \tau^{(\infty)})$ is almost surely determined by the field $h$.
\end{enumerate}
\end{remark}

From the convergence in Lemma \ref{lem::discrete_continuum_cvg}, we get the coupling between $\GFF$ and the sequence of quasisimple loops $(L_u, 0\le u\le \tau_1)$.

\begin{proposition}\label{prop::gff_upward_continuum}
There exists a coupling between zero-boundary $\GFF$ and c\`adl\`ag sequence of adjacent quasisimple loops $(L_u, 0\le u\le \tau_1)$ such that, for any stopping time $T\le \tau_1$, the conditional law of $h$ given $(L_u, u\le T)$ is the same as $\GFF$ whose boundary value is, for each $u\in [0,T]$,
\[\begin{cases}
-2\lambda u,&\text{to the left-side of } L_u,\\
2\lambda(1-u),&\text{ to the right-side of } L_u.
\end{cases}\]
In particular, for any stopping time $T<\tau_1$, given $(L_u, 0\le u\le T)$, let $U_T^0$ be the connected component of $\U\setminus L_T$ that contains the origin, then the conditional law of $h$ restricted to $U^0_T$ is the same as $\GFF$ with boundary value $-2\lambda T$. 
Given $(L_u, 0\le u\le \tau_1)$, the conditional law of $h$ restricted to $\inte(L_{\tau_1})$ is the same as $\GFF$ with boundary value $2\lambda(1-\tau_1)$. See Figure \ref{fig::gff_upward_continuum} (a).
\end{proposition}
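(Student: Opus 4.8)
The plan is to produce the coupling as a limit of the discrete couplings and to push the domain Markov property through the convergence of Lemma~\ref{lem::discrete_continuum_cvg}. For each $k\ge 1$, couple the zero-boundary $\GFF$ $h$ on $\U$ with its upward height-varying sequence of level loops of height difference $2^{-k}\lambda$ started from $L_0=\partial\U$ (counterclockwise) targeted at the origin, and reindex it as $(L^{(k)}_u,0\le u\le\tau^{(k)})$ as in Section~\ref{subsec::gff_discrete_continuum}. By the domain Markov property recalled for these sequences (boundary value $-nr\lambda$ to the left and $2\lambda-nr\lambda$ to the right of the $n$-th loop, here with $r=2^{-k}$), the set $A^{(k)}_T:=\cup_{u\le T}L^{(k)}_u$ is a local set of $h$ for any stopping time $T\le\tau^{(k)}$, and its conditional mean $\LC^{(k)}_T$ is the harmonic function on $\U\setminus A^{(k)}_T$ with boundary value $-n2^{-k}\lambda$ to the left of $L^{(k)}_u$ and $2\lambda-n2^{-k}\lambda$ to its right, where $n$ is the index with $2^{-k-1}n<u\le 2^{-k-1}(n+1)$. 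Since $|n2^{-k}-2u|\le 2^{-k}$, this boundary data converges uniformly in $u$ to $-2\lambda u$, respectively $2\lambda(1-u)$, as $k\to\infty$. By Remark~\ref{rem::discrete_continuum_cvg}, almost surely the reindexed sequences converge, in Carath\'eodory topology seen from the origin, to a sequence $(L^{(\infty)}_u,0\le u\le\tau^{(\infty)})$ that is determined by $h$, is c\`adl\`ag and adjacent (these last two being inherited from the identification with the family $(L_u,0\le u\le\tau_1)$ of Section~\ref{subsec::growing_cle4}, cf.\ Lemma~\ref{lem::growing_cle4_cadlag}), and has the law of $(L_u,0\le u\le\tau_1)$. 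I would take this coupling and verify the stated domain Markov property for it.

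First I would treat a deterministic $T<\tau^{(\infty)}$, so that for all large $k$ one has $T<\tau^{(k)}$ and, by \cite[Lemma~3.4.1]{WangWuLevellinesGFFI}, the loops $(L^{(k)}_u,u\le T)$ form a subfamily of $(L^{(k+1)}_u,u\le T)$; hence the $\sigma$-algebras $\LF_k:=\sigma(L^{(k)}_u,u\le T)$ are increasing for $k$ large, the local sets $A^{(k)}_T$ are increasing, and the origin components $U^{0,(k)}_T=\inte(L^{(k)}_T)$ decrease to $U^0_T=\inte(L^{(\infty)}_T)$. For any $\phi\in H_{s,0}(\U)$ the discrete Markov property gives $\E[(h,\phi)\mid\LF_k]=(\LC^{(k)}_T,\phi)$, a martingale in $k$, so $(\LC^{(k)}_T,\phi)$ converges in $L^2$; on the other hand, the Carath\'eodory convergence of the complementary domains (origin component and ``between'' components) together with the uniform convergence of the piecewise constant boundary data identifies the limit as $(\LC_\infty,\phi)$, where $\LC_\infty$ is the harmonic function on $\U\setminus A^{(\infty)}_T$ with boundary value $-2\lambda u$ to the left and $2\lambda(1-u)$ to the right of $L^{(\infty)}_u$ (the touching points of the loops with one another and with $\partial\U$ form a polar set, hence do not affect harmonic extension). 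Since $\LC_\infty$ is $\sigma(L^{(\infty)}_u,u\le T)$-measurable, conditioning down and invoking standard local-set theory for the $\GFF$ (an increasing limit of local sets with convergent conditional means is local) shows that $A^{(\infty)}_T$ is a local set with $\LC_{A^{(\infty)}_T}=\LC_\infty$, i.e.\ the conditional law of $h$ given $(L^{(\infty)}_u,u\le T)$ is a $\GFF$ with the claimed boundary values; in particular on $U^0_T$ it is a $\GFF$ with constant boundary value $-2\lambda T$. For the terminal time $T=\tau^{(\infty)}=\tau_1$, I would apply the same limiting argument to $L^k_{N^k}$, which is clockwise so that $\inte(L^k_{N^k})$ lies on its right and carries boundary value $2\lambda-N^k2^{-k}\lambda=2\lambda(1-\tau^{(k)})$; since $\tau^{(k)}\to\tau^{(\infty)}$ and $\inte(L^k_{N^k})\to\inte(L_{\tau_1})$, this gives that the conditional law of $h$ on $\inte(L_{\tau_1})$ is a $\GFF$ with constant boundary value $2\lambda(1-\tau_1)$.

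Finally I would pass from deterministic times to an arbitrary stopping time $T\le\tau_1$. Approximate $T$ from above by stopping times $T_m\downarrow T$ valued in a fixed countable set; by the c\`adl\`ag property the loops and their origin components at time $T_m$ converge (in Carath\'eodory topology seen from the origin) to those at time $T$, the target boundary data $-2\lambda u$, $2\lambda(1-u)$ depends continuously on the time parameter, and the GFF conditional laws converge accordingly, so the deterministic-time statement passes to the limit. Alternatively, one can restart the exploration at a small deterministic time $S$: conditionally on $(L^{(\infty)}_u,u\le S)$ the remaining loops are, after applying the conformal map from $U^0_S$ onto $\U$, the exploration of a zero-boundary $\GFF$ on $\U$ shifted by $-2\lambda S$, to which the deterministic-time case applies, and $S$ may be taken arbitrarily small. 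The main obstacle is the middle step: one must justify carefully that Carath\'eodory convergence of the complementary domains together with uniform convergence of the piecewise constant boundary values forces $L^2$-convergence of the conditional means $\LC^{(k)}_T\to\LC_\infty$, and that the increasing limit $A^{(\infty)}_T$ of the local sets $A^{(k)}_T$ is again a local set for $h$; this is analogous to, but somewhat more delicate than, the Green's-function estimate used in the proof of Theorem~\ref{thm::interior_levelloops_determins_field}.
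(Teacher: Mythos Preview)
Your approach is essentially the same as the paper's: the paper does not give a separate proof of this proposition but simply states, immediately before it, that ``from the convergence in Lemma~\ref{lem::discrete_continuum_cvg}, we get the coupling between $\GFF$ and the sequence of quasisimple loops $(L_u, 0\le u\le \tau_1)$,'' treating the domain Markov property as inherited from the discrete approximations. You have spelled out exactly this limiting argument in considerably more detail---the monotonicity of the $\sigma$-algebras via \cite[Lemma~3.4.1]{WangWuLevellinesGFFI}, the martingale identification of the limiting conditional mean, and the passage from deterministic to general stopping times---and you correctly flag the one genuinely delicate point (that the increasing union of local sets is local with the expected conditional mean), which the paper leaves implicit.
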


\begin{figure}[ht!]
\begin{subfigure}[b]{0.48\textwidth}
\begin{center}
\includegraphics[width=0.75\textwidth]{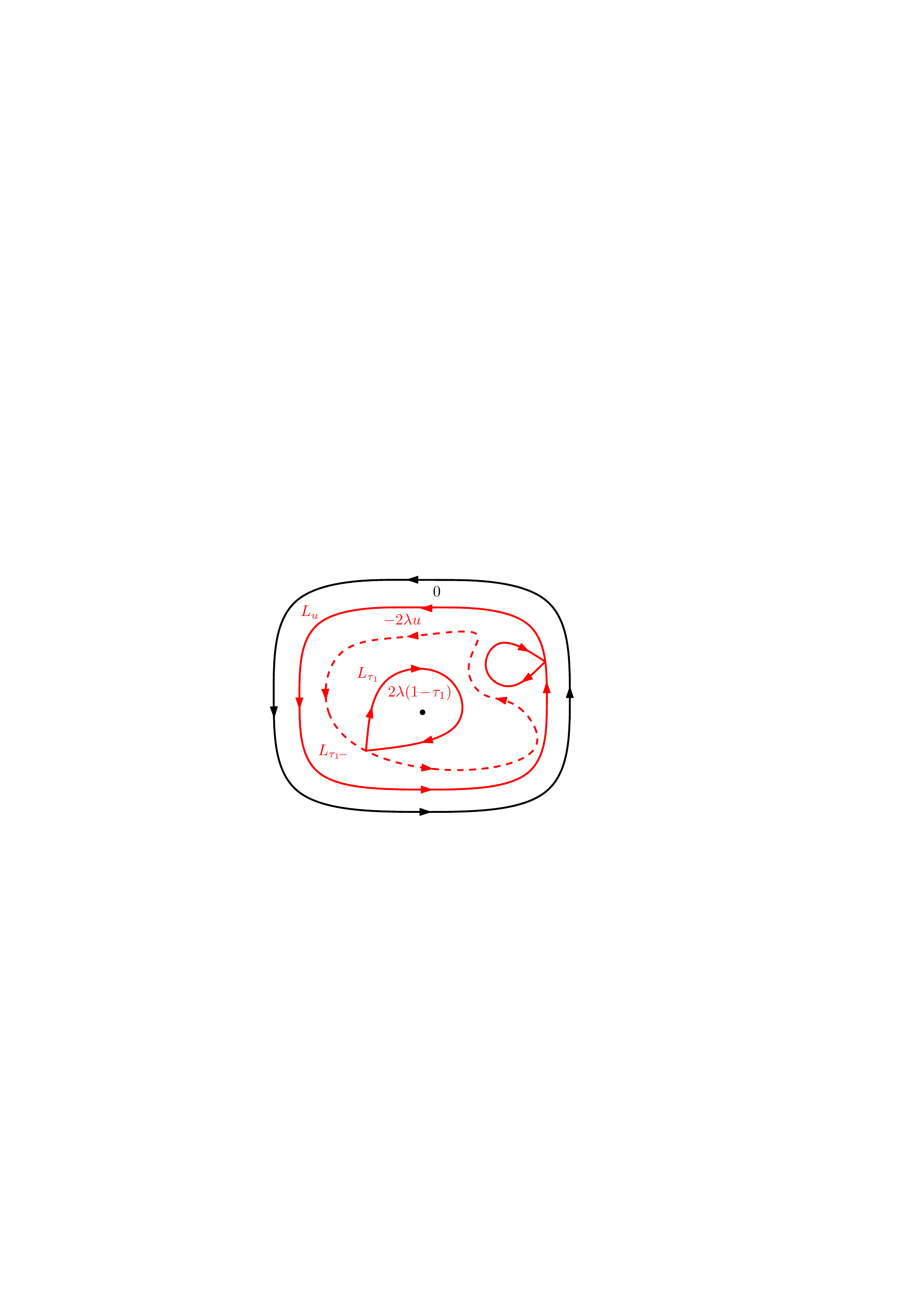}
\end{center}
\caption{Upward continuum exploration process. For $u<\tau_1$, the loop $L_u$ is counterclockwise and the boundary value is $-2\lambda u$ to the left-side of $L_u$. The loop $L_{\tau_1}$ is clockwise and the boundary value is $2\lambda (1-\tau_1)$ to the right-side of $L_{\tau_1}$.}
\end{subfigure}
$\quad$
\begin{subfigure}[b]{0.48\textwidth}
\begin{center}
\includegraphics[width=0.75\textwidth]{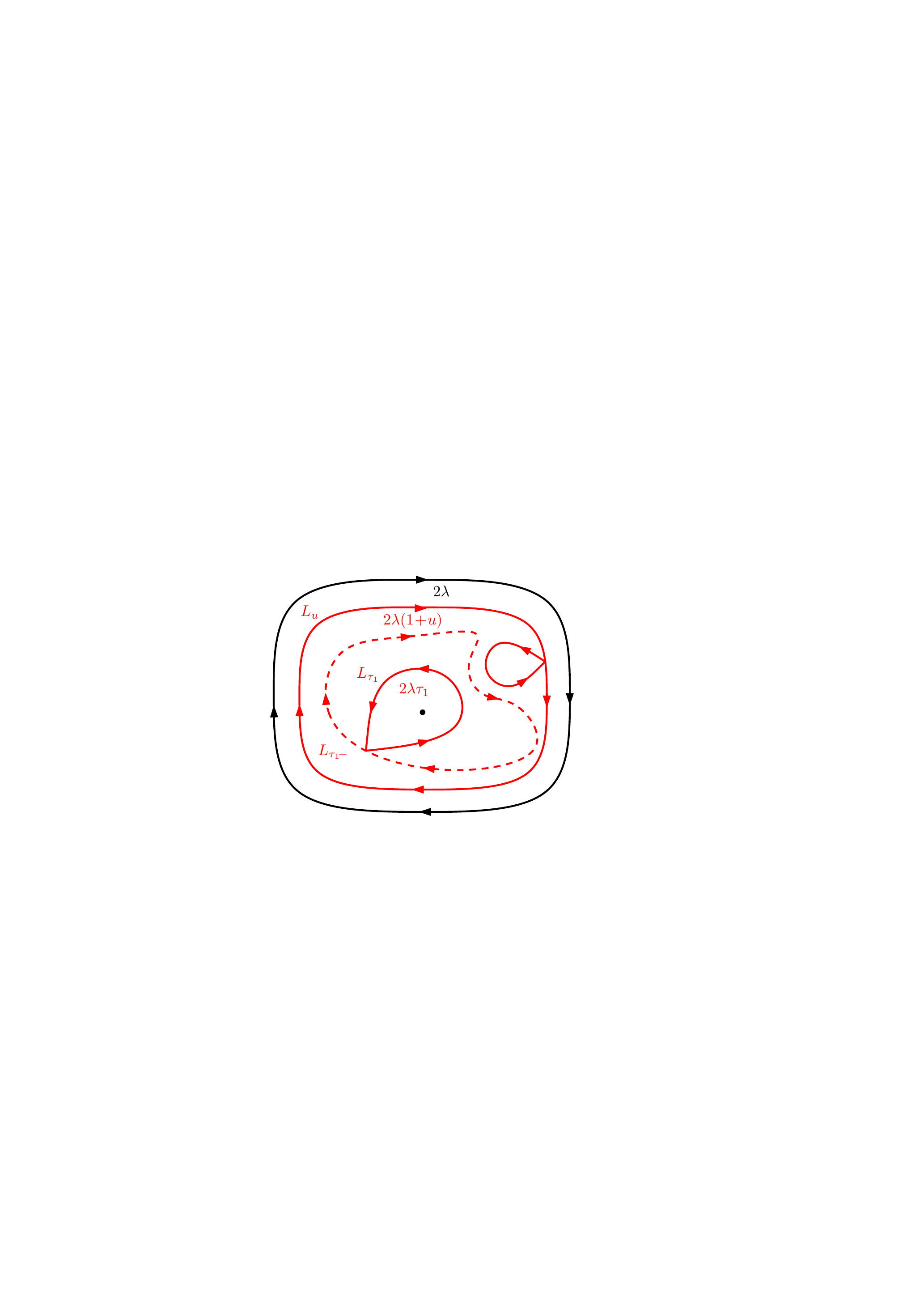}
\end{center}
\caption{Downward continuum exploration process. For $u<\tau_1$, the loop $L_u$ is clockwise and the boundary value is $2\lambda(1+ u)$ to the right-side of $L_u$. The loop $L_{\tau_1}$ is counterclockwise and the boundary value is $2\lambda\tau_1$ to the left-side of $L_{\tau_1}$.}
\end{subfigure}
\caption{\label{fig::gff_upward_continuum} Explanation of the upward and the downward continuum exploration processes of $\GFF$.}
\end{figure}

\begin{proposition}\label{prop::gff_upward_continuum_determinisitc}
In the coupling between $\GFF$ $h$ and c\`adl\`ag sequence of quasisimple loops $(L_u, 0\le u\le \tau_1)$ as in Proposition \ref{prop::gff_upward_continuum}, the sequence of loops $(L_u, 0\le u\le \tau_1)$ is almost surely determined by the field $h$. 
\end{proposition}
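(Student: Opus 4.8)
The plan is to read off the statement from the way the coupling is built in Section \ref{subsec::gff_discrete_continuum}, combined with the fact that its discrete approximants are functions of the field. Recall that the coupling of Proposition \ref{prop::gff_upward_continuum} is obtained, via Lemma \ref{lem::discrete_continuum_cvg} and Remark \ref{rem::discrete_continuum_cvg}, as the almost sure limit as $k\to\infty$ of the reindexed upward height-varying sequences of level loops $(L^{(k)}_u,\,0\le u\le\tau^{(k)})$ of $h$ with height difference $2^{-k}\lambda$. By Proposition \ref{prop::levelloops_inside_deterministic} each finite sequence $(L^k_n,\,0\le n\le N^k)$ is almost surely a Borel function of $h$, hence so is each reindexed sequence $(L^{(k)}_u,\,0\le u\le\tau^{(k)})$ and so is the stopping time $\tau^{(k)}$. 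Since the convergence supplied by Lemma \ref{lem::discrete_continuum_cvg} is almost sure (for the conformal maps in the topology of local uniform convergence, equivalently for the loops in the Carath\'eodory topology seen from the origin), the limit $(L^{(\infty)}_u,\,0\le u\le\tau^{(\infty)})$ is again a Borel function of $h$; this is precisely Remark \ref{rem::discrete_continuum_cvg}(4). By Remark \ref{rem::discrete_continuum_cvg}(3) this limiting coupling is the coupling of Proposition \ref{prop::gff_upward_continuum}, which gives the conclusion.

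To obtain the statement for an arbitrary coupling $(h,(L_u,\,0\le u\le\tau_1))$ satisfying the domain Markov property of Proposition \ref{prop::gff_upward_continuum}, I would argue that such a coupling is forced to coincide with the limiting one above. Fix a dyadic $u$; on the event $\{u<\tau_1\}$, conditioning on $(L_v,\,v\le u)$ yields, by the domain Markov property, a $\GFF$ on $\U\setminus\bigcup_{v\le u}L_v$ with boundary value $-2\lambda u$ on the component $U^0_u$ containing the origin and $2\lambda(1-u)$ on the remaining components. For all large $k$, writing $n=2^{k+1}u$ (an integer, and one checks $n\le N^k-1$), these are exactly the boundary data carried by the $n$-th loop $L^k_n$ of the discrete upward height-varying exploration of $h$ with height difference $2^{-k}\lambda$; using the monotone Carath\'eodory convergence $L^k_n\to L^{(\infty)}_{u-}$ of Remark \ref{rem::discrete_continuum_cvg}(2)--(3) together with the characterization of level loops in Lemma \ref{lem::levellinesI_levelloop}, one identifies $L_{u-}$ with the corresponding Borel function of $h$, and then $L_u=\lim_{v\downarrow u}L_{v-}$ by the right-continuity built into the coupling. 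Finally $\tau_1=\sup\{u:\,L_u\text{ is counterclockwise}\}$ and $L_{\tau_1}$ are recovered as in Section \ref{subsec::gff_discrete_continuum}.

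The main obstacle is the identification of $L_{u-}$ with a function of $h$ just alluded to: the continuum exploration loop at a non-transition time is only quasisimple, i.e.\ it may touch itself, so Lemma \ref{lem::levellinesI_levelloop}, which characterizes the \emph{Jordan} level loops, does not apply to it directly. The way around this is not to characterize $L_u$ itself, but to recover, for each $k$, the Jordan loop $L^k_n$ of the discrete exploration of $h$ inside the appropriate subdomain (to which Lemma \ref{lem::levellinesI_levelloop} does apply), and then to pass to the monotone limit using Remark \ref{rem::discrete_continuum_cvg}. Once each discrete exploration of $h$ is seen to be simultaneously a function of $h$ and measurable with respect to $(L_u,\,0\le u\le\tau_1)$, the coupling is pinned down and the asserted determinism follows; the shortest route, however, remains the first paragraph, since the coupling of Proposition \ref{prop::gff_upward_continuum} is by construction the limiting one.
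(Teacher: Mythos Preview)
Your first paragraph is correct but proves only the ``weak'' statement: that the \emph{particular} coupling built in Section~\ref{subsec::gff_discrete_continuum} as an almost-sure limit has $(L_u)$ measurable with respect to $h$. That is already recorded in the paper as Remark~\ref{rem::discrete_continuum_cvg}(4). The proposition, however, is meant (and used, e.g.\ in Proposition~\ref{prop::continuum_upward_targetindependence}) in the ``strong'' sense: \emph{any} process coupled with $h$ so that the domain Markov property of Proposition~\ref{prop::gff_upward_continuum} holds must coincide with the deterministic one. Your second and third paragraphs recognise this, but the sketch there does not close the gap.

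The missing mechanism is a ``containment plus equal law'' argument, and this is what the paper supplies. Given an \emph{arbitrary} coupling $(h,(L_u))$ with the stated conditional boundary values, the paper first shows a geometric fact: for each $v<\tau_1$, every level loop of $h$ with height $-\lambda+2\lambda a$, $a\in(0,v)$, and hence every discrete approximant $L^{(k)}_v$ of $h$, stays outside the connected component $U^0_v$ of $\U\setminus L_v$ containing the origin (this uses the conditional boundary value $-2\lambda v$ on $\partial U^0_v$ together with Lemma~\ref{lem::levellinesI_remark2515}(1)). Passing to the monotone Carath\'eodory limit, $L^{(\infty)}_{v-}$ stays outside $L_{v-}$; since these two loops have the same law, they must coincide. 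An analogous step, using the boundary value $2\lambda(1-\tau_1)$ inside $L_{\tau_1}$, handles the terminal loop. Your route via Lemma~\ref{lem::levellinesI_levelloop} cannot replace this, precisely for the reason you note: $L_{u-}$ is only quasisimple, so there is no direct characterisation identifying it with a single level loop of $h$. What pins it down is not a characterisation but the one-sided containment of the discrete loops combined with equality in law.
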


\begin{proof}
\textit{First}, we show that, for any $v\in (0,1]$ and $v<\tau_1$, let $\tilde{L}_a$ be the level loop of $h$ with height $-\lambda+2\lambda  a $ for some $a\in (0,v)$, then $\tilde{L}_a$ stays outside of $U_v^0$ which is the connected component of $\U\setminus L_v$ that contains the origin. By the coupling in Proposition \ref{prop::gff_upward_continuum}, we know that, given $(L_u, u\le v)$, the conditional law of $h$ restricted to $U_v^0$ is the same as $\GFF$ with boundary value $-2\lambda v$. If $\tilde{L}_a$ enters $U_v^0$, then the pieces of $\tilde{L}_a$ in $U_v^0$ are level lines of $h|_{U_v^0}$ with height $-\lambda +2\lambda a$. Whereas $-\lambda+2\lambda a -2\lambda v<-\lambda$, this contradicts with Lemma \ref{lem::levellinesI_remark2515} Item (1).
\medbreak
For each $k\ge 1$, let $(L^k_n, 0\le n\le N^k)$ be the upward height-varying sequence of level loops of $h$ with height difference $2^{-k}\lambda$, where $N^k$ is the transition step. We reindex the sequence in the following way:
\[L^{(k)}_u=L^k_n, \quad \text{for}\quad 2^{-k-1}n<u\le 2^{-k-1}(n+1),\quad  0\le n\le N^k-1.\]
We use the same notations as in Remark \ref{rem::discrete_continuum_cvg}. 
\medbreak
\textit{Second}, we show that, for any $v\in (0,1]$ and $v<\tau_1$, the loop $L_{v-}$ almost surely coincides with $L_{v-}^{(\infty)}$ and hence the loop $L_{v-}$ is almost surely determined by $h$. We have the following observations.
\begin{enumerate}
\item [(a)] By the first step, we know that, for each $k\ge 1$, the loop $L_v^{(k)}$ stays outside of $L_{v-}$. By Lemma \ref{lem::discrete_continuum_cvg} and Remark \ref{rem::discrete_continuum_cvg}, we know that, almost surely, the sequence $(L^{(k)}_v, k\ge 1)$ monotonically converges  to $L^{(\infty)}_{v-}$. Since each loop in the sequence stays outside of $L_{v-}$, the limit $L^{(\infty)}_{v-}$ also stays outside of $L_{v-}$.
\item [(b)] The loop $L^{(\infty)}_{v-}$ has the same law as $L_{v-}$.
\end{enumerate}
Combining these two facts, we have that $L^{(\infty)}_v$ and $L_{v-}$ coincide. 
\medbreak
\textit{Third}, we show that, for any $u\in (0,1)$ and $u<\tau_1$, the loop $L_u$ almost surely coincides with $L^{(\infty)}_u$ and hence the loop $L_u$ is almost surely determined by $h$. This is true because $L_u=\lim_{v\downarrow u}L_{v-}$.
\medbreak
\textit{Fourth}, we show that, for any $u< \tau_1$, the loop $L_{u-}$ coincides with $L^{(\infty)}_{u-}$ and is almost surely determined by $h$; and that, the loop $L_{u}$ coincides with $L^{(\infty)}_{u}$ and is almost surely determined by $h$. We only show the conclusion for $L_u$ and the conclusion for $L_{u-}$ can be proved similarly.

Assume that $u<n_0$ for some integer $n_0\ge 1$, define $u_j=ju/n_0$. We can prove by induction on $j$ that $L_{u_j}$ is almost surely determined by $h$. When $j=1$, since $u_1\in (0,1)$, the conclusion holds by the third step. Suppose that the conclusion holds for $j$, consider the loop $L_{u_{j+1}}$, we have the following observations.
\begin{enumerate}
\item [(a)] The loop $L_{u_j}$ is almost surely determined by $h$. Let $U_j^0$ be the connected component of $\U\setminus L_{u_j}$ that contains the origin. We also know that the conditional law of $h$ restricted to $U_j^0$ is the same as $\GFF$ with boundary value $-2\lambda u_j$.
\item [(b)] The sequence $(L_{u_j+u}, 0\le u\le \tau_1-u_j)$ is coupled with $h|_{U_j^0}$ in the same way as in Proposition \ref{prop::gff_upward_continuum}, thus $L_{u_{j+1}}$ is almost surely determined by $h|_{U_j^0}$ by the third step.  
\end{enumerate}
Combining these two facts, we have that $L_{u_{j+1}}$ is almost surely determined by $h$.
\medbreak
\textit{Finally}, we show that, the loop $L_{\tau_1}$ is almost surely determined by $h$. By the fourth step, we know that, for any $u<\tau_1$, the loop $L_{u-}$ coincides with $L^{(\infty)}_{u-}$. Therefore, $\tau^{(\infty)}\ge \tau_1$. Whereas, the quantity $\tau^{(\infty)}$ has the same law as $\tau_1$, thus $\tau^{(\infty)}=\tau_1$ almost surely. We will argue that $L^k_{N_k}$ stays outside of $L_{\tau_1}$. Assume this is true, then we know that the limit $L^{(\infty)}$ stays outside of $L_{\tau_1}$. Combining with the fact that $L^{(\infty)}$ has the same law as $L_{\tau_1}$, we know that $L_{\tau_1}$ coincides with $L^{(\infty)}$ and hence it is almost surely determined by $h$. Thus, we only need to show that $L^k_{N^k}$ stays outside of $L_{\tau_1}$. We have the following observations.
\begin{enumerate}
\item [(a)] By the coupling in Proposition \ref{prop::gff_upward_continuum}, we know that, the conditional law of $h$ restricted to $\inte(L_{\tau_1})$ is the same as $\GFF$ with boundary value $2\lambda (1-\tau_1)$.
\item [(b)] The loop $L^k_{N^k}$ is a level loop of $h$ with height $-\lambda+2\lambda \tau^{(k)}$.
\item [(c)] We know that $\tau^{(k)}\ge\tau^{(\infty)}=\tau_1$.
\end{enumerate}
Combining these three facts, if $L^k_{N^k}$ enters $\inte(L_{\tau_1})$, then the pieces of $L^k_{N^k}$ in $\inte(L_{\tau_1})$ are level lines of $h|_{\inte(L_{\tau_1})}$ with height $-\lambda+2\lambda\tau^{(k)}$. Whereas, $-\lambda+2\lambda\tau^{(k)}+2\lambda(1-\tau_1)\ge \lambda$, this contradicts with Lemma \ref{lem::levellinesI_remark2515} Item (1).
\end{proof}

In the coupling $(h, (L_u, 0\le u\le \tau_1))$ given by Proposition \ref{prop::gff_upward_continuum}, we call the sequence of quasisimple loops $(L_u, 0\le u\le \tau_1)$ the \textbf{upward continuum exploration process} of $h$ targeted at the origin. The following proposition describes the target-independence of the continuum exploration process.

\begin{figure}[ht!]
\begin{subfigure}[b]{\textwidth}
\begin{center}
\includegraphics[width=0.6\textwidth]{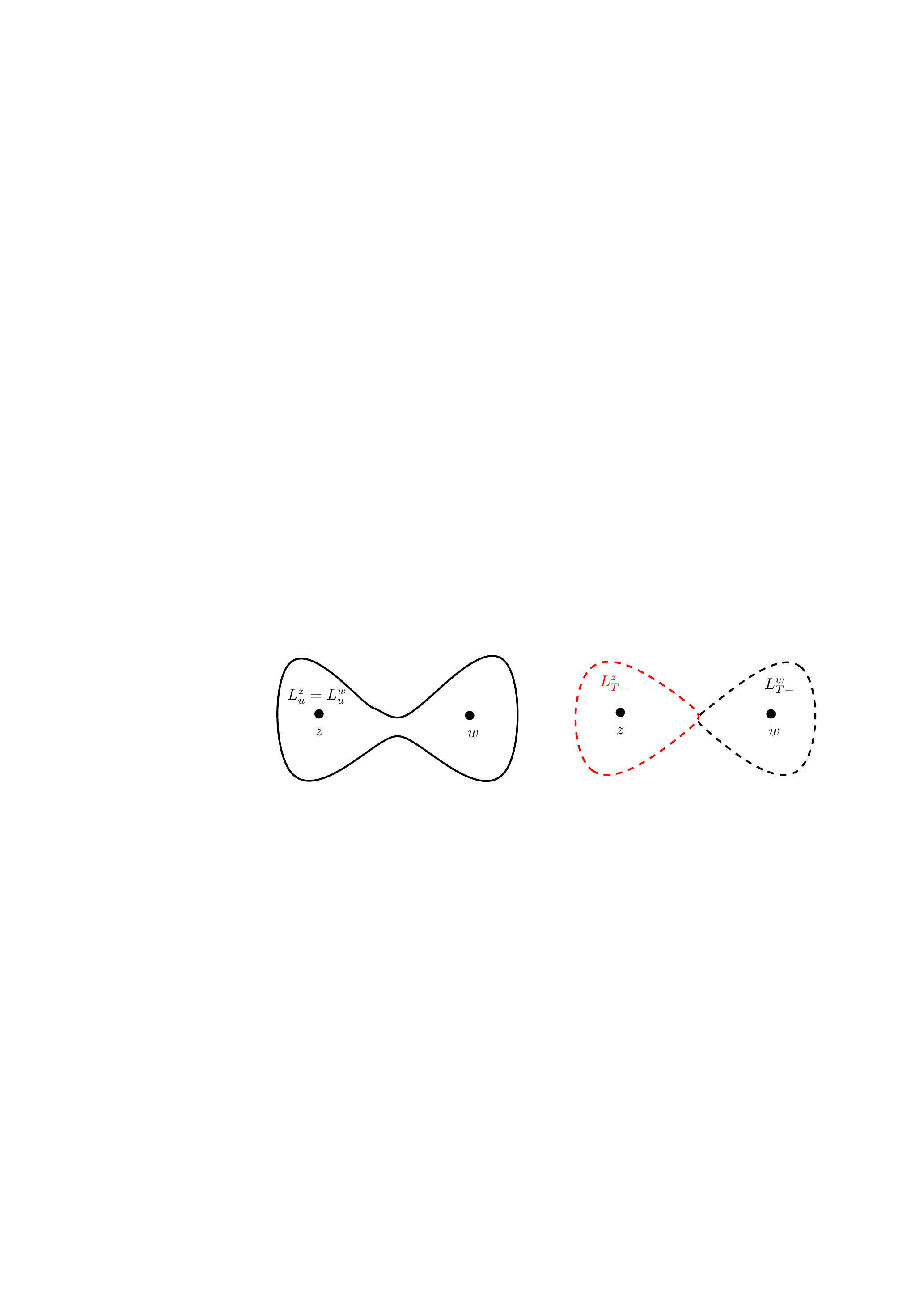}
\end{center}
\caption{We know that the loops $L^z_u$ and $L^w_u$ coincide for $u<T$ (the left panel). As $u\uparrow T$, the two left limits may become disjoint (the right panel). In this case, we have $S=T$.}
\end{subfigure}
\begin{subfigure}[b]{\textwidth}
\begin{center}\includegraphics[width=0.93\textwidth]{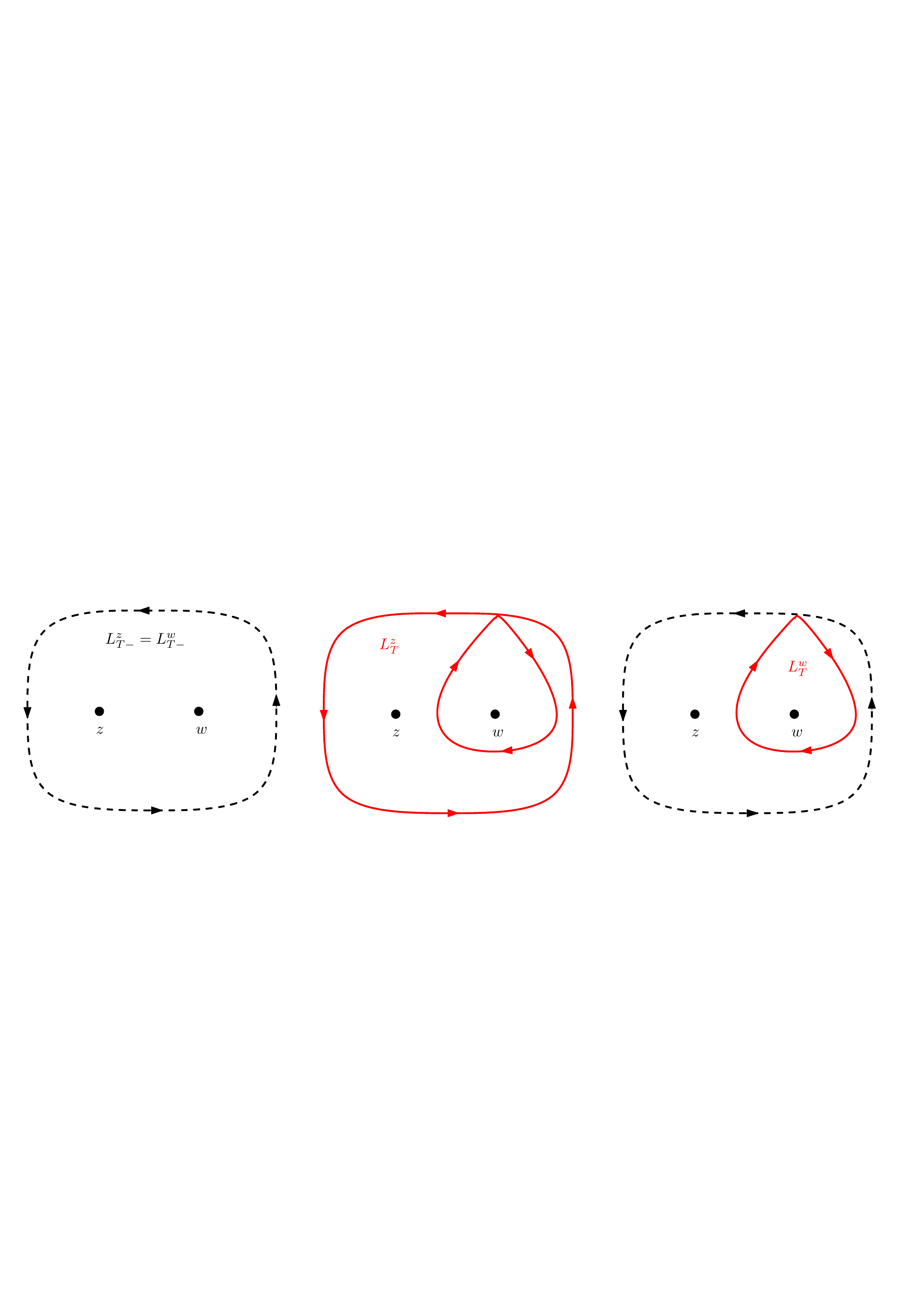}
\end{center}
\caption{As $u\uparrow T$, the two left limits may coincide. In this figure, at time $T$, the loop $L^z_T$ disconnects $w$ from $z$ by attaching a bubble that surrounds $w$ to $L^z_{T-}$. In this case, we have $S=T$.}
\end{subfigure}
\begin{subfigure}[b]{\textwidth}
\begin{center}\includegraphics[width=0.93\textwidth]{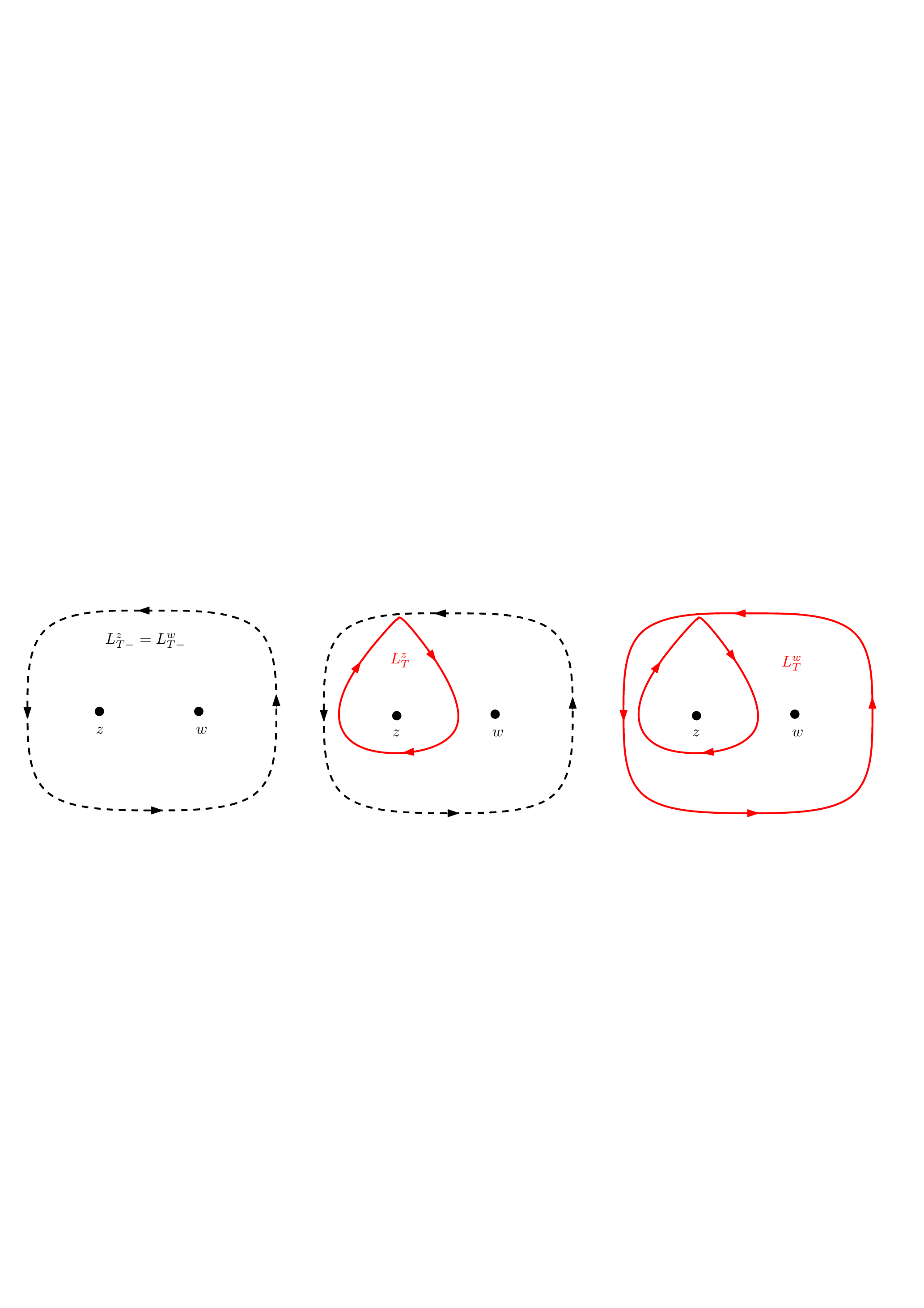}
\end{center}
\caption{As $u\uparrow T$, the two left limits may coincide. In this figure, at time $T$, the loop $L^z_T$ is a bubble that disconnects $w$ from $z$. In this case, we have $S=T$.}
\end{subfigure}
\caption{\label{fig::gff_upward_targetindependence} Consider two upward continuum exploration processes with distinct target points: $(L^z_u, 0\le u\le \tau_1^z)$ and $(L^w_u, 0\le u\le \tau_1^w)$. Suppose that $T$ is the first time $u$ that $L^z_u$ disconnects $w$ from $z$. We know that, for all $u<T$, the loops $L^z_u$ and $L^w_u$ coincide. As $u\uparrow T$, there are two possibilities for the loops $L^z_{T-}$ and $L^w_{T-}$: they are disjoint (a) or they coincide (b)(c). }
\end{figure}

\begin{proposition}\label{prop::continuum_upward_targetindependence}
Suppose that $h$ is a zero-boundary $\GFF$ on $\U$. Fix two target points $z,w\in\U$. Let $(L^z_u, 0\le u\le \tau_1^z)$ (resp. $(L_u^w, 0\le u\le \tau_1^w)$) be the upward continuum exploration process of $h$ targeted at $z$ (resp. targeted at $w$). Define stopping times: (with the convention that $\inf\emptyset=\infty$)
\[T=\inf\{u\in [0,\tau_1^z]: L_u^z\text{ disconnects }w\text{ from }z\};\quad S=\inf\{u\in [0,\tau_1^w]: L_u^w\text{ disconnects }z\text{ from }w\}.\]
Then, almost surely, we have $T=S$; moreover, the loops $L^z_u$ and $L_u^w$ coincide for all $u<T=S$.
\end{proposition}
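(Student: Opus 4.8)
The plan is to transfer the discrete target-independence statement (Proposition~\ref{prop::levellinesI_targetindependence}, together with the discrete-to-continuum convergence from Lemma~\ref{lem::discrete_continuum_cvg} and Remark~\ref{rem::discrete_continuum_cvg}) to the continuum exploration process. First I would recall that the upward continuum exploration process targeted at $z$ (resp.\ at $w$) is obtained, by the construction preceding Proposition~\ref{prop::gff_upward_continuum}, as the almost sure Carath\'eodory limit (seen from $z$, resp.\ from $w$) of the reindexed upward height-varying sequences $(L^{z,(k)}_u)$ (resp.\ $(L^{w,(k)}_u)$) of level loops with height difference $2^{-k}\lambda$, and that each of these sequences is almost surely determined by $h$ (Proposition~\ref{prop::gff_upward_continuum_determinisitc}). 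For each fixed $k$, Proposition~\ref{prop::levellinesI_targetindependence} applied to the height difference $2^{-k}\lambda$ gives an integer $M_k$ such that the discrete loops $L^{z,k}_n$ and $L^{w,k}_n$ agree for $n\le M_k-1$ while $U^{z,k}_{M_k}\cap U^{w,k}_{M_k}=\emptyset$; and given these common loops the two sequences continue conditionally independently.

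The key step is to show that, along the reindexing $u\mapsto n$, the discrete agreement times $M_k$ converge (after rescaling by $2^{-k-1}$) to a common continuum time, which one then identifies with $T=S$. Concretely, let $a_k = 2^{-k-1}M_k$ be the (reindexed) first time the $k$-th discrete exploration targeted at $z$ disconnects $w$ from $z$; I would argue that $a_k$ is monotone in a controlled way — using the nesting relation between the height-difference $2^{-k}\lambda$ and $2^{-k-1}\lambda$ sequences recalled just before Lemma~\ref{lem::discrete_continuum_cvg} (``$\tilde L_{2n}=L_n$ for $1\le n\le N-1$'', etc.), which forces the $(k{+}1)$-th exploration to revisit the same loops as the $k$-th one up to disconnection — and hence $a_k$ converges almost surely to a limit $a_\infty$. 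Since before disconnection the continuum loops $L^z_u$ and $L^w_u$ are the monotone Carath\'eodory limits of the same discrete loops $L^{z,(k)}_u=L^{w,(k)}_u$, we get $L^z_u=L^w_u$ for all $u<a_\infty$, whence $a_\infty\le T$ and $a_\infty\le S$. For the reverse inequality I would use that $w\in\ext(L^z_u)$ for all $u<a_\infty$ (the disconnection has not yet happened at the discrete level, and Carath\'eodory convergence seen from $z$ preserves ``$w$ in the exterior component'') together with the same statement for the $w$-exploration, giving $T\ge a_\infty$ and $S\ge a_\infty$; combined with $L^z_{T}$ disconnecting $w$ (so that the discrete approximants must have disconnected by a nearby time) this pins down $T=S=a_\infty$. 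The ``coincide for all $u<T=S$'' assertion is then immediate.

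The main obstacle I expect is the subtle behaviour of the left limits $L^z_{T-}$ and $L^w_{T-}$ and the precise identification of what happens exactly at the disconnection time $T$, illustrated by the three cases in Figure~\ref{fig::gff_upward_targetindependence}: the two left limits may already be disjoint, or may coincide with the disconnection realized either by attaching a bubble around $w$ to $L^z_{T-}$ or by $L^z_T$ itself being such a bubble. Handling this requires care because Carath\'eodory convergence seen from $z$ does not control the loop near $w$, so I would work with the two topologies (seen from $z$ and seen from $w$) simultaneously, using the c\`adl\`ag property from Lemma~\ref{lem::growing_cle4_cadlag} and Remark~\ref{rem::discrete_continuum_cvg}(2)--(3) to make sense of $L^z_{T-}$, and using the domain Markov property of Proposition~\ref{prop::gff_upward_continuum} restricted to the common explored region to deduce the conditional independence of the continuations from the discrete conditional independence in Proposition~\ref{prop::levellinesI_targetindependence}. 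A secondary technical point is justifying that the stopping-time equality $T=S$ holds without a null set of exceptional configurations; this should follow from the almost sure determination of each discrete sequence by $h$ (Proposition~\ref{prop::gff_upward_continuum_determinisitc}) plus the convergence statements, so that all objects in play are measurable functions of $h$ and the identities, once established almost surely for each fixed $k$, pass to the limit.
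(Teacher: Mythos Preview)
Your approach via discrete approximation is different from the paper's, which is considerably more direct. The paper observes that for any stopping time $U<T$, since $w$ and $z$ lie in the same connected component of $\U\setminus L^z_U$, the stopped process $(L^z_u,\, 0\le u\le U)$ satisfies exactly the domain Markov coupling of Proposition~\ref{prop::gff_upward_continuum} \emph{with target $w$}; since that process is almost surely determined by $h$ (Proposition~\ref{prop::gff_upward_continuum_determinisitc}), the two sequences $(L^z_u)$ and $(L^w_u)$ must coincide on $[0,U]$. Letting $U\uparrow T$ gives coincidence on $[0,T)$ and hence $S\ge T$; then a short case analysis at time $T$ (the three situations in Figure~\ref{fig::gff_upward_targetindependence}) shows $S=T$. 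The case $T=\infty$ is handled separately by the same determinism argument. No discrete approximation or stopping-time limit is needed.

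Your route could in principle be made to work, but the step you flag as ``key'' --- that $a_k=2^{-k-1}M_k$ converges and that its limit equals both $T$ and $S$ --- is the real content, and it is not established by what you wrote. Monotonicity of the $a_k$ does not follow cleanly from the relation $\tilde{L}_{2n}=L_n$: the disconnection index $M_k$ need not align with even indices, and whether $a_{k+1}$ lies above or below $a_k$ depends on whether the finer sequence disconnects at an odd step between $2(M_k-1)$ and $2M_k$. More seriously, even granting convergence of $a_k$ to some $a_\infty$, identifying $a_\infty$ with $T$ requires that the event ``$w$ is disconnected from $z$'' passes through the Carath\'eodory limit seen from $z$, which is not automatic: a sequence of loops can converge in that topology while the status of $w$ (in the $z$-component or not) changes discontinuously at the limit. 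You would need separate arguments ruling out both $a_\infty<T$ and $a_\infty>T$. The paper's determinism argument sidesteps all of this by never referring to the discrete processes.
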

\begin{proof}
\textit{First}, we show the conclusion for the case $T=\infty$. This is the case that $w$ is contained in $\inte(L^z_{\tau_1^z})$. In this case, we have that the sequence $(L^z_u, 0\le u\le \tau_1^z)$ is coupled with $h$ in the same way as the upward continuum exploration process targeted at $w$. By Proposition \ref{prop::gff_upward_continuum_determinisitc}, we have that the two sequences $(L^w_u, 0\le u\le \tau_1^w)$ and $(L^z_u, 0\le u\le \tau_1^z)$ coincide. In particular $S=\infty$.
\medbreak
\textit{Next}, we show the conclusion for the case $T\le \tau_1^z$. Let $U$ be any $(L^z_u, 0\le u\le \tau_1^z)$-stopping time such that $U<T$. Note that the sequence $(L^z_u, 0\le u\le U)$ is coupled with $h$ in the same way as the upward continuum exploration process targeted at $w$ stopped at $U$. Combining with Proposition \ref{prop::gff_upward_continuum_determinisitc}, we know that the two sequences $(L^z_u, 0\le u\le U)$ and $(L^w_u, 0\le u\le U)$ coincide. This holds for any $U<T$. In particular, for any $u<T$, we have $L_u^z=L_u^w$. We only need to show that $S=T$.

Consider the relation between $L^z_{T-}$ and $L^w_{T-}$, there are two possibilities: the two loops are disjoint (see Figure \ref{fig::gff_upward_targetindependence}(a))or the two loops coincide (see Figure \ref{fig::gff_upward_targetindependence}(b)(c)).
In the former case, clearly, we have $S=T$. In the latter case, let us consider the relation between $L^z_T$ and $L^z_{T-}$.
If the loop $L^z_T$ has the same orientation as $L^z_{T-}$ (Figure \ref{fig::gff_upward_targetindependence}(b)), then at time $T$, the loop $L^z_T$ disconnects $w$ from $z$ by attaching a bubble that surrounds $w$ to $L^z_{T-}$. This implies that $L^w_T$ is exactly this bubble, thus $S=T$. If the loop $L^z_T$ has the different orientation from $L^z_{T-}$ 
(Figure \ref{fig::gff_upward_targetindependence}(c)), in other words $T=\tau_1^z$, then at time $T$, the loop $L^z_T$ is a bubble attaching to $L^z_{T-}$ that disconnects $w$ from $z$. This implies that $L^w_T$ is the union of $L^z_T$ and $L^w_{T-}$. Thus $S=T$.
\end{proof}

From Proposition \ref{prop::continuum_upward_targetindependence}, we see that two upward continuum exploration processes with distinct target points coincide up to the first disconnecting time. In fact, in \cite[Lemma 8]{WernerWuCLEExploration}, the authors prove that the two processes (constructed from Poisson point process of $\SLE_4$-bubbles): $(L_u^z, 0\le u\le \tau_1^z)$ and $(L^w_u, 0\le u\le \tau_1^w)$ have the same law up to the first disconnecting time, and hence can be coupled so that they coincide up to the first disconnecting time. By Proposition \ref{prop::continuum_upward_targetindependence}, we see that $\GFF$ provides exactly this coupling. Namely, we couple the two processes with the same $\GFF$ $h$ so that $(L_u^z, 0\le u\le \tau_1^z)$ is the upward continuum exploration process of $h$ targeted at $z$ and $(L^w_u, 0\le u\le \tau_1^w)$  is the upward continuum exploration process of $h$ targeted at $w$. Then the two processes almost surely coincide up to the first disconnecting time. 

Now we can consider the upward continuum exploration processes of $\GFF$ $h$ targeted at all points in $\U$ simultaneously. Note that, for each pair of points $(z,w)$, the process targeted at $z$ and the process targeted at $w$ coincide up to the first disconnecting time. We summarize some properties of the process in the following.

\begin{proposition}\label{prop::upward_continuum_cle4}
Suppose that $h$ is a zero-boundary $\GFF$ on $\U$.
\begin{enumerate}
\item [(1)] The upward continuum exploration processes targeted at all points in $\U$ are almost surely determined by $h$.
\item [(2)] The law of the upward continuum exploration processes targeted at all points in $\U$ is invariant under any M\"obius transformation of $\U$. 
\item [(3)] The collection of all clockwise loops in the upward continuum exploration processes targeted at all points in $\U$ has the same law as $\CLE_4$ in $\U$.
\end{enumerate}
\end{proposition}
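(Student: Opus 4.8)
Here is my plan for proving Proposition~\ref{prop::upward_continuum_cle4}, treating the three assertions in turn.

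\emph{Part (1).} I would fix a countable dense set $D\subset\U$. For each $z\in D$, Proposition~\ref{prop::gff_upward_continuum_determinisitc} already gives that the upward continuum exploration process $(L^z_u,\,0\le u\le\tau_1^z)$ targeted at $z$ is almost surely determined by $h$. For an arbitrary $w\in\U$, the plan is to recover $(L^w_u,\,0\le u\le\tau_1^w)$ from the processes at points of $D$ via the target-independence of Proposition~\ref{prop::continuum_upward_targetindependence}: for $z\in D$ the process toward $w$ agrees with the process toward $z$ up to the first time $T_{z,w}$ at which the explored region disconnects $z$ from $w$, and as $z\to w$ within $D$ one has $T_{z,w}\uparrow\tau_1^w$ (with the convention $T_{z,w}=\infty$ when the $z$-process never disconnects them), so the $w$-process is the increasing limit of initial segments of the $z$-processes. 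The one technical point to nail down is continuity in the target point, which I expect to extract from the Carath\'eodory-continuity of the limiting conformal maps $\Psi^{(\infty)}_u$ in Lemma~\ref{lem::discrete_continuum_cvg}. Since this reconstruction is a measurable functional of $h$ together with the (determined) processes at points of $D$, the whole collection is almost surely determined by $h$.

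\emph{Part (2).} Zero-boundary $\GFF$ on $\U$ is conformally invariant, so for any M\"obius transformation $\psi$ of $\U$ one has $h\circ\psi^{-1}\overset{d}{=}h$. I would then verify that the construction is conformally covariant: applying $\psi$ to the collection of upward continuum exploration processes of $h$ (mapping each loop by $\psi$ and relabelling the target $w\mapsto\psi(w)$) yields the collection of upward continuum exploration processes of $h\circ\psi^{-1}$. This is inherited from the covariance of the discrete upward height-varying sequences of level loops — level lines of the $\GFF$ are conformally natural, and the reindexing by $u$ is by the $\GFF$ height, a conformal invariant of the coupling in Proposition~\ref{prop::gff_upward_continuum} — together with the fact (Lemma~\ref{lem::discrete_continuum_cvg}) that the $2^{-k}\lambda\to0$ limit commutes with $\psi$. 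Combining covariance with conformal invariance of the field gives the stated M\"obius invariance; this is the continuum counterpart of the M\"obius invariance of the $\SLE_4$-bubble measure $M$ recalled in Section~\ref{subsec::growing_cle4}.

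\emph{Part (3).} By Remark~\ref{rem::discrete_continuum_cvg}(3), for a single target the $\GFF$-coupled process $(L^z_u,\,0\le u\le\tau_1^z)$ has the same law as the process built from a Poisson point process of $\SLE_4$-bubbles in Section~\ref{subsec::growing_cle4}; in particular its unique clockwise loop $L^z_{\tau_1^z}$ has the law of the $\CLE_4$ loop surrounding $z$. To pass to the whole collection I would argue that, for any finite family of targets $z_1,\dots,z_m$, the \emph{joint} law of the $m$ explorations is the same under the $\GFF$ coupling as under the Poisson-point-process coupling used in the construction of $\CLE_4$. Both couplings are pinned down by the same two ingredients: the common marginal law of a single exploration, and target-independence in the strong conditional-independence form (the continuation toward $z_2$ of the exploration run up to the time it disconnects $z_2$ from $z_1$ is conditionally independent and distributed as an exploration of the remaining domain), which holds on the $\GFF$ side by Proposition~\ref{prop::continuum_upward_targetindependence} together with its discrete counterpart Proposition~\ref{prop::levellinesI_targetindependence}. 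Iterating over $z_1,\dots,z_m$ identifies the two joint laws, hence the two laws of the collection of clockwise loops; since the latter is $\CLE_4$ on the bubble side, the same holds here.

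I expect Part~(3) to be the main obstacle: making precise that ``Markov property plus target-independence'' characterizes the joint law of the explorations across all targets, so that the identification with the bubble construction — where the final loops are known to form a $\CLE_4$ — can be transferred to the $\GFF$ coupling. Parts (1) and (2) should be comparatively routine given the results already in hand, with the continuity-in-target point of Part~(1) being the only mild additional input.
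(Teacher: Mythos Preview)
The paper states this proposition without proof, presenting it as a summary of the preceding results (Propositions~\ref{prop::gff_upward_continuum_determinisitc} and~\ref{prop::continuum_upward_targetindependence}, Remark~\ref{rem::discrete_continuum_cvg}, and the $\CLE_4$ construction of Section~\ref{subsec::growing_cle4}). Your plan is correct and supplies the details the paper leaves implicit; a few comments on each part.

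For Part~(1), your countable-dense-set argument is the right way to handle the uncountable family of targets. The continuity-in-target point is in fact easier than you suggest: for almost every $w$, the terminal loop $L^w_{\tau_1^w}$ is a $\CLE_4$ loop with $w$ in its open interior, so any $z\in D$ sufficiently close to $w$ lies in the same interior and the two processes coincide \emph{entirely} (the case $T=\infty$ in Proposition~\ref{prop::continuum_upward_targetindependence}). No limiting procedure in $u$ is needed.

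For Part~(2), your covariance-plus-invariance argument works, but note that once Part~(1) is established there is a shorter route: the collection is a measurable functional $\Phi(h)$, the construction (via level lines, which are conformally natural, with height indexing) satisfies $\Phi(h\circ\psi^{-1})=\psi\cdot\Phi(h)$, and $h\circ\psi^{-1}\overset{d}{=}h$ gives the result immediately. Going through the discrete approximation is unnecessary here.

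For Part~(3), your strategy is exactly right and the obstacle you flag is not serious. The joint law of the explorations toward $z_1,\dots,z_m$ is determined recursively by (i) the marginal law toward $z_1$ and (ii) the conditional law, given the run up to the first disconnecting time, of the continuation in the complementary component --- which by target-independence is again an upward exploration of the restricted field. Both the $\GFF$ coupling (Proposition~\ref{prop::continuum_upward_targetindependence}) and the bubble construction (\cite[Lemma~8]{WernerWuCLEExploration}) satisfy the same recursion with the same one-target marginal (Remark~\ref{rem::discrete_continuum_cvg}(3)), so the joint laws agree; the clockwise loops on the bubble side form $\CLE_4$ by \cite[Section~7]{SheffieldWernerCLE}.
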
  

\subsection{Alternating continuum exploration process of $\GFF$}
\label{subsec::gff_alternate_continuum}
In this section, we will describe an alternating continuum exploration process which can be viewed as the limit of the alternating height-varying sequence of level loops as the height difference goes to zero. Before this, let us summarize what we have obtained by now.

Suppose that $(l_u, u\ge 0)$ is a Poisson point process with $\SLE_4$-bubble measure $M$. Let $\tau_1$ be the first time $u$ that $l_u$ surrounds the origin. Define
\[\tau_{k+1}=\inf\{u>\tau_k: l_u\text{ surrounds the origin}\}, \quad k\ge 1.\]

\begin{enumerate}
\item [(a)] From Section \ref{subsec::growing_cle4}, we can map the bubbles $(l_u, 0\le u\le \tau_1)$ one-by-one into the unit disc $\U$ and obtain a c\`adl\`ag sequence of quasisimple loops $(L_u, 0\le u\le\tau_1)$. 
We give the counterclockwise orientation to the loops $L_u$ for $0\le u<\tau_1$; and the clockwise orientation to the loop $L_{\tau_1}$.
The loop $L_{\tau_1}$ has the same law as the loop in $\CLE_4$ that surrounds the origin. 
\item [(b)] From Section \ref{subsec::gff_discrete_continuum}, we know that there exists a coupling between zero-boundary $\GFF$ and the sequence $(L_u, 0\le u\le \tau_1)$ such that, for any stopping time $T\le \tau_1$, the conditional law of $h$ given $(L_u, 0\le u\le T)$ is the same as $\GFF$ with certain boundary value. Moreover, in this coupling, the sequence of loops is almost surely determined by the field. 
\end{enumerate}

In this construction, we call the sequence of loops $(L_u, 0\le u\le \tau_1)$ the upward continuum exploration process of $h$ starting from $L_0=\partial\U$ (counterclockwise) targeted at the origin. See Figure \ref{fig::gff_upward_continuum} (a).
Symmetrically, we can also construct the \textbf{downward continuum exploration process} of a $\GFF$ $h$ with $2\lambda$ boundary value in $\U$ starting from $L_0=\partial\U$ (clockwise) targeted at the origin. See Figure \ref{fig::gff_upward_continuum}(b). Note that, for the downward continuum exploration process $(L_u, 0\le u\le \tau_1)$, we have that, for any stopping time $T\le \tau_1$, the conditional law of $h$ given $(L_u, 0\le u\le T)$ is the same as $\GFF$ whose boundary value is 
\[\begin{cases}
2\lambda u , &\text{ to the left-side of }L_u,\\
2\lambda (1+u),&\text{ to the right-side of }L_u.
\end{cases}\]
In particular, for any stopping time $T<\tau_1$, 
the conditional law of $h$, given $(L_u, 0\le u\le T)$, restricted to the connected component of $\U\setminus L_T$ that contains the origin is the same as $\GFF$ with boundary value $2\lambda (1+T)$. The conditional law of $h$, given $(L_u, 0\le u\le \tau_1)$, restricted to $\inte(L_{\tau_1})$ is the same as $\GFF$ with boundary value $2\lambda\tau_1$.

\medbreak

Now we can construct alternating continuum exploration process going all the way to the origin. Suppose that $h$ is a zero-boundary $\GFF$ on $\U$ and $L_0=\partial\U$ is oriented counterclockwise. We start by the upward continuum exploration process $(L_u, 0\le u\le \tau_1)$ which can be viewed as being constructed from $(l_u, 0< u\le \tau_1)$. Given $(L_u, 0\le u\le \tau_1)$, the conditional law of $h$ restricted to $\inte(L_{\tau_1})$ is the same as $\GFF$ with boundary value $2\lambda(1-\tau_1)$. We continue the process by the downward continuum exploration process of $h$ restricted to $\inte(L_{\tau_1})$: $(L_u, \tau_1\le u\le \tau_2)$. This process can be viewed as being constructed from $(l_u, \tau_1<u\le \tau_2)$. Given $(L_u, 0\le u\le \tau_2)$, the conditional law of $h$ restricted to $\inte(L_{\tau_2})$ is the same as $\GFF$ with boundary value $2\lambda(\tau_2-2\tau_1)$. 

Generally, given $(L_u, 0\le u\le \tau_{2k})$ for some $k\ge 1$, we continue the sequence by the upward continuum exploration process $(L_u, \tau_{2k}\le u\le \tau_{2k+1})$ and then continue the sequence by the downward continuum exploration process $(L_u, \tau_{2k+1}\le u\le \tau_{2k+2})$. 

In this way, we obtain a sequence of quasisimple loops $(L_u, u\ge 0)$ which we call the \textbf{alternating continuum exploration process} of $h$ starting from $L_0=\partial\U$ targeted at the origin. We call $(\tau_k, k\ge 0)$ the \textbf{sequence of transition times}. For the sequence of transition times, we have the following.
\begin{enumerate}
\item [(a)] For $k\ge 0$, the loops $L_u$ have the same orientation for $u\in [\tau_k,\tau_{k+1})$, which is different from  $L_{\tau_{k+1}}$. 
\item [(b)] The random variables $(\tau_{k+1}-\tau_k, k\ge 0)$ are i.i.d. with exponential law.
\end{enumerate}

We summarize some basic properties of the alternating continuum exploration process of $\GFF$ in the following proposition.

\begin{proposition} Suppose that $(L_u, u\ge 0)$ is the alternating continuum exploration process of a zero-boundary $\GFF$ $h$. Then we have the following.
\begin{enumerate}
\item [(1)] The sequence $(L_u, u\ge 0)$ is almost surely c\`adl\`ag and transient.
\item [(2)] The sequence $(L_u, u\ge 0)$ is almost surely determined by $h$.
\item [(3)] The sequence $(L_u, u\ge 0)$ satisfies domain Markov property: for any stopping time $T$, let $U_T^0$ be the connected component of $\U\setminus L_T$ that contains the origin, and let $g_T$ be the conformal map from $U_T^0$ onto $\U$ with $g_T(0)=0$ and $g_T'(0)>0$. Then the sequence $(g_T(L_{T+u}), u\ge 0)$ has the same law as the alternating continuum exploration process of $\GFF$.
\end{enumerate}
\end{proposition}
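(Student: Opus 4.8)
The plan is to obtain each of the three assertions by propagating the corresponding single‑phase statement of Section~\ref{subsec::gff_discrete_continuum} across the successive upward and downward phases, using the transition times $(\tau_k, k\ge 0)$ as the induction parameter; here one uses that $\tau_k\to\infty$ almost surely (the increments $\tau_{k+1}-\tau_k$ being i.i.d.\ exponential), so that any fixed time $u$ lies in one of the first finitely many phases. I would establish (2) first. The first phase $(L_u,0\le u\le\tau_1)$ is the upward continuum exploration process of the zero‑boundary $\GFF$ $h$, hence is a.s.\ determined by $h$ by Proposition~\ref{prop::gff_upward_continuum_determinisitc}. Inductively, suppose the first $k$ phases — in particular the loop $L_{\tau_k}$ and the domain $\inte(L_{\tau_k})$ — are determined by $h$. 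Given $\LF_{\tau_k}=\sigma(L_s,s\le\tau_k)$, the field $h|_{\inte(L_{\tau_k})}$ is then a deterministic functional of $h$, and (by iterating the coupling of Proposition~\ref{prop::gff_upward_continuum} over the phases up to $\tau_k$) it is a $\GFF$ whose boundary value along $L_{\tau_k}$ is a constant that is deterministic given $\LF_{\tau_k}$; the $(k{+}1)$‑th phase is, up to an additive constant, the downward (resp.\ upward) continuum exploration process of $h|_{\inte(L_{\tau_k})}$, which by Proposition~\ref{prop::gff_upward_continuum_determinisitc} and its reflection‑symmetric analogue is determined by $h|_{\inte(L_{\tau_k})}$, hence by $h$. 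Letting $k\to\infty$ gives that $(L_u,u\ge 0)$ is determined by $h$.

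For (3) I would first treat a deterministic time $T$ (more generally a stopping time of $\LF_u=\sigma(L_s,s\le u)$ with countably many values, which reduces to the deterministic case by conditioning on the value) and then pass to general stopping times by approximation from above. Fix $T$ and condition on $\LF_T$; by (2) this determines $L_T$, the origin‑component $U_T^0$ of $\U\setminus L_T$, the map $g_T$, and the boundary value $c_T$ of $h$ along $L_T$ (deterministic given $\LF_T$). By Proposition~\ref{prop::gff_upward_continuum} iterated over the phases up to $T$, the conditional law of $h|_{U_T^0}$ given $\LF_T$ is that of a $\GFF$ with boundary value $c_T$; equivalently $\hat h:=(h|_{U_T^0})\circ g_T^{-1}-c_T$ is a zero‑boundary $\GFF$ on $\U$ independent of $\LF_T$. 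The key point is to identify $(g_T(L_{T+u}),u\ge 0)$ with the alternating continuum exploration process of $\hat h$, started with an upward or a downward phase according to the orientation of $L_T$ (the two variants having the same law up to reversal of all orientations). Up to the next transition time, $(g_T(L_{T+u}))$ is the continuation of a single phase, and mapped into $\U$ by $g_T$ it is again an upward (resp.\ downward) continuum exploration process — this is the domain Markov (restart) property of a single phase, which holds because that process is built from a Poisson point process of $\SLE_4$‑bubbles (which restarts at time $T$), or equivalently because it is the limit of upward height‑varying sequences of level loops, which restart by Proposition~\ref{prop::levelloops_inside_markov} together with Lemma~\ref{lem::discrete_continuum_cvg}. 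Thereafter the process continues with the opposite phase inside the interior of the current loop, and so on, which is precisely the construction of the alternating process applied to $\hat h$. Hence $(g_T(L_{T+u}),u\ge 0)$ is a deterministic functional of $\hat h$, so it has the stated law and is independent of $\LF_T$. For a general stopping time, let $T_n=2^{-n}\lceil 2^nT\rceil\downarrow T$; the domain Markov property holds at each $T_n$, and by right‑continuity of $u\mapsto L_u$ and stability of $g_\cdot$ under Carath\'eodory convergence one has $g_{T_n}(L_{T_n+u})\to g_T(L_{T+u})$ for every $u$, whence $\E[Y\,G((g_{T_n}(L_{T_n+u}))_u)]=\E[Y]\,\E[G(\text{alt.\ process})]$ passes to the limit for bounded $\LF_T$‑measurable $Y$ and bounded continuous $G$, giving the claim at $T$.

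For (1), within each phase $[\tau_k,\tau_{k+1})$ the loops become, after applying the fixed conformal map $g_{\tau_k}$, an upward or downward continuum exploration process; since Carath\'eodory convergence seen from the origin is preserved by $g_{\tau_k}$, Lemma~\ref{lem::growing_cle4_cadlag} and its downward analogue give that $u\mapsto L_u$ is c\`adl\`ag on $[\tau_k,\tau_{k+1})$ with right limit $L_u$ and left limit $L_{u-}$; at a transition time $\tau_{k+1}$ the left limit is $L_{\tau_{k+1}-}$ (Lemma~\ref{lem::growing_cle4_cadlag} for the $k$‑th phase) and the right limit is $L_{\tau_{k+1}}$, because $L_{\tau_{k+1}}$ is by construction the initial loop of the $(k{+}1)$‑th phase and the right limit of a phase at its initial time is that initial loop. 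The loops $L_u,L_{u-}$ are adjacent (they differ, if at all, by attaching one $\SLE_4$‑bubble, hence meet at its root), the orientation is constant on each $[\tau_k,\tau_{k+1})$ and changes at every $\tau_k$, each $L_u$ disconnects the origin, and $\inte(L_v)\subset\overline{\inte(L_u)}$ for $v<u$ since each phase produces an increasing sequence of loops surrounding the origin inside the previous one. For transience, by the construction (each phase being, conditionally on the previous, an independent rescaled copy) the ratios $\CR(\inte(L_{\tau_{k+1}});0)/\CR(\inte(L_{\tau_k});0)$ are i.i.d., each distributed as $\CR(\inte(L_{\tau_1});0)\in(0,1)$ a.s.; hence $-\log\CR(\inte(L_{\tau_k});0)$ is a sum of i.i.d.\ strictly positive variables, tends to $+\infty$, and by the Koebe distortion theorem forces $\inte(L_{\tau_k})\to\{0\}$ in Hausdorff metric, so that $L_u\to\{0\}$ as $u\to\infty$ (every $u$ lying in some $[\tau_k,\tau_{k+1})$ with $L_u\subset\overline{\inte(L_{\tau_k})}$).

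The main obstacle is the heart of (3): showing that after an arbitrary stopping time — in particular one falling in the interior of a phase — the process, suitably mapped, restarts as a fresh alternating continuum exploration process. This needs the restart property of a single upward/downward phase together with careful bookkeeping of the conformal maps and of the accumulated boundary values, and then the passage from deterministic to general stopping times.
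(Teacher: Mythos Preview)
Your proposal is correct and follows essentially the same route as the paper: Lemma~\ref{lem::growing_cle4_cadlag} for c\`adl\`ag, Proposition~\ref{prop::gff_upward_continuum_determinisitc} iterated over phases for determinism, and the construction itself for the domain Markov property. The paper is simply much terser---it declares (3) ``immediate from the construction'' where you unpack the single-phase restart and the passage from deterministic to general stopping times---but the content is the same. For transience you use that the log conformal radii $-\log\CR(\inte(L_{\tau_k});0)$ form a random walk with i.i.d.\ strictly positive increments, whereas the paper observes $\PP[L_{\tau_1}\subset\eta\U]\ge p$ (since $L_{\tau_1}$ is a $\CLE_4$ loop, hence disjoint from $\partial\U$) and then invokes the Borel--Cantelli argument of Proposition~\ref{prop::levelloops_inside_transience}; these are minor variants of the same idea.
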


\begin{proof}
Lemma \ref{lem::growing_cle4_cadlag} implies that the sequence is c\`adl\`ag; Proposition \ref{prop::gff_upward_continuum_determinisitc} implies that the sequence is almost surely determined by the field; and the domain Markov property is immediate from the construction. We only need to explain that the sequence is transient. 

Note that $L_{\tau_1}$ has the same law as the loop in $\CLE_4$ that surrounds the origin. In particular, $L_{\tau_1}\cap\partial\U=\emptyset$ and there exist $\eta\in (0,1)$ and $p\in (0,1)$ such that
\[\PP\left[L_{\tau_1}\subset \eta\U\right]\ge p.\]
Then the transience can be proved similarly as the proof of Proposition \ref{prop::levelloops_inside_transience}.
\end{proof}

By Proposition \ref{prop::continuum_upward_targetindependence}, we have the following target-independence property of the alternating continuum exploration process.
\begin{proposition}\label{prop::continuum_alternate_targetindependence}
Suppose that $h$ is a zero-boundary $\GFF$ in $\U$. Fix two target points $w_1,w_2\in \U$. For $i=1,2$, let $(L^{w_i}_u,u\ge 0)$ be the alternating continuum exploration process of $h$ starting from $L^{w_i}_0=\partial\U$ targeted at $w_i$; and denote by $U_u^{w_i}$ the connected component of $\U\setminus L_u^{w_i}$ that contains $w_i$. Then there exists a number $T$ such that 
\[L_u^{w_1}=L_u^{w_2},\quad \text{for }u<T; \quad U_T^{w_1}\cap U_T^{w_2}=\emptyset.\]
Given $(L_u^{w_1}, L_u^{w_2}, u\le T)$, the two processes continue towards their target points respectively in a conditionally independent way.
\end{proposition}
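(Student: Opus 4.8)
The plan is to bootstrap Proposition \ref{prop::continuum_alternate_targetindependence} from the target-independence of a single upward (or downward) continuum exploration process, Proposition \ref{prop::continuum_upward_targetindependence}, together with the domain Markov property and the fact that each exploration process is almost surely determined by the field (Proposition \ref{prop::gff_upward_continuum_determinisitc}). The overall structure mirrors the proof of Theorem \ref{thm::interior_levelloops_interacting_commonstart}: run the two alternating processes $(L^{w_1}_u, u\ge 0)$ and $(L^{w_2}_u, u\ge 0)$ from $\partial\U$; during the first upward phase $[0,\tau_1)$ the two processes are, by construction, each built from the same Poisson point process of $\SLE_4$-bubbles coupled to $h$, so Proposition \ref{prop::continuum_upward_targetindependence} applies and gives a first ``disconnecting" time at which either the two loops become disjoint or the bubble closing the loop separates $w_1$ from $w_2$.

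First I would handle the base case. Let $S_1=\inf\{u: L^{w_1}_u\text{ disconnects }w_1\text{ from }w_2\}$ and $S_2=\inf\{u:L^{w_2}_u\text{ disconnects }w_2\text{ from }w_1\}$ computed within the first upward phase. By Proposition \ref{prop::continuum_upward_targetindependence} (with $z=w_1$, $w=w_2$), almost surely $S_1=S_2=:S$ and $L^{w_1}_u=L^{w_2}_u$ for all $u<S$. There are now two cases. If $S<\tau_1$ (a disconnecting bubble occurs before the origin-surrounding bubble), then at time $S$ the connected components $U^{w_1}_S$ and $U^{w_2}_S$ are disjoint, and given $(L^{w_i}_u, u\le S)$ the conditional law of $h$ restricted to $U^{w_1}_S$ and to $U^{w_2}_S$ is, by the Markov property in Proposition \ref{prop::gff_upward_continuum}, that of independent $\GFF$s with the appropriate (deterministic, given the loops) boundary data; since by Proposition \ref{prop::gff_upward_continuum_determinisitc} the continuations $(L^{w_i}_u, u\ge S)$ are each determined by the field in the respective component, they are conditionally independent. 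This gives $T=S$. If instead $S\ge\tau_1$, then $w_1,w_2$ both lie in $\inte(L_{\tau_1})$, the two processes agree on all of $[0,\tau_1]$, and given $(L_u, u\le\tau_1)$ the field restricted to $\inte(L_{\tau_1})$ is a single $\GFF$ with deterministic boundary value, so we are in the same situation one phase later with $\partial\U$ replaced by $L_{\tau_1}$ (conformally mapped via $g_{\tau_1}$) and ``upward" replaced by ``downward."

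Next I would set up the induction over transition phases. Using the domain Markov property (item (3) of the proposition summarizing properties of the alternating process, and the analogous statement in Proposition \ref{prop::gff_upward_continuum}), after conditioning on $(L_u, u\le\tau_j)$ the pair of processes restarted from $L_{\tau_j}$ in the component containing $\{w_1,w_2\}$ has the law of a fresh (downward or upward) continuum exploration of a zero-boundary-type $\GFF$, to which Proposition \ref{prop::continuum_upward_targetindependence} again applies after a conformal map. Since at each phase there is a uniformly positive probability that the disconnecting time falls inside that phase (the disconnecting probability within one $\tau_{j}\to\tau_{j+1}$ phase is bounded below — this follows from the analogous statement for $\CLE_4$-type explorations, e.g. there is positive chance the origin-surrounding loop of that phase separates $w_1$ from $w_2$ while keeping both in its interior, or that an intermediate bubble does), the number of phases before disconnection is almost surely finite; hence $T<\infty$ almost surely. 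On the disconnecting phase, the argument of the base case (disjoint components, conditional independence of the field in each, and Proposition \ref{prop::gff_upward_continuum_determinisitc}) gives the conditional-independence continuation statement.

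The main obstacle I expect is the delicate geometric case analysis at the disconnecting time $T$ when the two left-limit loops $L^{w_1}_{T-}=L^{w_2}_{T-}$ coincide: one must check, exactly as in Figure \ref{fig::gff_upward_targetindependence}(b)(c) and the proof of Proposition \ref{prop::continuum_upward_targetindependence}, that the disconnection is realized either by attaching a bubble surrounding one target to the common left-limit loop (forcing $U^{w_1}_T\cap U^{w_2}_T=\emptyset$ and identifying the other process's loop with that bubble or with its union with $L_{T-}$), or by an orientation-changing bubble at a transition time; in every subcase one needs that the two components become genuinely disjoint at time $T$ and that the boundary data on each is measurable with respect to $(L^{w_i}_u,u\le T)$ so that the conditional independence of the two $\GFF$ restrictions — and hence of the two continuations, via Proposition \ref{prop::gff_upward_continuum_determinisitc} — is valid. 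Tying the ``first disconnecting time" of the alternating process to the phase-by-phase disconnecting times of the constituent upward/downward processes, and verifying that $U_T^{w_1}\cap U_T^{w_2}=\emptyset$ rather than merely $U_{T-}$-disjointness, is where care is required.
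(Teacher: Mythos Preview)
Your proposal is correct and follows essentially the same route as the paper: the paper does not give a detailed proof but simply states that Proposition~\ref{prop::continuum_alternate_targetindependence} follows from Proposition~\ref{prop::continuum_upward_targetindependence}, leaving the iterative phase-by-phase argument and the conditional-independence deduction implicit. Your write-up spells out exactly this bootstrapping over transition phases; the only minor point is that the almost-sure finiteness of $T$ follows more directly from transience of the process targeted at $w_1$ (the loops shrink to $\{w_1\}$, so must eventually exclude $w_2$) than from the ``uniformly positive probability per phase'' argument you sketch.
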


Suppose that $h$ is a zero-boundary $\GFF$ on $\U$ and $(L^z_u, u\ge 0)$ is the alternating continuum exploration process of $h$ targeted at $z$ where $(\tau^z_k, k\ge 0)$ is the sequence of transition times. We know that the loop $L^z_{\tau_1^z}$ has the same law as the loop in $\CLE_4$ in $\U$ that surrounds $z$. By domain Markov property, we know that the sequence $(L^z_{\tau^z_k}, k\ge 1)$ has the same law as the sequence of loops in nested $\CLE_4$ in $\U$ that surrounds $z$. 

From Proposition \ref{prop::continuum_alternate_targetindependence}, we know that the two alternating continuum exploration processes of $h$ with distinct target points coincide up to the first disconnecting time. Let us consider the alternating continuum exploration processes of $h$ targeted at all points in $\U$ simultaneously. We summarize some properties of this process in the following proposition.

\begin{proposition}\label{prop::alternate_continuum_cle4_nested}
Suppose that $h$ is a zero-boundary $\GFF$ in $\U$. For each $z\in\U$, let $(L^z_u, u\ge 0)$ be the alternating continuum exploration process targeted at $z$ and let $(\tau^z_k, k\ge 1)$ be the sequence of transition times. 
\begin{enumerate}
\item [(1)] The alternating continuum exploration processes targeted at all points in $\U$ are almost surely determined by $h$.
\item [(2)] The law of the alternating continuum exploration processes targeted at all points in $\U$ is invariant under any M\"obius transformation of $\U$. 
\item [(3)] The collection of loops $\{L^z_{\tau^z_k}: k\ge 1, z\in \U\}$ in the alternating continuum exploration processes targeted at all points in $\U$ has the same law as nested $\CLE_4$ in $\U$.
\end{enumerate}
\end{proposition}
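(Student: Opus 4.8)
The plan is to reduce all three assertions to facts already available for a single target point, namely the preceding (unnumbered) proposition on a single alternating continuum exploration process, the target-independence Proposition~\ref{prop::continuum_alternate_targetindependence}, the single-layer identification Proposition~\ref{prop::upward_continuum_cle4}(3), and the conformal invariance of the $\SLE_4$-bubble measure recalled in Section~\ref{subsec::growing_cle4}. For \textbf{Part (1)}, I would fix a countable dense set $(z_m)\subset\U$; since for each fixed $z$ the process $(L^z_u,u\ge0)$ is a.s.\ determined by $h$, almost surely all the processes $(L^{z_m}_u,u\ge0)$ are simultaneously determined by $h$. For a general $z\in\U$ pick $z_m\to z$ and apply Proposition~\ref{prop::continuum_alternate_targetindependence}: the processes targeted at $z$ and at $z_m$ agree up to a disconnecting time $T_m$. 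Because the components $U^z_u$ decrease to $\{z\}$ as $u\to\infty$, for any fixed $u_0$ we have $z_m\in U^z_{u_0}$ once $m$ is large; then $z_m\in U^z_{u}$ for all $u\le u_0$, and combining this with the disjointness $U^z_{T_m}\cap U^{z_m}_{T_m}=\emptyset$ forces $T_m>u_0$. Hence on every compact time interval the process targeted at $z$ eventually coincides with one targeted at a dense point, so it is determined by $h$, and this holds simultaneously for all $z$.

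For \textbf{Part (2)}, by (1) the whole collection $\mathcal P(h)=\big((L^z_u,u\ge0),z\in\U\big)$ is a measurable functional of $h$, and I would show it is conformally covariant: for a M\"obius automorphism $\phi$ of $\U$ one has $\phi\big(\mathcal P(h)\big)=\mathcal P(h\circ\phi^{-1})$. Indeed, the upward (resp.\ downward) continuum exploration process is built by mapping in a Poisson point process of $\SLE_4$-bubbles whose intensity is M\"obius invariant; the time parameter is an affine function of the heights of the approximating level loops (as one reads off from the reindexing in Section~\ref{subsec::gff_discrete_continuum}), hence conformally natural; and the GFF boundary data of Proposition~\ref{prop::gff_upward_continuum} ($0$ on $\partial\U$ and the height-determined values along the loops) are preserved by $\phi$. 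This covariance propagates through the alternating construction, and since $h\circ\phi^{-1}\overset{d}{=}h$ we get $\phi(\mathcal P(h))\overset{d}{=}\mathcal P(h)$.

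For \textbf{Part (3)}, the identification of $(L^z_{\tau^z_k},k\ge1)$ with the nested $\CLE_4$ loops around a single $z$ is already in hand (Remark~\ref{rem::discrete_continuum_cvg} together with the domain Markov property). To pass to the joint law I would first observe that on $[0,\tau^z_1]$ the alternating process targeted at $z$ is precisely the upward continuum exploration process targeted at $z$, whose unique clockwise loop is $L^z_{\tau^z_1}$; hence $\{L^z_{\tau^z_1}:z\in\U\}$ is the set of all clockwise loops among the upward explorations targeted at all points, which is a $\CLE_4$ by Proposition~\ref{prop::upward_continuum_cle4}(3). Next, conditionally on this outermost $\CLE_4$, inside each outermost loop $\ell$ the loops $\{L^z_{\tau^z_2}:z\in\inte(\ell)\}$ are produced by the continuum exploration of the GFF restricted to $\inte(\ell)$ (the downward variant, whose transition loops carry the $\CLE_4$-loop law by the symmetry between the upward and downward explorations), and using target-independence together with the domain Markov property of the GFF they are conditionally independent across distinct $\ell$; iterating identifies $\{L^z_{\tau^z_k}:k\ge1,z\in\U\}$ with nested $\CLE_4$ in $\U$.

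\textbf{Main obstacle.} The delicate point is not any single computation but the passage from one target point to all of them simultaneously: in (1) one must check that the disconnecting times $T_m$ tend to $+\infty$ as $z_m\to z$, and in (3) one must verify that the per-$z$ identifications glue into a single consistent nested $\CLE_4$ rather than a family of separately-$\CLE_4$-distributed subsequences. Both rely essentially on the target-independence of Proposition~\ref{prop::continuum_alternate_targetindependence}, and for (3) also on the single-layer result Proposition~\ref{prop::upward_continuum_cle4}(3) and the domain Markov property of the construction.
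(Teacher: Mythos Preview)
Your proposal is correct and aligns with the paper's approach: the paper gives no formal proof block for this proposition, treating it as a summary of the preceding discussion (the paragraph before the statement already notes that $(L^z_{\tau^z_k},k\ge1)$ has the law of the nested $\CLE_4$ loops around $z$ by domain Markov, and invokes Proposition~\ref{prop::continuum_alternate_targetindependence} to pass to all targets simultaneously). Your write-up supplies the details the paper leaves implicit---in particular the density argument with $T_m\to\infty$ for Part~(1) and the layer-by-layer iteration via Proposition~\ref{prop::upward_continuum_cle4}(3) for Part~(3)---and these are exactly the mechanisms the paper relies on.
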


\begin{proof}
[Proof of Theorem \ref{thm::interior_continuum_coupling}] This can be completed similarly as the proof of Theorem \ref{thm::interior_levelloops_coupling}.
\end{proof}

In the coupling $(h, (L_u, u\in\R))$ as in Theorem \ref{thm::interior_continuum_coupling}, we call the sequence $(L_u, u\in\R)$ the \textbf{alternating continuum exploration process} of $h$ starting from the origin targeted at infinity when $D=\C$ (resp. starting from $z$ targeted at $\partial D$ when $D\subsetneq\C$).

\subsection{Interaction}
\label{subsec::continuum_interaction}
The following proposition describes the interaction between the continuum exploration process and the level line of $\GFF$, it can be proved similarly as the proof of Proposition \ref{prop::interaction_interior_boundary}.

\begin{proposition}\label{prop::interaction_continuum_boundary}
Fix a domain $D\subset\C$ with harmonically non-trivial boundary. Fix three points $z\in D, x\in\partial D$ and $y\in\overline{D}$. Let $h$ be a $\GFF$ on $D$, let $(L_u^z, u\in\R)$ be the alternating continuum exploration process of $h$ starting from $z$ targeted at $\partial D$, and let $\gamma_u^{x\to y}$ be the level line of $h$ with height $u\in\R$ starting from $x$ targeted at $y$. For any $(L^z_u, u\in\R)$-stopping time $T$, assume that the loop $L^z_T$ does not hit $\partial D$ and $L^z_T$ has boundary value $c$ to the left-side and $2\lambda+c$ to the right-side. On the event $\{\gamma_u^{x\to y}\text{ hits }L^z_T\}$, we have that
\[\begin{cases}
2\lambda+c+u\in (-\lambda,\lambda),&\text{if }L^z_T\text{ is counterclockwise},\\
c+u\in (-\lambda,\lambda),&\text{if }L^z_T\text{ is clockwise}.
\end{cases}\]
Moreover, the level line $\gamma_u^{x\to y}$ stays outside of $L^z_T$.
\end{proposition}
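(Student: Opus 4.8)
The plan is to mirror the proof of Proposition~\ref{prop::interaction_interior_boundary} line for line, with two substitutions: the domain Markov property of the discrete alternating height-varying sequence is replaced by that of the alternating continuum exploration process from Theorem~\ref{thm::interior_continuum_coupling}, and the single simple loop $L^z_N$ is replaced by the quasisimple loop $L^z_T$ together with the revealed set $\cup_{u\le T}L^z_u$. By symmetry it suffices to treat the case where $L^z_T$ is clockwise. First I would condition on $(L^z_u, u\le T)$: the region $D\cap\ext(L^z_T)$ is the connected component of $D\setminus\cup_{u\le T}L^z_u$ meeting $\partial D$, and by Theorem~\ref{thm::interior_continuum_coupling} the conditional law of $h$ restricted to it, call it $\tilde h$, is that of a $\GFF$ agreeing with $h$ on $\partial D$ and carrying boundary value $c$ along $L^z_T$ (the side of $L^z_T$ that faces $\partial D$, since $L^z_T$ is clockwise). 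Because level lines are local sets, the portion of $\gamma_u^{x\to y}$ lying in $D\cap\ext(L^z_T)$ is a level line of height $u$ of $\tilde h$.

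Next I would establish the height relation. Since $L^z_T$ does not touch $\partial D$, it contains neither $x$ nor any endpoint of the level line of $\tilde h$; hence on the event $\{\gamma_u^{x\to y}\text{ hits }L^z_T\}$ the curve hits an open boundary arc of $\tilde h$ of constant value $c$ containing no such endpoint, and Lemma~\ref{lem::levellinesI_remark2515}(3) forces $c+u\in(-\lambda,\lambda)$, which is the claimed inequality in the clockwise case.

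Then I would show $\gamma_u^{x\to y}$ stays outside $L^z_T$, by contradiction exactly as in Proposition~\ref{prop::interaction_interior_boundary}. If $\gamma_u^{x\to y}$ had a nontrivial excursion into $\inte(L^z_T)$, then — conditioning again on $(L^z_u, u\le T)$ — that excursion would be a level line of height $u$ of the $\GFF$ on whichever component of $D\setminus\cup_{u\le T}L^z_u$ inside $\inte(L^z_T)$ borders $L^z_T$, and along $L^z_T$ that field has the opposite-side value $2\lambda+c$; by Lemma~\ref{lem::levellinesI_remark2515}(1) a nontrivial such level line requires $2\lambda+c+u\in(-\lambda,\lambda)$, contradicting $c+u\in(-\lambda,\lambda)$. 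The counterclockwise case is identical after interchanging the two sides of $L^z_T$: one gets $2\lambda+c+u\in(-\lambda,\lambda)$ together with the same ``stays outside'' conclusion.

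The only delicate points — and the main, though mild, obstacle over the discrete argument — are bookkeeping issues peculiar to the continuum setting: that $\cup_{u\le T}L^z_u$ is a bona fide local set whose conditional boundary values are exactly the continuum level loop boundary conditions (Theorem~\ref{thm::interior_continuum_coupling}), and that the trace of a level line of $h$ in the sub-domain $D\cap\ext(L^z_T)$, or in a component inside $\inte(L^z_T)$, is again a level line of the conditional field there, so that Lemma~\ref{lem::levellinesI_remark2515} applies verbatim. Once these are recorded, the proof is identical to that of Proposition~\ref{prop::interaction_interior_boundary}.
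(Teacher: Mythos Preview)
Your proposal is correct and matches the paper's own treatment exactly: the paper simply states that Proposition~\ref{prop::interaction_continuum_boundary} ``can be proved similarly as the proof of Proposition~\ref{prop::interaction_interior_boundary}'', and your write-up carries out precisely that transfer, with the domain Markov property from Theorem~\ref{thm::interior_continuum_coupling} replacing its discrete analogue and Lemma~\ref{lem::levellinesI_remark2515} applied at the same two places.
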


The following proposition describes the interaction between two alternating continuum exploration processes with distinct starting points.

\begin{figure}[ht!]
\begin{subfigure}[b]{\textwidth}
\begin{center}
\includegraphics[width=0.6\textwidth]{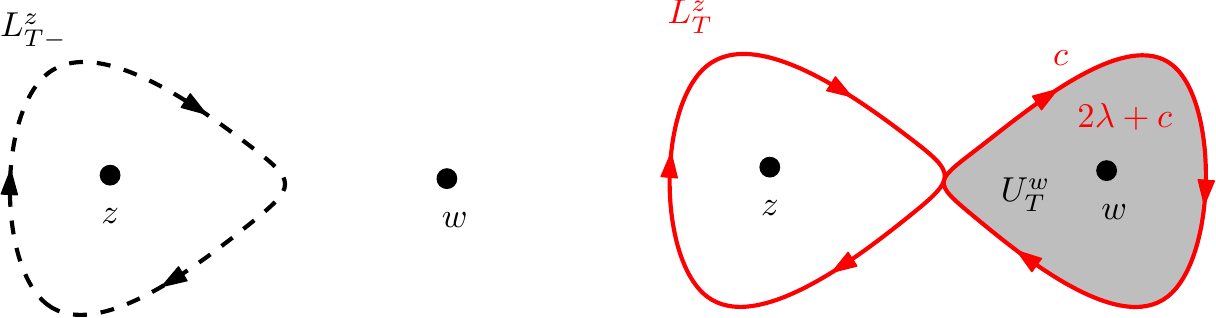}
\end{center}
\caption{Case 1. The loop $L^z_{T-}$ does not disconnect $w$ from $\infty$ and $L^z_{T-}$ is clockwise. The conditional law of $h$ restricted to $U^w_T$ is the same as $\GFF$ with boundary value $2\lambda+c$. }
\end{subfigure}
\begin{subfigure}[b]{\textwidth}
\begin{center}\includegraphics[width=0.6\textwidth]{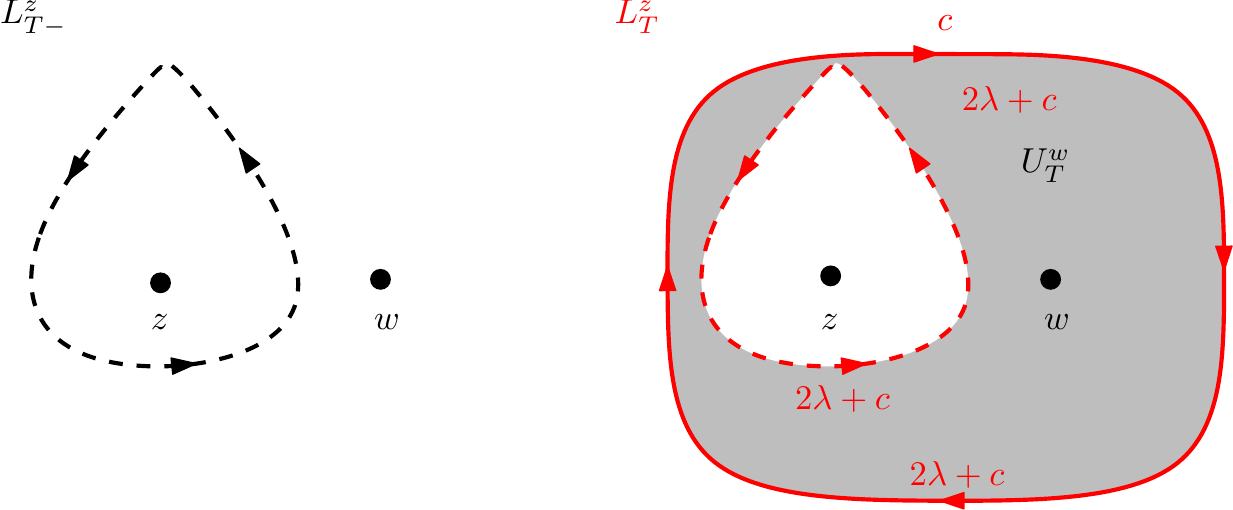}
\end{center}
\caption{Case 2. The loop $L^z_{T-}$ does not disconnect $w$ from $\infty$ and $L^z_{T-}$ is counterclockwise. The conditional law of $h$ restricted to $U^w_T$ is the same as $\GFF$ with boundary value $2\lambda+c$.}
\end{subfigure}
\begin{subfigure}[b]{\textwidth}
\begin{center}\includegraphics[width=0.6\textwidth]{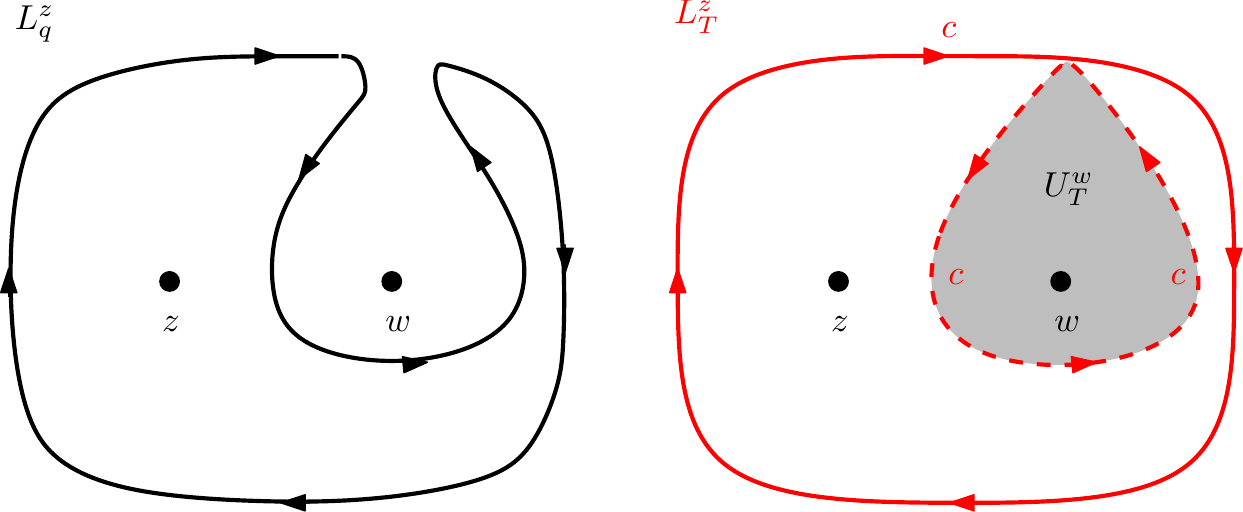}
\end{center}
\caption{Case 3. The loop $L^z_{T-}$ disconnects $w$ from $\infty$. The conditional law of $h$ restricted to $U^w_T$ is the same as $\GFF$ with boundary value $c$.}
\end{subfigure}
\caption{\label{fig::gff_continuum_merging}  Consider two alternating continuum exploration processes with distinct starting points: $(L^z_u, u\in\R)$ and $(L^w_u, u\in\R)$. Suppose that $T$ is the first time $u$ that $L^z_u$ disconnects $w$ from $\infty$. Assume that $L^z_T$ has boundary value $c$ to the left-side and $2\lambda+c$ to the right-side. Assume that $L^z_T$ is clockwise. There are three possibilities for the relation between $L^z_T$ and $L^z_{T-}$. }
\end{figure}

\begin{proposition}\label{prop::continuum_alternate_merging}
Let $h$ be a whole-plane $\GFF$ modulo a global additive constant in $\R$. Fix two starting points $z$ and $w$. Let $(L_u^z, u\in\R)$ (resp. $(L^w_u, u\in \R)$) be the alternating continuum exploration process of $h$ starting from $z$ (resp. starting from $w$) targeted at $\infty$.  Define stopping times
\[T=\inf\{u: L_u^z\text{ disconnects }w\text{ from }\infty\};\quad S=\inf\{u: L_u^w\text{ disconnects }z\text{ from }\infty\}.\]
Then, almost surely, the quasisimple loops $L^z_T$ and $L_S^w$ coincide.
\end{proposition}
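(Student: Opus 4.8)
The plan is to mirror the proof of the discrete analogue, Proposition \ref{prop::interaction_distinctstart}, using the discrete-to-continuum approximation developed in Section \ref{subsec::gff_discrete_continuum} together with the target-independence already established for the continuum exploration process (Proposition \ref{prop::continuum_alternate_targetindependence}). First I would fix a deterministic time $t_0$ small enough that with positive probability neither $L^z_{u}$ disconnects $w$ from $\infty$ nor $L^w_u$ disconnects $z$ from $\infty$ for $u\le t_0$; by the domain Markov property and scaling (Theorem \ref{thm::interior_continuum_coupling}) it suffices to prove the statement conditionally on the configuration up to such a time, so we may reduce to the situation where $L^z_T$ and $L^w_S$ both lie inside a fixed bounded region where the whole-plane field is mutually absolutely continuous with respect to an ordinary $\GFF$ (Proposition \ref{prop::wholeplanegff_ac_2.11}). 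This lets me transfer the reasoning to the bounded-domain setting.

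The core of the argument is the continuum analogue of Lemma \ref{lem::interaction_distinctstart_auxiliary}: I claim that once the continuum exploration process $(L^w_u)$ first enters $\inte(L^z_T)$ it is in fact already contained in $\overline{\inte(L^z_T)}$, i.e.\ it cannot re-emerge into $\ext(L^z_T)$. The key steps: (1) given $(L^z_u,u\le T)$, describe the conditional law of $h$ restricted to $\ext(L^z_T)$ and to the component $U^w$ of the complement containing $w$ using the continuum level loop boundary conditions (Theorem \ref{thm::interior_continuum_coupling}); note that the boundary value $c$ on the ``outside'' of $L^z_T$ is a height that is continuously varying but, crucially, that along the relevant portions of $\partial U^w$ coming from $L^z_{T-}$ the heights are pinned by the c\`adl\`ag structure and the transition-region geometry (Figure \ref{fig::gff_continuum_merging}); (2) argue via Lemma \ref{lem::levellinesI_remark2515} Items (1)--(3) that a level-line piece of the interior field with the relevant height cannot cross back out through either $L^z_T$ or $L^z_{T-}$, exactly as in the discrete lemma, since the height difference of $2\lambda$ across $L^z_T$ forces one of the two sides to be outside $(-\lambda,\lambda)$; (3) deduce that a complete level loop of the field in the appropriate subdomain, with the right height, is contained in $\overline{\inte(L^z_T)}$ and touches $L^w_{S-}$, and then use the uniqueness of level loops (Lemma \ref{lem::levellinesI_levelloop}) to identify it with $L^w_S$. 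Combining this with the symmetric statement, $L^z_T$ is then a level loop in $\ext(L^w_S)$ of the field with a height shifted by a multiple of the relevant continuum-height increment, and $L^w_S$ is a level loop in $\ext(L^z_T)$; a double-inclusion argument identical to the end of the proof of Proposition \ref{prop::interaction_distinctstart} (using that $L^w_S$ disconnects $z$ from $\infty$ forces $S\le$ the index at which $L^z_T$ appears in the $w$-sequence, and symmetrically) forces $L^z_T=L^w_S$.

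The main obstacle is the bookkeeping of heights in the continuum setting: in the discrete proof the boundary values along $L^z_{N-1}$ and $L^z_N$ differ by the discrete increments $r\lambda$ and $2\lambda$, and one uses divisibility by $r\lambda$ to pin the value of $c+u$; in the continuum case heights vary continuously, so there is no lattice to exploit. However, what replaces it is the structure of the transition region $D^{z}$ associated to the transition time $T$: the loop $L^z_T$ is obtained from $L^z_{T-}$ by attaching a quasisimple ``bubble,'' and the continuum level loop boundary conditions force the height to be exactly $c$ on the bubble part (the newly created portion bounding $\ext(L^z_T)$) while remaining the left-limit height on the part inherited from $L^z_{T-}$, precisely as in the three cases depicted in Figure \ref{fig::gff_continuum_merging}. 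One must carefully handle the possibility that $L^z_{T-}$ itself already disconnects $w$ from $\infty$ (Case 3 in the figure), where the relevant boundary value is $c$ rather than $2\lambda+c$; but in that case $w\in\inte(L^z_{T-})$ and one simply applies the target-independence result (Proposition \ref{prop::continuum_alternate_targetindependence}) directly to conclude that the two processes have already merged, with $T=S$. I would also need to verify that the first-entry time is almost surely finite, which follows from transience of both sequences.
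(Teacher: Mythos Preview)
Your proposal has a genuine gap at the heart of the argument. You want to apply the height-based obstructions of Lemma \ref{lem::levellinesI_remark2515} and the uniqueness statement in Lemma \ref{lem::levellinesI_levelloop} directly to the continuum loops $L^w_u$, but these lemmas are statements about genuine level lines/loops of a $\GFF$, i.e.\ $\SLE_4$-type curves characterized by a single height parameter. The quasisimple loops of the continuum exploration process are not level loops in this sense: they arise as limits of discrete level-loop sequences as the height increment goes to zero, and there is no single height $u$ making $L^w_S$ the level loop of any restriction of $h$. Consequently Step (2) of your core argument, and especially Step (3) where you invoke Lemma \ref{lem::levellinesI_levelloop} to ``identify'' a constructed level loop with $L^w_S$, does not go through as written.

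The paper circumvents this precisely by reintroducing the discrete approximation. It fixes a time $p$ with $L^w_p\subset U^w_T$, and for each $k$ runs the \emph{discrete} alternating height-varying sequence $(L^{w,k}_n)_{n\ge 0}$ with height difference $2^{-k}\lambda$ of $h|_{\ext(L^w_p)}$, letting $M^k$ be the first step disconnecting $z$ from $\infty$. The case analysis (your three cases from Figure \ref{fig::gff_continuum_merging}) is carried out on the \emph{discrete} loops $L^{w,k}_n$, where Lemmas \ref{lem::levellinesI_remark2515} and \ref{lem::levellinesI_levelloop} legitimately apply, to show each $L^{w,k}_{M^k}\subset\overline{\ext(L^z_T)}$. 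Since $L^{w,k}_{M^k}\to L^w_S$ in Carath\'eodory topology seen from $\infty$, one obtains $L^w_S\subset\overline{\ext(L^z_T)}$; by symmetry $L^z_T\subset\overline{\ext(L^w_S)}$, and these two inclusions force $L^z_T=L^w_S$.

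A second, smaller issue: your handling of Case 3 via Proposition \ref{prop::continuum_alternate_targetindependence} is not correct. That proposition concerns two continuum exploration processes with a \emph{common starting point} and distinct targets; here the starting points $z$ and $w$ are distinct. The paper handles Case 3 differently, first using Proposition \ref{prop::interaction_continuum_boundary} (interaction between the continuum process from $z$, stopped at a time $q<T$, and the discrete level loops from $w$) to show $L^{w,k}_{M^k}\subset\overline{\ext(L^z_q)}$, and then running a second discrete approximation of the $z$-process from $L^z_q$ to pass from $q$ up to $T$.
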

\begin{proof}
We will argue that $L^w_S$ is contained in the closure of $\ext(L^z_T)$. If this is true, then by symmetry, the loop $L^z_T$ is contained in the closure of $\ext(L^w_S)$. Combining these two facts, the two loops $L^z_T$ and $L^w_S$ have to be equal.

We may assume that $L^z_T$ is clockwise and the case that $L^z_T$ is counterclockwise can be proved similarly. We assume that $L^z_T$ has boundary value $c$ to the left-side and $2\lambda+c$ to the right-side. There are three possibilities for the relation between $L^z_T$ and $L^z_{T-}$: 
\begin{itemize}
\item Case 1. The loop $L^z_{T-}$ does not disconnect $w$ from $\infty$ and $L^z_{T-}$ is clockwise (Figure \ref{fig::gff_continuum_merging}(a)).
\item Case 2. The loop $L^z_{T-}$ does not disconnect $w$ from $\infty$ and $L^z_{T-}$ is counterclockwise (Figure \ref{fig::gff_continuum_merging}(b)).
\item Case 3. The loop $L^z_{T-}$ disconnects $w$ from $\infty$ (Figure \ref{fig::gff_continuum_merging}(c)).
\end{itemize}

Let $U^w_T$ be the connected component of $\C\setminus\cup_{u\le T}L_u^z$ that contains $w$. From the coupling between $h$ and the sequence $(L^z_u, u\in\R)$, we have the following.
\begin{enumerate}
\item [(a)] Given $(L^z_u, u\le T)$, the conditional law of $h$ restricted to $\ext(L^z_T)$ is the same as $\GFF$ with boundary value $c$.
\item [(b)] Given $(L^z_u, u\le T)$, the conditional law of $h$ restricted to $U^w_T$ is the same as $\GFF$ with boundary value
\[\begin{cases}
c,&\text{if }\partial U^w_T\text{ is counterclockwise};\\
2\lambda+c,&\text{if }\partial U^w_T\text{ is clockwise}.
\end{cases}\]
\end{enumerate}  

Fix some $p\in\R$ so that $L^w_p$ is contained in $U^w_T$. Consider the field restricted to $\ext(L^w_p)$. Given $(L^w_u, u\le p)$, the conditional law of $h$ restricted to $\ext(L^w_p)$ is the same as $\GFF$ and the process $(L^w_u, u\ge p)$ is the alternating continuum exploration process of $h|_{\ext(L^w_p)}$ targeted at $\infty$. For each $k\ge 1$, let $(L^{w,k}_n,n\ge 0)$ be the alternating height-varying sequence of level loops of $h|_{\ext(L^w_p)}$ with height difference $2^{-k}\lambda$ starting from $L^w_p$ targeted at $\infty$. Let $M^k$ be the first $n$ such that $L^{w,k}_n$ disconnects $z$ from $\infty$. We know that the loop $L^{w,k}_{M^k}$ almost surely converges to $L^w_S$ in Carath\'eodory topology seen from $\infty$ as $k$ goes to $\infty$. Therefore, in order to show that $L^w_S$ is contained in the closure of $\ext(L^z_T)$, it is sufficient to show that, for each $k$ large enough, the loop $L^{w,k}_{M^k}$ is contained in the closure of $\ext(L^z_T)$. We prove this case by case. 
\medbreak
\textit{Case 1.} Consider the sequence $(L^{w,k}_n, n\ge 0)$. Let $N$ be the first $n$ that $L^{w,k}_n$ exits $\overline{U^w_T}$. Assume that the level loop $L^{w,k}_N$ has height $u$. We have the following observations.
\begin{enumerate}
\item [(a)] Since the loops $L^{w,k}_N$ and $L^{w,k}_{N-1}$ have nonempty intersection, the loop $L^{w,k}_N$ has nonempty intersection with $\partial U^w_T$.
\item [(b)] The pieces of $L^{w,k}_N$ that are outside of $U^w_T$ are level lines of $h|_{\ext(L^z_T)}$ with height $u$, thus $c+u\in (-\lambda, \lambda)$ by Lemma \ref{lem::levellinesI_remark2515} Item (1).
\item [(c)] For each $v$ such that $c+v\in (-\lambda, \lambda)$, let $\tilde{L}_v$ be the level loop of $h|_{\ext(L^z_T)}$ with height $v$ targeted at $\infty$. Clearly, $\tilde{L}_v$ disconnects both $z$ and $w$ from $\infty$.
\end{enumerate}
Combining these three facts with Lemma \ref{lem::levellinesI_levelloop}, we know that $L^{w,k}_N$ coincides with $\tilde{L}_u$. Consequently, the loop $L^{w,k}_N$ is the first loop in $(L^{w,k}_n, n\ge 0)$ that disconnects $z$ from $\infty$ and it is contained in  the closure of $\ext(L^z_T)$. 
\medbreak
\textit{Case 2.} Consider the sequence $(L^{w,k}_n, n\ge 0)$. Let $N$ be the first $n$ that $L^{w,k}_n$ exits $\overline{U^w_T}$. Assume that the level loop $L^{w,k}_N$ has height $u$. If $L^{w,k}_N$ has non-trivial pieces in $\ext(L^z_T)$, then the same argument for Case 1 would show that $L^{w,k}_N$ is the first loop in $(L^{w,k}_n, n\ge 0)$ that disconnects $z$ from $\infty$ and it is contained in  the closure of $\ext(L^z_T)$. In fact, it is impossible that $L^{w,k}_N\cap \ext(L^z_T)=\emptyset$. We will prove it by contradiction. Assume that $L^{w,k}_N\cap \ext(L^z_T)=\emptyset$. We have the following observations.
\begin{enumerate}
\item [(a)] The loop $L^{w,k}_N$ can hit $\partial U^w_T$ from the inside, this implies that $2\lambda+c+u\in (-\lambda,\lambda)$ by Lemma \ref{lem::levellinesI_remark2515} Item (3).
\item [(b)] Consider the field $h$ restricted to $U^w_T$, it has boundary value $2\lambda+c$. For any $v$ such that $2\lambda+c+v\in (-\lambda,\lambda)$, let $\tilde{L}_v$ be the level loop with height $v$. Clearly, it is contained in $\overline{U^w_T}$.
\end{enumerate}
Combining these two facts, the loop $L^{w,k}_N$ has to coincide with $\tilde{L}_u$. This contradicts with the fact that $L^{w,k}_N$ exits $\overline{U^w_T}$.
\medbreak
\textit{Case 3.} Let $q$ be any $(L^z_u, u\in\R)$-stopping time such that $q<T$. We have the following observations.
\begin{enumerate}
\item [(a)] Recall that $(L^{w,k}_n,n\ge 0)$ is the alternating height-varying sequence of level loops of $h|_{\ext(L^w_p)}$ and that $M^k$ is the first $n$ such that $L^{w,k}_n$ disconnects $z$ from $\infty$.
\item [(b)] the sequence $(L^z_u, u\le q)$ is coupled with $h|_{\ext(L^w_p)}$ in the same way as in Theorem \ref{thm::interior_continuum_coupling} stopped at $q$.
\end{enumerate}
Combining these two facts with Proposition \ref{prop::interaction_continuum_boundary}, we have that each loop in $(L^{w,k}_n, n\ge 0)$ stays outside of $L^z_q$. In particular, the loop $L^{w,k}_{M^k}$ is contained in the closure of $\ext(L^z_q)$. 

Consider the field $h$ restricted to $\ext(L^z_q)$, for $m>>k$, let $(L^{z,m}_n,n\ge 0)$ be the alternating height-varying sequence of level loops of $h|_{\ext(L^z_q)}$ with height difference $2^{-m}\lambda$ starting from $L^z_q$ targeted at $\infty$. Let $N^m$ be the first $n$ that $L^{z,m}_n$ disconnects $w$ from $\infty$. We can see that, for fixed $k$ and for $m$ large enough, the loop $L^{w,k}_{M^k}$ is contained in the closure of $\ext(L^{z,m}_{N^m})$. As $m$ goes to $\infty$, this implies that the loop $L^{w,k}_{M^k}$ is contained in the closure of $\ext(L^z_T)$. This completes the proof.
\end{proof}

\begin{proof}
[Proof of Theorem \ref{thm::interior_continuum_interacting_commontarget}] This can be proved similarly as the proof of Theorem \ref{thm::interior_levelloops_interacting_commontarget} where we replace Proposition \ref{prop::interaction_distinctstart} by Proposition \ref{prop::continuum_alternate_merging} and replace Proposition \ref{prop::levelloops_inside_deterministic} by Proposition \ref{prop::gff_upward_continuum_determinisitc}.
\end{proof}

\begin{proof}
[Proof of Theorem \ref{thm::interior_continuum_deterministic}]
This can be proved similarly as the proof of Theorem \ref{thm::interior_levelloops_deterministic} where we replace Theorem \ref{thm::interior_levelloops_interacting_commontarget} by Theorem \ref{thm::interior_continuum_interacting_commontarget}.
\end{proof}

\begin{proof}
[Proof of Theorem \ref{thm::interior_continuum_interacting_commonstart}]
This can be proved similarly as the proof of Theorem \ref{thm::interior_levelloops_interacting_commonstart} where we replace Proposition \ref{prop::levellinesI_targetindependence} by Proposition \ref{prop::continuum_alternate_targetindependence}.
\end{proof}

\begin{proof}
[Proof of Theorem \ref{thm::interior_continuum_determins_field}]
This is can be proved similarly as the proof of Theorem \ref{thm::interior_levelloops_determins_field}.
\end{proof}

\subsection{Reversibility}
\label{subsec::continuum_reversibility}
In this section, we will complete the proof of Theorem \ref{thm::wholeplane_continuum_reversibility} and derive some consequences.

\begin{figure}[ht!]
\begin{subfigure}[b]{\textwidth}
\begin{center}
\includegraphics[width=0.57\textwidth]{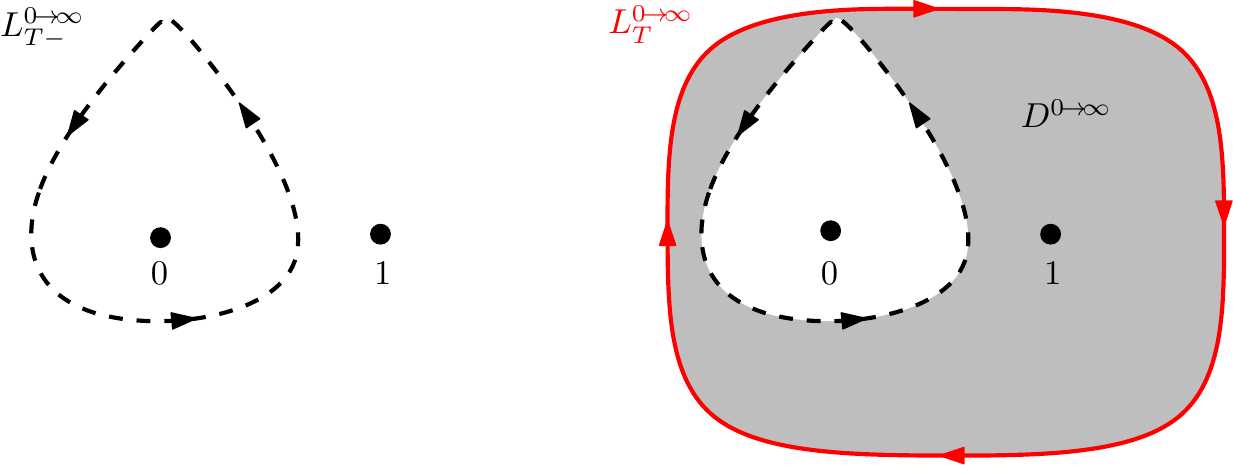}
\end{center}
\caption{Consider the alternating continuum exploration process starting from the origin targeted at $\infty$. The event $E^{0\to\infty}$ requires that $L_{T-}^{0\to\infty}$ does not disconnect $1$ from $\infty$ and that $T$ is a transition time. }
\end{subfigure}
\begin{subfigure}[b]{\textwidth}
\begin{center}\includegraphics[width=0.57\textwidth]{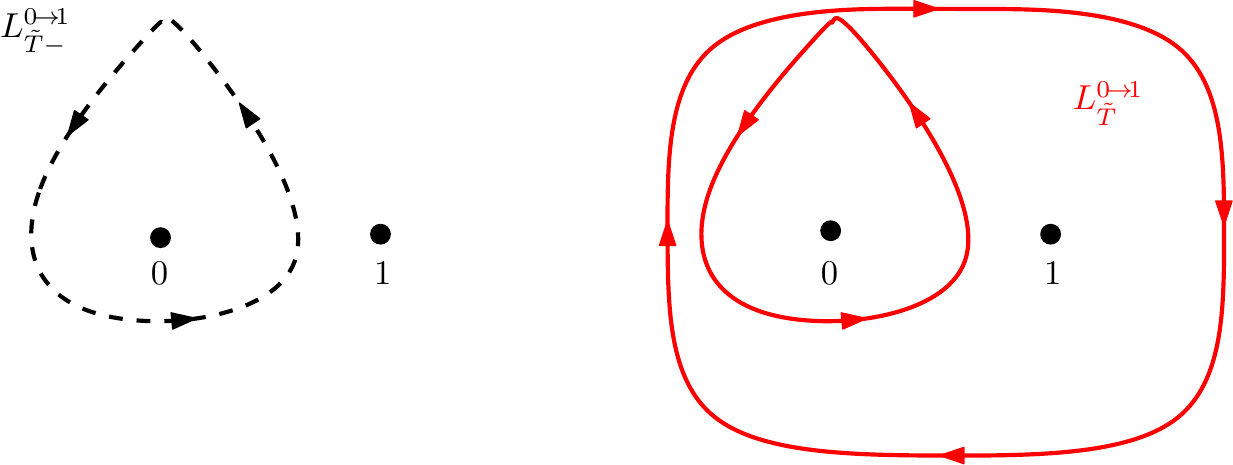}
\end{center}
\caption{Consider the alternating continuum exploration process starting from the origin targeted at $1$. On the event $E^{0\to\infty}$, we have that $L^{0\to 1}_{\tilde{T}-}$ does not disconnect $1$ from $\infty$ and that $\tilde{T}$ is not a transition time which is $E^{0\to 1}$.}
\end{subfigure}
\begin{subfigure}[b]{\textwidth}
\begin{center}\includegraphics[width=0.57\textwidth]{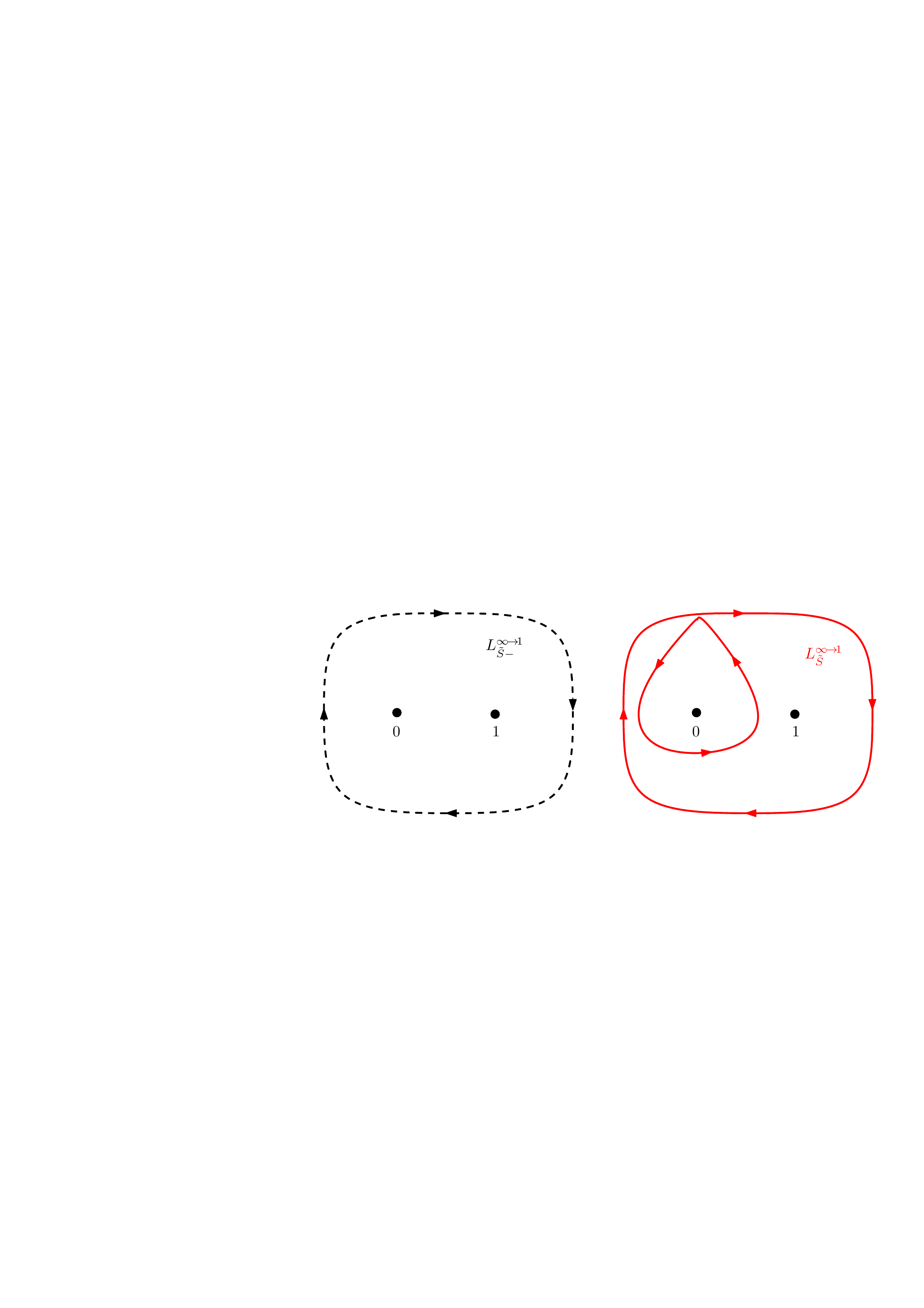}
\end{center}
\caption{Consider the alternating continuum exploration process starting from $\infty$ targeted at $1$. On the event $E^{0\to1}$, we have that $L^{\infty\to 1}_{\tilde{S}-}$ can not disconnect $0$ from $1$ and that $\tilde{S}$ is not a transition time which is $E^{\infty\to 1}$.}
\end{subfigure}
\begin{subfigure}[b]{\textwidth}
\begin{center}\includegraphics[width=0.57\textwidth]{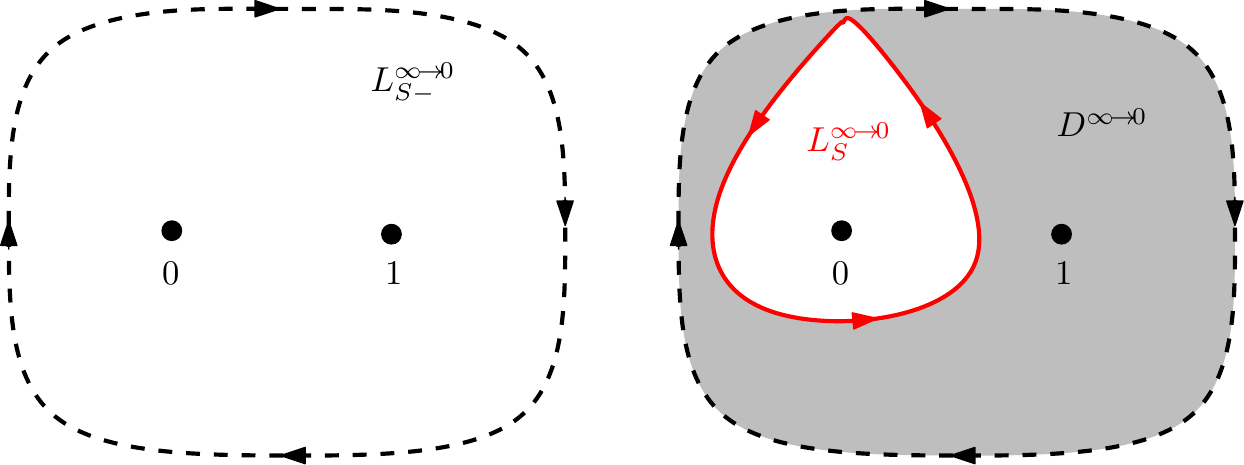}
\end{center}
\caption{Consider the alternating continuum exploration process starting from $\infty$ targeted at the origin. On the event $E^{\infty\to1}$, we have that $L^{\infty\to 0}_{S-}$ can not disconnect $0$ from $1$ and that $S$ is a transition time which is $E^{\infty\to 0}$.}
\end{subfigure}
\caption{\label{fig::gff_continuum_reversibility} Explanation of the behaviors of continuum exploration processes in Lemma \ref{lem::gff_continuum_reversibility}.}
\end{figure}

\begin{lemma}\label{lem::gff_continuum_reversibility}
Suppose that $h$ is a whole-plane $\GFF$ modulo a global additive constant in $\R$. Let $(L^{0\to\infty}_u, u\in\R)$ be the alternating continuum exploration process of $h$ starting from the origin targeted at $\infty$. Define $T$ to be the first time $u$ that $L_u^{0\to\infty}$ disconnects $1$ from $\infty$. Define $E^{0\to\infty}$ to be the event that the loop $L_{T-}^{0\to\infty}$ does not disconnect $1$ from $\infty$ and that $T$ is a transition time (see Figure \ref{fig::gff_continuum_reversibility}(a)). On the event $E^{0\to\infty}$, define $D^{0\to\infty}$ to be the domain between the loop $L_{T-}^{0\to\infty}$ and the loop $L_T^{0\to\infty}$:
\[D^{0\to\infty}=\overline{\ext(L^{0\to\infty}_{T-})}\setminus\ext(L^{0\to\infty}_T).\]
Similarly, we can define the event $E^{\infty\to 0}$ and the domain $D^{\infty\to 0}$ for the alternating continuum exploration process of $h$ starting from $\infty$ targeted at the origin. Then, almost surely on the event $E^{0\to\infty}$, the event $E^{\infty\to 0}$ also holds and the regions $D^{0\to\infty}$ and $D^{\infty\to 0}$ coincide.
\end{lemma}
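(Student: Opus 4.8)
The plan is to connect $(L^{0\to\infty}_u,u\in\R)$ with $(L^{\infty\to 0}_u,u\in\R)$ through two auxiliary exploration processes targeted at $1$, following the scheme of Figure~\ref{fig::gff_continuum_reversibility}:
\[(L^{0\to\infty}_u)\;\leadsto\;(L^{0\to 1}_u)\;\leadsto\;(L^{\infty\to 1}_u)\;\leadsto\;(L^{\infty\to 0}_u).\]
The two outer passages share a starting point and are governed by target-independence of the alternating continuum exploration process (Proposition~\ref{prop::continuum_alternate_targetindependence}, i.e.\ Theorem~\ref{thm::interior_continuum_interacting_commonstart}): two such processes coincide up to the first time one of their loops separates the two targets, after which they continue conditionally independently. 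The middle passage shares the target $1$ and is governed by common-target merging (Theorem~\ref{thm::interior_continuum_interacting_commontarget}, in its form for the interior target $1$, available after a M\"obius change of coordinates sending $1$ to $\infty$ since the whole-plane $\GFF$ modulo $\R$ is M\"obius invariant). Since each of the four processes is a.s.\ determined by $h$ (Theorem~\ref{thm::interior_continuum_deterministic}), there is no ambiguity in coupling them together, and it suffices to transfer, along each link, both the defining event and the associated region.

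\emph{First passage.} By target-independence with common start $0$, the processes $(L^{0\to 1}_u)$ and $(L^{0\to\infty}_u)$ agree for all $u$ strictly before the first time one of their loops separates $1$ from $\infty$, which is exactly the time $T$; in particular $L^{0\to 1}_{T-}=L^{0\to\infty}_{T-}$. On $E^{0\to\infty}$ the move of $(L^{0\to\infty}_u)$ at $T$ reverses orientation and consists of attaching a single $\SLE_4$-bubble (the condition that $L^{0\to\infty}_{T-}$ does not separate $1$ from $\infty$ rules out a disconnection realized only in the limit), and that bubble is the first to separate $1$ from $\infty$, so $1$ lies in the swept region $D^{0\to\infty}$. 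I would then track, using the $\SLE_4$-bubble description of Sections~\ref{subsec::growing_cle4}--\ref{subsec::gff_discrete_continuum} and the boundary-value constraints for level lines and level loops (Lemmas~\ref{lem::levellinesI_remark2515} and~\ref{lem::levellinesI_levelloop}), what $(L^{0\to 1}_u)$ is forced to do on reaching the neighbourhood of $1$: it separates $1$ from $\infty$ at a unique later time $\tilde T$, this $\tilde T$ is \emph{not} a transition time, $L^{0\to 1}_{\tilde T-}$ does not separate $1$ from $\infty$, and the region delimited by $L^{0\to 1}_{\tilde T-}$ and $L^{0\to 1}_{\tilde T}$ coincides with $D^{0\to\infty}$. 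This is precisely the event $E^{0\to 1}$ of Figure~\ref{fig::gff_continuum_reversibility}(b), so almost surely $E^{0\to\infty}$ implies $E^{0\to 1}$ with $D^{0\to 1}=D^{0\to\infty}$.

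\emph{Middle and third passages.} Common-target merging of $(L^{0\to 1}_u)$ and $(L^{\infty\to 1}_u)$ says that once the first has separated $\infty$ from $1$ and the second has separated $0$ from $1$, the two sequences coincide; in particular the loop created at the disconnection is common to both. The subtlety is that merging is guaranteed only \emph{after both} disconnections, while the region $D^{0\to 1}$ is pinned down already at $\tilde T$, which may precede the analogous time for $(L^{\infty\to 1}_u)$. To deal with this I would argue, as in the proofs of Propositions~\ref{prop::interaction_distinctstart} and~\ref{prop::continuum_alternate_merging}, that the frontier loop just before the disconnection and the loop just after it are each characterized---via absolute continuity (Propositions~\ref{prop::wholeplanegff_ac_2.11} and~\ref{prop::wholeplanegff_localset_2.16}) and the uniqueness statement in Lemma~\ref{lem::levellinesI_levelloop}---as the level loop of $h$ restricted to a canonical sub-domain, with the canonical height, separating $1$ from both $0$ and $\infty$; hence they agree for $(L^{0\to 1}_u)$ and $(L^{\infty\to 1}_u)$, giving $E^{0\to 1}\Rightarrow E^{\infty\to 1}$ with equal associated regions. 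The third passage is the mirror image of the first, with the roles of $0$ and $\infty$ exchanged, and yields $E^{\infty\to 1}\Rightarrow E^{\infty\to 0}$ with equal regions. Chaining the three implications gives that, almost surely on $E^{0\to\infty}$, the event $E^{\infty\to 0}$ holds and $D^{0\to\infty}=D^{\infty\to 0}$.

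\emph{Main obstacle.} The real work is the bubble-level book-keeping in the first and third passages and the loop identification in the middle one: deciding whether a given disconnection is carried out by a single bubble or only in the limit, whether it reverses orientation, and---most importantly---showing that the region swept around $1$ is insensitive to the change of target. This rests entirely on the $\SLE_4$-bubble picture of Sections~\ref{subsec::growing_cle4}--\ref{subsec::gff_discrete_continuum} and on the rigidity imposed by level-line boundary values through Lemma~\ref{lem::levellinesI_remark2515}, which is what forces each of the four processes, at the moment it separates $1$ from its target, to sweep the same region around $1$.
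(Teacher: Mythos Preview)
Your three-link chain $(L^{0\to\infty})\leadsto(L^{0\to 1})\leadsto(L^{\infty\to 1})\leadsto(L^{\infty\to 0})$ is exactly the paper's approach, and the two interaction theorems you cite are the right inputs. But you have manufactured obstacles that are not there, and in doing so introduced one genuine slip.

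In the first passage, Theorem~\ref{thm::interior_continuum_interacting_commonstart} gives $\tilde T=T$, not a strictly later time: for $u<T$ the two processes coincide and hence do not separate $1$ from~$\infty$, while at time $T$ the theorem's conclusion $U^{1}_T\cap U^{\infty}_T=\emptyset$ says precisely that $L^{0\to 1}_T$ already disconnects $1$ from~$\infty$. The identification of $U^{0\to 1}_{\tilde T}$ with the interior of $D^{0\to\infty}$ is then read off from that same disjointness statement, and the fact that $\tilde T$ is not a transition time for $(L^{0\to 1}_u)$ is simply that on $E^{0\to\infty}$ the bubble attached at $T$ surrounds $\infty$, not $1$. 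No further $\SLE_4$-bubble book-keeping and no appeal to Lemma~\ref{lem::levellinesI_remark2515} is needed.

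In the middle passage, the ``subtlety'' you raise is a misreading of Theorem~\ref{thm::interior_continuum_interacting_commontarget}. After a M\"obius transformation sending $1$ to $\infty$, that theorem asserts outright that the loops $L^{0\to 1}_{\tilde T}$ and $L^{\infty\to 1}_{\tilde S}$ \emph{coincide}, where $\tilde T$ and $\tilde S$ are precisely the respective first disconnection times---not merely that the processes merge after both times have elapsed. Since the two loops are equal, so are their $1$-components, and $E^{0\to 1}\Rightarrow E^{\infty\to 1}$ with equal regions follows at once. The level-loop uniqueness argument via Lemma~\ref{lem::levellinesI_levelloop} that you sketch is unnecessary.

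The third passage is the mirror of the first. Thus the whole proof is three direct invocations of Theorems~\ref{thm::interior_continuum_interacting_commonstart} and~\ref{thm::interior_continuum_interacting_commontarget}, with the geometric content of each step read off from the conclusions of those theorems (as in Figure~\ref{fig::gff_continuum_reversibility}); the ``main obstacle'' you describe does not arise.
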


\begin{proof}
For $z,w\in\C$, we denote by $(L^{z\to w}_u, u\in\R)$ the alternating continuum exploration process of $h$ starting from $z$ targeted at $w$. Define
\[T=\inf\{u: L^{0\to\infty}_u\text{ disconnects }1\text{ from }\infty\},\]
\[E^{0\to\infty}=\{L^{0\to\infty}_{T-}\text{ does not disconnect }1\text{ from }\infty\text{ and }T\text{ is a transition time}\};\]
\[\tilde{T}=\inf\{u: L^{0\to1}_u\text{ disconnects }1\text{ from }\infty\},\]
\[E^{0\to1}=\{L^{0\to1}_{\tilde{T}-}\text{ does not disconnect }1\text{ from }\infty\text{ and }\tilde{T}\text{ is not a transition time}\};\]
\[\tilde{S}=\inf\{u: L^{\infty\to1}_u\text{ disconnects }0\text{ from }1\},\]
\[E^{\infty\to1}=\{L^{\infty\to1}_{\tilde{S}-}\text{ does not disconnect }0\text{ from }1\text{ and }\tilde{S}\text{ is not a transition time}\};\]
\[S=\inf\{u: L^{\infty\to0}_u\text{ disconnects }0\text{ from }1\},\]
\[E^{\infty\to0}=\{L^{\infty\to0}_{S-}\text{ does not disconnect }0\text{ from }1\text{ and }S\text{ is a transition time}\}.\]
We have the following observations.
\begin{enumerate}
\item [(a)] Consider the sequences $(L_u^{0\to\infty}, u\in\R)$ and $(L^{0\to 1}_u, u\in\R)$. By Theorem \ref{thm::interior_continuum_interacting_commonstart}, we know that $T=\tilde{T}$ and $L_u^{0\to\infty}=L_u^{0\to 1}$ for all $u<T=\tilde{T}$. On the event $E^{0\to\infty}$, we have that $L^{0\to\infty}_{T-}=L^{0\to 1}_{\tilde{T}-}$ and that $E^{0\to 1}$ holds; moreover, the connected component of $\C\setminus L^{0\to 1}_{\tilde{T}}$ that contains $1$, denoted by $U^{0\to 1}_{\tilde{T}}$, coincides with the interior of $D^{0\to\infty}$. See Figure \ref{fig::gff_continuum_reversibility}(a)(b).
\item [(b)] Consider the sequences $(L_u^{0\to1}, u\in\R)$ and $(L^{\infty\to 1}_u, u\in\R)$. By Theorem \ref{thm::interior_continuum_interacting_commontarget}, we know that $L^{0\to 1}_{\tilde{T}}$ and $L^{\infty\to 1}_{\tilde{S}}$ coincide. In particular, on the event $E^{0\to 1}$, the event $E^{\infty\to 1}$ holds and the connected component of $\C\setminus L^{\infty\to 1}_{\tilde{S}}$ that contains $1$, denoted by $U^{\infty\to 1}_{\tilde{S}}$, coincides with $U^{0\to 1}_{\tilde{T}}$.
See Figure \ref{fig::gff_continuum_reversibility}(b)(c).
\item [(c)] Consider the sequences $(L_u^{\infty\to 1}, u\in\R)$ and $(L^{\infty\to 0}_u, u\in\R)$. By Theorem \ref{thm::interior_continuum_interacting_commonstart}, we know that $S=\tilde{S}$ and $L_u^{\infty\to 1}=L_u^{\infty\to 0}$ for all $u<S=\tilde{S}$. On the event $E^{\infty\to 1}$, we have that $L^{\infty\to 1}_{\tilde{S}-}=L^{\infty\to 0}_{S-}$ and that $E^{\infty\to 0}$ holds; moreover, the domain $U^{\infty\to 1}_{\tilde{S}}$ coincides with the interior of $D^{\infty\to 0}$. See Figure \ref{fig::gff_continuum_reversibility}(c)(d).
\end{enumerate}
Combining these three facts, we have that, almost surely on $E^{0\to\infty}$, the event $E^{\infty\to 0}$ holds and that the regions $D^{0\to\infty}$ and $D^{\infty\to 0}$ coincide.
\end{proof}

\begin{remark}\label{rem::gff_continuum_transitionregion_complement}
Assume the same notations as in Lemma \ref{lem::gff_continuum_reversibility}, we have the following consequences.
\begin{enumerate}
\item [(a)] Both the loops $L_T^{0\to\infty}$ and $L_{T-}^{0\to\infty}$ are simple loops.
\item [(b)] Consider the compact set $D^{0\to\infty}$, its interior is simply connected and its complement in the Riemann sphere has two connected components: one contains the origin and the other one contains $\infty$.
\end{enumerate}
\end{remark}

\begin{proof}
[Proof of Theorem \ref{thm::wholeplane_continuum_reversibility}]
We only need to show that the two unions of transition regions coincide, and this implies that the two unions of stationary regions coincide. 

For any $z\in\C$, suppose that $z$ is contained in the interior of $\cup_j D_j^{0\to\infty}$. Since $(D_j^{0\to\infty}, j\in\Z)$ is a sequence of compact sets with disjoint interiors, there is exactly one $D_{j_0}^{0\to\infty}$ that contains $z$. By Lemma \ref{lem::gff_continuum_reversibility}, there exists $D^{\infty\to 0}_{k_0}$ that contains $z$; moreover, the region $D_{k_0}^{\infty\to 0}$ coincides with $D_{j_0}^{0\to\infty}$. This implies that the union $\cup_j D_j^{0\to\infty}$ is contained in the union $\cup_j D_j^{\infty\to 0}$. Combining with the symmetry, the two unions coincide.
\end{proof}

Suppose that $h$ is a whole-plane $\GFF$ modulo a global additive constant in $\R$. Let $(L_u^{0\to\infty}, u\in\R)$ be the alternating continuum exploration process of $h$ starting from the origin targeted at $\infty$ where $(u_j, j\in\Z)$ is the sequence of transition times. 
Let $(D_j^{0\to\infty}, j\in\Z)$ (resp. $(F_j^{0\to\infty}, j\in\Z)$) be the sequence of transition regions (resp. the sequence of stationary regions) of $h$ starting from the origin targeted at $\infty$.
From Remark \ref{rem::gff_continuum_transitionregion_complement}, we have the following observations.
\begin{enumerate}
\item [(a)] For each transition time $u_j$, the loops $L_{u_j}^{0\to\infty}$ and $L_{u_j-}^{0\to\infty}$ are simple loops.
\item [(b)] For each $j$, the interior of $D_j^{0\to\infty}$ is simply connected, and the complement of $D_j^{0\to\infty}$ in the Riemann sphere has two connected components: one contains the origin and the other one contains $\infty$.
\item [(c)] For each $j$, the complement of $F_j^{0\to\infty}$ in the Riemann sphere has two connected components: one contains the origin and the other one contains $\infty$. (The interior of $F_j^{0\to\infty}$  is an annulus when the loops $L_{u_j}^{0\to\infty}$ and $L_{u_{j+1}-}^{0\to\infty}$ are disjoint, or is not connected when the loops $L_{u_j}^{0\to\infty}$ and $L_{u_{j+1}-}^{0\to\infty}$ have non-empty intersection.)
\item [(d)] The complement of the union $\cup_j D_j^{0\to\infty}$ is the union of interiors of $F_j^{0\to\infty}$. 
\end{enumerate}
\bibliographystyle{alpha}
\bibliography{bibliography}
\smallbreak
\noindent Menglu Wang\\
\noindent Department of Mathematics, Massachusetts Institute of Technology, Cambridge, MA, USA\\
\noindent mengluw@math.mit.edu

\noindent Hao Wu\\
\noindent NCCR/SwissMAP, Section de Math\'{e}matiques, Universit\'{e} de Gen\`{e}ve, Switzerland\\
\noindent\textit{and}
Yau Mathematical Sciences Center, Tsinghua University, China\\
\noindent hao.wu.proba@gmail.com

\end{document}